\documentclass[11pt,reqno]{amsart}
\usepackage{fullpage}
\subjclass[2020]{46M05, 47G30, 43A15, 46L08}
\keywords{Banach Gelfand Triples, External Tensor Products, Heisenberg Modules, Kernel Theorems}
\newcommand{\orcidlogo}{{\includegraphics[width=\fontcharht\font`l]{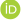}}}

\newcommand{\Addresses}{% additional braces for segregating \footnotesize
\setlength{\parindent}{0pt}

{
\bigskip
\footnotesize

    Dimitri Bytchenkoff \par\nopagebreak
    \textsc{Faculty of Mathematics, University of Vienna, 
Oskar-Morgenstern-Platz 1, 1090 Vienna, Austria; Acoustics Research Institute, Austrian Academy of Sciences, Dominika- nerbastei 16, 1010 Vienna, Austria; Université de Lorraine, CNRS, LEMTA, 54000 Nancy, France}\par\nopagebreak
    \textit{E-mail address}: \texttt{Dimitri.Bytchenkoff@univie.ac.at}\par\nopagebreak
    \href{https://orcid.org/0000-0001-5673-3260}{ 
   \orcidlogo\,0000-0001-5673-3260}
   
}

{
\bigskip
\footnotesize

    Arvin Lamando\par\nopagebreak

     \textsc{Institute of Mathematics, University of the Philippines, Diliman, 1101 Quezon City, Philippines}\par\nopagebreak
    \textit{E-mail address}: 
    \texttt{alamando@math.upd.edu.ph}
    \par\nopagebreak
    \href{https://orcid.org/0009-0001-2969-4857}{ 
   \orcidlogo\,0009-0001-2969-4857}
}

{
\bigskip
\footnotesize

    Franz Luef \par\nopagebreak
    \textsc{Department of Mathematical Sciences, NTNU Trondheim, Alfred Getz' vei 1, 7491 Trondheim, Norway}\par\nopagebreak
    \textit{E-mail address}: \texttt{franz.luef@ntnu.no}
    \par\nopagebreak
    \href{https://orcid.org/0000-0001-7413-8350}{ 
   \orcidlogo\,0000-0001-7413-8350}
}}
\title{Banach Gelfand Triples with Kernel Theorems via Equivalence Bimodules}
\author{Dimitri Bytchenkoff, Arvin Lamando, and Franz Luef}
\date{}
\newlength{\bibitemsep}
\newlength{\bibparskip}
\let\oldthebibliography\thebibliography
\renewcommand\thebibliography[1]{%
  \oldthebibliography{#1}%
  \setlength{\parskip}{\bibitemsep}%
  \setlength{\itemsep}{\bibparskip}%
}
\usepackage[T1]{fontenc}
\usepackage{adjustbox}
\usepackage{hyphenat}
\usepackage{listings}
\usepackage{enumitem}
\usepackage{setspace}
\usepackage[dvipsnames]{xcolor}
\usepackage[hypertexnames=false, colorlinks, citecolor=BrickRed,linkcolor=MidnightBlue, urlcolor=BrickRed]{hyperref}
\usepackage{acro}
\usepackage{amsthm}
\usepackage{subcaption}
\usepackage{pifont}
\usepackage{cancel}
\usepackage{leftindex}
\usepackage[normalem]{ulem}
\usepackage{tikz}
\usepackage{amsmath,amssymb, mathtools}
\usepackage{mathrsfs}
\usepackage[scr=boondox,  % heavily sloped
            cal=esstix]   % slightly sloped
           {mathalpha}
\usepackage{multicol}
\usepackage{tikz}
\usetikzlibrary{matrix,arrows,decorations.pathmorphing}
\usepackage{tikz-cd}
\tikzset{commutative diagrams/.cd}
\usepackage{tikz}
\usetikzlibrary{matrix,arrows,decorations.pathmorphing}
\usepackage{tikz-cd}
\tikzset{commutative diagrams/.cd}
\usepackage{tikz}
\usetikzlibrary{
  calc,
  decorations.pathmorphing,
  shapes,
  arrows.meta
}

\newtheorem{exmp}{Example}[section]

\newtheoremstyle{mystyle}
{\topsep}% measure of space to leave above the theorem. E.g.: 3pt
{\topsep}% measure of space to leave below the theorem. E.g.: 3pt
{\itshape}% name of font to use in the body of the theorem
{0pt}% measure of space to indent
{\bfseries\scshape}% name of head font
{.}% punctuation between head and body
{ }% space after theorem head; " " = normal interword space
{\thmname{#1}\thmnumber{ #2}\thmnote{ (#3)}}

\newtheorem{proposition}[exmp]{Proposition}

\newtheorem{lemma}[exmp]{Lemma}

\newtheorem{theorem}[exmp]{Theorem}

\newtheorem{corollary}[exmp]{Corollary}

\theoremstyle{definition}
\newtheorem{definition}[exmp]{Definition}

\newtheorem{remark}[exmp]{Remark}

\newtheorem{example}[exmp]{Example}

\theoremstyle{remark}

\theoremstyle{definition}

\def\hmath$#1${\texorpdfstring{{\rmfamily\textit{#1}}}{#1}}

\newcommand{\mycomment}[1]{}
\newcommand{\op}[1]{\operatorname{#1}}

\newcommand{\lin}[3]{\leftindex_{#1}{\left \langle #2,#3 \right\rangle}}
\newcommand{\rin}[3]{\left\langle #2,#3 \right\rangle_{#1}}

\newcommand{\comment}[1]{}

\begin{document}
\begin{abstract}
    Hilbert C*-modules whose coefficient C*-algebra is equipped with a finite faithful trace generate Banach Gelfand triples. This provides a natural setting for a function space interpretation of Hilbert C*-modules. In particular, when the modules are finitely generated and projective, their external tensor products interact well with projective tensor products, allowing us to derive abstract kernel theorems for Hilbert C*-modules. As an application, we apply our results to Heisenberg modules and obtain new kernel theorems for time-frequency analysis.
\end{abstract}

\maketitle

\section{Introduction}
Gelfand triples arose as part of an ongoing effort to formalize quantum mechanics, also known by the moniker ``rigged Hilbert spaces'', introduced in \cite{GeKo55}, with subsequent developments published in \cite{GeIz64}. The idea is that one can ``rig'', or more precisely, ``extend'' a Hilbert space $\mathcal{H}$ to include generalized vectors which live outside the original Hilbert space, but can nevertheless be seen as a functional on some dense subspace, say $X$, inside $\mathcal{H}.$ In general, there is a topological vector space $X$ that is densely embedded in $\mathcal{H}$ and $\mathcal{H}$ is, in its turn, weak-$*$ densely embedded in the continuous dual space $X'$, giving the eponymous \emph{Gelfand triple} $$X\hookrightarrow \mathcal{H} \hookrightarrow X'.$$ 

A classical example of a Gelfand triple is $\mathscr{S}(\mathbb{R}^d)\hookrightarrow L^2(\mathbb{R}^d)\hookrightarrow \mathscr{S}'(\mathbb{R}^d)$, where $\mathscr{S}(\mathbb{R}^d)$ is the Schwartz test function space, known to be densely embedded in the square-integrable functions $L^2(\mathbb{R}^d)$. Calculations involving vectors that are known to be non-square integrable, such as the Dirac-delta ``functions'' for example, are now placed in a more rigorous framework as they are instead known to be distributions inside the dual space $\mathscr{S}'(\mathbb{R}^d)$ of tempered distributions \cite{Sc50-I-II}. In a related direction, there is the so-called \emph{Schwartz kernel theorem} \cite{Sc50-1,Ho03}, which says that every  continuous linear operator $T:\mathscr{S}(\mathbb{R}^d)\to \mathscr{S}'(\mathbb{R}^m)$ has a unique $\emph{kernel}$ $\kappa(T)\in \mathscr{S}'(\mathbb{R}^d\times \mathbb{R}^m)$ satisfying
\begin{align*}
    \rin{\mathscr{S}(\mathbb{R}^m),\mathscr{S}'(\mathbb{R}^m)}{g}{Tf} = \rin{\mathscr{S}(\mathbb{R}^d\times \mathbb{R}^m),\mathscr{S}'(\mathbb{R}^d\times \mathbb{R}^m)}{f\otimes g}{\kappa(T)}, \qquad \forall (f,g)\in \mathscr{S}(\mathbb{R}^d)\times \mathscr{S}(\mathbb{R}^m).
\end{align*}
One of Grothendieck's early works is related to the abstract 
study of tensors of topological vector spaces from which one may derive the Schwartz kernel theorem \cite{Gr95-1}. As it turns out, tensors of topological vector spaces are quite subtle, and one of the central challenges is to know when the so-called \emph{projective tensor product} is embeded (i.e. mapped injectively and continuously) in certain spaces of interest. For the purposes of this paper, we are again brought back to the notion of Gelfand triples, as we would like to ask when these projective tensor products can be densely embedded in tensors of Hilbert $C^*$-modules.

To get closer to our main motivation, we are interested in extending Schwartz kernel theorem to a somewhat `exotic' function space that appear in time-frequency analysis. To start with, we have Feichtinger's algebra $\mathbf{S}_0(\mathbb{R}^d)$, introduced by Feichtinger in \cite{Fe81}. It enjoys several properties very similar to the Schwartz space indeed (see \cite{Ja18} for a more recent survey). However, perhaps the most striking difference between $\mathbf{S}_0(\mathbb{R}^d)$ and $\mathscr{S}(\mathbb{R}^d)$ is that the former is a Banach space, while the latter is only a Fr\'echet space. Furthermore, $\mathbf{S}_0(\mathbb{R}^d)$ also generates a Gelfand triple of Banach spaces \cite{CoFeLu08, Fe09, Fe18-3, Fe24-7}: $$\mathbf{S}_0(\mathbb{R}^d)\hookrightarrow L^2(\mathbb{R}^d)\hookrightarrow \mathbf{S}_0'(\mathbb{R}^d).$$ Henceforth, we call such Gelfand triples as \emph{Banach Gelfand triples}.

In \cite{Ri88}, Rieffel constructed the so-called \emph{Heisenberg modules} as a Banach space completion of the Schwartz space to study the geometry of noncommutative tori (a C*-algebra). These specific modules over C*-algebras are nowadays called Hilbert C*-modules \cite{La95}. In particular, finitely generated full Hilbert C*-modules are interpreted as noncommutative complex vector bundles \cite{Se55,Sw62}. Full Hilbert C*-modules can always be realized as \emph{equivalence bimodules}, and are important tools in the structure-theory of C*-algebras \cite{ Rie74,RaWi98}. In \cite{Lu07}, Luef showed that the Heisenberg modules can instead be constructed using Feichtinger's algebra, making new connections between noncommutative geometry and time-frequency analysis \cite{Lu09, JaLu21}. To be more precise, given a lattice $\Delta\subseteq \mathbb{R}^{2d}$, the Heisenberg module $\mathcal{E}_{\Delta}(\mathbb{R}^d)$ densely contains the Feichtinger's algebra $\mathbf{S}_0(\mathbb{R}^d).$ Furthermore, following \cite{AuEn20}, it is now known that there is a dense embedding $\mathcal{E}_{\Delta}(\mathbb{R}^d)\hookrightarrow L^2(\mathbb{R}^d)$ by localization using a faithful finite trace (see Theorem \ref{thm: localization}, or the original paper \cite{AuEn20}), which allows us to interpret $\mathcal{E}_{\Delta}(\mathbb{R}^d)$ as an exotic function space. 

It is now natural to ask: do the Heisenberg modules, like the Feichtinger's algebras and the Schwartz function spaces, also have kernel theorems of their own? The answer is affirmative, even in the abstract setting of LCA groups. In fact, we shall show that there are abstract kernel theorems for finitely generated projective full Hilbert C*-modules that can be faithfully localized by a finite faithful trace. The paper is organized as follows: In Section \ref{sec: hilb-cstar}, we first discuss functional-analytic notations pertaining to Banach spaces and equivalence bimodules. A Banach Gelfand triple result of (pre)-equivalence bimodules are given in Lemma \ref{lem: localizable-is-gelfand-triple} and Theorem \ref{thm: gelfand-quintuple}.  We also introduce the notion of $\tau$-adjointability, which allows us to generalize adjointable maps between Hilbert C*-modules with different but $\tau$-isomorphic coefficient C*-algebras. In Section \ref{sec: tensors}, we review tensor products for general Banach spaces, C*-algebras, and Hilbert C*-modules, together with their interactions with one another. There is an extended discussion on the so-called \emph{approximation property} (Proposition \ref{prop: the-ap}), as this will allow us to generate Banach Gelfand triples from projective tensor products. In Section \ref{sec: hilb-cstar-tensors}, we obtain our main abstract results on the tensors of equivalence bimodules, including their operator interpretations. The section culminates in Theorem \ref{thm: abstract-kernels}, which summarizes the abstract kernel theorems for (pre)-equivalence bimodules, and can be seen as a diagram of finer and finer Gelfand triples involving equivalence bimodules. In Section \ref{sec: tfa}, we first review some basic notions from time-frequency analysis, and finish the first half of the section with a result pertaining to the approximation property of the Feichtinger's algebra $\mathbf{S}_0(G)$ when $G$ is a second-countable LCA group (Theorem \ref{thm: feich-has-ap}). In the latter half of Section \ref{sec: tfa}, we shall show that the external tensor product of Heisenberg modules is still a Heisenberg module \ref{thm: concrete-external-tensor}. We finally obtain, as a corollary, the inner and outer kernel theorems for the Heisenberg modules (Theorems \ref{thm: heis-outer-kernel} and \ref{thm: heis-inner-kernel}).  

\section{Faithfully Localizable Hilbert C*-modules}\label{sec: hilb-cstar}
We shall assume that the reader is familiar with the basics of C*-algebras and Hilbert C*-modules. We refer the reader interested in more details to the following references: \cite{Mu90, RaWi98, La95}. In this paper, we have a preference for the left structure and so, when we talk about a Hilbert C*-module over a C*-algebra $A$, or simply a Hilbert $A$-module, we mean a \emph{left} Hilbert C*-module. Otherwise, we shall explicitly say that a Hilbert C*-module is a \emph{right} Hilbert C*-module. Take a typical Hilbert $A$-module $Z$ again, recall that it comes with an $A$-valued inner-product linear in the first component. Since we shall be working with different Hilbert C*-modules simultaneously, we shall label our inner-products
\begin{align}\label{form: left-inner} 
    \lin{A}{z_1}{z_2}, \qquad \forall z_1,z_2\in Z.
\end{align}
We shall use the notation $\|z\|_Z$ for the module norm on $Z$. For two Hilbert $A$-modules $Z$ and $W$, we shall denote by $\mathcal{L}_A(Z,W)$ the space of all adjointable maps from $Z$ to $W$. If $Z$ and $W$ are instead right Hilbert $B$-modules, then we shall use $\mathcal{L}_B(Z,W)$ for the space of all adjointable maps from $Z$ to $W$ as right Hilbert $B$-modules. Consequently, if $Z=W$, then $\mathcal{L}_A(Z,W):=\mathcal{L}_A(Z)$, $\mathcal{L}_B(Z,W):=\mathcal{L}_B(Z)$ and the C*-algebra of compact adjointable operators are denoted by $\mathcal{K}_A(Z)\subseteq \mathcal{L}_A(Z)$ and $\mathcal{K}_B(Z)\subseteq \mathcal{L}_B(Z)$. We shall also use the notation $S(A)$ for the states of a C*-algebra $A$. 

If $X,Y$ are Banach spaces, then we use the notation $\mathcal{B}(X,Y)$ for the space of bounded linear operators from $X$ to $Y$. The topological dual of a Banach space is denoted by $X'$, we shall use the bilinear dual brackets $\rin{X,X'}{\cdot}{\cdot}: X\times X'\to \mathbb{C}$ as follows:
\begin{align*}
    \sigma(x) := \rin{X,X'}{x}{\sigma}, \qquad \forall x\in X, \sigma \in X'.
\end{align*}
From this, one can define a notion of \emph{rank-one} operator in $\mathcal{B}(X,Y)$, denoted by $\theta_{x',y} \in \mathcal{B}(X,Y)$ via $\theta_{x',y}(x) := \rin{X,X'}{x}{x'}y$ for $x'\in X$ and $y\in Y$. Naturally, an $N$-sum of rank-one operator is a \emph{rank}-$N$ operator in $\mathcal{B}(X,Y)$, and are called \emph{finite-rank}. The completion of finite-rank operators in operator norm defines the space of \emph{approximable operators} $\mathcal{A}(X,Y)$ for Banach spaces. Note that this is generally different from the space of \emph{compact operators} $\mathcal{K}(X,Y)$ defined by operators in $\mathcal{B}(X,Y)$ that sends bounded subsets of $X$ to relatively compact subsets in $Y$. In the special case when $X$ and $Y$ are Hilbert spaces, we obtain the classical result $\mathcal{A}(X,Y)=\mathcal{K}(X,Y)$, while for general Banach spaces, one needs the so-called \emph{approximation property} for either $X$ or $Y$ (see Proposition \ref{prop: the-ap}) to obtain the same result. 

If $T\in \mathcal{B}(X,Y)$, then the \emph{formal adjoint} $T': Y'\to X'$ is given by:
\begin{align*}
    T'(y') = y'\circ T, \qquad y'\in Y'.
\end{align*}
Since $T$ is a continuous linear map, $T'$ is also the unique continuous linear map satisfying the duality relation:
\begin{align*}
    \rin{Y,Y'}{Tx}{y'} = \rin{X,X'}{x}{T'y'}. 
\end{align*}
One of the main questions we will consider is when a bounded linear map is injective. We will generally refer to such maps as \emph{embeddings}. For the sake of convention, we will use hooked arrows $T:X\hookrightarrow Y$ to signify that a map is an embedding. On the other hand, unless we need to emphasize them, we will usually omit the embedding maps, treating the embedded space as a subspace of the larger space. There is a nice duality between embeddings and maps with dense range c.f. \cite[Theorem 4.12]{Ru91-2}, which we cite here for easy reference. 
\begin{theorem}\label{thm: embedding-dense-range}
    Let $T\in \mathcal{B}(X,Y)$, then
    \begin{enumerate}
        \item $T\in \mathcal{B}(X,Y)$ is an embedding if and only if $T'\in \mathcal{B}(Y',X')$ has weak-$*$ dense range in $X'$. 
        \item $T\in \mathcal{B}(X,Y)$ has norm dense range in $Y$ if and only if $T\in \mathcal{B}(Y',X')$ is an embedding. 
        \item If $T:X\to Y$ is a Banach space isomorphism, then $T':Y'\to X'$ is also a Banach space isomorphism, and is a homeomorphism with respect to weak-$*$ topologies.
    \end{enumerate}
\end{theorem}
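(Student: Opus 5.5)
The plan is to prove all three items directly from the duality relation $\langle Tx, y'\rangle_{Y,Y'} = \langle x, T'y'\rangle_{X,X'}$, combined with the Hahn--Banach theorem in two forms. The first form is the norm version on $Y$. The second is the weak-$*$ version on $X'$: the dual of $(X',\text{weak-}*)$ is exactly $X$, and a subspace of $X'$ is weak-$*$ dense if and only if its pre-annihilator in $X$ is $\{0\}$. The central identity I will prove first is
\[
  \ker T = {}^{\perp}\bigl(\operatorname{ran} T'\bigr) := \{x\in X : \langle x, T'y'\rangle_{X,X'}=0 \ \forall y'\in Y'\}.
\]
It follows immediately from the duality relation, since $x\in{}^\perp(\operatorname{ran}T')$ if and only if $y'(Tx)=0$ for all $y'\in Y'$, and by Hahn--Banach this holds if and only if $Tx=0$. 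Dually, I will show $\ker T' = (\operatorname{ran} T)^{\perp}$ in $Y'$: we have $T'y'=0$ if and only if $y'(Tx)=0$ for all $x$.

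For (1), $T$ is injective if and only if ${}^\perp(\operatorname{ran}T')=\{0\}$. I then invoke the bipolar theorem for the weak-$*$ topology, which says that the weak-$*$ closure of a subspace $M\subseteq X'$ is $({}^\perp M)^\perp$. Hence ${}^\perp(\operatorname{ran}T')=\{0\}$ is equivalent to the weak-$*$ closure of $\operatorname{ran}T'$ being $\{0\}^\perp=X'$, which is (1). For (2), I read the statement as asserting that $T'\in\mathcal B(Y',X')$ is an embedding. Then $T'$ is injective if and only if $(\operatorname{ran}T)^\perp=\{0\}$. By Hahn--Banach in the norm topology, the norm closure of $\operatorname{ran}T$ equals ${}^\perp((\operatorname{ran}T)^\perp)$, so this condition is equivalent to $\operatorname{ran}T$ being norm dense in $Y$.

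For (3), suppose $T$ is a bounded bijection, so that $T^{-1}$ is bounded by the open mapping theorem. I will check that $(T^{-1})'$ is a two-sided inverse of $T'$ by dualizing the identities $T T^{-1}=\mathrm{id}_Y$ and $T^{-1}T=\mathrm{id}_X$, using $(ST)'=T'S'$, which follows from the definition $T'(y')=y'\circ T$. Boundedness of $T'$ holds because $\|T'\|\le\|T\|$. For weak-$*$ continuity, if $y'_\alpha\to y'$ weak-$*$ then $\langle x,T'y'_\alpha\rangle=\langle Tx,y'_\alpha\rangle\to\langle Tx,y'\rangle$, so $T'$ is weak-$*$ continuous. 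The same argument applied to $(T^{-1})'$ shows that the inverse is weak-$*$ continuous, so $T'$ is a weak-$*$ homeomorphism.

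The only nontrivial ingredient is the weak-$*$ bipolar fact used in (1), namely that the weak-$*$ continuous functionals on $X'$ are precisely the evaluations at points of $X$. This is standard and is exactly what \cite[Theorem 4.12]{Ru91-2}-type arguments rely on, so I will cite it rather than reprove it. The remaining steps are routine bookkeeping with the duality bracket.
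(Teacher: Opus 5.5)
Your proposal is correct and follows essentially the same route as the paper. The paper gives no proof of its own but cites \cite[Theorem 4.12]{Ru91-2}, and that proof consists of your two identities $\ker T' = (\operatorname{ran} T)^{\perp}$ and $\ker T = {}^{\perp}(\operatorname{ran} T')$, combined with the facts that ${}^{\perp}(M^{\perp})$ is the norm closure of a subspace $M\subseteq Y$ and $({}^{\perp}N)^{\perp}$ is the weak-$*$ closure of a subspace $N\subseteq X'$. Reading item (2) as a statement about $T'$ is the right correction of the typo, and your argument for item (3), which is not part of Rudin's theorem, is fine.
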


Moving on to Hilbert C*-modules, say $Z$ over a C*-algebra $A$; we will only consider those that are \emph{full}. That is, the span of $A$-valued inner products such as in \eqref{form: left-inner} are dense in $A$. If so, $Z$ is an equivalence bimodule \cite[Proposition 3.8]{RaWi98} (A.K.A. imprimitivity bimodule) for the C*-algebras $A$ and $\mathcal{K}_A(Z)=:B$. Note that this setting is completely symmetric, that is, if $Z$ is a full right Hilbert $B$-module, then it is also an equivalence bimodule between $\mathcal{K}_B(Z)$ and $B$. The notion of full Hilbert C*-modules and equivalence bimodules are then interchangeable. If $Z$ is an $A-B$ imprimitivity bimodule, we shall use $\lin{A}{\cdot}{\cdot}$ to denote the $A$-valued inner-product on $Z$, while we use $\rin{B}{\cdot}{\cdot}$ for the $B$-valued inner-product on $Z$.

We will also make references to the idea that Hilbert C*-modules are considered as the noncommutative version of complex vector bundles. This is due to the fact that there is an equivalence of categories between vector bundles over a fixed compact Hausdorff space and finitely generated projective modules over the commutative C*-algebra associated with the above-mentioned compact Hausdorff space \cite{Se55,Sw62}. To this end, we are particularly interested in full Hilbert $A$-modules $Z$ that are finitely generated and projective as an $A$-module. From the equivalence bimodule picture, it turns out that this is equivalent \cite[Proposition 2.3]{AuJaMaLu20} to $Z$ having the right C*-algebra coefficient $B:=\mathcal{K}_A(Z)$ unital. The following result will be of particular use to us.
\begin{proposition}\label{prop: parseval-module-frame}
    Let $Z$ be an $A-B$ equivalence bimodule, if $B$ is unital, then there exist $\{f_1,...,f_n\}\subseteq Z$ and $\{g_1,...,g_n\}\subseteq Z$ such that $\displaystyle \sum_{i=1}^n \rin{B}{f_i}{g_i}=1_B$, where $1_B$ is the unital element in $B$. Furthermore, the elements can be chosen so that $w_i=f_i=g_i\in Z$ for all $n=1,...,n.$
\end{proposition}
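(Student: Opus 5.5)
The plan is to use density of the inner products together with a standard perturbation argument in the unital C*-algebra $B$. Since $Z$ is full as a right Hilbert $B$-module, the span of $\{\rin{B}{f}{g} : f,g\in Z\}$ is a dense two-sided ideal of $B$. This span is an ideal because $b\rin{B}{f}{g}=\rin{B}{f b^*}{g}$ after adjusting sides, and $\rin{B}{f}{g}b=\rin{B}{f}{gb}$.

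A dense ideal in a unital C*-algebra contains an element $x$ with $\|1_B-x\|<1$. Then $x$ is invertible by the Neumann series, so the ideal contains $x x^{-1}=1_B$. Writing $1_B=\sum_{i=1}^n \rin{B}{f_i'}{g_i'}$ and absorbing scalar coefficients into the $f_i'$ gives the first assertion.

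For the refinement $f_i=g_i=w_i$, I would work with the positive part of the ideal. First I use the polarization identity
\[
\rin{B}{f}{g}=\tfrac14\sum_{k=0}^{3} i^k\rin{B}{f+i^k g}{f+i^k g}
\]
(with the appropriate convention on which slot is linear). This shows the dense ideal $I$ is spanned by the elements $\rin{B}{h}{h}$. I would then avoid complex coefficients by a different route. Since $1_B\in I$ and $I$ is an algebraic ideal, the element $c=\sum_i \rin{B}{f_i}{f_i}$, with $\{f_i\}$ suitably chosen, can be made invertible.

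To produce such a $c$, take $a=\sum_i \rin{B}{f_i}{g_i}=1_B$. Then
\[
1_B = a^*a \le \Big\|\sum_i \rin{B}{g_i}{g_i}\Big\|\sum_i \rin{B}{f_i}{f_i}.
\]
This follows from the module Cauchy--Schwarz inequality applied in $Z^n$ viewed as a Hilbert $B$-module with inner product $\sum_i \rin{B}{\cdot}{\cdot}$. So $c=\sum_i\rin{B}{f_i}{f_i}\ge \lambda 1_B$ for some $\lambda>0$, and hence $c$ is positive and invertible.

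Set $w_i=f_i c^{-1/2}$. Using $B$-linearity in the right slot and conjugate linearity in the left,
\[
\sum_i \rin{B}{w_i}{w_i}=c^{-1/2}\Big(\sum_i\rin{B}{f_i}{f_i}\Big)c^{-1/2}=1_B.
\]

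The main obstacle is the Cauchy--Schwarz step. I need to make sure the inequality $\langle x,y\rangle^*\langle x,y\rangle\le\|\langle y,y\rangle\|\langle x,x\rangle$ holds in the direct sum module $Z^n$ with the paper's convention of left versus right linearity. I also need the order of the factors to match the convention, so that the resulting lower bound really applies to $\sum_i\rin{B}{f_i}{f_i}$.
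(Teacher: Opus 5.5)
Your proof is correct. The first half (dense ideal, an element within distance $1$ of $1_B$, Neumann series) is the paper's argument. For the refinement $f_i=g_i=w_i$, however, you take a different and more self-contained route.

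The paper passes to the $A$-side. It forms the module frame operators $F_f(z)=\sum_i\lin{A}{z}{f_i}\cdot f_i$ and $F_g(z)=\sum_i\lin{A}{z}{g_i}\cdot g_i$, and invokes a cited result for their invertibility together with the identity $F_f^{-1}=F_g$. It then sets $w_i=F_g^{1/2}(f_i)$ to obtain $z=\sum_i\lin{A}{z}{w_i}\cdot w_i=z\cdot\sum_i\rin{B}{w_i}{w_i}$, and concludes by faithfulness of the right action.

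You never leave the right Hilbert $B$-module: Cauchy--Schwarz in $Z^n$ bounds $c=\sum_i\rin{B}{f_i}{f_i}$ below, and you renormalize by $c^{-1/2}$. By the compatibility relation $\lin{A}{z}{f}\cdot f=z\cdot\rin{B}{f}{f}$, the operator $F_f$ is just right multiplication by $c$. So your $w_i=f_ic^{-1/2}$ is exactly $F_f^{-1/2}(f_i)$, and your Cauchy--Schwarz step is an elementary proof that $F_f$ is invertible.

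Your route also avoids the identity $F_f^{-1}=F_g$. That identity holds when $g_i=F_f^{-1}(f_i)$ is the canonical dual, but not for an arbitrary pair with $\sum_i\rin{B}{f_i}{g_i}=1_B$. For instance, with $A=B=Z=\mathbb{C}$, $(f_1,f_2)=(1,0)$ and $(g_1,g_2)=(1,5)$, one gets $F_f=\mathrm{id}$ and $F_g=26\,\mathrm{id}$.

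What the paper's framing buys is the explicit reconstruction formula $z=\sum_i\lin{A}{z}{w_i}\cdot w_i$, used later in Lemma~\ref{lem: z-has-ap} and Proposition~\ref{prop: send-gen-to-gen}. That formula follows from your conclusion in one line via the same compatibility relation.

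The obstacle you flag dissolves because $1_B$ is self-adjoint, so $\sum_i\rin{B}{g_i}{f_i}=1_B$ as well. Use the right-module inequality $\langle x,y\rangle^*\langle x,y\rangle\le\|\langle x,x\rangle\|\,\langle y,y\rangle$, with the inner product linear in the second slot as for the paper's $B$-valued inner product. Taking $x=(g_i)_i$ and $y=(f_i)_i$ gives exactly $1_B\le\bigl\|\sum_i\rin{B}{g_i}{g_i}\bigr\|\sum_i\rin{B}{f_i}{f_i}$, which also forces the norm factor to be nonzero. The polarization paragraph is not needed and can be dropped.
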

\begin{proof}
    Let $I=\op{span}\{\rin{B}{z}{w}: z,w\in Z\}$, so by fullness of $Z$ as a right Hilbert $B$-module, we have $\overline{I}^{\|\cdot\|_B}=B$. It follows that there is some $b\in I$ such that $\|1_B-b\|_{B}<1$ and so, by Neumann Series, there must be the inverse $(1_B-(1_B-b))^{-1}=b^{-1}$ in $B$. However this implies $b\cdot b^{-1}\in I$, and by the ideal property of $I$, we have $I=B$. It follows that there exist $\{f_1,...,f_n\}$ and $\{g_1,...,g_n\}$ in $Z$ such that $\displaystyle \sum_{i=1}^n \rin{B}{f_i}{g_i}=1_{B}.$ Define the maps $F_f, F_g: Z\to Z$ via $\displaystyle F_f(z) = \sum_{i=1}^n \lin{A}{z}{f_i}\cdot f_i$ and $\displaystyle F_g(z) = \sum_{i=1}^n \lin{A}{z}{g_i}\cdot g_i$ for all $Z\in Z.$ These maps are the well-known \emph{module frame} maps. In fact, since there are only finitely many elements associated with both $F_f$ and $F_g$ and using the isomorphism $\mathcal{K}_A(Z)\cong B$, it is easy to see that they are positive. A slightly more nontrivial fact is that they are necessarily invertible in $\mathcal{K}_A(Z)$, with $F_f^{-1}=F_g$ \cite[Proposition 3.9]{AuJaMaLu20}. Now let us define $w_i = F_g^{1/2}(f_i)$ for each $i=1,...,n$. It follows that for all $z\in Z:$
    \begin{align*}
        z = F_g^{1/2}(F_f (F_g^{1/2}z))= \sum_{i=1}^n \lin{A}{z}{F_g^{1/2}(f_i)}\cdot F_{g}^{1/2}(f_i) = \sum_{i=1}^n \lin{A}{z}{w_i}\cdot w_i = z\cdot \sum_{i=1}^n \rin{B}{w_i}{w_i}.
    \end{align*}
    We see that $\displaystyle z\cdot \left(1_B -z\cdot \sum_{i=1}^n \rin{B}{w_i}{w_i}\right)=0$, and since the module action in a Hilbert C*-module is faithful, it follows that $\displaystyle \sum_{i=1}^n\rin{B}{w_i}{w_i}=1_B.$
\end{proof}
We also give a pre-equivalence bimodule version of the result above.
\begin{proposition}\label{prop: pre-parseval-module-frame}
    Let $\mathcal{A}$ and $\mathcal{B}$ be Banach $*$-algebras, with C*-envelopes $A$ and $B$ respectively. Suppose $\mathcal{B}$ and $B$ are both unital with a common unit $1_B$, and that $\mathcal{B}$ is inverse-closed in $B$. If $\mathcal{Z}$ is an $\mathcal{A}-\mathcal{B}$ pre-equivalence bimodule, then there exist $\{\mathscr{f}_1,...,\mathscr{f}_n\} \subseteq \mathcal{Z}$ and $\{\mathscr{g}_1,...,\mathscr{g}_n\}\subseteq \mathcal{Z}$ such that $\displaystyle \sum_{i=1}^n \rin{\mathcal{B}}{\mathscr{f}_i}{\mathscr{g}_i}=1_{B}.$
\end{proposition}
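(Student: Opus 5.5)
The plan is to repeat the Neumann-series argument from Proposition \ref{prop: parseval-module-frame}, but carried out inside the dense subalgebra $\mathcal{B}$, with inverse-closedness playing the role that the Neumann series in $B$ played before. We use the standard pre-equivalence bimodule axioms: $\mathcal{Z}$ carries a $\mathcal{B}$-valued inner product $\rin{\mathcal{B}}{\cdot}{\cdot}$, and the span
\[
\mathcal{I}=\op{span}\{\rin{\mathcal{B}}{\mathscr{z}}{\mathscr{w}}:\mathscr{z},\mathscr{w}\in\mathcal{Z}\}
\]
is dense in $\mathcal{B}$, and hence also dense in $B$. The compatibility $\rin{\mathcal{B}}{\mathscr{z}}{\mathscr{w}}\cdot \mathscr{b}=\rin{\mathcal{B}}{\mathscr{z}}{\mathscr{w}\cdot\mathscr{b}}$ shows that $\mathcal{I}$ is a right ideal of $\mathcal{B}$; symmetrically it is a two-sided ideal.

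First, density of $\mathcal{I}$ in $B$ (the density in the $\mathcal{B}$-norm gives density in the C*-norm, since $\mathcal{B}\to B$ is contractive, or density in $B$ is directly part of the definition) yields some $b\in\mathcal{I}$ with $\|1_B-b\|_B<1$. The Neumann series then shows that $b$ is invertible in $B$.

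Second, and this is the key step, we need $b^{-1}\in\mathcal{B}$, and not merely in $B$. This is precisely what inverse-closedness of $\mathcal{B}$ in $B$ gives: $b\in\mathcal{B}$ and $b$ is invertible in $B$, so $b^{-1}\in\mathcal{B}$. The common unit hypothesis is needed here so that $1_B\in\mathcal{B}$ is meaningful.

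Third, because $\mathcal{I}$ is an ideal in $\mathcal{B}$, we get $1_B=b\,b^{-1}\in\mathcal{I}$. Writing $b=\sum_j\rin{\mathcal{B}}{\mathscr{z}_j}{\mathscr{w}_j}$, we obtain explicitly
\[
1_B=\sum_j\rin{\mathcal{B}}{\mathscr{z}_j}{\mathscr{w}_j\cdot b^{-1}}
\]
(with the appropriate adjoint convention). Setting $\mathscr{f}_j=\mathscr{z}_j$ and $\mathscr{g}_j=\mathscr{w}_j\cdot (b^{-1})^*$ or $\mathscr{w}_j\cdot b^{-1}$, according to the linearity convention, gives the required elements of $\mathcal{Z}$. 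Since $\mathcal{Z}$ is a right $\mathcal{B}$-module and $b^{-1}\in\mathcal{B}$, these lie in $\mathcal{Z}$.

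The main obstacle is the second step, together with checking which density statement the definition of a pre-equivalence bimodule provides. If the definition only gives density of $\mathcal{I}$ in $\mathcal{B}$ for the $\mathcal{B}$-norm, the argument is even simpler: use the Neumann series in the unital Banach algebra $\mathcal{B}$ directly, and inverse-closedness becomes unnecessary. The proof should cover both readings.
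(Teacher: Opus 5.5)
Your proposal is correct and follows the same route as the paper's proof: pick $b\in\mathcal{I}$ with $\|1_B-b\|_B<1$, invert it in $B$ via the Neumann series, use inverse-closedness to get $b^{-1}\in\mathcal{B}$, and conclude $1_B=b\,b^{-1}\in\mathcal{I}$ by the ideal property. Your explicit choice $\mathscr{f}_j=\mathscr{z}_j$, $\mathscr{g}_j=\mathscr{w}_j\cdot b^{-1}$ simply spells out the paper's final step concretely.
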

\begin{proof}
    Consider the ideal $\mathcal{I} = \op{span}\{\rin{\mathcal{B}}{\mathscr{f}}{\mathscr{g}}:\mathscr{f},\mathscr{g}\in \mathcal{Z}  \}$ of $\mathcal{B}$, since $\mathcal{Z}$ is an $\mathcal{A}-\mathcal{B}$ pre-equivalence bimodule, we have $\overline{\mathcal{I}}^{\|\cdot\|_B}=B$. Similar to the proof of Proposition \ref{prop: parseval-module-frame}, there must exist an element in $\mathscr{b}\in \mathcal{I}$ that is invertible in $B$. However, by inverse closedness of $\mathcal{B}$, it must be that $\mathscr{b}$ is also invertible in $\mathcal{B}$. Since $\mathcal{I}$ is an ideal in $\mathcal{B}$, we have that $\mathscr{b}\cdot \mathscr{b}^{-1}\in \mathcal{I}$, therefore $\mathcal{I}=\mathcal{B}.$ The result now follows from the fact that $1_B\in \mathcal{I}.$
\end{proof}

It will be important for us to be able to continuously and densely embed full Hilbert C*-modules in Hilbert spaces. Furthermore, whenever we talk about faithful finite traces, say $\tau$ on a C*-algebra $B$, we mean a faithful positive linear functional satisfying the trace condition $\tau(b_1b_2)=\tau(b_2b_1)$ for all $b_1,b_2\in B$. When one is dealing with nice (unital) C*-algebras, the term `finite' comes from the fact that there may be faithful positive maps on a C*-algebra satisfying the trace condition on a dense subset, albeit possibly unbounded with respect to the C*-norm. To this end, we recall the \emph{localization scheme} of \cite{AuEn20}. 
\begin{theorem}\label{thm: localization}
    Let $Z$ be an $A-B$ imprimitivity bimodule, and $\op{tr}_B: B\to \mathbb{C}$ a faithful finite trace in $B$. There exists a unique lower semi-continuous faithful trace on defined on $\op{span}\{\lin{A}{z}{w}:z,w\in Z \}$ satisfying:
    \begin{align*}
        \op{tr}_A\left(\lin{A}{z}{w}\right) = \op{tr}_B\left(\rin{B}{w}{z} \right) =: \rin{\op{tr}_B}{z}{w}, \qquad z,w\in Z.
    \end{align*}
    \begin{enumerate}
        \item The form $\rin{\op{tr}_A}{z}{w}:=\op{tr}_A\left(\lin{A}{z}{w}\right)$ defines an inner product on $Z$ satisfying $\|z\|_{\op{tr}_A} = \sqrt{\rin{\op{tr}_A}{z}{z}}=\sqrt{\rin{\op{tr}_B}{z}{z}}$ for $z,w\in Z$. The completion of $Z$ with respect to the inner product $\rin{\op{tr}_A}{\cdot}{\cdot}$ coincides with the localization of $Z$ with respect to $\rin{\op{tr}_B}{\cdot}{\cdot}$. We then denote the induced Hilbert space by $\op{Loc}(Z)$ without ambiguity and there is a dense embedding $Z\hookrightarrow \op{Loc}(Z)$ with the following estimate for $z\in Z$:
        \begin{equation}\label{form: loc-estimate}
            \begin{split}
                \|z\|_{\op{tr}_A} \leq \|\op{tr}_B\| \|z\|_Z.
            \end{split}
        \end{equation}
        \item Let $W$ be another $A-B$ imprimitivity bimodule. The localization $\op{Loc}$ can be seen to be a functor from equivalence $A-B$ bimodules to Hilbert spaces: there are two embeddings
        \begin{equation}\label{form: loc-op-embedding}
            \begin{split}
            \mathcal{L}_A(Z,W)& \hookrightarrow \mathcal{B}(\op{Loc}(Z),\op{Loc}(W)), \\
            \mathcal{L}_B(Z,W)& \hookrightarrow \mathcal{B}(\op{Loc}(Z),\op{Loc}(W)).
            \end{split}
        \end{equation}
        If $Z=W$, then the embedding \eqref{form: loc-op-embedding} is an isometry. 
    \end{enumerate}
\end{theorem}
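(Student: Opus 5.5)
The plan is to build the trace $\op{tr}_A$ from $\op{tr}_B$ through the bimodule structure, then derive the Hilbert space and operator statements from elementary inequalities for Hilbert C*-modules.

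\textbf{Step 1: existence and uniqueness of $\op{tr}_A$.} On the algebraic ideal $A_0:=\op{span}\{\lin{A}{z}{w}\}$, the only candidate is
\[
\op{tr}_A\Bigl(\sum_k \lin{A}{z_k}{w_k}\Bigr):=\sum_k \op{tr}_B\bigl(\rin{B}{w_k}{z_k}\bigr).
\]
So uniqueness is automatic once this is well defined. Well-definedness is the main obstacle: we must show that $\sum_k \lin{A}{z_k}{w_k}=0$ forces $\sum_k\op{tr}_B(\rin{B}{w_k}{z_k})=0$.

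\begin{itemize}
\item First I would take an approximate unit for $B$ of the form $e_\lambda=\sum_j \rin{B}{u_j}{u_j}$ with $u_j \in Z$, which exists by fullness.
\item I would then compute, using $\lin{A}{x}{y}\cdot v = x\cdot\rin{B}{y}{v}$ and the trace property of $\op{tr}_B$,
\[
\sum_j \op{tr}_B\Bigl(\rin{B}{u_j}{\textstyle\sum_k \lin{A}{z_k}{w_k}\cdot u_j}\Bigr)=\sum_k \op{tr}_B\bigl(\rin{B}{w_k}{u_j}\rin{B}{u_j}{z_k}\bigr)\to \sum_k \op{tr}_B\bigl(\rin{B}{w_k}{z_k}\bigr).
\]
The convergence uses that $\op{tr}_B$ is bounded.
\item The left-hand side vanishes whenever $\sum_k \lin{A}{z_k}{w_k}=0$, which gives well-definedness.
\end{itemize}

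The trace property of $\op{tr}_A$ follows from the analogous identity $\lin{A}{z}{w}\lin{A}{x}{y}=\lin{A}{z\cdot\rin{B}{w}{x}}{y}$ together with the trace property of $\op{tr}_B$. Faithfulness and positivity follow from faithfulness of $\op{tr}_B$ and $\rin{B}{z}{z}\ge 0$. For lower semicontinuity, I would write $\op{tr}_A(a^*a)=\sup_\lambda$ of the expression above, which is a supremum of norm-continuous functionals. Alternatively, I would cite the construction of \cite{AuEn20}.

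\textbf{Step 2: item (1).} The identity $\|z\|_{\op{tr}_A}=\|z\|_{\op{tr}_B}$ is immediate from the defining formula. Positive definiteness follows from faithfulness of the traces, and the two completions coincide because the norms are equal. For the estimate, I would use
\[
\rin{\op{tr}_B}{z}{z}\le\|\op{tr}_B\|\,\|\rin{B}{z}{z}\|=\|\op{tr}_B\|\,\|z\|_Z^2.
\]
This gives a bound of the form \eqref{form: loc-estimate}, with $\|\op{tr}_B\|^{1/2}$ in place of $\|\op{tr}_B\|$; the two agree under the normalization $\|\op{tr}_B\|\ge1$, or the displayed constant is read accordingly. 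Injectivity of $Z\hookrightarrow\op{Loc}(Z)$ holds because $\rin{\op{tr}}{\cdot}{\cdot}$ is a genuine inner product on $Z$ itself, and density holds by construction.

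\textbf{Step 3: item (2).} Take $T\in\mathcal{L}_B(Z,W)$ and use the standard inequality
\[
\rin{B}{Tz}{Tz}\le\|T\|^2\rin{B}{z}{z}.
\]
Applying the positive functional $\op{tr}_B$ gives $\|Tz\|_{\op{tr}_B}\le\|T\|\,\|z\|_{\op{tr}_B}$, so $T$ extends to $\op{Loc}$. For $T\in\mathcal{L}_A(Z,W)$ I would argue the same way with $\op{tr}_A$, on $A$-valued inner products, in the form $\op{tr}_A(\lin{A}{Tz}{Tz})$. Injectivity of $T\mapsto\tilde T$ holds because $Z\subseteq\op{Loc}(Z)$ and $W\hookrightarrow\op{Loc}(W)$ is injective.

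For the isometry when $Z=W$, the inequality $\|\tilde T\|\le\|T\|$ is already done. For the converse, I would note that $T\mapsto\tilde T$ is an injective $*$-homomorphism of C*-algebras, since adjoints are preserved by the defining formula. An injective $*$-homomorphism between C*-algebras is isometric. Injectivity here uses faithfulness of the trace through Step 2.
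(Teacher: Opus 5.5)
The paper gives no proof of this theorem; it imports it from \cite{AuEn20}. Your self-contained construction is therefore a genuinely different route, but it has two concrete problems, in Step 1 and in part of Step 3.

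\textbf{Step 1 uses the approximate unit on the wrong side.} Sum your display over $j$ and use $u_j\cdot\rin{B}{u_j}{z}=\lin{A}{u_j}{u_j}\cdot z$. This gives
\[
\sum_j \rin{B}{w}{u_j}\rin{B}{u_j}{z}=\rin{B}{w}{f_\lambda\cdot z},\qquad f_\lambda:=\sum_j\lin{A}{u_j}{u_j}\in A .
\]
So the convergence you need is $f_\lambda\cdot z\to z$, i.e.\ $f_\lambda$ must be an approximate unit for $A$. Knowing that $\sum_j\rin{B}{u_j}{u_j}$ is an approximate unit for $B$ does not give this. Concretely, take $A=M_n(\mathbb{C})$, $B=\mathbb{C}$, $Z=\mathbb{C}^n$, $\op{tr}_B=\op{id}$ and the single vector $u_1=e_1$. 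Then $\rin{B}{u_1}{u_1}=1_B$, yet your left-hand side equals $\rin{B}{e_1}{a\cdot e_1}=a_{11}$, not $\op{Tr}(a)$.

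The repair is to use fullness of $Z$ as a \emph{left} $A$-module and choose the $u_j$ so that $f_\lambda$, with $0\le f_\lambda\le 1$, is an approximate unit for $A$. Boundedness of $\op{tr}_B$ then gives the limit. The functionals $\phi_\lambda(a)=\sum_j\op{tr}_B(\rin{B}{u_j}{a\cdot u_j})$ are positive and bounded on $A$, so you also get positivity of $\op{tr}_A$ on every positive element of the span.

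For lower semicontinuity, a pointwise limit of continuous functionals is not enough; you must show $\phi_\lambda\le\op{tr}_A$ on positive elements. For positive $x$ in the span this follows from three facts:
\begin{itemize}
\item $\phi_\lambda(x)=\op{tr}_A(f_\lambda x)$;
\item $\op{tr}_A(cx)=\op{tr}_A(xc)$ for all $c\in A$;
\item $\op{tr}_A(x)-\op{tr}_A(f_\lambda x)=\op{tr}_A\bigl((1-f_\lambda)^{1/2}x(1-f_\lambda)^{1/2}\bigr)\ge 0$.
\end{itemize}

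\textbf{The $\mathcal{L}_A(Z,W)$ part of item (2) cannot be done this way when $Z\neq W$.} The trace on $A$ induced from $\op{tr}_B$ depends on the bimodule. So ``argue the same way with $\op{tr}_A$'' compares $\|Tz\|_{\op{Loc}(W)}^2$ with $\op{tr}^W_A(\lin{A}{z}{z})$, not with $\|z\|_{\op{Loc}(Z)}^2=\op{tr}^Z_A(\lin{A}{z}{z})$.

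In fact the statement, read as extension of maps, is false in general. Take $A=B=C[0,1]$, $\op{tr}_B(b)=\int_0^1 b(t)\,dt$, and let $W=C[0,1]$ carry its standard structure. Let $Z=C[0,1]$ carry the same left structure, $\lin{A}{z}{w}=z\overline{w}$, but with $(z\cdot b)(s)=z(s)b(s^2)$ and $\rin{B}{z}{w}(t)=\overline{z(\sqrt{t})}\,w(\sqrt{t})$. The identity map lies in $\mathcal{L}_A(Z,W)$. However, $\|z\|_{\op{Loc}(Z)}^2=\int_0^1|z(s)|^2\,2s\,ds$ while $\|z\|_{\op{Loc}(W)}^2=\int_0^1|z(s)|^2\,ds$, so the identity has no bounded extension $\op{Loc}(Z)\to\op{Loc}(W)$.

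The rest of Step 3 is fine:
\begin{itemize}
\item Your $\mathcal{L}_B(Z,W)$ argument is correct, since only $\op{tr}_B$ enters.
\item For $Z=W$ the $\mathcal{L}_A$ argument works once monotonicity of $\op{tr}_A$ on positive elements of the span is available, which the repaired Step 1 supplies.
\item The isometry argument via injective $*$-homomorphisms is correct.
\end{itemize}

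Finally, you are right about \eqref{form: loc-estimate}. Only the constant $\|\op{tr}_B\|^{1/2}$ is provable, and the displayed bound already fails for $Z=B=\mathbb{C}$ with $\op{tr}_B=c\,\op{id}$, $0<c<1$. The paper itself uses the square-root form in Lemma \ref{lem: localizable-is-gelfand-triple}.
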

\noindent In the above-mentioned setting, we generally denote the inner product on $\op{Loc}(Z)$ by $\rin{\op{Loc}(Z)}{\cdot}{\cdot}$. When we are working with the dense subspace $Z$ and want to emphasize that the inner product is induced either by traces on $A$ or $B$, we shall use $\rin{\op{tr}_A}{\cdot}{\cdot}$ or $\rin{\op{tr}_B}{\cdot}{\cdot}$ respectively.

In practice, $A-B$ equivalence bimodules such as $Z$ are usually constructed densely. Suppose $A_0$ and $B_0$ are dense $*$-subalgebras of $A$ and $B$ respectively and that $\mathcal{Z}$ is an $A_0-B_0$ bimodule satisfying $A_0-B_0$ \emph{pre-equivalence bimodule} properties (A.K.A. $A_0-B_0$ \emph{imprimitivity bimodule properties}) \cite[Definition 3.9]{RaWi98}, then $\mathcal{Z}$ can be uniquely `completed' to the $A-B$ equivalence bimodule $Z$ \cite[Proposition 3.12]{RaWi98}. In this construction, $\mathcal{Z}$ sits as a dense $A_0-B_0$ bimodule inside $Z$.

We now make the following definition that encapsulates the setting of Theorem \ref{thm: localization}.
\begin{definition}\label{def: faithful-localization}
    If $Z$ is an $A-B$ equivalence bimodule with a faithful finite trace $\op{tr}_B: B\to \mathbb{C}$, then we say that $Z$ is \emph{faithfully localizable}. Let $A_0$ and $B_0$ be dense $*$-subalgebras of $A$ and $B$ respectively. Suppose $\mathcal{Z}$ is an $A_0-B_0$ pre-equivalence bimodule that can be completed to $Z$. We say that $Z$ is \emph{faithfully localizable from} $\mathcal{Z}$ if there exists a faithful finite trace $\op{tr}_{B_0}: B_0\to \mathbb{C}$ that extends to a faithful finite trace $\op{tr}_{B}: B\to \mathbb{C}$.
\end{definition}
\begin{remark}\label{rem: extending-traces}
    More precisely, we say that $\op{tr}_{B_0}:B_0\to \mathbb{C}$ is a faithful finite trace in $B_0$, if it satisfies the trace condition: $\op{tr}_{B_0}(b_1b_2)= \op{tr}_{B_0}(b_2b_1)$ for all $b_1,b_2\in B_0$, and $\op{tr}(b_0)\geq 0$ whenever $b_0\geq 0$ in $B$, for all $b_0\in B_0$ (and one obtains $\op{tr}_{B_0}(b_0)$ exactly when $b_0=0$). Such traces cannot be extended to the C*-algebra $B$, and in general boundedness with respect to the C*-norm is a very strong condition. As an example, the trace-class operators $\mathcal{S}_1(H)$ are dense in C*-algebra $\mathcal{K}(H)$ of compact operators with respect to the operator norm. One can define a trace $\op{Tr}$ in $\mathcal{S}_1(H)$ that is faithful in the sense given above, but $\op{Tr}$ is unbounded with respect to the operator norm, and thus cannot be extended to $\mathcal{K}(H).$ 
\end{remark} 
Let us now briefly discuss the continuous dual of a Hilbert space $H$, which we shall denote by $H'$. Note that in this work, we shall follow the standard mathematical convention of Hilbert spaces having a sesqui-linear form or inner-product that is linear in the former variable, and antilinear in the latter variable. By the Riesz-representation theorem, there is an anti-linear Banach space isomorphism $R: H\to H'$ given by $R(h)(k) = \lin{H}{k}{h}$. One can make $H'$ into a Hilbert space by pushing forward the structure of $H$ via $R$. Since $R$ is invertible, we can always define the structure of $H'$ through $R$. Similar to the case of module duals, scalar multiplication in $H'$ is defined via $\lambda R(h) := R(\overline{\lambda} h)$, while the inner product is defined via conjugation:
\begin{align*}
    \rin{H'}{R(h)}{R(k)} = \rin{H}{k}{h}. 
\end{align*}
This makes $H'$ into a Hilbert space and  $R$ into an anti-unitary isomorphism because it flips the inner product structure. Since we can treat any Hilbert space $H$ as a Hilbert $\mathbb{C}$-module, we can see that $H'$ is a right Hilbert $\mathbb{C}$-module and, indeed, is the module dual of $H$ when seen through this lens. 

On the other hand, given an $A-B$ equivalence bimodule $Z$, let us discuss its continuous dual $Z'$. One can equip $Z'$ with a natural $B-A$ bimodule structure:
\begin{align*}
    (\xi\cdot a)(z) := \xi(a\cdot z) \\
    (b\cdot \xi)(z) := \xi(z\cdot b)
\end{align*}
for $\xi\in Z'$. This definition makes it immediately clear that $Z'$ is a Banach $B-A$ module since $|(\xi \cdot a) (z)|= |\xi(a\cdot z)|\leq \|\xi\| \|a\cdot z\|\leq \|\xi\|\|a\|\|z\|.$ An analogous estimate holds for the left $B$-module action. Indeed, this is a basic result of Banach module theory \cite{Ka81}. The continuous dual $Z'$ generally loses much of the Hilbert C*-module properties of $Z$, as it is not obvious that one can define a natural C*-algebra valued inner-products on it. In contrast, there is a natural notion of ``dual module'' \cite{RaWi98} $\widetilde{Z}$ for an $A-B$ equivalence bimodule $Z$ which results in a $B-A$ equivalence bimodule. However, we will be working with finitely-generated projective Hilbert C*-modules, and this property imposes the Banach space self-duality $Z\cong \widetilde{Z}$. In keeping with the spirit of interpreting $Z$ as a function space, we avoid working with the module dual as we want $Z$ to be a test function space with ``nontrivial distributions''. This is the first step towards this idea: we shall show that in the setting of Definition \ref{def: faithful-localization}, Hilbert C*-modules generate Banach-Gelfand triples.
\begin{lemma}\label{lem: localizable-is-gelfand-triple}
    Let $Z$ be a faithfully localizable $A-B$ equivalence bimodule via trace $\op{tr}_B:B\to \mathbb{C}$. Then there are norm-continuous embeddings
    \begin{align}
        Z \hookrightarrow \op{Loc}(Z) \hookrightarrow Z'
    \end{align}
    with the following estimate:
    \begin{align}
        \|h\|_{Z'} \leq \sqrt{\|\op{tr}_B\|} \|h\|_{\op{Loc}(Z)}.
    \end{align}    
    Furthermore, under the embedding above, $\op{Loc}(Z)$ is weak-$*$ dense in $Z'$.
\end{lemma}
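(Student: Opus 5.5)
The plan is to build the second embedding $\op{Loc}(Z)\hookrightarrow Z'$ from the Riesz map, and then obtain weak-$*$ density from Theorem \ref{thm: embedding-dense-range} applied to the first embedding. The first embedding $\iota: Z\hookrightarrow \op{Loc}(Z)$ is already provided by Theorem \ref{thm: localization}(1): it is injective because $\op{tr}_B$ is faithful, bounded by \eqref{form: loc-estimate}, and has dense range by construction of the completion.

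For the second map I would take $j=\iota'\circ R:\op{Loc}(Z)\to Z'$. Here $R:\op{Loc}(Z)\to \op{Loc}(Z)'$ is the Riesz isomorphism and $\iota'$ is the formal adjoint. Concretely,
\begin{align*}
j(h)(z)=\rin{\op{Loc}(Z)}{\iota z}{h}, \qquad z\in Z,\ h\in \op{Loc}(Z).
\end{align*}
Since $R$ is antilinear, I would either compose with a fixed conjugation or accept an antilinear embedding; this is the usual Gelfand-triple convention and I will just state which convention is used.

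Boundedness comes from Cauchy--Schwarz: $|j(h)(z)|\le \|h\|\,\|z\|_{\op{tr}_A}$. The stated constant $\sqrt{\|\op{tr}_B\|}$ requires the sharper local estimate $\|z\|_{\op{tr}_A}^2=\op{tr}_B(\rin{B}{z}{z})\le \|\op{tr}_B\|\,\|\rin{B}{z}{z}\|_B=\|\op{tr}_B\|\,\|z\|_Z^2$. This uses positivity of $\op{tr}_B$ and the fact that the two module norms agree on an imprimitivity bimodule. It gives $\|j(h)\|_{Z'}\le \sqrt{\|\op{tr}_B\|}\,\|h\|_{\op{Loc}(Z)}$.

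Injectivity of $j$ comes from Theorem \ref{thm: embedding-dense-range}(2): $\iota$ has norm-dense range, so $\iota'$ is injective, and $R$ is bijective. Alternatively, one can argue directly: if $j(h)=0$, then $h$ is orthogonal to the dense subspace $Z$, so $h=0$.

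Weak-$*$ density follows from Theorem \ref{thm: embedding-dense-range}(1). Since $\iota$ is an embedding, $\iota'$ has weak-$*$ dense range in $Z'$. As $R$ is surjective onto $\op{Loc}(Z)'$, the range of $j$ equals the range of $\iota'$, and so it is weak-$*$ dense.

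I expect the main subtlety to be bookkeeping rather than anything deep. One point is the antilinearity of $R$ and the bilinear dual-pairing convention. The other is checking that the Hilbert-space and Hilbert-module norms are compatible so that the square-root constant comes out. Finally, I would check that the composite $Z\to\op{Loc}(Z)\to Z'$, $z\mapsto \rin{\op{tr}_A}{\cdot}{z}$, is consistent with the two inclusions.
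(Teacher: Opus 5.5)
Your proposal is correct and follows essentially the same route as the paper: it also defines the second map as $\iota'\circ R$, bounds it by Cauchy--Schwarz, and gets weak-$*$ density from Theorem \ref{thm: embedding-dense-range}(1) applied to the injective map $\iota$. Your version is a bit more careful in two places: you prove injectivity of $\iota'\circ R$ explicitly, from density of $\iota(Z)$, which the paper leaves implicit; and you derive the constant $\sqrt{\|\op{tr}_B\|}$ directly from $\op{tr}_B(\rin{B}{z}{z})\le\|\op{tr}_B\|\,\|z\|_Z^2$, whereas the paper appeals to \eqref{form: loc-estimate}, whose printed form carries $\|\op{tr}_B\|$ rather than its square root.
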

\begin{proof}
    We define $\iota^*: \op{Loc}(Z)\to Z'$ via:
    \begin{align*}
        \iota^*(h)(z):= \rin{\op{Loc}(Z)}{z}{h}, \qquad \forall h\in \op{Loc}(Z), z\in Z.
    \end{align*}
    It now follows from Cauchy-Schwarz inequality and Inequality \eqref{form: loc-estimate} that $$|\iota(h)(z)|\leq \|z\|_{\op{Loc(Z)}} \|h\|_{\op{Loc}(Z)}\leq \sqrt{\|\op{tr}_B\|} \cdot \|z\|_Z\cdot \|h\|_{\op{Loc}(Z)}.$$ By taking the supremum over all $\|z\|_Z=1$, we obtain the desired estimate $\|h\|_{Z'}:= \|\iota^*(h)\|_{Z'}\leq \sqrt{\|\op{tr}_B\|}\cdot \|h\|_{\op{Loc}(Z)}.$ 
    
    If $\iota: Z\to \op{Loc}(Z)$ is the norm-continuous embedding given by the localization scheme in Theorem \ref{thm: localization} then technically speaking, what we have is: $$\iota(h)(z) = \rin{\op{Loc}(Z),\op{Loc}(Z)'}{\iota(z)}{R(h)},$$ for all $z\in Z$ and $h\in \op{Loc}(Z)$, with $R$ being the Riesz-representation map. If $\iota': \op{Loc}(Z)'\to Z'$ is the Banach space adjoint of $\iota$, then $\iota^* = \iota' \circ R.$ Since $\iota$ is injective, it follows from Theorem \ref{thm: embedding-dense-range} that the Banach space adjoint $\iota': \op{Loc}(Z)' \to Z'$ has a weak-$*$ dense range in $Z'$. Since $R$ implements the antilinear self-duality of $\op{Loc}(Z)$, it follows that $\iota^*$ also has a weak-$*$ dense range in $Z'$. The last assertion follows from the basic fact that norm-density implies weak-* density. Therefore $Z$ is also weak-$*$ densely embedded in $Z'$. 
\end{proof}

One observes that the embedding of $Z$ in $\op{Loc}(Z)$ bridges a natural identification of $Z$ with its dual space $Z'$. In fact any Banach space $X$ that is norm-densely embedded in a Hilbert space $H$ generates a Banach Gelfand triple $(X,H,X')$ as above. However, Lemma \ref{lem: localizable-is-gelfand-triple} above gives us an explicit norm-estimate involving the trace $\op{tr}_B$, which has a natural interpretation in time-frequency analysis (see for example the paragraph preceding Theorem \ref{thm: localization-heis}). If $Z$ can be constructed by a pre-equivalence bimodule $\mathcal{Z}$ that is also embedded in it as a Banach space, then we can further refine Lemma \ref{lem: localizable-is-gelfand-triple} to show that $\mathcal{Z}$ automatically generates a Gelfand triple. In fact, our setting generates \emph{Gelfand quintuples} in the sense given below.
\begin{theorem}\label{thm: gelfand-quintuple}
Let $A_0$ and $B_0$ be dense $*$-subalgebras of $A$ and $B$. Suppose $Z$ is an $A-B$ bimodule that is faithfully localizable from an $A_0-B_0$ pre-equivalence bimodule $\mathcal{Z}$ with trace $\op{tr}_B: B\to \mathbb{C}$. Suppose $\mathcal{Z}$ is a Banach space whose topology is finer than the one induced by its $A_0-B_0$ pre-equivalence bimodule structure in the following sense: there exists a $C>0$ such that 
\begin{align}\label{form: pre-equiv-est}
    \|\lin{A_0}{\mathscr{z}}{\mathscr{z}}\|_A^{1/2}=: \|\mathscr{z}\|_Z \leq C\|\mathscr{z}\|_{\mathcal{Z}}, \qquad \mathscr{z}\in \mathcal{Z}.
\end{align}
Then there are embeddings
\begin{align}\label{bgquint-embeddings}
    \mathcal{Z}\hookrightarrow Z\hookrightarrow \op{Loc}(Z)\hookrightarrow Z' \hookrightarrow \mathcal{Z}'
\end{align}
The first two embeddings have norm dense ranges, while the latter two have weak-$*$ dense ranges. Furthermore, we necessarily have the following estimate for the last embedding
\begin{align}\label{form: pre-equiv-dual-est}
    \|z'\|_{\mathcal{Z}'}\leq C \|z'\|_{Z'}, \qquad \forall z' \in Z'.
\end{align}
\end{theorem}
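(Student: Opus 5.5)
Let $j:\mathcal{Z}\to Z$ be the canonical inclusion of the pre-equivalence bimodule into its completion. By the construction of $Z$ from $\mathcal{Z}$ \cite[Proposition 3.12]{RaWi98}, $j$ is injective and has norm dense range, since $Z$ is the completion of $\mathcal{Z}$ with respect to $\|\cdot\|_Z$ (here we quotient by nothing, because the $A_0$-valued inner product is definite on $\mathcal{Z}$). Estimate \eqref{form: pre-equiv-est} says precisely that $\|j(\mathscr{z})\|_Z\leq C\|\mathscr{z}\|_{\mathcal{Z}}$, so $j\in\mathcal{B}(\mathcal{Z},Z)$ with $\|j\|\leq C$. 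This gives the first embedding $\mathcal{Z}\hookrightarrow Z$ with dense range.

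The middle two embeddings $Z\hookrightarrow\op{Loc}(Z)\hookrightarrow Z'$ are supplied by Theorem \ref{thm: localization} and Lemma \ref{lem: localizable-is-gelfand-triple}. The first of these has norm dense range, and the second has weak-$*$ dense range. We only need that $Z$ is faithfully localizable, which is included in the hypothesis that it is faithfully localizable from $\mathcal{Z}$.

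For the last map we take the Banach space adjoint $j':Z'\to\mathcal{Z}'$, $j'(z')=z'\circ j$. Since $j$ has norm dense range, Theorem \ref{thm: embedding-dense-range}(2) shows that $j'$ is injective. Since $j$ is injective, Theorem \ref{thm: embedding-dense-range}(1) shows that $j'$ has weak-$*$ dense range in $\mathcal{Z}'$. The estimate \eqref{form: pre-equiv-dual-est} is then immediate:
\begin{align*}
\|j'(z')\|_{\mathcal{Z}'}=\sup_{\|\mathscr{z}\|_{\mathcal{Z}}\leq 1}|z'(j\mathscr{z})|\leq \|z'\|_{Z'}\sup_{\|\mathscr{z}\|_{\mathcal{Z}}\leq 1}\|j\mathscr{z}\|_Z\leq C\|z'\|_{Z'}.
\end{align*}

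It remains to check that the composed inclusions behave as stated. Norm density composes, so $\mathcal{Z}$ is norm dense in $\op{Loc}(Z)$. For the weak-$*$ part, $\op{Loc}(Z)$ is weak-$*$ dense in $Z'$, and $j'$ is weak-$*$ to weak-$*$ continuous. Hence the image of $\op{Loc}(Z)$ is weak-$*$ dense in $j'(Z')$, which is itself weak-$*$ dense in $\mathcal{Z}'$.

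We should also check that the chain is consistent. The composite $\mathcal{Z}\to\mathcal{Z}'$ is $\mathscr{z}\mapsto\rin{\op{Loc}(Z)}{\cdot}{\mathscr{z}}$ restricted to $\mathcal{Z}$, and this is injective by faithfulness of the trace. This is routine.

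The only point requiring care is the injectivity and density of $j$, i.e.\ that $\mathcal{Z}$ genuinely sits inside $Z$. This rests on the definiteness of the pre-inner product and on the completion result cited above. Everything else is duality bookkeeping via Theorem \ref{thm: embedding-dense-range}.
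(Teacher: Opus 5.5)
Your proof is correct and follows the same route as the paper. Like the paper, you take the middle two embeddings from Lemma \ref{lem: localizable-is-gelfand-triple}, realize the last embedding as the Banach adjoint $j'$ of the inclusion $j:\mathcal{Z}\hookrightarrow Z$, and obtain \eqref{form: pre-equiv-dual-est} by the same supremum estimate. You are in fact slightly more careful than the paper: it derives both injectivity and weak-$*$ density of $j'$ from the norm density of $j$ alone, whereas you correctly trace the weak-$*$ density to the injectivity of $j$ via Theorem \ref{thm: embedding-dense-range}(1).
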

\begin{proof}
    The two embeddings in the middle come from Lemma \ref{lem: localizable-is-gelfand-triple}. The embedding $\mathscr{i}:\mathscr{Z}\hookrightarrow Z$ has a norm-dense range since $\mathscr{Z}$ is a pre-equivalence bimodule for $Z$, therefore its adjoint $\mathscr{j}': Z'\hookrightarrow \mathscr{Z}'$ is an embedding with weak-$*$ dense range. For $z'\in Z'$, we have (with the embeddings omitted) $$\|z'\|_{\mathscr{Z}'}=\sup_{\|\mathscr{z}\|_{\mathcal{Z}}\leq 1}|\rin{Z,Z'}{\mathscr{z}}{z'}|\leq \|z'\|_{Z'} \sup_{\|\mathscr{z}\|_{\mathcal{Z}}\leq1} \|\mathscr{z}\|_{Z}\leq C \|z\|_{Z'},$$ giving us the required norm estimate.
\end{proof}

Before we close this section, we introduce a natural notion of morphism between Hilbert C*-modules whose coefficient C*-algebras are not necessarily the same, but are related by C*-isomorphisms on C*-subalgebras.
\begin{definition}
    Fix C*-algebras $A$ and $C$. Let $Z$ be a Hilbert $A$-module, $W$ a Hilbert $C$-module, and $\tau: A\to C$ a C*-isomorphism between C*-algebra $A$ and $C$. A map $T: Z\to W$ is said to be $\tau$-\emph{adjointable} if there exists a map $T^*: W\to Z$ which we call a $\tau$-\emph{adjoint} of $T$, satisfying:
    \begin{align}\label{form: tau-adjointable}
        \lin{C}{Tz}{w} = \tau\left(\lin{A}{z}{T^*w} \right)
    \end{align}
    for $z\in Z$ and $w\in W$. We call the map $T^*$ the $\tau$\emph{-adjoint of} $T$. We denote the set of all $\tau$-adjointable maps from $Z$ to $W$ by $\mathcal{L}_{\tau}(Z,W).$
\end{definition}
For the rest of this section, unless otherwise stated, we assume that $\tau: A\to C$ is a C*-isomorphism whenever $\tau$ is invoked. The following result shows that $\tau$-adjointable maps have the same properties as the adjointable maps.
\begin{proposition}\label{prop: adjointable-auto-properties}
    Let $T: Z\to W$ be a $\tau$-adjointable map. Then:
    \begin{enumerate}
        \item $T$ is linear;
        \item $T$ is $\tau$\emph{-linear} in the following sense: $T(a\cdot z) = \tau(a)\cdot T(z)$ for all $a\in A$ and $z\in Z$;
        \item $T$ is bounded;
        \item $T$ has a unique $\tau$-adjoint $T^*$, and $\|T\|= \|T^*\|$;
        \item $T^*\in \mathcal{L}_{\tau^{-1}}(W,Z)$.
    \end{enumerate}
\end{proposition}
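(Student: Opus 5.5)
The plan is to imitate the standard proof that adjointable maps between Hilbert C*-modules are automatically linear, module-linear and bounded, with $\tau$ inserted into the inner products. Throughout I use that $\tau$ is a $*$-isomorphism. It is therefore isometric and order-preserving, and $\tau(a^*)=\tau(a)^*$. I also use that left Hilbert C*-module inner products are linear in the first variable and conjugate-linear in the second. Finally, a vector $w\in W$ is zero if $\lin{C}{w}{w'}=0$ for all $w'\in W$.

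For (1), fix $z_1,z_2\in Z$, scalars $\lambda,\mu$ and any $w\in W$. Applying \eqref{form: tau-adjointable} and the linearity of $\tau$ and of the inner product in the first slot gives
\[
\lin{C}{T(\lambda z_1+\mu z_2)-\lambda Tz_1-\mu Tz_2}{w}=\tau\left(\lin{A}{\lambda z_1+\mu z_2-\lambda z_1-\mu z_2}{T^*w}\right)=0.
\]
Taking $w$ equal to the difference vector itself shows that the difference vanishes. Part (2) is identical: $\lin{C}{T(a\cdot z)}{w}=\tau(a\,\lin{A}{z}{T^*w})=\tau(a)\tau(\lin{A}{z}{T^*w})=\lin{C}{\tau(a)\cdot Tz}{w}$.

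For (5), take the $C$-adjoint of \eqref{form: tau-adjointable}. This gives
\[
\lin{A}{T^*w}{z}=\tau^{-1}\left(\lin{C}{Tz}{w}^*\right)=\tau^{-1}\left(\lin{C}{w}{Tz}\right),
\]
so $T^*$ is $\tau^{-1}$-adjointable with $\tau^{-1}$-adjoint $T$. For uniqueness in (4), suppose $S_1,S_2$ are both $\tau$-adjoints. Then $\tau(\lin{A}{z}{S_1w-S_2w})=0$ for all $z$, and injectivity of $\tau$ together with the choice $z=S_1w-S_2w$ gives $S_1=S_2$.

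Boundedness (3) is the main step, and I would prove it with the closed graph theorem. By (1), $T$ is linear between Banach spaces. Suppose $z_n\to z$ and $Tz_n\to v$. Then for every $w$,
\[
\lin{C}{v}{w}=\lim_n \lin{C}{Tz_n}{w}=\lim_n\tau\left(\lin{A}{z_n}{T^*w}\right)=\tau\left(\lin{A}{z}{T^*w}\right)=\lin{C}{Tz}{w}.
\]
This uses continuity of the inner products and of $\tau$. Hence $v=Tz$, the graph is closed, and $T$ is bounded. By (5) and (1), the same argument shows that $T^*$ is bounded.

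For the norm equality in (4), note that $\tau$ is isometric, so
\[
\|Tz\|^2=\|\lin{C}{Tz}{Tz}\|=\|\lin{A}{z}{T^*Tz}\|\le \|z\|\,\|T^*\|\,\|T\|\,\|z\|,
\]
by Cauchy–Schwarz in $Z$. This yields $\|T\|\le\|T^*\|$. The reverse inequality follows by applying the same estimate to $T^*$, using (5).

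The only delicate points are the closed graph step and keeping track of the isometry and $*$-preservation of $\tau$. Both are available because $\tau$ is assumed to be a C*-isomorphism.
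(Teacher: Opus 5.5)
Your proof is correct. Parts (1), (2), (5) and the uniqueness in (4) follow the paper's argument; you are in fact more explicit than the paper about (5), where the $*$-preservation of $\tau$ is exactly what is used.

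The real difference is in (3) and the norm identity. The paper applies the uniform boundedness principle to the family of linear maps $T_z\colon W\to C$, $T_z(w)=\lin{C}{w}{Tz}$, with $z$ ranging over the unit ball of $Z$. These maps are pointwise bounded, since $\|T_z(w)\|=\|\tau(\lin{A}{T^*w}{z})\|\le\|T^*w\|$. Both the boundedness of $T$ and $\|T\|=\|T^*\|$ are then read off from the single identity $\sup_{z,w}\|\lin{C}{Tz}{w}\|=\sup_{z,w}\|\lin{A}{z}{T^*w}\|$, using that $\tau$ is isometric. That route needs neither the linearity of $T$ nor any prior information about $T^*$; finiteness of $\|T^*\|$ comes out of the equality.

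Your closed graph argument instead requires linearity first, which is your part (1). Your estimate $\|Tz\|^2\le\|T^*\|\,\|T\|\,\|z\|^2$ and its mirror image both use that $T^*$ is linear and bounded. You correctly obtain this by applying (1) and (3) to the $\tau^{-1}$-adjointable map $T^*$ from (5).

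Both are standard Baire-category proofs of the same fact. The paper's packs boundedness and the norm equality into one computation, while yours separates them and makes the logical dependence on (5) explicit.
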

\begin{proof}
We shall prove $\tau$-linearity first, the proof for ordinary linearity is similar. We have for $a\in A$, $z\in Z$ and $w\in W$ that
\begin{align*}
    \lin{C}{T(a\cdot z)}{w} =\tau\left( a\cdot \lin{A}{z}{T^*w}\right) = \tau(a) \cdot  \lin{C}{Tz}{w} = \lin{C}{\tau(a)\cdot Tz}{w}.
\end{align*}
It follows that $\lin{C}{T(a\cdot z)- \tau(a)\cdot T(z)}{w}=0$ for arbitrary $w\in W$ and so $T(a\cdot z)= \tau(a) \cdot T(z).$ Showing that the $\tau$-adjoint is unique is only slightly different. Suppose $T_1^*$ and $T_2^*$ are both $\tau$-adjoints, then it follows that
\begin{align*}
    \tau\left(\lin{A}{z}{T_1^*w -T_2^*w} \right) =0,
\end{align*}
for $z\in Z$ and $w\in W$. Since $\tau$ is injective, and $z$ is arbitrary, it follows that $T_1^*w=T_2^*w$ for all $w\in W$. To show boundedness and the equality of the operator norms we let $B_1(Z)$ be the unit sphere in $Z$. We define the maps $\{T_z: W\to C: z\in B_1(Z)\}$ as follows
\begin{align*}
    T_z(w):=\lin{C}{w}{Tz}, \qquad w\in W.
\end{align*}
Since $\tau$ is an isometry of $C^*$-algebras, $\sup_{z\in B_1(Z)}\|T_z(w)\| = \sup_{z\in B_1(Z)}\|\tau\left(\lin{A}{T^*w}{z} \right)\| = \|T^*w\|<\infty$ for any $w\in W$. From the Banach-Steinhaus Theoerm it follows that:
\begin{align*}
    \infty>\sup_{z\in B_1(Z)}\|T_z\|_{W\to C}= \sup_{z\in B_1(Z)} \sup_{w\in B_1(W)}\|\lin{C}{Tz}{w} \|= \sup_{z\in B_1(Z)}\|Tz\| = \|T\|.
\end{align*}
Hence we have shown the boundedness of $T$ and the same computation actually shows that $\|T\|=\|T^*\|.$ The last assertion follows from the definition, and the fact that $\tau$ is invertible.
\end{proof}
We now give a generalization of unitarity for $\tau$-adjointable maps.
\begin{definition}\label{def: tau-preserving}
    Suppose $\tau: A\to C$ is a $C^*$-isomorphism. A map $T: Z\to W$ is said to be $\tau$-\emph{preserving} if
    \begin{align*}
        \lin{C}{Tz}{Tz} = \tau\left(\lin{A}{z}{z}\right)
    \end{align*}
    for $z\in Z$. Furthermore, if $T$ is also surjective, then we say that $T$ is a $\tau$-\emph{unitary} map. We say that $Z$ and $W$ are $\tau$\emph{-unitarily equivalent} if we can find a $\tau$-unitary map $T:Z\to W$.
\end{definition}
\begin{corollary}\label{cor: unitary-is-adjointable}
    Let $T: Z\to W$ be a $\tau$-unitary map, then $T$ is an isometric $\tau$-adjointable map with $T^*=T^{-1}$. 
\end{corollary}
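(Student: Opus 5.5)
We need to show three things about a $\tau$-unitary $T$: that it is isometric, that it is injective, and that it is $\tau$-adjointable with $T^*=T^{-1}$.

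\emph{Isometry.} Since $\tau$ is a C*-isomorphism, it is isometric. Hence
\[
\|Tz\|_W^2=\|\lin{C}{Tz}{Tz}\|_C=\|\tau(\lin{A}{z}{z})\|_C=\|\lin{A}{z}{z}\|_A=\|z\|_Z^2 .
\]
This already gives injectivity. Together with surjectivity, $T$ is a bijection, so $T^{-1}:W\to Z$ exists as a set map.

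\emph{Polarization.} Definition \ref{def: tau-preserving} gives preservation of the inner product only on the diagonal, and linearity of $T$ is not assumed. So I would first show
\[
\lin{C}{Tz_1}{Tz_2}=\tau\bigl(\lin{A}{z_1}{z_2}\bigr),\qquad z_1,z_2\in Z .
\]
For linearity, I would expand $\lin{C}{T(z_1+z_2)-Tz_1-Tz_2}{T(z_1+z_2)-Tz_1-Tz_2}$. Each cross term has the form $\lin{C}{Tx}{Ty}$, but the diagonal identity alone does not identify these cross terms, so this route is circular. Instead I would rely on the standard reading of Definition \ref{def: tau-preserving}, namely that $T$ is a linear map, as is implicit in calling it a map of modules. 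With linearity in hand, the polarization identity
\[
\lin{C}{x}{y}=\tfrac14\sum_{k=0}^{3} i^k\lin{C}{x+i^ky}{x+i^ky}
\]
holds in any Hilbert C*-module (with the matching sign convention, since the inner product is linear in the first variable). Applying it on both sides and using that $\tau$ is linear gives the displayed identity. This is the main obstacle: establishing the off-diagonal identity. If linearity is not to be assumed, I would try to recover it by showing
\[
\|T(z_1+z_2)-Tz_1-Tz_2\|=0
\]
via an isometry argument in the style of Mazur--Ulam. That argument only yields real-affine maps, though, so taking linearity as part of the definition is the cleaner choice.

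\emph{Adjoint.} Given $z\in Z$ and $w\in W$, write $w=Tu$ with $u=T^{-1}w$. Then
\[
\lin{C}{Tz}{w}=\lin{C}{Tz}{Tu}=\tau\bigl(\lin{A}{z}{T^{-1}w}\bigr).
\]
This is exactly \eqref{form: tau-adjointable} with $T^*=T^{-1}$. So $T\in\mathcal{L}_\tau(Z,W)$, and by the uniqueness in Proposition \ref{prop: adjointable-auto-properties}, $T^*=T^{-1}$.
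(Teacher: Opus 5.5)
Your proof is correct and follows the paper's argument essentially step for step. You get the isometry from the $\tau$-preserving identity together with $\tau$ being isometric, use polarization to obtain $\lin{C}{Tz_1}{Tz_2}=\tau\bigl(\lin{A}{z_1}{z_2}\bigr)$, then write $w=T(T^{-1}w)$ and invoke uniqueness of the $\tau$-adjoint. You are right that linearity of $T$ has to be read into the definition of $\tau$-preserving: the paper's appeal to polarization tacitly assumes it, and it is genuinely needed, since the bijection of $Z$ that swaps some $z_0\neq 0$ with $-z_0$ and fixes every other point preserves $\lin{A}{z}{z}$ but is not additive. Note also that your step ``isometry gives injectivity'' relies on linearity in the same way.
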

\begin{proof}
    It follows from the $\tau$-preserving property of $T$, that it is isometric. From the polarization identity in $Z$ and $W$, along with the $\tau$-preserving property on $T$, it follows that $\lin{C}{Tz_1}{Tz_2}= \tau\left(\lin{A}{z_1}{z_2} \right)$ for $z_1,z_2\in Z$. Since $T$ is also surjective, it is invertible. We let $T^{-1}w=z_2$ for $w\in W$ and find that $\lin{C}{Tz_1}{w} = \lin{C}{Tz_1}{Tz_2} = \tau\left(\lin{A}{z_1}{z_2}\right) = \tau\left(\lin{A}{z_1}{T^{-1}(w)}\right).$ It follows that the $\tau$-adjoint exists and by its uniqueness, it must satisfy $T^{*}=T^{-1}. $
\end{proof}
\begin{remark}
     If $Z$ and $W$ are Hilbert C*-modules and $\tau$-unitarily equivalent, say, via $T:Z\to W$. This gives us what is an expected isomorphism between Hilbert C*-modules with isomorphic C*-algebra coefficients. To reiterate, $T$ is a bijection that satisfies
    \begin{align*}
        \lin{C}{T z_1}{Tz_2} &= \tau(\lin{A}{z_1}{z_2}), \\
        T(a\cdot z) &= \tau(a) \cdot T(z), \\
        \lin{A}{T^*w_1}{T^*w_2} &= \tau^{-1}(\lin{C}{w_1}{w_2}), \\
        T^*(c\cdot w) &= \tau^{-1}(c)\cdot T^*(w).
    \end{align*}
    for all $a\in A$, $c\in C$, $z,z_1,z_2\in Z$ and $w,w_1,w_2\in W$. 
\end{remark}
\begin{proposition}\label{prop: send-gen-to-gen}
    Let $Z$ and $W$ be full Hilbert C*-modules and $T: Z\to W$ a $\tau$-unitary map. If $Z$ is finitely generated and projective, then so is $W$, and $T$ sends generators to generators.
\end{proposition}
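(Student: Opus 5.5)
Since $T$ is a $\tau$-unitary, Corollary \ref{cor: unitary-is-adjointable} makes it a bijective, isometric, $\tau$-adjointable map with $T^*=T^{-1}$, and Proposition \ref{prop: adjointable-auto-properties} makes it $\tau$-linear: $T(a\cdot z)=\tau(a)\cdot T(z)$. The plan is to transport a finite generating set, in the strong form of Proposition \ref{prop: parseval-module-frame}, from $Z$ to $W$ along $T$.

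\textbf{Step 1: a Parseval frame for $Z$.} Because $Z$ is a full, finitely generated projective Hilbert $A$-module, $B:=\mathcal{K}_A(Z)$ is unital. Proposition \ref{prop: parseval-module-frame} then gives $w_1,\dots,w_n\in Z$ with $\sum_i \rin{B}{w_i}{w_i}=1_B$. Equivalently, the reconstruction formula
\begin{align*}
    z=\sum_{i=1}^n \lin{A}{z}{w_i}\cdot w_i, \qquad z\in Z,
\end{align*}
holds, as in the computation inside that proof.

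\textbf{Step 2: transport to $W$.} Set $v_i:=Tw_i$ and take $w\in W$. Since $T$ is surjective, write $w=Tz$. Applying $T$ to the reconstruction formula and using $\tau$-linearity together with $\tau$-preservation (in polarized form, $\lin{C}{Tz}{Tw_i}=\tau(\lin{A}{z}{w_i})$, as in the proof of Corollary \ref{cor: unitary-is-adjointable}) gives
\begin{align*}
    w=\sum_{i=1}^n \tau\left(\lin{A}{z}{w_i}\right)\cdot Tw_i=\sum_{i=1}^n \lin{C}{w}{v_i}\cdot v_i .
\end{align*}
So $\{v_i\}$ generates $W$ as a $C$-module, which shows that $T$ sends generators to generators. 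The same argument applies verbatim to an arbitrary finite generating set $\{z_j\}$: from $z=\sum_j c_j\cdot z_j$ we get $Tz=\sum_j\tau(c_j)\cdot Tz_j$, and $\tau$ is surjective.

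\textbf{Step 3: projectivity of $W$.} The identity in Step 2 says that the rank-one sum $\sum_i \Theta_{v_i,v_i}$, viewed in $\mathcal{K}_C(W)$, equals the identity. Hence $\mathcal{K}_C(W)$ is unital, and since $W$ is full, \cite[Proposition 2.3]{AuJaMaLu20} gives that $W$ is finitely generated and projective.

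This last step is the delicate point, because it relies on the cited equivalence. To reduce the dependence on it, one can also argue directly. The map $W\to C^n$, $w\mapsto(\lin{C}{w}{v_i})_i$, is adjointable, with adjoint $(c_i)\mapsto\sum_i c_i\cdot v_i$. By Step 2 their composite $W\to C^n\to W$ is the identity on $W$. Therefore $W$ is isomorphic to the range of an adjointable idempotent on $C^n$, that is, an orthogonal direct summand of $C^n$, and so $W$ is projective.
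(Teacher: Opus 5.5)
Your proof is correct and follows essentially the same route as the paper: take a Parseval frame $\{w_i\}$ for $Z$ from Proposition~\ref{prop: parseval-module-frame}, push the reconstruction formula through $T$ to get $\sum_i \Theta_{Tw_i,Tw_i}=\mathrm{id}_W$, and conclude via unitality of $\mathcal{K}_C(W)$ and \cite[Proposition 2.3]{AuJaMaLu20}; you use polarized $\tau$-preservation where the paper uses $T^*=T^{-1}$, which is equivalent. Your extra argument, exhibiting $W$ as the range of the projection $SS^*$ on $C^n$, is a self-contained supplement that the paper does not give, and it does not need the fullness of $W$.
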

\begin{proof}
If $Z$ is finitely generated and projective as a Hilbert $A$-module, then equivalently $\mathcal{K}_A(Z)$ is unital and by Proposition \ref{prop: parseval-module-frame}, there exists $z_1,...,z_n\in Z$ such that $\displaystyle z = \sum_{i=1}^n \lin{A}{z}{z_i}\cdot z_i$ for all $z\in Z.$ Since $T$ is $\tau$-unitary, for every $w\in W$, we have $$w = T(T^*(w)) = T\left(\sum_{i=1}^n \lin{A}{T^*(w)}{z_i}\cdot z_i \right) = \sum_{i=1}^n \tau(\lin{C}{T^*(w)}{z_i})\cdot T(z_i)= \sum_{i=1}^n \lin{C}{w}{T(z_i)}\cdot T(z_i).$$
It follows that $\displaystyle \sum_{i=1}^n \rin{\mathcal{K}_C(W)}{T(z_i)}{T(z_i)}$ acts as a unit to the right of $W$, and since $W$ is full, it follows that it must be finitely generated and projective as a Hilbert $C$-module. The last assertion follows from the fact that $T$ is $\tau$-linear and so it must send generators to generators. 
\end{proof}
\section{Preliminaries on Tensor Products}\label{sec: tensors}
We shall give a reasonably extensive review of the relevant tensor products for Banach spaces, C*-algebras, and Hilbert C*-modules. We first recall what the tensor product of two Hilbert spaces and C*-algebras are. We write $V_1\odot V_2$ for the vector space that is the algebraic tensor product of two vector spaces $V_1$ and $V_2$ over the complex numbers. For a more concrete description, $V_1\odot V_2$ is simply the complex linear span of the elementary tensors $v_1\otimes v_2$ for $v_1\in V_1$ and $v_2\in V_2$. If $V_i$ and $W_i$ are vector spaces for $i=1,2$, and $f_i:V_i\to C_i$ is a linear map, then we define $f_1\otimes f_2 : V_1\odot V_2 \to W_1\odot W_2$ by extending linearly the following: $f_1\otimes f_2 (v_1\otimes v_2) = f_1(v_1)\otimes f_2(v_2)$. In some cases, we may use an isomorphism on the tensor product product of the target spaces. For example, if $W_i=\mathbb{C}$ for $i=1,2$, then $(f_1\otimes f_2)(v_1\otimes v_2)=f_1(v_1)f_2(v_2)$ since $\mathbb{C}\odot \mathbb{C} \cong \mathbb{C}$. This same idea will be used even when the vector spaces are completed under a suitable norm. 

The completion of the algebraic tensor product $H_1\odot H_2$ of two Hilbert spaces $H_1$ and $H_2$ with respect to the Hilbert space tensor norm will be denoted by $H_1\otimes_{2} H_2$. This is a Hilbert space too. On elementary tensors, this norm is induced by the inner product, namely
\begin{align*}
    \rin{2}{h_1 \otimes h_2}{h_1' \otimes h_2'} := \rin{H_1}{h_1}{h_1'} \cdot \rin{H_2}{h_2}{h_2'}, \qquad h_1,h_1'\in H_1,\ h_2,h_2'\in H_2.
\end{align*}
It is well-known that the Hilbert space tensor product $H_1\otimes_2 H_2$ corresponds to the Hilbert-Schmidt operators $\mathcal{HS}(H_1', H_2).$

Before we proceed, we have a lemma on tensor products made out of norms satisfying the so-called \emph{sub-cross} property. It will be useful later on, as all the tensor product norms we will discuss satisfy this property (and in fact they will all be cross norms, and not just sub-cross norms). 
\begin{lemma}\label{lem: dense-tensor}
    Fix $i=1,2$. Let $\mathcal{X}_i$ be a dense subspace of the Banach space $X_i$. Suppose $\|\cdot\|_{t}$ is a norm on the algebraic tensor product $X_1\odot X_2$ satisfying the sub-cross property: 
    \begin{align}\label{form: cross-norm}
        \|x_1\otimes x_2\|_t \leq \|x_1\|_{X_1}\cdot \|x_2\|_{X_2}
    \end{align} 
    for all $x_1\in X_1$ and $x_2\in X_2$. If $X_1\otimes_{t}X_2$ is the completion of $X_1\odot X_2$ with respect to norm $\|\cdot\|_{t}$,  then $\mathcal{X}_1\odot \mathcal{X}_2$ is a dense subspace of $X_1\otimes_{t}X_2.$
\end{lemma}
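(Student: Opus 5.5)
By construction, $X_1\odot X_2$ is dense in $X_1\otimes_t X_2$. It therefore suffices to show that the $\|\cdot\|_t$-closure of $\mathcal{X}_1\odot\mathcal{X}_2$ contains $X_1\odot X_2$. Since this closure is a linear subspace and $X_1\odot X_2$ is spanned by elementary tensors, we only need to approximate a single elementary tensor $x_1\otimes x_2$ with $x_i\in X_i$. Density then passes to the completion by a standard $\varepsilon/2$ argument: given $u\in X_1\otimes_t X_2$, first pick $v\in X_1\odot X_2$ with $\|u-v\|_t<\varepsilon/2$. Writing $v=\sum_{k=1}^N x_1^{(k)}\otimes x_2^{(k)}$, approximate each summand to within $\varepsilon/(2N)$.

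For the elementary tensor, use density of $\mathcal{X}_i$ in $X_i$ to choose sequences $\mathscr{x}_i^{(m)}\in\mathcal{X}_i$ with $\mathscr{x}_i^{(m)}\to x_i$ in $\|\cdot\|_{X_i}$. Bilinearity of $\otimes$ in the algebraic tensor product gives the telescoping identity
\[
x_1\otimes x_2-\mathscr{x}_1^{(m)}\otimes\mathscr{x}_2^{(m)} = (x_1-\mathscr{x}_1^{(m)})\otimes x_2+\mathscr{x}_1^{(m)}\otimes(x_2-\mathscr{x}_2^{(m)}).
\]
Applying the triangle inequality for $\|\cdot\|_t$ and then the sub-cross estimate \eqref{form: cross-norm} to each term yields
\[
\|x_1\otimes x_2-\mathscr{x}_1^{(m)}\otimes\mathscr{x}_2^{(m)}\|_t\le \|x_1-\mathscr{x}_1^{(m)}\|_{X_1}\|x_2\|_{X_2}+\|\mathscr{x}_1^{(m)}\|_{X_1}\|x_2-\mathscr{x}_2^{(m)}\|_{X_2}.
\]
This tends to $0$, because the factor $\|\mathscr{x}_1^{(m)}\|_{X_1}$ stays bounded as a convergent sequence. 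The elementary tensors $\mathscr{x}_1^{(m)}\otimes\mathscr{x}_2^{(m)}$ lie in $\mathcal{X}_1\odot\mathcal{X}_2$, which completes the approximation.

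There is no real obstacle here. The one point to check is that $\mathcal{X}_1\odot\mathcal{X}_2$ sits inside $X_1\odot X_2$, so that its $t$-closure makes sense. This holds because algebraic tensor products of vector spaces preserve injectivity of the inclusions $\mathcal{X}_i\subseteq X_i$ over a field. Once that identification is made, the sub-cross property is exactly what turns approximation in each factor into approximation in $\|\cdot\|_t$.
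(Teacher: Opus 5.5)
Your proof is correct and follows essentially the same route as the paper: both rest on the decomposition $x_1\otimes x_2-y_1\otimes y_2=(x_1-y_1)\otimes x_2+y_1\otimes(x_2-y_2)$, followed by the triangle inequality and the sub-cross estimate. Your sequential version, with the bounded factor $\|\mathscr{x}_1^{(m)}\|_{X_1}$, and your explicit passage to the completion are slightly tidier than the paper's tolerances, which divide by $\|h_2^{(i)}\|_{X_2}$ and $\|k_1^{(i)}\|_{X_1}$ and so tacitly assume these are nonzero.
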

\begin{proof}
    It suffices to show that any finite sum of elementary tensors from $X_1\odot X_2$ can be approximated arbitrarily well by a finite sum of elementary tensors from $\mathcal{X}_1\odot \mathcal{X}_2$.  Consider $h_1^{(1)},...,h_1^{(n)}\in X_1$ and $X_2^{(1)},...,h_2^{(n)}\in X_2$. Choose $\varepsilon>0$ and, for each $i=1,...,n$, choose $k_1^{(i)}\in \mathcal{X}_1$ such that
    \begin{align*}
        \left\|h_1^{(i)}-k_1^{(i)}\right\|_{X_1}<\frac{\varepsilon}{\left\|h_2^{(i)} \right\|_{X_2}} \cdot \frac{1}{n(n+1)}.
    \end{align*}
    Now choose $k_2^{(i)}\in \mathcal{X}_2$ such that 
    \begin{align*}
        \left\|h_2^{(i)}-k_2^{(i)}\right\|_{X_2} <\frac{\varepsilon}{\left\|k_1^{(i)} \right\|_{X_1}}\cdot \frac{1}{n(n+1)}.
    \end{align*}
    The following computation is valid due to the sub-cross property \eqref{form: cross-norm} of $\|\cdot\|_t$:
    \begin{align*}
        \left\|\sum_{i=1}^n h_1^{(i)}\otimes h_2^{(i)}   - \sum_{i=1}^n k_1^{(i)}\otimes k_2^{(2)} \right\|_{t} &= \left\|\sum_{i=1}^n \left( \left(h_1^{(i)}-k_1^{(i)}\right)\otimes h_2^{(i)} + k_1^{(i)}\otimes \left(h_2^{(i)}-k_2^{(i)} \right)  \right)  \right\|_{t} \\
        & \leq \sum_{i=1}^n \left(\left\|h_1^{(i)}-k_1^{(i)}\right\|_{X_1}\left\|h_2^{(i)}\right\|_{X_2} + \left\|h_2^{(i)}-k_2^{(i)}\right\|_{X_2}\left\|k_1^{(i)}\right\|_{X_1} \right) \\
        &< \sum_{i=1}^n \varepsilon \frac{2}{n(n+1)} = \varepsilon .
     \end{align*}
\end{proof}

We next review the notion of tensor product for Banach spaces, say $X$ and $Y$. For an arbitrary element $\mathbf{u}$ in $X\odot Y$, the following is called the \emph{projective tensor norm}, and is indeed a norm on $X\odot Y$:
\begin{align}\label{form: projective-tensor-norm}
    \|\mathbf{u}\|_{\pi} := \op{inf}\left\{\sum_{i=1}^N \|x_i\|_X\cdot \|y_i\|_Y: \mathbf{u}=\sum_{i=1}^N x_i\otimes y_i  \right\}.
\end{align}
The \emph{projective tensor product} of two Banach spaces, denoted by $X\otimes_{\pi} Y$ is the Banach space completion of $X\odot Y$ with respect to the projective tensor norm $\|\cdot\|_{\pi}$. Note that we may sometimes use the notation $\|\cdot\|_{\pi(X,Y)}$ to denote that we are taking the projective tensor norm with respect to the Banach space norms on $X$ and $Y$. More details about Banach spaces of tensors in general can be found in the following textbooks \cite{DeFl92, Ry02}.

Due to the nice properties of the projective tensor product, any element in $\mathbf{u}\in X\otimes_{\pi}Y$ in the completion is of the form 
\begin{align}\label{form: general-projective-tensor}
 \mathbf{u} = \sum_{i=1}^{\infty} x_i\otimes y_i   
\end{align}
with $\displaystyle \sum_{i=1}^{\infty}\|x_i\|_X\cdot \|y_i\|_Y <\infty$ and with both the sequences $(x_i)_i$ and $(y_i)_i$ bounded in $X$ and $Y$ respectively. Henceforth, whenever we choose a representation of $\mathbf{u}\in X\otimes_{\pi}Y$, we always mean sequences in $X$ and $Y$ satisfying the aforementioned properties. In line with this, the projective tensor product norm \eqref{form: projective-tensor-norm} can be extended to $N=\infty.$ Another important property is the following Banach space isomorphism:
\begin{align}\label{form: duality-for-projective-tensor}
    (X\otimes_{\pi}Y)' \cong \mathcal{B}(X,Y').
\end{align}
It is a consequence of the universal property of projective tensor products relating continuous bilinear forms and the projective tensor product. Generally, the projective tensor product behaves quite badly with respect to embeddings, as it is hard to figure out when a representation such as Equation \eqref{form: general-projective-tensor} is $0$ in $X\otimes_{\pi} Y.$ Indeed, as we shall see in the sequel, most technical difficulties in the interpretation of equivalence bimodules as Banach spaces with kernel theorems lie in embedding projective tensor products in the external tensor product for Hilbert C*-modules. Hoverever, in the following special case, the canonical isometry $X \hookrightarrow X^{''}$ induces a true embedding in the projective tensor product.
\begin{proposition}[{\cite[Corollary 2.12]{Ry02}}]\label{prop: canonical-isom-to-proj}
    Let $X$ and $Y$ be Banach spaces, the canonical isometry $X\hookrightarrow X^{''}$ induces an isometry $X\otimes_{\pi} Y \hookrightarrow X''\otimes_{\pi} Y.$
\end{proposition}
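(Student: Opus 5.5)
The plan is to show two things about $J:=\kappa\otimes \op{id}_Y$, where $\kappa: X\hookrightarrow X''$ is the canonical isometry: first that $J$ extends to a contraction $X\otimes_{\pi}Y\to X''\otimes_\pi Y$, and then that it does not decrease norms. The strategy for the second point is to compute the projective norm by duality, using the isomorphism \eqref{form: duality-for-projective-tensor}. This gives
\begin{align*}
\|\mathbf{u}\|_{\pi} = \sup\Big\{\big|\textstyle\sum_i \langle y_i, T x_i\rangle\big| : T\in \mathcal{B}(X,Y'),\ \|T\|\le 1\Big\}
\end{align*}
for $\mathbf{u}=\sum_i x_i\otimes y_i$. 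The whole argument reduces to producing, for each such $T$, a norm-one operator $\widetilde T\in\mathcal{B}(X'',Y')$ that extends $T$.

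The first point is immediate. Since $\|\kappa x\|=\|x\|$, every representation $\mathbf{u}=\sum x_i\otimes y_i$ gives the representation $J\mathbf{u}=\sum \kappa x_i\otimes y_i$ with the same cost $\sum\|x_i\|\|y_i\|$. Hence $\|J\mathbf{u}\|_{\pi}\le\|\mathbf{u}\|_\pi$ on $X\odot Y$, and $J$ extends by continuity to the completions.

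For the reverse inequality, I take $T\in\mathcal{B}(X,Y')$ with $\|T\|\le1$ and define $\widetilde T := (T')'\circ \iota$ restricted suitably. Concretely, set $\widetilde T(\xi)(y):=\langle \hat y, \xi\rangle$ after passing through $T'$: let $j:Y\hookrightarrow Y''$ be the canonical map, and put $\widetilde T(\xi)(y):=\xi(T'(j y))$ for $\xi\in X''$. Here $T':Y''\to X'$ is the Banach adjoint. Then $\|\widetilde T\|\le\|T'\|=\|T\|$. For $\xi=\kappa x$ we get $\widetilde T(\kappa x)(y)=(T'jy)(x)=(jy)(Tx)=(Tx)(y)$, so $\widetilde T\circ\kappa=T$.

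Applying the bilinear form associated with $\widetilde T$ to $J\mathbf{u}$ then yields $\sum (Tx_i)(y_i)$, with absolute value at most $\|\widetilde T\|\,\|J\mathbf{u}\|_\pi\le\|J\mathbf{u}\|_\pi$. Taking the supremum over $T$ gives $\|\mathbf{u}\|_\pi\le\|J\mathbf{u}\|_\pi$ on the algebraic tensors, hence on the completion by density. Therefore $J$ is an isometry, and in particular injective.

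The main obstacle I expect is the norm formula obtained from the duality \eqref{form: duality-for-projective-tensor}, namely the fact that the projective norm is attained as a supremum over norm-one bilinear forms. This is Hahn–Banach applied in $(X\otimes_\pi Y)'$, and I would state it explicitly. The extension step itself is formal once it is phrased via the second adjoint.
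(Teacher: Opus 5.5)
Your proof is correct. The paper gives no argument of its own and simply cites Ryan, and your route is the standard one behind that result: compute $\|\cdot\|_\pi$ by duality against norm-one operators in $\mathcal{B}(X,Y')$, then extend each such $T$ to $X''$ without increasing its norm, which is exactly what your $\widetilde T=(T'\circ j)'=j'\circ T''$ does. The only fix needed is cosmetic: replace the garbled phrase ``$(T')'\circ\iota$ restricted suitably'' with that explicit formula.
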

We will also be particularly interested in the space of operators called \emph{nuclear operators}.
\begin{definition}
    Let $X$ and $Y$ be Banach spaces, we say that a linear operator $T:X\to Y$ is \emph{nuclear} if there exists a sequence $\{x'_i\}_{i-1}^{\infty}\subseteq X'$ and $\{y_i\}_{i=1}^{\infty}\subseteq Y$ such that $\displaystyle \sum_{i=1}^{\infty}\|x_i'\|_{X'}\cdot \|y_i\|_{Y}<\infty$ and 
    \begin{align}\label{form: nuclear-representation}
        T(x) :=\sum_{i=1}^{\infty}\theta_{x_i',y_i}(x) =\sum_{i=1}^{\infty}\rin{X,X'}{x}{x'_i} y_i.
    \end{align}
We denote the space of all such nuclear operators via $\mathcal{N}(X,Y)$.
\end{definition}
\noindent For any such nuclear operator $T$, we define the \emph{nuclear operator norm} by $$\|T\|_{\mathcal{N}}:= \op{inf}\left\{\sum_{i=1}^{\infty} \|x_i'\|_{X'}\cdot \|y_i\|_{Y} :T=\sum_{i=1}^{\infty}\theta_{x_i',y_i} \right\},$$ making $\mathcal{N}(X,Y)$ into a Banach space. It is also immediately clear from the definition that $T\in \mathcal{B}(X,Y)$, and $\|T\|\leq \|T\|_{\mathcal{N}}$ and so $T\in \mathcal{A}(X,Y)$ as well. One sees a natural surjection, $J:X'\otimes_{\pi} Y \to \mathcal{N}(X,Y)$ sending $\displaystyle \mathbf{u} = \sum_{i=1}^{\infty}x_i' \otimes y_i$ to $\displaystyle J(\mathbf{u})=\sum_{i=1}^{\infty} \theta_{x_i',y_i}$. One might deduce that $J$ is an isometry and so an isometric isomorphism of Banach spaces. However, a more careful look at the norms $\|\mathbf{u}\|_{\pi}$ and $\|J(\mathbf{u})\|_{\mathcal{N}}$ reveals that the infima run over different ranges. The isometry property of $J$ follows from injectiveness of $J$ as this will make the set on which we take the respective infima for the norms equal. For this, one needs the so-called \emph{approximation property} either for $X'$ or $Y$. We state it as a proposition and it will allow us later on to inject the projective tensor product successfully to other tensor product constructions.
\begin{proposition}[{\cite[Proposition 4.6]{Ry02}}]\label{prop: the-ap}
    Let $X$ be a Banach space. Then following are equivalent.
    \begin{enumerate}
        \item \emph{\textbf{(Approximation Property).}} For every compact subset $K$ of $X$ and $\varepsilon>0$, there exists a finite rank operator $T:X\to X$ such that $\|x-Sx\|_X<\varepsilon$ for all $x\in K.$
        \item For every Banach space $Y$, if $\displaystyle u= \sum_{i=1}^{\infty} x_i\otimes y_i \in X\otimes_{\pi} Y$ such that $\displaystyle \sum_{i=1}^{\infty}\rin{X,X'}{x_i}{x'}\cdot y_i=0$ in $Y$ for all $x'\in X_0'$ for any separating subset $X_0'$ of $X'$, then $u=0$ in $X\otimes_{\pi} Y.$ 
        \item For every Banach space $Y$, if $\displaystyle u= \sum_{i=1}^{\infty} x_i\otimes y_i \in X\otimes_{\pi} Y$ such that $\displaystyle \sum_{i=1}^{\infty}\rin{Y,Y'}{y_i}{y'}\cdot x_i=0$ in $X$ for all $y'\in Y_0'$ for any separating subset $Y_0'$ of $Y'$, then $u=0$ in $X\otimes_{\pi} Y.$ 
    \end{enumerate}
\end{proposition}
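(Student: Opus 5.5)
The plan is to run the whole argument through the canonical map $J_Y: X\otimes_{\pi} Y\to \mathcal{B}(Y',X)$ (and symmetrically $\mathcal{B}(X',Y)$), $J_Y\bigl(\sum_i x_i\otimes y_i\bigr)(y') = \sum_i \rin{Y,Y'}{y_i}{y'} x_i$. Conditions (2) and (3) are injectivity statements for these maps, tested only on separating sets. The standard intermediate statement to aim for is Grothendieck's characterisation: $X$ has the approximation property if and only if, for all sequences $(x_i)\subseteq X$ and $(x_i')\subseteq X'$ with $\sum_i\|x_i\|\|x_i'\|<\infty$ and $\sum_i \rin{X,X'}{x}{x_i'}x_i = 0$ for all $x\in X$, we have $\sum_i \rin{X,X'}{x_i}{x_i'} = 0$. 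In other words, the trace is well defined on nuclear operators $X\to X$. I would prove (1) $\Leftrightarrow$ (this trace criterion), and then (trace criterion) $\Leftrightarrow$ (2) $\Leftrightarrow$ (3).

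For (1) $\Rightarrow$ (2), take $u=\sum_i x_i\otimes y_i$ with $\sum_i \rin{X,X'}{x_i}{x'} y_i = 0$ for all $x'$ in a separating set $X_0'$. I would first reduce to showing $\langle u, B\rangle = 0$ for every $B\in\mathcal{B}(X,Y')\cong (X\otimes_\pi Y)'$, using \eqref{form: duality-for-projective-tensor}. Normalise the representation so that $x_i\to 0$ and $\sum_i\|y_i\|<\infty$. Then $K=\overline{\{x_i\}}\cup\{0\}$ is compact. By (1), pick a finite-rank $S=\sum_k \theta_{x_k',e_k}$ with $\|x-Sx\|$ small on $K$. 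The pairing $\langle u, B\circ S\rangle$ equals $\sum_k \rin{Y,Y'}{\sum_i \rin{X,X'}{x_i}{x_k'} y_i}{Be_k}$. This vanishes provided the $x_k'$ can be taken in the span of $X_0'$.

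This restriction is the main obstacle, since a general separating set need not be norm-dense. I would handle it as follows. First check that the map $x'\mapsto \sum_i\rin{X,X'}{x_i}{x'}y_i$ is weak-$*$ continuous on bounded sets; this holds because $x_i\to0$ and $\sum\|y_i\|<\infty$. Hence the vanishing extends to the weak-$*$ closure of $\op{span}X_0'$, which equals $X'$ because the set is separating. The estimate $|\langle u,B\rangle-\langle u,BS\rangle|\le \sup_{x\in K}\|x-Sx\|\,\|B\|\sum\|y_i\|$ then finishes the argument.

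For (2) $\Rightarrow$ (1), I would follow Grothendieck's route. Assume (1) fails, so some compact $K$ and $\varepsilon$ defeat all finite-rank operators. By Hahn–Banach applied in the space $\mathcal{B}(X)$ with the topology of compact convergence, whose dual consists of functionals $T\mapsto\sum_i\rin{X,X'}{Tx_i}{x_i'}$ with $\sum\|x_i\|\|x_i'\|<\infty$, we obtain such a functional that vanishes on finite-rank operators but not on the identity. Vanishing on finite-rank operators says exactly that $\sum_i \rin{X,X'}{x_i}{x'}x_i'=0$ for all $x'$. Taking $Y=X'$ and $u=\sum x_i\otimes x_i'$ with $X_0'=X'$, condition (2) forces $u=0$, so the trace pairing with the identity gives $\sum\rin{X,X'}{x_i}{x_i'}=0$, a contradiction. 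The description of the dual of $(\mathcal{B}(X),\tau_c)$ is the technical lemma I would cite, not reprove.

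Condition (3) is handled symmetrically by swapping roles through $X\otimes_\pi Y\cong Y\otimes_\pi X$ and again reducing to the trace criterion. Where a dual space enters, I would use Proposition \ref{prop: canonical-isom-to-proj} to pass to $X''\otimes_\pi Y$ without losing injectivity.
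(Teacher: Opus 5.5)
The paper gives no proof of this statement; it is quoted from Ryan's book, so your argument has to stand on its own. Your proofs of (1)$\Rightarrow$(2) and (2)$\Rightarrow$(1) are the classical Grothendieck argument, and they are correct. The following steps all go through:
\begin{itemize}
\item the renormalisation to $x_i\to 0$, $\sum_i\|y_i\|<\infty$, and the compact set $K$;
\item the estimate $|\langle u,B\rangle-\langle u,B\circ S\rangle|\le \sup_{x\in K}\|x-Sx\|\,\|B\|\sum_i\|y_i\|$;
\item Hahn--Banach separation in $(\mathcal{B}(X),\tau_c)$, using Grothendieck's description of its dual;
\item the contradiction from the trace functional on $X\otimes_\pi X'$, with $Y=X'$ and $X_0'=X'$.
\end{itemize}

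Two points need repair.

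\emph{The separating-set step.} This is the step you call the main obstacle. Weak-$*$ continuity on bounded sets yields vanishing on the weak-$*$ closed span of $X_0'$ only through the Krein--\v{S}mulian theorem: the kernel meets every ball in a weak-$*$ closed set, hence is weak-$*$ closed. Limits of bounded nets from $\operatorname{span} X_0'$ alone need not exhaust $X'$ when $X_0'$ is separating but not norming. There is a one-line way around this. For $y'\in Y'$,
\[
\Bigl\langle \sum_i\langle x_i,x'\rangle y_i,\,y'\Bigr\rangle=\langle x_{y'},x'\rangle, \qquad x_{y'}:=\sum_i\langle y_i,y'\rangle x_i\in X,
\]
where the series converges absolutely. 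Vanishing for $x'\in X_0'$, together with separation, forces $x_{y'}=0$ for every $y'$. Hence the map vanishes for all $x'\in X'$. This also shows it is irrelevant whether ``for any separating subset'' is read as ``some'' or ``every''.

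\emph{The proof of (3).} It cannot be obtained by flipping $X\otimes_\pi Y\cong Y\otimes_\pi X$. After the flip, the approximation property sits on the second factor. Applying (2) to $Y\otimes_\pi X$ would therefore require $Y$ to have the approximation property, which is not assumed.

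What is true follows from the identity above and its mirror image with $y_{x'}:=\sum_i\langle x_i,x'\rangle y_i\in Y$. Together they show that the hypothesis of (2) for a separating $X_0'$ and the hypothesis of (3) for a separating $Y_0'$ are both equivalent to
\[
\sum_i\langle x_i,x'\rangle\langle y_i,y'\rangle=0 \quad\text{for all } x'\in X',\ y'\in Y'.
\]
So for each fixed $Y$, (2) and (3) have the same hypothesis and the same conclusion, and (2)$\Leftrightarrow$(3) is immediate. No further trace argument is needed, and neither is Proposition~\ref{prop: canonical-isom-to-proj}.
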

\begin{corollary}\label{cor: projective-nuclear}
Let $X,Y$ be Banach spaces. If $X'$ or $Y$ has the approximation property, then the mapping $J: X'\otimes_{\pi}Y\to \mathcal{N}(X,Y)$ is an isometric isomorphism of Banach spaces.    
\end{corollary}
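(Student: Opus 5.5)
The plan is to show that $J$ is a well-defined, norm-decreasing surjection and then to use the approximation property to get injectivity. Once $J$ is injective, the set of representations over which $\|\mathbf{u}\|_{\pi}$ is an infimum coincides with the set over which $\|J(\mathbf{u})\|_{\mathcal{N}}$ is an infimum, and isometry follows.

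First, well-definedness and contractivity. Define $J$ on the algebraic tensor product $X'\odot Y$ by $J(x'\otimes y)=\theta_{x',y}$, extended linearly. For any finite representation $\mathbf{u}=\sum_i x_i'\otimes y_i$ we have $\|J(\mathbf{u})\|\le \sum_i\|x_i'\|_{X'}\|y_i\|_Y$, so $\|J(\mathbf{u})\|\le \|\mathbf{u}\|_\pi$ in operator norm. Hence $J$ extends continuously to $X'\otimes_\pi Y$ with values in $\mathcal{B}(X,Y)$. For $\mathbf{u}=\sum_{i=1}^\infty x_i'\otimes y_i$ with $\sum_i\|x_i'\|\|y_i\|<\infty$, the extension is given by the absolutely convergent series $\sum_i \theta_{x_i',y_i}$, since the partial sums converge in $\pi$-norm. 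Every nuclear operator is of this form, so $J$ maps onto $\mathcal{N}(X,Y)$. Comparing with the definition of $\|\cdot\|_{\mathcal{N}}$, every representation of $\mathbf{u}$ is a nuclear representation of $J(\mathbf{u})$, which gives $\|J(\mathbf{u})\|_{\mathcal{N}}\le\|\mathbf{u}\|_\pi$.

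Second, injectivity, which is the main step. Suppose $J(\mathbf{u})=0$, that is, $\sum_i \langle x,x_i'\rangle y_i=0$ for every $x\in X$.
\begin{itemize}
\item If $X'$ has the approximation property, apply Proposition \ref{prop: the-ap}(2) to the space $X'$ with separating subset $X_0'\subseteq X''$ equal to the canonical image of $X$. This set separates the points of $X'$, and $\langle x_i',\hat x\rangle=\langle x,x_i'\rangle$. The hypothesis is then exactly $J(\mathbf{u})=0$, so $\mathbf{u}=0$.
\item If instead $Y$ has the approximation property, apply Proposition \ref{prop: the-ap}(3) with the roles arranged so that $Y$ is the space with the approximation property, via the symmetry $X'\otimes_\pi Y\cong Y\otimes_\pi X'$. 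We need $\sum_i\langle y_i,y'\rangle x_i'=0$ in $X'$ for all $y'\in Y'$. Evaluating this absolutely convergent series at $x\in X$ gives $\langle J(\mathbf{u})x,y'\rangle=0$, and since $X$ is norming for $X'$, the element vanishes. The proposition then yields $\mathbf{u}=0$.
\end{itemize}
The delicate point is correctly matching the separating-set hypotheses. In the first case the separating set must be taken inside $X''$, which is why we use the image of $X$ rather than all of $X''$.

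Third, the isometry. Let $T=\sum_i\theta_{x_i',y_i}$ be any nuclear representation, and set $\mathbf{v}=\sum_i x_i'\otimes y_i\in X'\otimes_\pi Y$. Then $J(\mathbf{v})=T=J(\mathbf{u})$, so $\mathbf{v}=\mathbf{u}$ by injectivity, and therefore $\|\mathbf{u}\|_\pi\le\sum_i\|x_i'\|\|y_i\|$. Taking the infimum over all nuclear representations gives $\|\mathbf{u}\|_\pi\le\|J(\mathbf{u})\|_{\mathcal{N}}$. Combined with the first step, $J$ is an isometric bijection, hence an isometric isomorphism of Banach spaces.
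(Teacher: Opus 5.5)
Your proposal is correct and follows the paper's proof: it gets injectivity of $J$ from Proposition \ref{prop: the-ap}, using the canonical image of $X$ in $X''$ as the separating set when $X'$ has the approximation property and the symmetry $X'\otimes_\pi Y\cong Y\otimes_\pi X'$ when $Y$ does, and then the isometry follows because the two infima range over the same representations. One small labeling point: after the swap, the hypothesis you verify, $\sum_i\langle y_i,y'\rangle x_i'=0$ in $X'$, is that of item (2) for $Y\otimes_\pi X'$, while item (3) there applies directly to the condition $J(\mathbf{u})=0$ (as in the paper), so your extra slicing step is valid but unnecessary.
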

\begin{proof}
    Suppose $X'$ has the approximation property, then it follows from Proposition \ref{prop: the-ap} Item (2) and the fact that $X$ isometrically is embedded in $X''$ as a separating subset that for $\displaystyle \mathbf{u}= \sum_{i=1}^{\infty} x_i'\otimes y_i\in X'\otimes_{\pi}Y$, if
    \begin{align*}
        J(\mathbf{u})(x) = \sum_{i=1}^{\infty}\rin{X',X''}{x_i'}{x} \cdot y_i = \sum_{i=1}^{\infty}\rin{X,X'}{x}{x'} \cdot y_i=0 ,\qquad x\in X
    \end{align*}
    then $\mathbf{u}=0.$ This shows that $J$ is injective an so, from our previous discussions, an isometric isomorphism. If instead $Y$ has the approximation property, then we use Item (3) of Proposition \ref{prop: the-ap}, along with the easily verifiable fact that $X\otimes_{\pi}Y \cong Y\otimes_{\pi}X.$ 
\end{proof}
    In this paper, we are instead interested in the space of nuclear operators $\mathcal{N}(X',Y)$. As we shall see, this can be viewed as the operator-space reservoir for the projective tensor product, much like how the Hilbert space tensor product can be identified with the Hilbert-Schmidt operators.
\begin{corollary}\label{cor: proj-goes-into-nuclear}
     If $X''$ or $Y$ has the approximation property, then there is an isometric embedding $k: X\otimes_{\pi} Y\hookrightarrow \mathcal{N}(X',Y).$
\end{corollary}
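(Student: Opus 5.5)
The map $k$ will be built as a composition of two isometries already available. First, Proposition \ref{prop: canonical-isom-to-proj}, applied to the canonical isometry $X\hookrightarrow X''$, gives an isometric embedding
\begin{align*}
    \iota\otimes \op{id}_Y : X\otimes_{\pi} Y \hookrightarrow X''\otimes_{\pi} Y .
\end{align*}
Second, I will apply Corollary \ref{cor: projective-nuclear} with the Banach space $X'$ in place of $X$. Its hypothesis is that $(X')'=X''$ or $Y$ has the approximation property, which is exactly the hypothesis of the present statement. It then yields an isometric isomorphism
\begin{align*}
    J: X''\otimes_{\pi} Y\to \mathcal{N}(X',Y).
\end{align*}
I set $k:=J\circ(\iota\otimes \op{id}_Y)$. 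As a composition of an isometric embedding with an isometric isomorphism, $k$ is an isometric embedding, and in particular it is injective.

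It remains to identify $k$ concretely, so that it is the natural map the paper intends. Take $\mathbf{u}=\sum_{i=1}^\infty x_i\otimes y_i\in X\otimes_\pi Y$ with $\sum_i\|x_i\|_X\|y_i\|_Y<\infty$. Since $\|\iota(x_i)\|_{X''}=\|x_i\|_X$, its image $\sum_i \iota(x_i)\otimes y_i$ is an admissible representation in $X''\otimes_\pi Y$. Tracing through the definitions gives
\begin{align*}
    k(\mathbf{u})(x')=\sum_{i=1}^\infty \rin{X',X''}{x'}{\iota(x_i)}\, y_i=\sum_{i=1}^\infty \rin{X,X'}{x_i}{x'}\, y_i ,\qquad x'\in X'.
\end{align*}
So $k$ sends $x\otimes y$ to the rank-one operator $x'\mapsto \rin{X,X'}{x}{x'}\,y$.

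Two points need care. One must check that $\iota\otimes\op{id}_Y$ is well defined on the completion and independent of the chosen representation. This holds because it is the continuous extension of the algebraic map on $X\odot Y$, and Proposition \ref{prop: canonical-isom-to-proj} guarantees it is isometric. The step I expect to be the main obstacle is confirming that Corollary \ref{cor: projective-nuclear} genuinely applies with $X'$ in place of $X$. In particular, when $X''$ has the approximation property, the injectivity argument via Proposition \ref{prop: the-ap}(2) needs a separating subset of $X'''$, and the canonical image of $X'$ in $X'''$ serves this role. Since the corollary is stated for arbitrary Banach spaces, it can be invoked directly, so I expect this check to go through.
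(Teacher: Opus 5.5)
Your proposal is correct and matches the paper's proof: $k$ is defined as the composition of the isometry $X\otimes_{\pi}Y\hookrightarrow X''\otimes_{\pi}Y$ from Proposition \ref{prop: canonical-isom-to-proj} with the isometric isomorphism $J: X''\otimes_{\pi}Y\to\mathcal{N}(X',Y)$, which comes from Corollary \ref{cor: projective-nuclear} applied with $X'$ in place of $X$. Your explicit formula for $k(\mathbf{u})$, and your check that the canonical image of $X'$ separates the points of $X''$, are correct additions that the paper leaves implicit.
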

\begin{proof}
    We have the isometry $X\otimes_{\pi} Y \hookrightarrow X''\otimes_{\pi} Y$ for free (i.e. without assuming approximation property in any of the Banach spaces) due to Proposition \ref{prop: canonical-isom-to-proj}. If $X''$ or $Y$ has the approximation property, then it follows from Corollary \ref{cor: projective-nuclear} that $J: X''\otimes_{\pi} Y\to \mathcal{N}(X',Y)$ is an isometric isomorphism. We define $k$ to be the composition $X\otimes_{\pi} Y \hookrightarrow X''\otimes_{\pi} Y \overset{J}{\hookrightarrow} \mathcal{N}(X',Y)$, and it is itself an isometric embedding.
\end{proof}
Since $\mathcal{N}(X',Y)\subseteq \mathcal{B}(X',Y)$ and $\mathcal{N}(Y',X)\subseteq \mathcal{B}(Y',X)$, we have the natural maps $i_X: X\otimes_{\pi} Y \to \mathcal{B}(X',Y)$ and $i_Y: X\otimes_{\pi}Y\to \mathcal{B}(Y',X)$. The following corollary shows that under the approximation property of their spaces, these maps are injective, and that the projective tensor product is embedded in many natural Banach spaces.
\begin{corollary}\label{cor: many-injections}
    Suppose $X$ and $Y$ are Banach spaces and either $X$ or $Y$ has the approximation property. Then $i_X: X\otimes_{\pi} Y \to \mathcal{B}(X',Y)$ and $i_Y: X\otimes_{\pi}Y \to \mathcal{B}(Y',X)$ are both injective. Furthermore, any bounded linear operator that factors through these maps are automatically injective. In other words, we have the following commutative diagram
    \[
    \begin{tikzcd}[row sep=large, column sep=large]
     & Q_X \arrow[d] & X\otimes_{\pi}Y \arrow[l, hook', "T_X"'] \arrow[ld, hook', "i_X"] \arrow[r, hook, "T_Y"] \arrow[rd, hook, "i_Y"] & Q_Y \arrow[d] \\
        &\mathcal{B}(X',Y) & &\mathcal{B}(Y',X).
    \end{tikzcd}
    \]
    for any Banach spaces $Q_X$, $Q_Y$ and bounded linear operators $T_X: X\otimes_{\pi} Y \to Q_X$ and $T_Y: X\otimes_{\pi} Y \to \mathcal{B}(Y',X)$.
\end{corollary}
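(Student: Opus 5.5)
By the symmetry $X\otimes_{\pi}Y\cong Y\otimes_{\pi}X$, which exchanges $i_X$ and $i_Y$, it suffices to show that $i_X$ and $i_Y$ are injective under the single hypothesis that $X$ or $Y$ has the approximation property. Once injectivity is known, the factorization claim is formal. If $T_X$ followed by some map $Q_X\to\mathcal{B}(X',Y)$ equals $i_X$, then $T_X\mathbf{u}=0$ forces $i_X\mathbf{u}=0$, hence $\mathbf{u}=0$. The same argument applies to $T_Y$.

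I will not route the argument through Corollary \ref{cor: proj-goes-into-nuclear}, since that would require the approximation property of $X''$. Instead I will use Proposition \ref{prop: the-ap} directly. Take $\mathbf{u}=\sum_{i=1}^\infty x_i\otimes y_i\in X\otimes_{\pi}Y$ with $\sum_i\|x_i\|\|y_i\|<\infty$. Unwinding the definitions (elementary tensor $x\otimes y$ goes to $x'\mapsto \langle x,x'\rangle y$, extended by continuity, since the map is contractive for $\|\cdot\|_\pi$), we get
\begin{align*}
 i_X(\mathbf{u})(x')=\sum_{i=1}^\infty \rin{X,X'}{x_i}{x'}\, y_i,\qquad i_Y(\mathbf{u})(y')=\sum_{i=1}^\infty \rin{Y,Y'}{y_i}{y'}\, x_i .
\end{align*}
Both series converge absolutely.

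Now suppose $X$ has the approximation property and $i_X(\mathbf{u})=0$. Then $\sum_i\langle x_i,x'\rangle y_i=0$ for all $x'$ in $X'$, which is trivially separating. Item (2) of Proposition \ref{prop: the-ap} then gives $\mathbf{u}=0$. If instead $i_Y(\mathbf{u})=0$, I pair with an arbitrary $x'\in X'$ to obtain
\begin{align*}
\sum_i\langle y_i,y'\rangle\langle x_i,x'\rangle=0 \quad\text{for all } y'\in Y'.
\end{align*}
By Hahn--Banach this means $\sum_i\langle x_i,x'\rangle y_i=0$ in $Y$ for every $x'\in X'$, i.e.\ $i_X(\mathbf{u})=0$, so we reduce to the previous case. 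The interchange of sums is justified by absolute convergence.

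If $Y$ has the approximation property, the same two steps go through with item (3) of Proposition \ref{prop: the-ap} in place of item (2), and with the roles of $X$ and $Y$ swapped. This also follows directly from the symmetry.

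The only point requiring care is this pairing argument, which shows that $i_X$ and $i_Y$ have the same kernel. Each of them vanishes on $\mathbf{u}$ exactly when the scalar bilinear form $(x',y')\mapsto\sum_i\langle x_i,x'\rangle\langle y_i,y'\rangle$ vanishes identically. With that identity in hand, one application of the approximation property to a single factor makes both maps injective.
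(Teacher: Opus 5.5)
Your proof is correct and follows essentially the same route as the paper. The paper also reads off $i_X(\mathbf{u})$ and $i_Y(\mathbf{u})$ from series representations, gets injectivity from Proposition \ref{prop: the-ap}, handles the case where $Y$ has the approximation property via $X\otimes_{\pi}Y\cong Y\otimes_{\pi}X$, and closes with the same formal factorization argument. The one difference is that the paper cites item (3) directly for $i_Y$, whereas your Hahn--Banach pairing shows $\ker i_X=\ker i_Y$, so you need only item (2); this also makes clear why items (2) and (3) are interchangeable here.
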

\begin{proof}
    Let us write here explicitly that, if $\displaystyle \mathbf{u} = \sum_{i=1}^{\infty} x_i\otimes y_i\in X\otimes_{\pi}Y$, then 
    \begin{align*}
        i_X(\mathbf{u})(x') &=\sum_{i=1}^{\infty}\rin{X,X'}{x_i}{x'} y_i, \qquad \forall x'\in X' \\
        i_Y(\mathbf{u})(y') &= \sum_{i=1}^{\infty}\rin{Y,Y'}{y_i}{y'} x_i, \qquad \forall y'\in Y'.
    \end{align*}
    Suppose $X$ has the approximation property, then Proposition \ref{prop: the-ap} Item (2) implies that $i_X$ is injective, while Item (3) implies that $i_Y$ is injective. The same can be said if $Y$ has the approximation property using $X\otimes_{\pi} Y\cong Y\otimes_{\pi} X.$ Now suppose $T_X: X\otimes_{\pi} Y \to Q_X$ is a bounded linear operator such that there exists another bounded linear operator $q_X: Q_X\to \mathcal{B}(X',Y)$ where $i_X= q_X \circ T_X$. Since $i_X$ is injective. It immediately follows that $T_x$ must be injective too. A similar conclusion holds for a bounded linear operator $T_Y: X\otimes_{\pi} Y \to Q_Y$ factoring through $i_Y.$
\end{proof}
In light of the previous corollary and for simplicity, we will often be inclined to assume, when necessary, that the first factor has the approximation property. 
\begin{proposition}\label{prop: proj-into-proj}
    For $i=1,2$, let $\mathcal{X}_i$ Banach spaces that are embedded in Banach space $X_i$ with dense range via $\iota_i: \mathcal{X}_i \hookrightarrow X_i$ satisfy the following continuity condition:
    \[
    \|\iota_i(\mathscr{x}_i)\|_{X_i}\leq C_i \|\mathscr{x}_i\|_{\mathcal{X}_i}, \qquad \forall \mathscr{x}_i\in \mathcal{X}_i.
    \]
    for some $C_i>0$. If $\mathcal{X}_1$ has the approximation property, the tensor of embeddings can be extended to an embedding with dense range $\iota=\iota_1\otimes \iota_2: \mathcal{X}_1\otimes_{\pi}\mathcal{X}_2\hookrightarrow X_1\otimes_{\pi} X_2$, satisfying $\|\iota(\mathbf{u})\|_{\pi}\leq C_1 C_2 \|\mathbf{u}\|_{\pi}$ for all $\mathbf{u}\in \mathcal{X}_1\otimes_{\pi}\mathcal{X}_2$.
\end{proposition}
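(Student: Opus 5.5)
We split the argument into three parts: construct the bounded extension, show it has dense range, and show it is injective.

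\textbf{Boundedness.} On the algebraic tensor product, $\iota_1\otimes\iota_2:\mathcal{X}_1\odot\mathcal{X}_2\to X_1\odot X_2$ is defined by linearity. Fix $\mathbf{u}\in\mathcal{X}_1\odot\mathcal{X}_2$ and any representation $\mathbf{u}=\sum_{i=1}^N \mathscr{x}_1^{(i)}\otimes\mathscr{x}_2^{(i)}$. Then
\[
\|\iota(\mathbf{u})\|_{\pi(X_1,X_2)}\leq \sum_{i=1}^N \|\iota_1(\mathscr{x}_1^{(i)})\|_{X_1}\|\iota_2(\mathscr{x}_2^{(i)})\|_{X_2}\leq C_1C_2\sum_{i=1}^N\|\mathscr{x}_1^{(i)}\|_{\mathcal{X}_1}\|\mathscr{x}_2^{(i)}\|_{\mathcal{X}_2}.
\]
Taking the infimum over representations gives $\|\iota(\mathbf{u})\|_\pi\leq C_1C_2\|\mathbf{u}\|_\pi$. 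Hence $\iota$ extends uniquely to a bounded operator on $\mathcal{X}_1\otimes_\pi\mathcal{X}_2$ with the same bound. On a general element $\mathbf{u}=\sum_{i=1}^\infty \mathscr{x}_1^{(i)}\otimes\mathscr{x}_2^{(i)}$ it acts by $\iota(\mathbf{u})=\sum_i\iota_1(\mathscr{x}_1^{(i)})\otimes\iota_2(\mathscr{x}_2^{(i)})$, and this series converges absolutely in $X_1\otimes_\pi X_2$.

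\textbf{Dense range.} The range of $\iota$ contains $\iota_1(\mathcal{X}_1)\odot\iota_2(\mathcal{X}_2)$. Each $\iota_i(\mathcal{X}_i)$ is dense in $X_i$, and $\|\cdot\|_\pi$ is a cross norm, so Lemma \ref{lem: dense-tensor} shows this algebraic tensor product is dense in $X_1\otimes_\pi X_2$.

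\textbf{Injectivity.} This is the main obstacle, and it is where the approximation property of $\mathcal{X}_1$ enters. Suppose $\iota(\mathbf{u})=0$ with $\mathbf{u}=\sum_i \mathscr{x}_1^{(i)}\otimes\mathscr{x}_2^{(i)}$. We plan to apply Proposition \ref{prop: the-ap}, Item (2), to the space $\mathcal{X}_1$ with $Y=\mathcal{X}_2$ and separating set $X_0'=\iota_1'(X_1')\subseteq\mathcal{X}_1'$. This set separates points of $\mathcal{X}_1$ because $\iota_1$ is injective: if $\iota_1(\mathscr{x})\neq0$, Hahn--Banach gives some $x_1'\in X_1'$ with $\langle\iota_1\mathscr{x},x_1'\rangle\neq0$.

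Now fix $x_1'\in X_1'$ and consider the bounded map $x_1'\otimes\mathrm{id}:X_1\otimes_\pi X_2\to X_2$, $x\otimes y\mapsto \langle x,x_1'\rangle y$. Applying it to $\iota(\mathbf{u})=0$ gives
\[
0=\sum_i\langle \iota_1\mathscr{x}_1^{(i)},x_1'\rangle\,\iota_2(\mathscr{x}_2^{(i)})=\iota_2\Big(\sum_i\langle\mathscr{x}_1^{(i)},\iota_1'x_1'\rangle\,\mathscr{x}_2^{(i)}\Big).
\]
Here the series inside $\iota_2$ converges in $\mathcal{X}_2$ because $\sum_i\|\mathscr{x}_1^{(i)}\|\,\|\mathscr{x}_2^{(i)}\|<\infty$, and continuity of $\iota_2$ lets us pull it out. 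Since $\iota_2$ is injective, $\sum_i\langle\mathscr{x}_1^{(i)},\iota_1'x_1'\rangle\,\mathscr{x}_2^{(i)}=0$ in $\mathcal{X}_2$ for every element $\iota_1'x_1'$ of the separating set $X_0'$. Proposition \ref{prop: the-ap}, Item (2), then yields $\mathbf{u}=0$, so $\iota$ is an embedding.

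The delicate points in this last step are checking that evaluation against $x_1'\otimes\mathrm{id}$ commutes with the infinite sums, and that the separating subset is taken in the dual of the space carrying the approximation property, namely $\mathcal{X}_1$.
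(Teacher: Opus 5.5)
Your proof is correct and follows the paper's argument: the same bound on finite sums, density via Lemma \ref{lem: dense-tensor}, and injectivity by slicing $\iota(\mathbf{u})$ against each $x_1'\in X_1'$, then using that $\iota_2$ is injective and that $\mathcal{X}_1$ has the approximation property. The only difference is that you apply the separating-subset clause of Proposition \ref{prop: the-ap}(2) directly with $X_0'=\iota_1'(X_1')$; the paper instead first extends the vanishing to all of $\mathcal{X}_1'$ by weak-$*$ density of $\iota_1'(X_1')$ and then uses injectivity of $i_{\mathcal{X}_1}$ (Corollary \ref{cor: many-injections}), which is exactly the argument that justifies that clause.
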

\begin{proof}
    Let $\iota:=\iota_1\otimes \iota_2: \mathcal{X}_1\odot\mathcal{X}_2 \to X_1\odot X_2$ be the tensor of the embeddings. Fo the sakeof clarity, let us for the time being denote the norm in $\mathcal{X}_1\otimes_{\pi}\mathcal{X}_2$ with $\|\cdot\|_{\pi(\mathcal{X}_1,\mathcal{X}_2)}$, and similarly $\|\cdot\|_{\pi(X_1,X_2)}$ for $X_1\otimes_{\pi} X_2.$ If $\displaystyle \mathbf{u} = \sum_{i=1}^n \mathscr{x}_1^{(i)}\otimes \mathscr{x}_2^{(i)}\in \mathcal{X}_1\otimes_{\pi} \mathcal{X}_2$. Then 
    \begin{align*}
        \|\iota(\mathbf{u})\|_{\pi(X_1,X_2)} \leq \sum_{i=1}^n \|\iota_1(\mathscr{x}_1^{(i)})\|_{X_1}\cdot \|\iota_2(\mathscr{x}_2^{(i)})\|_{X_2} \leq C_1C_2 \sum_{i=1}^n \|\mathscr{x}_1^{(i)}\|_{\mathcal{X}_1}\cdot \|\mathscr{x}_2^{(i)}\|_{\mathcal{X}_2}.
    \end{align*}
    By definition of the projective tensor product norm $\pi(X_1,X_2)$, we find that $\|\iota(\mathbf{u})\|_{\pi(X_1,X_2)}\leq C_1 C_2\|\mathbf{u}\|_{\pi(\mathcal{X}_1,\mathcal{X}_2)}$. It follows that the natural inclusion map $\iota$ can be extendded to a bounded linear functional $\iota: \mathcal{X}_1\otimes_{\pi} \mathcal{X}_2\to X_1\otimes_{\pi} X_2$. From Lemma \ref{lem: dense-tensor}, it also follows that $\iota$ has a dense range. We still need to show that it is injective.
    
    Since $\iota_i: \mathcal{X}_i\hookrightarrow X_i$ is an embedding with norm dense range, $\iota_i': X_i'\hookrightarrow \mathcal{X}_i'$ is an embedding with weak-$*$ dense range (see Theorem \ref{thm: embedding-dense-range}). Now, let $\displaystyle \mathbf{u}= \sum_{i=1}^{\infty}\mathscr{x}_1^{(i)}\otimes\mathscr{x}_2^{(i)}\in \mathcal{X}_1\otimes_{\pi}\mathcal{X}_2$ such that $\iota(\mathbf{u})=0 \in X_1\otimes_{\pi} X_2.$ Consider the map $i_{X_1}: X_1\otimes_{\pi} X_2\to \mathcal{B}(X_1',X_2)$. It follows that $i_{X_1}(\iota(\mathbf{u}))=0$. Observe that for any $x_1'\in X_1'$:
    \begin{align*}
        \iota_2( i_{\mathcal{X}_1}(\mathbf{u})(\iota_1'(x_1'))) = i_{X_1}(\iota(\mathbf{u}))(x_1')=0.
    \end{align*}
    Due to the fact that $\iota_2$ is an embedding, it follows that $i_{\mathcal{X}_1}(\mathbf{u})(\iota_1'(x_1'))=0$ for all $x_1'\in X_1'$. Let $\mathscr{x}_1'\in \mathcal{X}_1'$, and $\{x_{1,\lambda}' \}_{\lambda\in \Lambda}\subseteq X_1'$ be a net such that $\iota_1'(x_{1,\lambda}')\to \mathscr{x}_1'$ in weak-$*$ topology. Take $\mathscr{x}_2'\in \mathcal{X}_2'$ and observe that $\displaystyle \sum_{i=1}^{\infty}\left\|\rin{\mathcal{X}_2,\mathcal{X}_2'}{\mathscr{x}_2^{(i)}}{\mathscr{x}_2'}\cdot \mathscr{x}_1^{(i)} \right\|_{\mathcal{X}_1}\leq \|\mathscr{x}_2'\|_{\mathcal{X}_2}\cdot \sum_{i=1}^{\infty}\|\mathscr{x}_1^{(i)}\|_{\mathcal{X}_1}\cdot \|\mathscr{x}_2^{(i)}\|_{\mathcal{X}_2}<\infty$. The preceding series is absolutely convergent and $\mathcal{X}_1$ is a Banach space. It follows that we can define the element $\displaystyle \sum_{i=1}^{\infty}\rin{\mathcal{X}_2,\mathcal{X}_2'}{\mathscr{x}_2^{(i)}}{\mathscr{x}_2'}\cdot \mathscr{x}_1^{(i)}:=\mathscr{x}_1\in \mathcal{X}_1$. Therefore we can write:
    \begin{align*}    \rin{\mathcal{X}_2,\mathcal{X}_2'}{i_{\mathcal{X}_1}(\mathbf{u})(\mathscr{x}_1')}{\mathscr{x}_2'} &= \sum_{i=1}^{\infty} \rin{\mathcal{X}_1,\mathcal{X}_1'}{\mathscr{x}_1^{(i)}}{\mathscr{x}_1'}\cdot \rin{\mathcal{X}_2,\mathcal{X}_2'}{\mathscr{x}_2^{(i)}}{\mathscr{x}_2'} = \rin{\mathcal{X}_1,\mathcal{X}_1'}{\mathcal{x}_1}{\mathcal{x}_1'} \\ &=\lim_{\lambda}\rin{\mathcal{X}_1,\mathcal{X}_1'}{\mathcal{x}_1}{\iota_1(x_{1,\lambda}')} =\lim_{\lambda} \sum_{i=1}^{\infty} \rin{\mathcal{X}_1,\mathcal{X}_1'}{\mathscr{x}_1^{(i)}}{\iota_1'(x_{1,\lambda}')}\cdot \rin{\mathcal{X}_2,\mathcal{X}_2'}{\mathscr{x}_2^{(i)}}{\mathscr{x}_2'}\\
    &= \lim_{\lambda} \rin{\mathcal{X}_2,\mathcal{X}_2'}{i_{\mathcal{X}_1}(\mathbf{u})(\iota_1'(x_{1,\lambda}')) }{\mathscr{x}_2'}=0.
\end{align*}
Since both $\mathscr{x}_1'\in \mathcal{X}_1'$ and $\mathscr{x}_2'\in \mathcal{X}_2'$ are arbitrary, it follows that $i_{\mathcal{X}_1}(\mathbf{u})=0$. However, due to Corollary \ref{cor: many-injections}, $i_{\mathcal{X}_1}$ is injective and so $\mathbf{u}=0$ in $\mathcal{X}_1\otimes_{\pi}\mathcal{X}_2.$
\end{proof}
Let us also briefly discuss the special case when $X,Y$ are Banach $*$-algebras. It is not hard to see that the projective tensor product $X\otimes_{\pi} Y$ is also a Banach $*$-algebra. Indeed, if $\displaystyle \mathbf{u} = \sum_{i=1}^{\infty}x_i \otimes y_i$ and $\displaystyle \mathbf{v} = \sum_{i=1}^{\infty} w_i \otimes z_i'$ are tensors in $X\otimes_{\pi} Y$, we can simply define the product and involution factorwise on $\mathbf{u}$ and $\mathbf{v}$. We also see that:
\begin{align*}
    \|\mathbf{u}\cdot \mathbf{v}\|_{\pi} \leq \sum_{i=1}^{\infty}\|x_i\|_X\cdot  \|y_i\|_Y \cdot \sum_{j=1}^{\infty} \|w_j\|_X\cdot \|z_j\|_Y.
\end{align*}
It follows that $\|\mathbf{u}\cdot \mathbf{v}\|_{\pi}\leq \|\mathbf{u}\|_{\pi}\cdot \| \mathbf{v}\|$, therefore $X\otimes_{\pi}Y$ is a Banach algebra. Finally,
\begin{align*}
    \|\mathbf{v}^*\|_{\pi} &= \op{inf}\left\{\sum_{i=1}^{\infty}\|x_i^*\|_X\cdot \|y_i^*\|_Y: \mathbf{u} = \sum_{i=1}^{\infty}x_i\otimes y_i \right\} \\
    &= \op{inf}\left\{\sum_{i=1}^{\infty}\|x_i\|_X\cdot \|y_i\|_Y: \mathbf{u} = \sum_{i=1}^{\infty}x_i\otimes y_i \right\} = \|\mathbf{u}\|_{\pi}.
\end{align*}
We have verified that $X\otimes_{\pi}Y$ is a Banach $*$-algebra. In general $\|\cdot\|_{\pi}$ is not a C*-norm. Therefore $X\otimes_{\pi}Y$ is not a C*-algebra even if both $X$ and $Y$ are. However, one can construct the enveloping C*-algebra $C^*(X\otimes_{\pi}Y)$ \cite[Definition 10.4]{1fell88} (or the universal C*-completion) of the Banach $*$-algebra $X\otimes_{\pi}Y$ (see Propopsition \ref{prop: envelope-of-proj} and Remark \ref{rem: envelope-of-proj}).

It is clear that the tensor product $A_1\odot A_2$ of two C*-algebras $A_1$ and $A_2$ is a vector space, which, together with factorwise multiplication and involution, becomes a $*$-algebra. One can define several different C*-norms on $A_1\odot A_2$ and so obtain several different C*-algebras of the tensors of $A_1$ and $A_2$. However, we shall only consider the \emph{spatial tensor product}. A convenient way of describing the required C*-norm is to invoke the fact \cite[Theorem B.9]{RaWi98} that for positive linear functionals (or states) $\rho_1$ of $A_1$ and $\rho_2$ of $A_2$, $\rho_1\otimes \rho_2 (s^*s)\geq0$ for $s\in A_1\odot A_2$. 
\begin{definition}
    The spatial tensor product, denoted by $A_1\otimes_{\op{sp}}A_2$, is the completion of $A_{1}\odot A_2$ with respect to the norm
    \begin{align}\label{form: spatial-tensor-norm}
        \|t\|_{\op{sp}} = \sup\left\{\frac{(\rho_1\otimes \rho_2)((ts)^*(ts))}{(\rho_1\otimes \rho_2)(s^*s)}:\rho\in S(A_1), \rho_2\in S(A_2),s\in A_1\odot A_2, (\rho_1\otimes \rho_2)(s^*s)\neq 0 \right\}
    \end{align}
    for $t\in A_1\odot A_2$.
\end{definition}
\begin{remark}[A summary of the Properties of the Spatial Tensor Product]\label{rem: spatial-tensor-props}
    The spatial tensor product is \emph{cross-norm}, that is, $\|a_1\otimes a_2\|_{\op{sp}}= \|a_1\|_{A_1} \|a_2\|_{A_2}$ for all $a_1\in A_1$ and $a_2\in A_2$. Furthermore, the spatial tensor product is well-behaved with respect to different morphisms. If $\pi_1:A_1\to \mathcal{B}(H_1)$, $\pi_2: \mathcal{B}(H_2)$ are faithful $*$-representations, their tensor product can be extendded to $\pi_1\otimes \pi_2: A_1\otimes_{\op{sp}} A_2\to \mathcal{B}(H_1\otimes_2 H_2)$, which is also a faithful $*$-representation. Finally, if $\rho_1:A_1\to \mathbb{C}$ and $\rho_2:A_2\to \mathbb{C}$ are positive linear functionals (states), then their tensor product can be extendded to $\rho_1\otimes\rho_2: A_1\otimes_{\op{sp}}A_2\to \mathbb{C}$, a positive linear functional (state). More generally, if $\rho_1$ and $\rho_2$ are bounded linear functionals on $A_1$ and $A_2$ respectively, $\rho_1\otimes \rho_2 \in (A_1\otimes_{\op{sp}}A_2)'$ with $\|\rho_1 \otimes \rho_2\| = \|\rho_1\|\|\rho_2\|$. These facts can be found in the following textbooks: \cite{Mu90, 2KaRi97, 1KaRi97}.
\end{remark} 
\begin{remark}\label{rem: cstar-nuclearity}
    As mentioned above, there are many C*-norms on $A_1 \odot A_2$. However, if $A_1$ (or $A_2$) is a \emph{nuclear} C*-algebra, all C*-norms on $A_1\odot A_2$ coincide. Therefore, we practically do not lose anything by only considering the spatial tensor norm on $A_1\odot A_2$ if the class of C*-algebras we study are nuclear. As we shall see later on, we will only deal with twisted group C*-algebras over locally compact abelian groups, say $K$ with a $2$-cocycle $\eta: K\times K \to \mathbb{T}$. As noted by Lance in \cite{La73}, \emph{amenable} locally compact groups have group C*-algebras are nuclear. Since $K$ is abelian, it is amenable. Therefore the group C*-algebra $C^*(K)$ is nuclear. Cocyles or twists have no effect on nuclearity, as shown by Raeburn and Packer using a stabilization trick \cite[Corollary 3.9]{PaRa89}. Therefore $C^*(H,\eta)$ is still a nuclear C*-algebra.  We also note that nuclearity is preserved under Morita equivalence \cite{Ze82}. That is, if $B$ (or $A$) is nuclear and there exists an $A-B$ equivalence bimodule $Z$, then $A$ (or $B$) is also nuclear. Lastly, and perhaps more importantly for this work, nuclear C*-algebras have the approximation property, and in fact they are characterized by the so-called \emph{completely positive approximation property} \cite{ChEf78}.
\end{remark}

Given C*-algebra $A$ and $B$, it is natural to ask what the enveloping C*-algebra $C^*(A\otimes_{\pi}B)$ of the Banach $*$-algebra $A\otimes_{\pi}B$ is. Before we proceed, the following is a useful estimate regarding the spatial tensor product norm and linear functionals.
\begin{lemma}\label{lem: slice-map-lem}
    Let $A_1$ and $A_2$ be C*-algebras and $a'_1\in A'_1$. Then, for any $\displaystyle \sum_{i=1}^n a^{(i)}_1 \otimes a_2^{(i)}\in A_1\otimes_{\op{sp}}A_2$, the map 
    \begin{align}\label{form: slice-map}
       (a'_1\otimes \op{Id}_{A_2}) \left(\sum_{i=1}^n a^{(i)}_1 \otimes a_2^{(i)}\right):=\sum_{i=1}^n \rin{A_1,A_1'}{a_1^{(i)}}{a'_1}\cdot a_2^{(i)}
    \end{align}
    satisfies:
    \begin{align}\label{form: slice-map-est}
        \left\|\left(a'_1\otimes \op{Id}_{A_2}\right)\left(\sum_{i=1}^na_1^{(i)}\otimes a_2^{(i)}\right) \right\|_{A_2}\leq \|a_1'\|_{A'}\cdot \left\|\sum_{i=1}^n a_1^{(i)}\otimes a_2^{(i)} \right\|_{\op{sp}}
    \end{align}
    Consequently, it can be extendded to a continuous linear map  $a_1'\otimes\op{Id_{A_2}}: A_1\otimes_{\op{sp}}A_2\to A_2.$
\end{lemma}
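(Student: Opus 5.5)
The plan is to reduce the estimate \eqref{form: slice-map-est} to a statement about bounded linear functionals on $A_2$, using the duality property of the spatial tensor norm recorded in Remark \ref{rem: spatial-tensor-props}. Write $t=\sum_{i=1}^n a_1^{(i)}\otimes a_2^{(i)}\in A_1\odot A_2$ and $x:=(a_1'\otimes \op{Id}_{A_2})(t)\in A_2$. First we check that $x$ is well defined, i.e.\ independent of the chosen representation of $t$. This holds because the assignment $(a_1,a_2)\mapsto \rin{A_1,A_1'}{a_1}{a_1'}\, a_2$ is bilinear, so by the universal property of the algebraic tensor product it induces a linear map on $A_1\odot A_2$.

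Next we compute the norm of $x$ through its pairings with the dual of $A_2$. By Hahn–Banach,
\[
\|x\|_{A_2}=\sup\{|\rin{A_2,A_2'}{x}{a_2'}| : a_2'\in A_2',\ \|a_2'\|\leq 1\}.
\]
For any such $a_2'$ we have
\[
\rin{A_2,A_2'}{x}{a_2'}=\sum_{i=1}^n a_1'(a_1^{(i)})\, a_2'(a_2^{(i)}) = (a_1'\otimes a_2')(t).
\]
By Remark \ref{rem: spatial-tensor-props}, the functional $a_1'\otimes a_2'$ extends to an element of $(A_1\otimes_{\op{sp}}A_2)'$ with norm $\|a_1'\|\,\|a_2'\|$. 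Hence $|(a_1'\otimes a_2')(t)|\le \|a_1'\|\,\|a_2'\|\,\|t\|_{\op{sp}}$. Taking the supremum over $\|a_2'\|\le 1$ yields \eqref{form: slice-map-est}.

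The final step is the extension. Since $A_1\odot A_2$ is dense in $A_1\otimes_{\op{sp}}A_2$ and $A_2$ is complete, the bounded linear map $a_1'\otimes \op{Id}_{A_2}$ extends uniquely by continuity to all of $A_1\otimes_{\op{sp}}A_2$, with the same norm bound.

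The only nontrivial input is the norm equality $\|\rho_1\otimes\rho_2\|=\|\rho_1\|\|\rho_2\|$ for \emph{arbitrary bounded}, not merely positive, functionals on the spatial tensor product. This is the main point to justify. If one prefers not to rely on the cited textbooks, I would argue as follows. Write $\rho_1,\rho_2$ as vector functionals $\langle \pi_j(\cdot)\xi_j,\eta_j\rangle$ in suitable faithful representations $\pi_j$, such as the universal ones. Then $\rho_1\otimes\rho_2=\langle(\pi_1\otimes\pi_2)(\cdot)\,\xi_1\otimes\xi_2,\eta_1\otimes\eta_2\rangle$. This functional is bounded by $\|\xi_1\|\|\eta_1\|\|\xi_2\|\|\eta_2\|\,\|\cdot\|_{\op{sp}}$, because $\pi_1\otimes\pi_2$ is isometric on the spatial tensor product. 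Choosing the vectors so that $\|\xi_j\|\|\eta_j\|=\|\rho_j\|$ gives the bound.
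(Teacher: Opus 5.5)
Your proof is correct and follows the paper's route: pair $x=(a_1'\otimes\op{Id}_{A_2})(t)$ with functionals on $A_2$, identify the pairing with $(a_1'\otimes a_2')(t)$, and bound it using the norm of product functionals from Remark~\ref{rem: spatial-tensor-props}. The one real difference is to your credit. You norm $x$ by the full unit ball of $A_2'$, whereas the paper takes the supremum only over the states $S(A_2)$. For non-normal $x$ that supremum is the numerical radius rather than $\|x\|_{A_2}$ (e.g.\ it equals $1/2$ for the matrix unit $e_{12}\in M_2(\mathbb{C})$, whose norm is $1$). So your version is the one that actually gives the constant $\|a_1'\|$, at the cost of the cross-norm identity for two arbitrary bounded functionals, which you correctly single out and justify.
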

\begin{proof}
    Let us write $\mathbf{a}=\displaystyle \sum_{i=1}^n a_1^{(i)}\otimes a_2^{(i)}$. Let $a_2'\in A_2'$ be a state. Then we find that: 
    \begin{align*}
        \rin{A_2,A_2'}{a_2'}{(a'_1\otimes \op{Id}_{A_2}) \left(\sum_{i=1}^n a^{(i)}_1 \otimes a_2^{(i)}\right)} &= \sum_{i=1}^n \rin{A_1,A_1'}{a_1^{(i)}}{a_1'} \rin{A_2,A_2'}{a_2^{(i)}}{a_2'} \\
        &= (a_1'\otimes a_2')(\mathbf{a}).
    \end{align*}
    It follows from Remark \ref{rem: spatial-tensor-props} that $a_1'\otimes a_2' \in (A_1\otimes_{\op{sp}}A_2)'$ with $\|a_1'\otimes a_2'\|_{(A_1\otimes_{\op{sp}}A_2)'} = \|a_1'\|_{A_1'}$. From the computation above:
    \begin{align*}
        \left\|\left(a'_1\otimes \op{Id}_{A_2}\right)\left(\sum_{i=1}^na_1^{(i)}\otimes a_2^{(i)}\right) \right\|_{A_2} &=
        \sup_{a_2'\in S(A_2)}\left| \rin{A_2,A_2'}{a_2'}{(a'_1\otimes \op{Id}_{A_2}) \left(\sum_{i=1}^n a^{(i)}_1 \otimes a_2^{(i)}\right)}\right| \\ &= \sup_{a_2'\in S(A_2)} |(a_1'\otimes a_2')(\mathbf{a})| \leq \|a_1'\|_{A_1'} \|\mathbf{a}\|_{\op{sp}}
    \end{align*}
    as required.
\end{proof}
\begin{proposition}\label{prop: envelope-of-proj}
    If $A_1$ and $A_2$ are C*-algebras with $A_1$ nuclear, $C^*(A_1\otimes_{\pi} A_2)\cong A_1\otimes_{\op{sp}} A_{2}.$ In this case, the projective tensor product $A_1\otimes_{\pi} A_2$ is embedded densely in $A_1\otimes_{\op{sp}} A_2.$
\end{proposition}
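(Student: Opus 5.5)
The plan is to identify the enveloping C*-algebra through its universal property. Recall that $C^*(A_1\otimes_{\pi}A_2)$ is the completion of $A_1\otimes_{\pi}A_2$ in the universal norm
\[
\|\mathbf{u}\|_{*}=\sup_{\rho}\|\rho(\mathbf{u})\|,
\]
where the supremum runs over all $*$-representations $\rho$ of the Banach $*$-algebra $A_1\otimes_{\pi}A_2$ on Hilbert spaces, quotiented by the kernel of this seminorm. The proof then has three steps.

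\textbf{Step 1: the spatial C*-algebra is a C*-completion of the projective tensor product.} Since $\|\cdot\|_{\op{sp}}$ is a cross norm (Remark \ref{rem: spatial-tensor-props}), the triangle inequality gives $\|\mathbf{u}\|_{\op{sp}}\leq \|\mathbf{u}\|_{\pi}$ on $A_1\odot A_2$. Hence the identity on $A_1\odot A_2$ extends to a contractive $*$-homomorphism
\[
j\colon A_1\otimes_{\pi}A_2\to A_1\otimes_{\op{sp}}A_2.
\]
Its range contains $A_1\odot A_2$, so it is dense. Composing $j$ with a faithful representation of $A_1\otimes_{\op{sp}}A_2$ yields a representation of $A_1\otimes_{\pi}A_2$, which shows $\|\mathbf{u}\|_{\op{sp}}\leq\|\mathbf{u}\|_{*}$ on $A_1\odot A_2$.

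\textbf{Step 2: the universal norm is dominated by the spatial norm.} Take any $*$-representation $\rho$ of $A_1\otimes_{\pi}A_2$. After restricting to the essential subspace, its restriction to $A_1\odot A_2$ gives commuting representations $\rho_1,\rho_2$ of $A_1,A_2$; one obtains these via approximate units, or through the multiplier algebra in the nonunital case. By the universal property of the maximal tensor product, $\|\rho(\mathbf{u})\|\leq\|\mathbf{u}\|_{\max}$ on $A_1\odot A_2$. Nuclearity of $A_1$ (Remark \ref{rem: cstar-nuclearity}) gives $\|\cdot\|_{\max}=\|\cdot\|_{\op{sp}}$. Consequently $\|\cdot\|_{*}=\|\cdot\|_{\op{sp}}$ on the dense subalgebra $A_1\odot A_2$, and therefore $C^*(A_1\otimes_{\pi}A_2)\cong A_1\otimes_{\op{sp}}A_2$.

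\textbf{Step 3: injectivity of $j$.} This is the main obstacle: the canonical map $A_1\otimes_{\pi}A_2\to A_1\otimes_{\op{sp}}A_2$ must have trivial kernel. I would use slice maps, in the spirit of Proposition \ref{prop: proj-into-proj}. Suppose $\mathbf{u}=\sum_{i} a_1^{(i)}\otimes a_2^{(i)}$ with $\sum_i\|a_1^{(i)}\|\,\|a_2^{(i)}\|<\infty$ and $j(\mathbf{u})=0$.
\begin{enumerate}
\item For each $a_1'\in A_1'$, the slice map $a_1'\otimes\op{Id}_{A_2}$ of Lemma \ref{lem: slice-map-lem} is continuous on $A_1\otimes_{\op{sp}}A_2$.
\item Its composition with $j$ agrees with $\mathbf{u}\mapsto\sum_i\rin{A_1,A_1'}{a_1^{(i)}}{a_1'}\,a_2^{(i)}$. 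This holds because the two maps coincide on finite sums and both are continuous for $\|\cdot\|_{\pi}$.
\item Hence $i_{A_1}(\mathbf{u})(a_1')=0$ for every $a_1'\in A_1'$.
\item Nuclear C*-algebras have the approximation property (Remark \ref{rem: cstar-nuclearity}), so Corollary \ref{cor: many-injections} shows that $i_{A_1}$ is injective. Therefore $\mathbf{u}=0$.
\end{enumerate}
Combining this with the density from Step 1 gives the dense embedding $A_1\otimes_{\pi}A_2\hookrightarrow A_1\otimes_{\op{sp}}A_2$.
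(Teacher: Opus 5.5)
Your proposal is correct and follows essentially the same route as the paper: nuclearity forces the universal C*-norm on $A_1\odot A_2$ to coincide with $\|\cdot\|_{\op{sp}}$, and injectivity of the canonical map comes from factoring $i_{A_1}$ through the slice maps of Lemma \ref{lem: slice-map-lem}, so that the approximation property of $A_1$ and Corollary \ref{cor: many-injections} apply, with density coming from the algebraic tensor product. The only difference is that you explicitly sandwich the universal norm between $\|\cdot\|_{\op{sp}}$ and $\|\cdot\|_{\max}$, using the spatial representation for the lower bound and commuting representations for the upper bound. The paper leaves this step implicit when it asserts that the universal norm is a C*-norm on $A_1\odot A_2$ and appeals to uniqueness of C*-norms.
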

\begin{proof}
    By definition, $C^*(A_1\otimes_{\pi} A_2)$ is simply the completion of the $*$-algebra $A_1\otimes_{\pi} A_2$ with respect to the universal C*-norm, say, denoted by $\|\cdot\|_{u}$. However $A_1\odot A_2$ is a dense $*$-subalgebra of $A_1\otimes_{\pi} A_2$ and so $C^*(A_1\otimes_{\pi} A_2)$ is just the completion of $A_1\odot A_2$\footnote{Because $A\odot B$ has a Banach $*$-algebra norm, $A_1\odot A_2$ is a legitimately admissible (i.e. the universal C*-norm is well-defined) $*$-algebra that completes to a $C^*$-algebra.} with respect to $\|\cdot\|_{u}$. Since $\|\cdot\|_{u}$ is a C*-norm on $A_1\odot A_2$ and at least one of $A_1$ and $A_2$ is nuclear, the universal C*-norm $\|\cdot\|_{u}$ coincides with the spatial tensor norm $\|\cdot\|_{\op{sp}}$ on $A_1\odot A_2.$ It follows that $C^*(A_1\otimes_{\pi} A_2) \cong A_1\otimes_{\op{sp}} A_2$.

    The identification map $\iota: A_1\otimes_{\pi} A_2 \hookrightarrow A_1\otimes_{\op{sp}}A_2$ is well-defined, due to the cross-norm property of $A_1\otimes_{\op{sp}}A_2$. Indeed, if $\displaystyle \sum_{i=1}^n a_1^{(i)}\otimes a_2^{(i)} \in A_1\otimes_{\op{sp}} A_2$,
    \[
    \left\|\sum_{i=1}^n a_1^{(i)}\otimes a_2^{(i)} \right\|_{\op{sp}} \leq \sum_{i=1}^n \|a_1^{(i)}\|_{A_1}\cdot \|a_2^{(i)}\|_{A_2}.
    \]
    Since $\|\cdot \|_{\op{sp}}$ is a norm on $A_1\otimes_{\op{sp}}A_2$, we find that $\displaystyle \left\|\sum_{i=1}^n a_1^{(i)}\otimes a_2^{(i)} \right\|_{\op{sp}} \leq \left\|\sum_{i=1}^n a_1^{(i)}\otimes a_2^{(i)} \right\|_{\pi}$, showing that $\iota: A_1\otimes_{\pi}A_2 \hookrightarrow A_1\otimes_{\op{sp}}A_2$ can be extendded to a continuous linear map. We still need to show that it is injective. However, the assumption of nuclearity of either $A_1$ or $A_2$ allows us to use Corollary \ref{cor: many-injections}. It is enough to show that there is a bounded linear map map $q_{A_1}: A_1\otimes_{\op{sp}}A_2 \to \mathcal{B}(A'_1,A_2)$ such that $i_{A_1}= q_{A_1} \circ \iota$. Using Lemma \ref{form: slice-map}, we define for each $t\in A_1\otimes_{\op{sp}} A_2$ and $a_1'\in A_1'$, $$q_{A_1}(t)(a_1') = (a_1' \otimes \op{Id}_{A_2})(t)\in A_2.$$ It follows from the estimate in \eqref{form: slice-map-est} that $\op{sup}_{\|a_1'\|\leq 1}\|q_{A_1}(t)(a_1')\|\leq \|t\|_{\op{sp}}$. Therefore $q_{A_1}:A_1\otimes_{\op{sp}}A_2 \to \mathcal{B}(A_1',A_2)$ is indeed a bounded linear map. Since $\iota$ is just the identification map, $i_{A_1} = q_{A_1}\circ \iota$. Since at least one of $A_i$ is nuclear, one of them has the approximation property (see Remark \ref{rem: cstar-nuclearity}). It now follows from Corollary \ref{cor: many-injections} that $\iota$ is injective. That $\iota$ has dense range follows from the fact that it is restricted to the identity map in $A_1\odot A_2$, along with Lemma \ref{lem: dense-tensor}.
\end{proof}
\begin{remark}\label{rem: envelope-of-proj}
    Completing $A\odot B$ with the universal C*-norm is in fact the definition of the maximal tensor norm denoted by $A \otimes_{\op{max}}B$. Therefore, without nuclearity, the best we can get is that $C^*(A\otimes_{\pi}B) \cong  A\otimes_{\op{max}}B$.
\end{remark}
\begin{corollary}\label{cor: dense-tensor-envelope}
    Let $\mathcal{A}_1$ and $\mathcal{A}_2$ be dense $*$-subalgebras of C*-algebras $A_1$ and $A_2$. If $A_1$ is nuclear, $C^*(\mathcal{A}_1\otimes_{\pi}\mathcal{A}_2)\cong A_1\otimes_{\op{sp}}A_2.$ In particular, if $X_1$ and $X_2$ are Banach $*$-algebras and $C^*(X_1)$ is known to be nuclear, $C^*(X_1\otimes_{\pi}X_2)\cong C^*(X_1)\otimes_{\op{sp}}C^*(X_2).$
\end{corollary}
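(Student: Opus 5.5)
The plan is to mimic the proof of Proposition \ref{prop: envelope-of-proj}, replacing $A_1\odot A_2$ by $\mathcal{A}_1\odot\mathcal{A}_2$. By definition, $C^*(\mathcal{A}_1\otimes_{\pi}\mathcal{A}_2)$ is the completion of $\mathcal{A}_1\otimes_{\pi}\mathcal{A}_2$ in the universal C*-seminorm $\|\cdot\|_u$. Since $\mathcal{A}_1\odot\mathcal{A}_2$ is $\pi$-dense in $\mathcal{A}_1\otimes_\pi\mathcal{A}_2$ (Lemma \ref{lem: dense-tensor}), and $\|\cdot\|_u\le\|\cdot\|_\pi$, this is the same as completing the $*$-algebra $\mathcal{A}_1\odot\mathcal{A}_2$ in the largest C*-seminorm dominated by the projective norm. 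So the task reduces to identifying that largest C*-seminorm on $\mathcal{A}_1\odot\mathcal{A}_2$ with the restriction of $\|\cdot\|_{\op{sp}}$ from $A_1\odot A_2$.

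For the inequality $\|\cdot\|_{\op{sp}}\le\|\cdot\|_u$, I would use that the spatial norm is a C*-norm on $\mathcal{A}_1\odot\mathcal{A}_2$ (restricted from $A_1\otimes_{\op{sp}}A_2$). It is also dominated by $\|\cdot\|_\pi$, by the cross property. This requires the Banach norms on $\mathcal{A}_i$ to dominate the C*-norms, which holds automatically since $A_i = C^*(\mathcal{A}_i)$ in the intended reading, or by continuity of the inclusion.

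For the reverse inequality, take any $*$-representation $\rho$ of $\mathcal{A}_1\otimes_\pi\mathcal{A}_2$ on a Hilbert space. In the unital case, restricting to $\mathcal{A}_1\otimes 1$ and $1\otimes\mathcal{A}_2$ gives commuting representations $\rho_1,\rho_2$ of $\mathcal{A}_1,\mathcal{A}_2$. In general, one instead uses the usual approximate-unit/multiplier argument for nondegenerate representations of tensor products. These $\rho_i$ are contractive for the universal C*-norms, hence extend to $A_i=C^*(\mathcal{A}_i)$. This is where the identification $A_i\cong C^*(\mathcal{A}_i)$ is needed, i.e.\ the ``dense $*$-subalgebra'' has $A_i$ as its enveloping algebra; I would state this reading explicitly. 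Commuting representations of $A_1,A_2$ yield a representation of $A_1\otimes_{\max}A_2$ extending $\rho$ on $\mathcal{A}_1\odot\mathcal{A}_2$. Nuclearity of $A_1$ (Remark \ref{rem: cstar-nuclearity}) gives $\otimes_{\max}=\otimes_{\op{sp}}$, so $\|\rho(t)\|\le\|t\|_{\op{sp}}$. Taking the supremum over $\rho$ gives $\|\cdot\|_u\le\|\cdot\|_{\op{sp}}$.

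The two norms therefore agree on $\mathcal{A}_1\odot\mathcal{A}_2$. This set is dense in $A_1\otimes_{\op{sp}}A_2$ by Lemma \ref{lem: dense-tensor} applied with $\mathcal{X}_i=\mathcal{A}_i$ dense in $A_i$. Hence $C^*(\mathcal{A}_1\otimes_\pi\mathcal{A}_2)\cong A_1\otimes_{\op{sp}}A_2$. The ``in particular'' follows by taking $\mathcal{A}_i=X_i$ (their images) and $A_i=C^*(X_i)$. One must handle non-injectivity of $X_i\to C^*(X_i)$ by passing to the quotient, which does not affect universal norms.

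The main obstacle is the extension of $\rho_i$ from $\mathcal{A}_i$ to $A_i$, i.e.\ ensuring that the C*-norm on $A_i$ is the universal one for $\mathcal{A}_i$. Without this the statement could fail, since a dense $*$-subalgebra could carry a larger enveloping norm.
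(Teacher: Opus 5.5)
Your proof is correct, and it takes a different route from the paper's. The paper's proof is two lines. Proposition \ref{prop: proj-into-proj} embeds $\mathcal{A}_1\otimes_{\pi}\mathcal{A}_2$ densely into $A_1\otimes_{\pi}A_2$, and Proposition \ref{prop: envelope-of-proj} gives $C^*(A_1\otimes_{\pi}A_2)\cong A_1\otimes_{\op{sp}}A_2$. You instead compute the universal C*-norm on $\mathcal{A}_1\odot\mathcal{A}_2$ directly: you split a representation into commuting representations of the factors, extend these to $A_i=C^*(\mathcal{A}_i)$, and use $\otimes_{\op{max}}=\otimes_{\op{sp}}$.

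Your route buys two things.
\begin{enumerate}
\item It supplies a step the paper leaves implicit. Passing from $C^*(\mathcal{A}_1\otimes_{\pi}\mathcal{A}_2)$ to $C^*(A_1\otimes_{\pi}A_2)$ requires every $*$-representation of $\mathcal{A}_1\otimes_{\pi}\mathcal{A}_2$ to be bounded for the $\pi(A_1,A_2)$-norm, and that is exactly your extension step. Your caveat that the C*-norm of $A_i$ must be the enveloping norm of $\mathcal{A}_i$ is genuinely needed: with $\mathcal{A}_2=\mathbb{C}$ the statement reduces to $C^*(\mathcal{A}_1)\cong A_1$. The uniqueness-of-C*-norms argument of Proposition \ref{prop: envelope-of-proj} does not transfer, since a non-complete dense subalgebra such as $\mathcal{A}_1\odot\mathcal{A}_2$ can carry several C*-norms.
\item You never need the approximation property of $\mathcal{A}_1$. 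Proposition \ref{prop: proj-into-proj} requires it, but the corollary does not assume it, and injectivity is irrelevant for identifying an enveloping algebra.
\end{enumerate}
What the paper's route gives in return is that injectivity (under the approximation property). This is what Theorem \ref{thm: proj-tensor-are-preequiv} actually uses to regard $\mathcal{A}_1\otimes_{\pi}\mathcal{A}_2$ as an honest dense $*$-subalgebra of $A_1\otimes_{\op{sp}}A_2$.

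One detail needs tightening. Banach $*$-algebras need not have bounded approximate units, so the ``approximate-unit'' splitting is not directly available. Instead, extend a nondegenerate $\rho$ to the unital algebra $\widetilde{\mathcal{A}_1}\otimes_{\pi}\widetilde{\mathcal{A}_2}$ of unitizations, in which $J=\mathcal{A}_1\otimes_{\pi}\mathcal{A}_2$ is a $*$-ideal. Define $\tilde\rho(b)\rho(j)\xi:=\rho(bj)\xi$. This is well defined because $\langle\rho(bj_1)\xi,\rho(j_2)\eta\rangle=\langle\rho(j_2^{*}b)\rho(j_1)\xi,\eta\rangle$ together with nondegeneracy, and it is automatically contractive. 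Then restrict $\tilde\rho$ to $\widetilde{\mathcal{A}_1}\otimes 1$ and $1\otimes\widetilde{\mathcal{A}_2}$ to obtain the commuting pair.
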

\begin{proof}
    This follows directly from Proposition \ref{prop: proj-into-proj} and Proposition  \ref{prop: envelope-of-proj}.
\end{proof}
With the spatial tensor product of C*-algebras in mind we can define the external tensor product of Hilbert C*-modules.
\begin{definition}
    The \textbf{external tensor product} $Z_1\otimes_{\op{ex}}Z_2$ 
    of a full Hilbert $A_1$-modules $Z_1$ of Hilbert $A_2$-modules $Z_2$
    is a Hilbert $A_1\otimes_{\op{sp}}A_2$-module induced by the following $A_1\otimes_{\op{sp}}A_2$-valued inner product on the elementary tensors
    \begin{align}\label{form: external-inner}
    \lin{A_1\otimes_{\op{sp}}A_2}{\sum_{i=1}^n z_1^{(i)}\otimes z_2^{(i)}}{\sum_{j=1}^m w_1^{(j)}\otimes w_2^{(j)}}:= \sum_{i=1}^n\sum_{j=1}^m \lin{A_1}{z_1^{(i)}}{w_1^{(j)}} \otimes \lin{A_2}{z_2^{(i)}}{w_2^{(j)}}. 
    \end{align}
    for  $z_1^{(i)},w_1^{(j)}\in Z_1$ and $z_2^{(i)},w_2^{(j)}\in Z_2$, where $i=1,...,n$, $j=1,...m$. In particular, the induced module norm is
 \begin{equation}\label{eq: tensor-module-norm}
  \left\|\sum_{i=1}^n z_1^{(i)}\otimes z_2^{(i)}  \right\|_{\op{ex}} := \left \| \sum_{i=1}^n \sum_{j=1}^n \lin{A_1}{z_1^{(i)}}{z_1^{(j)}} \otimes \lin{A_2}{z_2^{(i)}}{z_2^{(j)}} \right \|^{1/2}_{\op{sp}}.
 \end{equation}
The $A_{1}\otimes_{\op{sp}}A_2$ module action on $Z_1\otimes_{\op{ex}}Z_2$ can be defined densely by factor-wise module action.
\end{definition}
\begin{remark}
It can be shown that, if $Z_1$ and $Z_2$ are full, $Z_1 \otimes_{\op{ex}}Z_2$ is full \cite[Proposition 3.36]{RaWi98}. Consequently, the external tensor product of equivalence bimodules is an equivalence bimodule. Therefore there is the following isomorphism
    \begin{align*}
        \mathcal{K}_{A_1\otimes_{\op{sp}}A_2}(Z_1\otimes_{_{\op{ex}}}Z_2)\cong \mathcal{K}_{A_1}(Z_1)\otimes_{\op{sp}}\mathcal{K}_{A_2}(Z_2) = B_1 \otimes_{\op{sp}} B_2,
    \end{align*}
    where $B_i := \mathcal{K}_{A_i}(Z_i)$ for $i=1,2.$ Furthermore, it follows from
    Remark \ref{rem: spatial-tensor-props} and Equation
    \eqref{form: external-inner} that the external tensor product norm is a cross-norm, that is
    \begin{align}
        \|z_1\otimes z_2\|_{\op{ex}} = \|z_1\|_{Z_1}\cdot \|z_2\|_{Z_2}, \qquad z_1\in Z_1,\ z_2\in Z_2.
    \end{align}
\end{remark}
\begin{example}\label{exa: ex-are-hilb}
    Aside from being quite a natural tensor product of Hilbert C*-modules, the external tensor product can also be seen as a generalization of the Hilbert space tensor product. Indeed, for $i=1,2$, if one sees Hilbert spaces $H_i$ as Hilbert $\mathbb{C}$-modules, the external tensor product norm on $H_1\otimes_{\op{ex}}H_2$ is just the Hilbert space tensor product on $H_1\otimes_2 H_2.$
\end{example}
\section{Tensors of Faithfully Localizable Hilbert C*-modules}\label{sec: hilb-cstar-tensors}
In this section, we shall show that the external tensor product of faithfully localizable Hilbert C*-modules interacts well with the localization scheme. Furthermore, if they are faithfully localizable from Banach space pre-equivalence bimodules with at least one factor possessing the approximation property, we obtain our desired kernel theorems for the equivalence bimodules. 

We first verify that one can induce faithful finite traces on the spatial tensor product of two C*-algebras if both factors have faithful finite traces.
\begin{lemma}\label{lem: extended-tensor-trace}
    Consider faithful finite traces $\op{tr}_{B_i}: B_i\to \mathbb{C}$. Then the map $\op{tr}_{B_1\otimes_{\op{sp}}B_2}: B_1\odot B_2\to \mathbb{C}$ defined via:
    \begin{align}\label{form: trace-tensor}
        \op{tr}_{B_1\otimes_{\op{sp}}B_2}:= \op{tr}_{B_1}\otimes \op{tr}_{B_2}
    \end{align}
    to a faithful finite trace on the spatial tensor product $B_1\otimes_{\op{sp}}B_2.$ Furthermore, the operator norm satisfies
    \begin{align}\label{form: trace-tensor-norm}
        \|\op{tr}_{B_1\otimes_{\op{sp}}B_2}\| = \|\op{tr}_{B_1}\| \|\op{tr}_{B_2}\|.
    \end{align}
\end{lemma}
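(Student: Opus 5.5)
The plan is to use Remark \ref{rem: spatial-tensor-props} for extension, positivity and the norm identity, then check the trace property on elementary tensors, and finally prove faithfulness via slice maps as in Lemma \ref{lem: slice-map-lem}. Faithfulness is the only step that needs a real argument.

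\textbf{Extension, positivity and norm.} Since $\op{tr}_{B_i}$ are positive bounded functionals, Remark \ref{rem: spatial-tensor-props} gives a positive extension $\op{tr}_{B_1}\otimes\op{tr}_{B_2}\in (B_1\otimes_{\op{sp}}B_2)'$ with norm $\|\op{tr}_{B_1}\|\|\op{tr}_{B_2}\|$. This yields \eqref{form: trace-tensor-norm} and finiteness. Alternatively, for positive functionals the norm is attained along approximate units, and $e_\lambda\otimes f_\mu$ is an approximate unit of the tensor product.

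\textbf{Trace property.} On elementary tensors,
\[
\op{tr}_{B_1}(b_1c_1)\op{tr}_{B_2}(b_2c_2)=\op{tr}_{B_1}(c_1b_1)\op{tr}_{B_2}(c_2b_2).
\]
Bilinearity extends this to $B_1\odot B_2$. Since multiplication is jointly continuous and the functional is bounded, density of $B_1\odot B_2$ extends it to all of $B_1\otimes_{\op{sp}}B_2$.

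\textbf{Faithfulness.} Take $t\in B_1\otimes_{\op{sp}}B_2$ with $\op{tr}(t^*t)=0$; we want $t=0$.
\begin{enumerate}
\item For each positive $b'\in B_1'$, consider the slice map $b'\otimes\op{Id}_{B_2}$ from Lemma \ref{lem: slice-map-lem}. It is completely positive, being a positive functional tensored with the identity, so it sends positive elements to positive elements. Hence $\op{tr}_{B_2}\big((\op{tr}_{B_1}\otimes\op{Id})(t^*t)\big)=0$ forces $(\op{tr}_{B_1}\otimes\op{Id})(t^*t)=0$, by faithfulness of $\op{tr}_{B_2}$.
\item Pass to the GNS representations $\pi_i$ of $\op{tr}_{B_i}$ on $L^2(B_i,\op{tr}_{B_i})$. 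Faithfulness of the traces makes $B_i\to L^2(B_i)$ injective. It also makes $\pi_i$ faithful: if $\pi_i(b)=0$ then $\op{tr}(c^*b^*bc)=0$ for all $c$, so $\op{tr}(bcc^*b^*)=0$, giving $bB_i=0$ and hence $b=0$.
\item By Remark \ref{rem: spatial-tensor-props}, $\pi_1\otimes\pi_2$ is faithful on $B_1\otimes_{\op{sp}}B_2$, and $\op{tr}_{B_1}\otimes\op{tr}_{B_2}$ is the vector-type functional associated with the GNS data. The condition $\op{tr}(t^*t)=0$ gives $\op{tr}(s^*t^*ts)=\op{tr}(tss^*t^*)\le\|s\|^2\,\op{tr}(tt^*)$. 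By the trace property, $\op{tr}(tt^*)=\op{tr}(t^*t)=0$.
\item Therefore $(\pi_1\otimes\pi_2)(t)$ vanishes on the dense set of vectors $\Lambda_1(b_1)\otimes\Lambda_2(b_2)$. These span a dense subspace of $L^2(B_1)\otimes_2L^2(B_2)$.
\end{enumerate}

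\textbf{Main obstacle.} The delicate part is step 4. In the non-unital case, the identification $\langle(\pi_1\otimes\pi_2)(x)\,\Lambda(s),\Lambda(s)\rangle=\op{tr}(s^*xs)$ must be checked on elementary tensors and then extended by continuity. Once it holds, $(\pi_1\otimes\pi_2)(t)\Lambda(s)=0$ for all $s\in B_1\odot B_2$, so $(\pi_1\otimes\pi_2)(t)=0$, and faithfulness of $\pi_1\otimes\pi_2$ gives $t=0$.
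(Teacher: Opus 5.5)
Your proposal is correct and follows essentially the same route as the paper. Both proofs get extension and the norm identity from Remark \ref{rem: spatial-tensor-props} and check the trace identity on elementary tensors. For faithfulness, both realize $\op{tr}_{B_1}\otimes\op{tr}_{B_2}$ through the faithful representation $\pi_1\otimes\pi_2$ of the GNS representations and use the trace property to show that $(\pi_1\otimes\pi_2)(t)$ vanishes on a dense subspace. The only difference is that you work with $\Lambda(B_1\odot B_2)$ where the paper uses vectors of the form $(\pi_1\otimes\pi_2)(x)(h_1\otimes h_2)$ with the cyclic vector $h_1\otimes h_2$. Your slice-map Step 1 is correct but never used afterwards, so you can drop it.
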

\begin{proof}
Both $\op{tr}_{B_1}$ and $\op{tr}_{B_2}$ are faithful finite traces. Hence it follows from Remark \ref{rem: spatial-tensor-props} that their tensor product $\op{tr}_{B_1\otimes_{\op{sp}}B_2}$ linearly and densely extends to a well-defined finite positive linear functional on $B_1\otimes_{\op{sp}}B_2$ that also satisfies the norm condition \eqref{form: trace-tensor-norm}. The trace condition follows from the following. If $\displaystyle \mathbf{b}=\sum_{i=1}^nb_1^{(i)}\otimes b_2^{(i)}, \mathbf{b}'=\sum_{j=1}^m b_1'^{(j)}\otimes b_2'^{(j)}\in B_1\otimes_{\op{sp}}B_2$, 
\begin{align*}
\op{tr}_{B_1\otimes B_2}(\mathbf{b}\cdot  \mathbf{b}')&= \sum_{i=1}^n \sum_{j=1}^m \op{tr}_{B_1}\left(b_1^{(i)}\cdot b_1'^{(j)}\right)\op{tr}_{B_2}\left(b_2^{(i)}\cdot b_2'^{(j)}\right) \\ 
&= \sum_{j=1}^m \sum_{i=1}^n \op{tr}_{B_1}\left(b_1'^{(j)}\cdot b_1^{(i)}\right) \op{tr}_{B_2}\left(b_2'^{(j)}\cdot b_2^{(i)}\right) = \op{tr}_{B_1\otimes_{\op{sp}}B_2}(\mathbf{b}'\cdot \mathbf{b})
\end{align*} 
We only need to show faithfulness. Let $(\pi_1, h_1, H_1)$ and $(\pi_2, h_2,H_2)$ be the associated GNS (cyclic) representations to $\op{tr}_{B_1}$ and $\op{tr}_{B_2}$ respectively. Since $\op{tr}_{B_i}(b_i) = \rin{H_i}{\pi_i(b_i)h_i}{h_i}$ for all $b_i\in B_i$, $i=1,2$, faithfulness of each $\op{tr}_{B_i}$ also implies faithfulness of the $*$-representation $\pi_i$. Furthermore, by construction, $h_1\otimes h_2$ is a cyclic vector for $\pi_1\otimes_{\op{sp}} \pi_2: B_1\otimes B_2\to \mathcal{B}(H_1\otimes_{2} H_2)$ satisfying $\op{tr}_{B_1}\otimes \op{tr}_{B_2}(\mathbf{b}) = \rin{H_1\otimes_2 H_2}{\pi_1\otimes \pi_2 (\mathbf{b})(h_1\otimes h_2)}{h_1\otimes h_2}$ for $\mathbf{b}\in B_1\otimes_{\op{sp}}B_2$. It follows that $\pi_1\otimes \pi_2$ is (unitarily equivalent to) the GNS representation of $\op{tr}_{B_1\otimes_{\op{sp}}B_2}=\op{tr}_{B_1}\otimes \op{tr}_{B_2}$.  Let $t\in B_1\otimes_{\op{sp}}B_2$ such that $\op{tr}_{B_1}\otimes \op{tr}_{B_2}(t^*t)=0$. Then
\begin{align*}
    \op{tr}_{B_1}\otimes \op{tr}_{B_2}(t^*t) &= \rin{H_1\otimes_2 H_2}{\pi_1\otimes \pi_2(t^*t)(h_1\otimes h_2)}{h_1\otimes h_2} \\
    &= \|\pi_1\otimes \pi_2 (t)(h_1\otimes h_2)\|^2_{H_1\otimes_2 H_2}=0.
\end{align*}
Hence $\pi_1\otimes \pi_2(t)(h_1\otimes h_2)=0$. Now, let $x\in B_1\otimes_{\op{sp}}B_2$. Then:
\begin{align*}
    &\rin{H_1\otimes_2 H_2}{\pi_1\otimes \pi_2(tt^*)\left(\pi_1\otimes \pi_2(x)(h_1\otimes h_2)\right)}{\pi_1\otimes \pi_2 (x)(h_1\otimes h_2)}\\
    &=\rin{H_1\otimes H_2}{\pi_1\otimes\pi_2((t^*x)^*(t^*x)) \left(h_1\otimes h_2\right)}{h_1\otimes h_2}\\
    &\stackrel{\text{trace property}}{=} \rin{H_1\otimes_2 H_2}{\pi_1\otimes \pi_2(t^*xx^*) \left(\pi_1\otimes \pi_2 (t)(h_1\otimes h_2)\right)}{h_1\otimes h_2}\\
    &=0
\end{align*}
We have shown that $\pi_1\otimes \pi_2(t^*t)$ is zero over the dense subspace $\{\pi_1\otimes \pi_2 (x) (h_1\otimes h_2): x\in B_1\otimes_{\op{sp}}B_2 \}\subseteq H_1\otimes_{2} H_2$. It follows that $\pi_1\otimes \pi_2(t^*t)=0$ and so $\pi_1\otimes \pi_2$ is faithful and $t=0$, as required.
\end{proof}
We finally prove one of the main results of this section, which can be summarized by the following: ``The localization of the tensor products of full Hilbert C*-modules is the tensor product of the localization of full Hilbert C*-modules''.
\begin{theorem}\label{thm: localization-ext}
    Let $Z_i$ be right Hilbert $B_i$-modules for $i=1,2$ such that $\op{tr}_{B_i}:B_i\to \mathbb{C}$ are finite faithful traces, and let $\op{Loc}(Z_1\otimes_{\op{ex}}Z_2)$ be the localization of $Z_1\otimes_{\op{ex}}Z_2$ with respect to the finite faithful trace $\op{tr}_{B_1\otimes_{\op{sp}} B_2}:=\op{tr}_{B_1}\otimes \op{tr}_{B_2}$. Then $Z_1\otimes_{\op{ex}}Z_2$ is densely embedded in both $\op{Loc}(Z_1\otimes_{\op{ex}} Z_2)$ and $\op{Loc}(Z_1)\otimes_{2}\op{Loc}(Z_2)$. Furthermore, we have the isomorphism of Hilbert spaces:
    \begin{align*}
        \op{Loc}(Z_1\otimes_{\op{ex}}Z_2)\cong \op{Loc}(Z_1)\otimes_{2}\op{Loc}(Z_2).
    \end{align*}
    One can summarize the result via the commutative diagram.
    \[
\begin{tikzcd}[row sep=large, column sep=large]
& & \op{Loc}(Z_1)\otimes_{2}\op{Loc}(Z_2) \arrow[d, "\cong"]\\
&Z_1\otimes_{\op{ex}}Z_2 \arrow[r, hook] \arrow[ru, hook] &\op{Loc}(Z_1\otimes_{\op{ex}}Z_2).
\end{tikzcd}
\]
Lastly, the embedding satisfies the following estimate:
\begin{equation} \label{form: loc-tensor-norm-est}
    \begin{split}
        \|\mathbf{z}\|_{\op{tr}_{B_1\otimes_{\op{sp}} B_2}} \leq \|\op{tr}_{B_1}\| \|\op{tr}_{B_2}\| \|\mathbf{z}\|_{Z_1\otimes_{\op{ex}}Z_2}. 
    \end{split}
\end{equation}
for $\mathbf{z}\in Z_1\otimes_{\op{ext}}Z_2$.
\end{theorem}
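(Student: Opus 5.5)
The plan is to compare the two inner products on the algebraic tensor product $Z_1\odot Z_2$ and show that they agree; density and completion then do the rest. By Lemma \ref{lem: extended-tensor-trace}, $\op{tr}_{B_1\otimes_{\op{sp}}B_2}=\op{tr}_{B_1}\otimes\op{tr}_{B_2}$ is a faithful finite trace with norm $\|\op{tr}_{B_1}\|\|\op{tr}_{B_2}\|$. The external tensor product is full, since $Z_i$ are full (by the remark after the definition of $\otimes_{\op{ex}}$). Theorem \ref{thm: localization} therefore applies to $Z_1\otimes_{\op{ex}}Z_2$ with coefficient algebra $B_1\otimes_{\op{sp}}B_2$. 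It gives a dense embedding $Z_1\otimes_{\op{ex}}Z_2\hookrightarrow\op{Loc}(Z_1\otimes_{\op{ex}}Z_2)$, and the estimate \eqref{form: loc-estimate} with $\|\op{tr}_{B_1\otimes_{\op{sp}}B_2}\|=\|\op{tr}_{B_1}\|\|\op{tr}_{B_2}\|$ yields \eqref{form: loc-tensor-norm-est}.

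The key computation is on elementary tensors. Take $\mathbf z=\sum_i z_1^{(i)}\otimes z_2^{(i)}$ and $\mathbf w=\sum_j w_1^{(j)}\otimes w_2^{(j)}$ in $Z_1\odot Z_2$. The right $B_1\otimes_{\op{sp}}B_2$-valued inner product is factorwise, so
\begin{align*}
\rin{\op{tr}_{B_1\otimes_{\op{sp}}B_2}}{\mathbf z}{\mathbf w}=\sum_{i,j}\op{tr}_{B_1}\bigl(\rin{B_1}{w_1^{(j)}}{z_1^{(i)}}\bigr)\op{tr}_{B_2}\bigl(\rin{B_2}{w_2^{(j)}}{z_2^{(i)}}\bigr)=\sum_{i,j}\rin{\op{Loc}(Z_1)}{z_1^{(i)}}{w_1^{(j)}}\rin{\op{Loc}(Z_2)}{z_2^{(i)}}{w_2^{(j)}}.
\end{align*}
The last expression is exactly the Hilbert tensor inner product $\rin{2}{\mathbf z}{\mathbf w}$ in $\op{Loc}(Z_1)\otimes_2\op{Loc}(Z_2)$. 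So the identity on $Z_1\odot Z_2$ is an isometry between the two pre-Hilbert structures.

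Next I extend this identity to the completions. The space $Z_1\odot Z_2$ is dense in $Z_1\otimes_{\op{ex}}Z_2$, and that space is dense in $\op{Loc}(Z_1\otimes_{\op{ex}}Z_2)$ by the localization estimate. Hence $Z_1\odot Z_2$ is dense in $\op{Loc}(Z_1\otimes_{\op{ex}}Z_2)$. On the other side, $Z_i$ is dense in $\op{Loc}(Z_i)$ and the Hilbert norm is a cross norm, so Lemma \ref{lem: dense-tensor} shows that $Z_1\odot Z_2$ is dense in $\op{Loc}(Z_1)\otimes_2\op{Loc}(Z_2)$. The isometry therefore extends uniquely to a unitary $\op{Loc}(Z_1\otimes_{\op{ex}}Z_2)\cong\op{Loc}(Z_1)\otimes_2\op{Loc}(Z_2)$. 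The embedding $Z_1\otimes_{\op{ex}}Z_2\hookrightarrow\op{Loc}(Z_1)\otimes_2\op{Loc}(Z_2)$ is defined as the composite of the first embedding with this unitary. That makes the diagram commute by construction, and the composite is injective and dense.

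The main obstacle I expect is checking that the identity on $Z_1\odot Z_2$ induces a well-defined map that behaves consistently on the completion $Z_1\otimes_{\op{ex}}Z_2$. The danger is that distinct elements of the module completion could become identified in a way that is incompatible with the Hilbert-side representation. This is handled by the fact that both maps out of $Z_1\otimes_{\op{ex}}Z_2$ are continuous: the first by \eqref{form: loc-tensor-norm-est}, and the second by composition with the unitary. They also agree on the dense subspace $Z_1\odot Z_2$. Injectivity is inherited from Theorem \ref{thm: localization}, which in turn rests on the faithfulness supplied by Lemma \ref{lem: extended-tensor-trace}.
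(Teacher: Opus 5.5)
Your proposal is correct and follows essentially the same route as the paper: you show that the trace-induced inner product and the Hilbert tensor inner product coincide on $Z_1\odot Z_2$, use density on both sides (Lemma~\ref{lem: dense-tensor} on the Hilbert side) to obtain the unitary, and read off the estimate from \eqref{form: loc-estimate} together with Lemma~\ref{lem: extended-tensor-trace}; defining the embedding into $\op{Loc}(Z_1)\otimes_2\op{Loc}(Z_2)$ as the composite with that unitary is slightly more careful than the paper, which simply asserts this embedding. One remark that applies equally to the paper: directly, $\|\mathbf z\|_{\op{tr}_{B_1\otimes_{\op{sp}}B_2}}^2=\op{tr}_{B_1\otimes_{\op{sp}}B_2}\bigl(\rin{B_1\otimes_{\op{sp}}B_2}{\mathbf z}{\mathbf z}\bigr)\le\|\op{tr}_{B_1}\|\,\|\op{tr}_{B_2}\|\,\|\mathbf z\|_{\op{ex}}^2$, so the natural constant is $(\|\op{tr}_{B_1}\|\|\op{tr}_{B_2}\|)^{1/2}$, as the paper itself uses in the later Heisenberg corollary, and the constant without the square root inherited from \eqref{form: loc-estimate} is only valid in general when $\|\op{tr}_{B_1}\|\|\op{tr}_{B_2}\|\ge 1$.
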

\begin{proof}
     That $Z_1\otimes_{\op{ex}}Z_2$ is densely embedded in $\op{Loc}(Z_1\otimes_{\op{ex}}Z_2)$ simply follows from the construction of localization of an equivalence bimodule. Since $Z_1 \odot Z_2$ is dense in $Z_1\otimes_{\op{ex}}Z_2$, it is also densely embedded in $\op{Loc}(Z_1\otimes_{\op{ext}} Z_2)$ with respect to the inner-product induced by $\op{tr}_{B_1\otimes_{\op{sp}} B_2}$. 
     
     On the other hand, there are the dense embeddings $Z_1\hookrightarrow \op{Loc}(Z_1)$ and $Z_2\hookrightarrow \op{Loc}(Z_2)$ and so one can identify simple tensors in $z_1\otimes z_2$ in $Z_1\otimes_{\op{ex}}Z_2$ as simple tensors in $\op{Loc}(Z_1)\otimes_{2}\op{Loc}(Z_2)$. Therefore it also follows from Lemma \ref{lem: dense-tensor} that $Z_1\odot Z_2$ is dense in $\op{Loc}(Z_1)\otimes_{2}\op{Loc}(Z_2)$ with respect to the Hilbert space tensor norm. Therefore $Z_1\otimes_{\op{ex}}Z_2$ is also densely embedded in $\op{Loc}(Z_1)\otimes_{2} \op{Loc}(Z_2)$.

    We know now that the span of simple tensors of the the form $z_1\otimes z_2$ for $z_1\in Z_1$ and $z_2\in Z_2$ is a common dense subspace of both $\op{Loc}(Z_1\otimes_{\op{ext}}Z_2)$ and $\op{Loc}(Z_1)\otimes_{2} \op{Loc}(Z_2)$. Observe that for simple tensors $z_1\otimes z_2, z_1'\otimes z_2\in Z_1\odot  Z_2$:
    \begin{equation}\label{form: tr-to-hilb-equation}
        \begin{split}
            \rin{\op{tr}_{B_1\otimes_{\op{sp}} B_2} }{z_1\otimes z_2}{z_1'\otimes z_2'} &=\op{tr}_{B_1\otimes_{\op{sp}}B_2}\left(\rin{B_1\otimes_{\op{sp}} B_2}{z_1\otimes z_2}{z_1'\otimes z_2'}\right)\\
        &= \op{tr}_{B_1\otimes_{\op{sp}}B_2}\left(\rin{B_1}{z_1}{z_1'}\otimes \rin{B_2}{z_2}{z_2'}\right) \\
        &= \op{tr}_{B_1}\left( \rin{B_1}{z_1'}{z_1} \right)\cdot \op{tr}_{B_2} \left(\rin{B_2}{z_2'}{z_2}  \right) \\
        &= \rin{H_1\otimes_{2}H_2 }{z_1\otimes z_2}{z_1'\otimes z_2'}.
        \end{split}
    \end{equation}
    Therefore, on the dense subspace $Z_1 \odot Z_2$, the inner products on $\op{Loc}(Z_1\otimes_{\op{ex}} Z_2)$ and $\op{Loc}(Z_1)\otimes_{2}\op{Loc}(Z_2)$ agree with one another. It follows that $\op{Loc}(Z_1\otimes_{\op{ex}}Z_2)\cong \op{Loc}(Z_1)\otimes_{2}\op{Loc}(Z_2)$. Lastly, the estimate \eqref{form: loc-tensor-norm-est} follows from \eqref{form: loc-estimate} and \eqref{form: trace-tensor-norm}.
\end{proof}
\begin{corollary}\label{cor: localization-of-external-tensor}
    Let $Z_i$ be faithfully localizable $A_i$-$B_i$ equivalence bimodules with faithful finite traces $\op{tr}_{B_i}: B_i \to \mathbb{C}$ for $i=1,2$. There exists a unique lower semi-continuous faithful trace $\op{tr}_{A_1 \otimes_{\op{sp}} A_2}$ defined on $$\op{span}\left\{\lin{A_1\otimes_{\op{sp}} A_2}{z^{(1)}\otimes z^{(2)}}{w^{(1)}\otimes w^{(2)}}: z^{(1)},w^{(1)}\in Z_1, \ z^{(2)},w^{(2)}\in Z_2 \right\}\subseteq A_1\otimes_{\op{sp}}A_2,$$ by
    \begin{align*}
        \op{tr}_{A_1\otimes_{\op{sp}} A_2}\left( \lin{A_1\otimes_{\op{sp}}A_2}{z^{(1)}\otimes z^{(2)}}{w^{(1)}\otimes w^{(2)}}\right) = \op{tr}_{B_1\otimes_{\op{sp}}B_2} \left( \rin{B_1\otimes_{\op{sp}} B_2}{w^{(1)}\otimes w^{(2)}}{z^{(1)}\otimes z^{(2)}} \right),
    \end{align*}
    for $z^{(1)},w^{(1)}\in Z_1$ and $z^{(2)},w^{(2)}\in Z_2$.
    \begin{enumerate}
        \item The completion of $Z_1\otimes_{\op{ex}}Z_2$ with respect to the induced inner product of $\op{tr}_{A_1\otimes_{\op{sp}} A_2}$ is isomorphic to the Hilbert space tensor $\op{Loc}(Z_1)\otimes_{2}\op{Loc}(Z_2)$.
        \item If $W_i$ are also faithfully localizable $A_i$-$B_i$ equivalence bimodule for $i=1,2$, then there are two continuous embeddings:
        \begin{align*}
            \mathcal{L}_{A_1\otimes_{\op{sp}} A_2}(Z_1\otimes_{\op{ex}} Z_2, W_1\otimes_{\op{ex}} W_2) &\hookrightarrow \mathcal{B}(\op{Loc}(Z_1) \otimes_{2} \op{Loc}(Z_2),\op{Loc}(W_1)\otimes_{2}\op{Loc}(W_2)), \\
             \mathcal{L}_{B_1\otimes_{\op{sp}} B_2}(Z_1\otimes_{\op{ex}} Z_2, W_1\otimes_{\op{ex}} W_2) &\hookrightarrow \mathcal{B}(\op{Loc}(Z_1) \otimes_{2} \op{Loc}(Z_2),\op{Loc}(W_1)\otimes_{2}\op{Loc}(W_2)).
        \end{align*}
        If $Z_i=W_i$ for $i=1,2$, these embeddings are isometric. 
    \end{enumerate}
\end{corollary}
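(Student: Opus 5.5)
The plan is to reduce everything to Theorem \ref{thm: localization} applied to the single equivalence bimodule $Z:=Z_1\otimes_{\op{ex}}Z_2$, and then to import the identification of Hilbert spaces from Theorem \ref{thm: localization-ext}.

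First, $Z$ must be a faithfully localizable $(A_1\otimes_{\op{sp}}A_2)$-$(B_1\otimes_{\op{sp}}B_2)$ equivalence bimodule. By the remark following the definition of the external tensor product, $Z$ is full and hence an equivalence bimodule, with right coefficient algebra $\mathcal{K}_{A_1\otimes_{\op{sp}}A_2}(Z)\cong B_1\otimes_{\op{sp}}B_2$. Lemma \ref{lem: extended-tensor-trace} supplies the faithful finite trace $\op{tr}_{B_1\otimes_{\op{sp}}B_2}=\op{tr}_{B_1}\otimes\op{tr}_{B_2}$. Theorem \ref{thm: localization} applied to $Z$ then gives a unique lower semicontinuous faithful trace $\op{tr}_{A_1\otimes_{\op{sp}}A_2}$ on the span of all $A_1\otimes_{\op{sp}}A_2$-valued inner products $\lin{A_1\otimes_{\op{sp}}A_2}{\mathbf z}{\mathbf w}$ with $\mathbf z,\mathbf w\in Z$, satisfying the compatibility $\op{tr}_{A}(\lin{A}{\mathbf z}{\mathbf w})=\op{tr}_B(\rin{B}{\mathbf w}{\mathbf z})$.

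The statement only specifies the trace on inner products of elementary tensors, so uniqueness needs care. I would argue as follows. By sesquilinearity, the span of inner products of elementary tensors equals the span of $\lin{}{\mathbf z}{\mathbf w}$ for $\mathbf z,\mathbf w\in Z_1\odot Z_2$. The full span in Theorem \ref{thm: localization} involves general $\mathbf z,\mathbf w\in Z$. Any lower semicontinuous trace satisfying the displayed identity on the elementary span is determined on the larger span by approximation: $Z_1\odot Z_2$ is dense in $Z$, and the inner products converge in norm, so the right-hand side $\op{tr}_B(\rin{B}{\cdot}{\cdot})$ converges because $\op{tr}_B$ is bounded. Hence it coincides with the trace produced by Theorem \ref{thm: localization}.

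\textbf{The main obstacle is this step.} Lower semicontinuity alone gives only inequalities, so I would rely on the fact that Theorem \ref{thm: localization} itself defines $\op{tr}_A$ through the bounded trace $\op{tr}_B$. This makes the extension from the elementary span to the full span continuous along these particular approximations, and existence and uniqueness both follow.

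For item (1), Theorem \ref{thm: localization}(1) says that the completion of $Z$ under $\rin{\op{tr}_{A_1\otimes_{\op{sp}}A_2}}{\cdot}{\cdot}$ coincides with $\op{Loc}(Z)$, the completion under $\rin{\op{tr}_{B_1\otimes_{\op{sp}}B_2}}{\cdot}{\cdot}$. Theorem \ref{thm: localization-ext} then gives $\op{Loc}(Z)\cong\op{Loc}(Z_1)\otimes_2\op{Loc}(Z_2)$, via the computation \eqref{form: tr-to-hilb-equation} on the common dense subspace $Z_1\odot Z_2$.

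For item (2), set $W:=W_1\otimes_{\op{ex}}W_2$, which is likewise an $(A_1\otimes_{\op{sp}}A_2)$-$(B_1\otimes_{\op{sp}}B_2)$ equivalence bimodule with the same trace. Applying Theorem \ref{thm: localization}(2) to the pair $Z,W$ yields embeddings of $\mathcal{L}_{A_1\otimes_{\op{sp}}A_2}(Z,W)$ and $\mathcal{L}_{B_1\otimes_{\op{sp}}B_2}(Z,W)$ into $\mathcal{B}(\op{Loc}(Z),\op{Loc}(W))$, which are isometric when $Z=W$. Conjugating by the unitary identifications of item (1) for $Z$ and for $W$ converts the target into $\mathcal{B}(\op{Loc}(Z_1)\otimes_2\op{Loc}(Z_2),\op{Loc}(W_1)\otimes_2\op{Loc}(W_2))$. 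Conjugation by unitaries preserves injectivity, continuity and isometry, so the stated embeddings follow.
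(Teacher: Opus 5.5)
Your proposal follows the paper's proof, which is just the remark that the corollary follows from Theorem \ref{thm: localization} and Theorem \ref{thm: localization-ext}. Theorem \ref{thm: localization} is applied to $Z_1\otimes_{\op{ex}}Z_2$, which is full with right coefficient algebra $B_1\otimes_{\op{sp}}B_2$ carrying the trace of Lemma \ref{lem: extended-tensor-trace}. Your treatment of items (1) and (2), including conjugation by the unitaries $\op{Loc}(Z)\cong\op{Loc}(Z_1)\otimes_2\op{Loc}(Z_2)$ and $\op{Loc}(W)\cong\op{Loc}(W_1)\otimes_2\op{Loc}(W_2)$, is a correct expansion of that.

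The ``main obstacle'' you identify is not actually there. The trace in the statement is defined only on the span of inner products of elementary tensors. So uniqueness is immediate, since a linear functional is determined by its values on a spanning set. Existence is just the restriction of the trace that Theorem \ref{thm: localization} produces on the span of all inner products in $Z_1\otimes_{\op{ex}}Z_2$.

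You should drop the approximation argument, because it does not prove what it aims at: that a competing lower semicontinuous trace $\tau$ on the larger span agrees with $\op{tr}_{A_1\otimes_{\op{sp}}A_2}$. Lower semicontinuity only yields $\tau(\lin{A_1\otimes_{\op{sp}}A_2}{\mathbf z}{\mathbf z})\le\op{tr}_{B_1\otimes_{\op{sp}}B_2}(\rin{B_1\otimes_{\op{sp}}B_2}{\mathbf z}{\mathbf z})$. Appealing to the boundedness of $\op{tr}_{B_1\otimes_{\op{sp}}B_2}$ controls only the candidate trace, not $\tau$, so that step is circular.

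Nor is the stronger statement needed for item (1). You can take the inner product on all of $Z_1\otimes_{\op{ex}}Z_2$ from the trace of Theorem \ref{thm: localization}. Alternatively, complete $Z_1\odot Z_2$ directly; by \eqref{form: tr-to-hilb-equation} and the domination \eqref{form: loc-tensor-norm-est} of the trace norm by the module norm, this gives the same Hilbert space.
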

\begin{proof}
    The corollary follows directly from Theorem \ref{thm: localization} and Proposition \ref{thm: localization-ext}. 
\end{proof}

Tensor products are intimately related with operators. This is true for the Banach space projective tensor product, as we have $X\otimes_{\pi}Y\cong k(\mathcal{N}(X',Y))$ from Corollary \ref{cor: proj-goes-into-nuclear}, as well as the Hilbert space tensor due to $H_1\otimes_2 H_2 \cong \mathcal{HS}(H_1',H_2)$. One constructs an appropriate rank-one operator from simple tensors and then completes the space of resulting finite-rank operators using a topology inherited from the tensor product. Using this idea, we give an operator description of the external tensor product $Z_1\otimes_{\op{ex}}Z_2$. For each $(z_1,z_2)\in Z_1\times Z_2$, define the mapping
\begin{align}\label{form: external-tensor-as-operator}
    \mathfrak{R}_{z_1,z_2}: Z_1'\to Z_2, \ \text{via } \mathfrak{R}_{z_1,z_2}(z_1')=\rin{Z_1,Z_1'}{z_1}{z_1'}\cdot z_2
\end{align}
for $z_1'\in Z_1'.$ Naturally we can equip such operators with module actions and inner products: given $(a_1,a_2)\in A_1\times A_2$ and $(b_1,b_2)\in B_1\times B_2$:
\begin{align*}
(a_1\otimes a_2)\cdot\mathfrak{R}_{z_1,z_2} &:= \mathfrak{R}_{a_1\cdot z_1, a_2\cdot z_2}: Z'_1\to Z_2, \\
\mathfrak{R}_{z_1,z_2}\cdot (b_1\otimes b_2) &:= \mathfrak{R}_{z_1\cdot b_1, z_2\cdot b_2}: Z_1'\to Z_2.
\end{align*}
Furthermore, if $(z_1',z_2')\in Z_1\times Z_2$, we define:
\begin{align*}
    \lin{A_1\otimes_{\op{sp}}A_2}{\mathfrak{R}_{z_1,z_2}}{\mathfrak{R}_{z_1',z_2'}}&:= \lin{A_1}{z_1}{z_1'} \otimes \lin{A_2}{z_2}{z_2'}, \\
    \rin{B_1\otimes_{\op{sp}}B_2}{\mathfrak{R}_{z_1,z_2}}{\mathfrak{R}_{z_1',z_2'}}&:= \rin{B_1}{z_1}{z_1'} \otimes \lin{B_2}{z_2}{z_2'}.
\end{align*}
A simple linear extension of the structure defined above allows us to see $\op{span}\{\mathfrak{R}_{z_1,z_2}:(z_1,z_2)\in Z_1\times Z_2 \}$ as an $A_1\otimes_{\op{sp}}A_2-B_1\otimes_{\op{sp}}B_2$ pre-inner product bimodule. Of course the above-mentioned structure is essentially inherited from the external tensor product $Z_1\otimes_{\op{ext}} Z_2$, and we obtain the following proposition.
\begin{proposition}\label{prop: ex-tensor-as-operator}
   The map $\mathfrak{R}: Z_1\odot Z_2 \to \op{span}\{\mathfrak{R}_{z_1,z_2}: (z_1,z_2)\in Z_1\times Z_2\}$ defined via
   \begin{align}
       \mathfrak{R}\left(\sum_{i=1}^n z_1^{(i)}\otimes z_2^{(i)} \right) := \sum_{i=1}^n \mathfrak{R}_{z_1^{(i)},z_2^{(i)}},
   \end{align}
is a bijection that can be extendded to a unitary $A_1\otimes_{\op{sp}}A_2$-$B_1\otimes_{\op{sp}}B_2$ bimodule map. The completion of $\op{span}\{\mathfrak{R}_{z_1,z_2}:(z_1,z_2)\in Z_1\times Z_2 \}$ with respect to the bimodule structure is denoted $\mathcal{M}(Z_1',Z_2)$ and is an $A_1\otimes_{\op{sp}} A_2-B_1\otimes_{\op{sp}}B_2$ equivalence bimodule. 
\end{proposition}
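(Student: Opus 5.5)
The proof transports everything from $Z_1\odot Z_2$, where the structure is already known, and then completes. The one nontrivial point is that $\mathfrak{R}$ is well defined and injective as a map into operators $Z_1'\to Z_2$, i.e. that an algebraic tensor vanishes exactly when the corresponding finite-rank operator vanishes.

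\textbf{Step 1: $\mathfrak{R}$ is a well-defined linear bijection.} The assignment $(z_1,z_2)\mapsto \mathfrak{R}_{z_1,z_2}$ is bilinear, because the dual pairing $\rin{Z_1,Z_1'}{z_1}{z_1'}$ is linear in $z_1$. By the universal property of the algebraic tensor product, $\mathfrak{R}$ is therefore a well-defined linear map on $Z_1\odot Z_2$, and it is surjective onto the span by construction.

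For injectivity, take $\mathbf{u}=\sum_{i=1}^n z_1^{(i)}\otimes z_2^{(i)}$ and rewrite it so that $z_1^{(1)},\dots,z_1^{(n)}$ are linearly independent. By Hahn--Banach there are functionals $z_1'^{(j)}\in Z_1'$ with $\rin{Z_1,Z_1'}{z_1^{(i)}}{z_1'^{(j)}}=\delta_{ij}$. If $\mathfrak{R}(\mathbf{u})=0$, evaluating at $z_1'^{(j)}$ gives $z_2^{(j)}=0$ for every $j$, hence $\mathbf{u}=0$. No approximation property is needed here, since only finite sums occur.

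\textbf{Step 2: $\mathfrak{R}$ carries the bimodule structure.} The actions and inner products on the span were defined on the generators $\mathfrak{R}_{z_1,z_2}$. A priori they might depend on the chosen representation of an operator as a sum of such generators. Injectivity settles this: each element of the span has a unique preimage in $Z_1\odot Z_2$, so we may simply define the structure on the span as the push-forward of the structure of $Z_1\odot Z_2\subseteq Z_1\otimes_{\op{ex}}Z_2$ along $\mathfrak{R}$. This push-forward agrees with the given formulas on generators, by \eqref{form: external-inner}, its right-hand analogue, and factorwise actions.

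With the structure transported, the following hold by construction:
\begin{itemize}
\item $\mathfrak{R}$ intertwines the $A_1\otimes_{\op{sp}}A_2$ and $B_1\otimes_{\op{sp}}B_2$ actions;
\item $\mathfrak{R}$ preserves both inner products;
\item the span inherits the pre-equivalence bimodule axioms from $Z_1\odot Z_2$. These are positivity, the compatibility $\lin{A}{x}{y}\cdot w = x\cdot \rin{B}{y}{w}$, and density of the ranges of the inner products in $A_1\otimes_{\op{sp}}A_2$ and $B_1\otimes_{\op{sp}}B_2$, which follows from fullness of the $Z_i$ and Lemma \ref{lem: dense-tensor}.
\end{itemize}

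\textbf{Step 3: completion.} Since $\mathfrak{R}$ preserves the $A_1\otimes_{\op{sp}}A_2$-valued inner product, it is isometric for the module norms. It therefore extends uniquely to an isometric map from $Z_1\otimes_{\op{ex}}Z_2$ onto the completion $\mathcal{M}(Z_1',Z_2)$ of the span, which is an equivalence bimodule by \cite[Proposition 3.12]{RaWi98}. By continuity, the extension still preserves both inner products and both actions, and its range is closed and dense, hence everything. So it is a surjective inner-product-preserving bimodule map, i.e. unitary.

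Finally, the fact that $Z_1\otimes_{\op{ex}}Z_2$ is itself an $A_1\otimes_{\op{sp}}A_2$--$B_1\otimes_{\op{sp}}B_2$ equivalence bimodule, by the remark following the definition of the external tensor product, confirms consistency of the two completions.

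I do not expect to identify the elements of the completion concretely as operators $Z_1'\to Z_2$. That would require a continuous injection of the completion into $\mathcal{B}(Z_1',Z_2)$, i.e. approximation-property arguments of the type in Corollary \ref{cor: many-injections}, which the statement does not claim.
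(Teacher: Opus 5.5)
Your proposal is correct and follows the same transport-of-structure route as the paper. The paper gives no written proof beyond remarking that the structure is ``essentially inherited from the external tensor product'' $Z_1\otimes_{\op{ex}}Z_2$; your Hahn--Banach biorthogonality argument for injectivity of $\mathfrak{R}$ on $Z_1\odot Z_2$ supplies the detail the paper leaves implicit, and it is exactly what makes the linearly extended actions and inner products on the span well defined. You did not list the boundedness axiom for pre-imprimitivity bimodules, but this is harmless: the completed span is unitarily identified with the equivalence bimodule $Z_1\otimes_{\op{ex}}Z_2$, so all the axioms follow from that identification.
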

\noindent It is not obvious if the space $\mathcal{M}(Z_1',Z_2)$, which is an abstract completion of finite-rank operators, defines bounded linear maps from $Z_1'$ to $Z_2$. The next result shows a sufficient condition.
\begin{lemma}\label{lem: for-external-embed}
    Suppose that $Z_i$ is an $A_i-B_i$ equivalence bimodule for $i=1,2.$ If $B_1$ is unital. Then $\mathcal{M}(Z_1',Z_2)\subseteq \mathcal{B}(Z_1',Z_2)$ with the following estimate:
    \begin{align*}
        \|\mathfrak{R}(\mathbf{z})\|_{\mathcal{B}(Z_1',Z_2)}\leq \|\mathbf{z}\|_{\op{ex}}=\|\mathfrak{R}(\mathbf{z})\|_{\mathcal{M}(Z_1',Z_2)}, \qquad \mathbf{z}\in Z_1\otimes_{\op{ex}}Z_2.
    \end{align*}
\end{lemma}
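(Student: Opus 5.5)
We prove the estimate on the dense subspace $Z_1\odot Z_2$ and then extend by continuity. So fix $\mathbf{z}=\sum_{i=1}^n z_1^{(i)}\otimes z_2^{(i)}$ and $z_1'\in Z_1'$. By \eqref{form: external-tensor-as-operator},
\[
\mathfrak{R}(\mathbf{z})(z_1')=\sum_{i=1}^n \rin{Z_1,Z_1'}{z_1^{(i)}}{z_1'}\, z_2^{(i)}\in Z_2 .
\]
The plan is to bound $\|\mathfrak{R}(\mathbf{z})(z_1')\|_{Z_2}$ by $\|z_1'\|_{Z_1'}\,\|\mathbf{z}\|_{\op{ex}}$. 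The operator norm bound follows after taking the supremum over $\|z_1'\|\le 1$. The equality $\|\mathbf{z}\|_{\op{ex}}=\|\mathfrak{R}(\mathbf{z})\|_{\mathcal{M}(Z_1',Z_2)}$ is simply the unitarity from Proposition \ref{prop: ex-tensor-as-operator}.

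\textbf{Step 1: use unitality of $B_1$.} By Proposition \ref{prop: parseval-module-frame} there are $w_1,\dots,w_m\in Z_1$ with $\sum_k \rin{B_1}{w_k}{w_k}=1_{B_1}$. Hence every $z\in Z_1$ satisfies the reconstruction formula
\[
z=\sum_k \lin{A_1}{z}{w_k}\cdot w_k .
\]
Apply this to each $z_1^{(i)}$ and set $\alpha_k:=\xi_k\!\left(\lin{A_1}{z_1^{(i)}}{w_k}\right)$-type coefficients; concretely, define the bounded functional $\phi_k(a):=\rin{Z_1,Z_1'}{a\cdot w_k}{z_1'}$ on $A_1$, so that $\|\phi_k\|\le \|w_k\|\,\|z_1'\|$. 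Then
\[
\mathfrak{R}(\mathbf{z})(z_1')=\sum_k (\phi_k\otimes \op{Id}_{A_2}\text{-type slice applied to } \textstyle\sum_i \lin{A_1}{z_1^{(i)}}{w_k}\otimes z_2^{(i)}).
\]
This rewrites the evaluation as a slice of the external tensor product. More precisely, the map $(a\otimes z_2)\mapsto \phi_k(a)z_2$ is a slice map from the Hilbert $A_1\otimes_{\op{sp}}A_2$-module $A_1\otimes_{\op{ex}}Z_2$ to $Z_2$. To control its norm, compute $\lin{A_2}{\cdot}{\cdot}$ of the slice and compare it with $(\phi_k\otimes\op{Id})$ applied to the $A_1\otimes_{\op{sp}}A_2$-valued inner product, using Lemma \ref{lem: slice-map-lem} (extended to positive/bounded functionals via the Cauchy–Schwarz inequality for completely positive slices).

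\textbf{Step 2: compare norms.} A cleaner route, which I would try first, is to regard $\mathbf{z}\mapsto \mathfrak{R}(\mathbf{z})(z_1')$ as $(\op{ev}_{z_1'}\otimes\op{Id})$. Since $B_1$ is unital, $Z_1$ is finitely generated projective, so $Z_1\cong p A_1^m$ as left modules via $z\mapsto (\lin{A_1}{z}{w_k})_k$; this map is isometric because $\sum_k \rin{B_1}{w_k}{w_k}=1$. Consequently $Z_1\otimes_{\op{ex}}Z_2$ is isometrically a direct summand of $(A_1\otimes_{\op{ex}}Z_2)^m$. On each summand $A_1\otimes_{\op{ex}}Z_2$, a bounded functional $\phi$ on $A_1$ gives $\|(\phi\otimes\op{Id})(\mathbf{y})\|\le\|\phi\|\,\|\mathbf{y}\|$. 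This follows by writing $\phi$ as a combination of states and using $\lin{A_2}{(\rho\otimes\op{Id})\mathbf y}{(\rho\otimes\op{Id})\mathbf y}\le(\rho\otimes\op{Id})(\lin{}{\mathbf y}{\mathbf y})$, which is Kadison–Schwarz for the completely positive slice map $\rho\otimes\op{Id}$. Finally, sum over $k$ and use the isometry.

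\textbf{Main obstacle.} The main obstacle is keeping the constant equal to $1$. Decomposing $\phi_k$ into states loses constants, and summing over $k$ also loses constants. I expect to fix this by treating the whole column $(\phi_k)_k$ at once as a single functional on $p A_1^m$ of norm $\|z_1'\|$, and by applying the Kadison–Schwarz step to the polar part of that functional.
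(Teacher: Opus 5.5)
The gap is the one you flag yourself: nothing in the proposal actually produces the constant $1$.

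As written, Steps~1--2 give $\|\mathfrak{R}(\mathbf{z})(z_1')\|_{Z_2}\le c\sum_k\|\phi_k\|\,\|\mathbf{y}_k\|$. Here $c$ is an absolute factor that comes from splitting each $\phi_k$ into positive functionals. You only know $\|\phi_k\|\le\|w_k\|\,\|z_1'\|$ and $\|\mathbf{y}_k\|\le\|\mathbf{z}\|_{\op{ex}}$, so this proves boundedness with a frame-dependent constant (up to $cm$), not the stated estimate. The closing remark about the \emph{polar part} of a functional on $pA_1^m$ is only a sketch. Such a functional has a polar decomposition only after it is extended to a C*-algebra containing $pA_1^m$, and once you extend it, no Kadison--Schwarz step is needed.

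The paper's proof does not get past this point either. It is essentially your Step~1: the frame $w_k$, the functionals $f_k(a)=\rin{Z_1,Z_1'}{a\cdot w_k}{z_1'}$, Lemma~\ref{lem: slice-map-lem}, and Cauchy--Schwarz. It reaches $\sum_k\|f_k\|\,\|w_k\|\,\|z_2\|\,\|\mathbf{z}\|_{\op{ex}}$ and then asserts $\|z_1'\|_{Z_1'}=\sum_k\|f_k\|_{A_1'}$. Both of its displayed arguments prove only $\|z_1'\|_{Z_1'}\le\sum_k\|f_k\|_{A_1'}$, and the reverse inequality it needs is false. Take $A_1=\mathbb{C}$, $Z_1=\mathbb{C}^m$, $B_1=M_m(\mathbb{C})$, $w_k=e_k$. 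Then $\|f_k\|=|z_1'(e_k)|$, so $\sum_k\|f_k\|$ is the $\ell^1$-norm of $(z_1'(e_k))_k$, while $\|z_1'\|$ is its $\ell^2$-norm. A frame-dependent constant would be enough for the later uses in Corollary~\ref{cor: proj-into-external} and Theorem~\ref{thm: heis-inner-kernel}, but not for the lemma as stated.

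What is missing is a contractive slice estimate for an arbitrary bounded functional on a Hilbert module, and you get it from Hahn--Banach inside an ambient C*-algebra. Your idea of treating the column $(\phi_k)_k$ as a single functional then works, as follows.
\begin{enumerate}
\item Let $Uz=(\lin{A_1}{z}{w_k})_k$ be your isometry. Then $\Phi:=z_1'\circ U^*$, i.e.\ $\Phi((a_k)_k)=\sum_k\phi_k(a_k)$, satisfies $\|\Phi\|\le\|z_1'\|$ on the row module $A_1^m$.
\item The row module $A_1^m$ is isometrically the first row of $M_m(A_1)$, since $\|(a_k)_k\|^2=\|\sum_k a_ka_k^*\|$. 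Extend $\Phi$ to $M_m(A_1)$ without increasing its norm.
\item Let $x_k=\lin{A_1\otimes_{\op{sp}}A_2}{\mathbf{z}}{w_k\otimes z_2}$, and let $X\in M_m(A_1\otimes_{\op{sp}}A_2)\cong M_m(A_1)\otimes_{\op{sp}}A_2$ have first row $(x_1,\dots,x_m)$ and zeros elsewhere. Apply Lemma~\ref{lem: slice-map-lem} to $X$.
\item By the paper's own computation, the slice of $X$ is $\lin{A_2}{\mathfrak{R}(\mathbf{z})(z_1')}{z_2}$.
\item The identity $\sum_k x_kx_k^*=\lin{A_1\otimes_{\op{sp}}A_2}{\mathbf{z}\cdot(1_{B_1}\otimes\rin{B_2}{z_2}{z_2})}{\mathbf{z}}$ gives $\|X\|\le\|z_2\|\,\|\mathbf{z}\|_{\op{ex}}$. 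Taking the supremum over $\|z_2\|\le 1$ gives the constant $1$.
\end{enumerate}

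A more direct route avoids the frame and does not use unitality of $B_1$ at all. Put each $Z_j$ as the off-diagonal corner of its linking algebra $L_j$. Then $\mathbf{z}\mathbf{z}^*=\lin{A_1\otimes_{\op{sp}}A_2}{\mathbf{z}}{\mathbf{z}}$, and injectivity of the spatial tensor product gives $\|\mathbf{z}\|_{\op{ex}}=\|\mathbf{z}\|_{L_1\otimes_{\op{sp}}L_2}$. A norm-preserving extension $\tilde{z}_1'\in L_1'$ of $z_1'$ gives $\mathfrak{R}(\mathbf{z})(z_1')=(\tilde{z}_1'\otimes\op{Id}_{L_2})(\mathbf{z})$, and Lemma~\ref{lem: slice-map-lem} finishes the proof.

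In either version, the supremum in the proof of Lemma~\ref{lem: slice-map-lem} must be taken over the unit ball of the dual rather than over states, since the supremum over states only gives the numerical radius.
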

\begin{proof}
    Since $B_1$ is unital, there exist $w_1^{(1)},...,w_1^{(n)}\in Z_1$ such that $\displaystyle \sum_{k=1}^n \rin{B_1}{w_1^{(k)}}{w_1^{(k)}}=1_{B_1}$ (Proposition \ref{prop: parseval-module-frame}). Let $\displaystyle \mathbf{z} = \sum_{i=1}^m z_1^{(i)}\otimes z_2^{(i)} \in Z_1\otimes_{\op{ex}}Z_2$. For $z_1'\in Z_1'$, let us define $f_k: A_1\to \mathbb{C}$ via $\rin{A_1,A_1'}{a_1}{f_k}=\rin{Z_1,Z_1'}{a_1\cdot w_k}{z_1'}$ for each $a_1\in A_1$ and $k=1,...,m.$ It follows that $f_k\in A_1'$, and $\displaystyle \rin{Z_1,Z_1'}{z_1}{z_1'} = \sum_{k=1}^n \rin{A_1,A_1'}{\lin{A_1}{z_1}{w_k}}{f_k}$ We now compute, for $z_2\in Z_2$:
    \begin{align*}
        \lin{A_2}{\mathfrak{R}(\mathbf{z})(z_1')}{z_2} &= \sum_{i=1}^{m} \rin{Z_1,Z_1'}{z_1^{(i)}}{z_1'} \lin{A_2}{z_2^{(i)}}{z_2}\\
        &= \sum_{i=1}^m \rin{Z_1,Z_1'}{\sum_{k=1}^n \lin{A_2}{z_1^{(i)}}{w_1^{(k)}}\cdot w_1^{(k)} }{z_1'}\cdot  \lin{A_2}{z_1^{(i)}}{z_2} \\
        &= \sum_{i=1}^m \sum_{k=1}^n \rin{A_1,A_1'}{ \lin{A_1}{z_1^{(i)}}{w_1^{(k)}} }{f_k}\cdot  \lin{A_2}{z_1^{(i)}}{z_2} \\
        &=\sum_{k=1}^n \left((f_k \otimes \op{Id}_{A_2}) \left( \sum_{i=1}^m \lin{A_1}{z_1^{(i)}}{w_1^{(k)}} \otimes \lin{A_2}{z_1^{(i)}}{z_2}  \right)   \right) \\
        &=\sum_{k=1}^n \left((f_k \otimes \op{Id}_{A_2}) \left( \lin{A_1\otimes_{\op{sp}}A_2}{\sum_{i=1}^m z_1^{(i)}\otimes z_2^{(i)} }{ w_1^{(k)}\otimes z_2}  \right) \right)
    \end{align*}
    It follows from Lemma \ref{lem: slice-map-lem} that:
    \begin{align*}
    \|\lin{A_2}{\mathfrak{R}(\mathbf{z})(z_1')}{z_2} \|_{A_2} \leq \sum_{k=1}^n \left\|f_k \right\|_{A_1'} \left\|\lin{A_1\otimes_{\op{sp}}A_2}{\sum_{i=1}^m z_1^{(i)}\otimes z_2^{(i)} }{ w_1^{(k)}\otimes z_2} \right\|_{\op{sp}}.
    \end{align*}
    Applying the Cauchy-Schwarz inequality for the external tensor product $Z_1\otimes_{\op{ex}}Z_2$, we obtain:
    \begin{align}\label{form: intermediate-frakR}
        \|\lin{A_2}{\mathfrak{R}(\mathbf{z})(z_1')}{z_2} \|_{A_2} \leq \sum_{k=1}^n\|f_k\|_{A_1'}\cdot \|w_1^{(k)}\|_{Z_1} \cdot \|z_2\|_{Z_2}\cdot \|\mathbf{z}\|_{\op{ex}}.
    \end{align}
    For any $k=1,...,n$, we have $0\leq \rin{B_1}{w_1^{(k)}}{w_1^{(k)}}\leq 1_{B_1}$.  It follows by taking the $B_1$-norm that $\|w_1^{(k)}\|_{Z_1}\leq 1$. Next, we estimate $\displaystyle \sum_{i=1}^{n}\|f_k\|_{A_1'}$. By definition, for any $\varepsilon>0$, there exists a $v_1\in Z_1$ with $\|v_1\|_{Z_1}\leq 1$ such that $\displaystyle \|z_1'\|_{Z_1'}-\varepsilon < \left|\rin{Z_1,Z_1'}{v_1}{z_1'}\right|\leq \sum_{i=1}^n \left|\rin{A_1,A_1'}{\lin{A_1}{v_1}{w_k}}{f_k}\right|\leq \sum_{i=1}^n\|f_k\|_{A_1'}$. Since $\varepsilon$ is arbitrary, we find that $\displaystyle \|z_1'\|_{Z_1'}\leq \sum_{i=1}^n \|f_k\|_{A_1'}$. On the other hand, $\|z_1'\|_{Z_1'}=\displaystyle \sup_{\|z_1\|_{Z_1}\leq 1}\left|\rin{Z_1,Z_1'}{z_1}{z_1'}\right | = \sup_{\|z_1\|_{Z_1}\leq 1} \left|\sum_{i=1}^n \rin{A_1,A_1'}{\lin{A_1}{z_1}{w_k}}{f_k} \right|\leq \sum_{i=1}^n \|f_k\|_{A_1'}.$ It follows that $\displaystyle \|z_1'\|_{Z_1'}= \sum_{i=1}^n \|f_k\|_{A_1'}$. Finallyfrom Inequality \eqref{form: intermediate-frakR} we infer that:
    \begin{align*}
        \|\mathfrak{R}(\mathbf{z})(z_1')\|_{Z_2} = \sup_{\|z_2\|_{Z_2}\leq 1}\|\lin{A_2}{\mathfrak{R}(\mathbf{z})(z_1')}{z_2}\|_{A_2}\leq \|z_1'\|_{Z_1'} \|\mathbf{z}\|.
    \end{align*}
    Taking the supremum over all $\|z_1'\|_{Z_1'}\leq 1$ of the inequality above gives us $\|\mathfrak{R}(\mathbf{z})\|_{\mathcal{B}(Z_1',Z_2)}\leq \|\mathbf{z}\|_{\op{ex}}$, as desired.
\end{proof}
The assumption of unitality of $B_1$ is essential in this proof. We are exploiting the fact that $Z_1$ is forced to be a finitely generated and projective Hilbert $A_1$-module. This, in its turn, allows us to pull back linear functionals in $Z_1'$ to linear functionals in $A_1'$ as we have seen above. We can prove more in this setting.
\begin{lemma}\label{lem: z-has-ap}
    Let $Z$ be an $A-B$ equivalence bimodule with $B$ unital and nuclear. Then $Z$ has the approximation property. 
\end{lemma}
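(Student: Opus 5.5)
The plan is to realize $Z$ as a complemented subspace of a finite direct sum $A^n$, and then to use the fact that the approximation property passes to complemented subspaces and to finite direct sums. Since $B$ is unital, Proposition \ref{prop: parseval-module-frame} gives $w_1,\dots,w_n\in Z$ with $\sum_{k=1}^n \rin{B}{w_k}{w_k}=1_B$. Every $z\in Z$ then satisfies the reconstruction formula
\begin{align*}
z=z\cdot 1_B=\sum_{k=1}^n \lin{A}{z}{w_k}\cdot w_k .
\end{align*}
Define the analysis map $U:Z\to A^n$ by $U(z)=(\lin{A}{z}{w_k})_{k=1}^n$ and the synthesis map $V:A^n\to Z$ by $V(a_1,\dots,a_n)=\sum_k a_k\cdot w_k$. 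Both are bounded: $\|\lin{A}{z}{w_k}\|\le \|z\|\|w_k\|$ by Cauchy--Schwarz, and $\|a_k\cdot w_k\|\le\|a_k\|\|w_k\|$. The reconstruction formula says exactly that $VU=\op{Id}_Z$. Consequently $P=UV$ is a bounded idempotent on $A^n$, and $U$ is a Banach space isomorphism of $Z$ onto the closed complemented subspace $P(A^n)$.

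Next I would show that $A$ has the approximation property. By Morita invariance of nuclearity (Remark \ref{rem: cstar-nuclearity}, citing \cite{Ze82}), nuclearity of $B$ implies nuclearity of $A$. Nuclear C*-algebras have the completely positive approximation property, which gives the approximation property (Remark \ref{rem: cstar-nuclearity}). A finite direct sum $A^n$ (with any equivalent norm, e.g. the max norm) then also has it: given compact $K\subseteq A^n$, its coordinate projections $K_j$ are compact in $A$. Choosing finite-rank $S_j$ with $\|a-S_ja\|<\varepsilon$ on $K_j$, the map $S=\oplus_j S_j$ is finite rank and approximates the identity on $K$.

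Finally I would transfer the property to $Z$. Let $K\subseteq Z$ be compact and $\varepsilon>0$. Then $U(K)$ is compact in $A^n$, so there is a finite-rank $S$ on $A^n$ with $\|y-Sy\|<\varepsilon/(\|V\|+1)$ for $y\in U(K)$. The operator $T:=VSU$ is finite rank on $Z$, and for $z\in K$,
\begin{align*}
\|z-Tz\|=\|VUz-VSUz\|\le\|V\|\,\|Uz-SUz\|<\varepsilon .
\end{align*}
Thus $Z$ satisfies item (1) of Proposition \ref{prop: the-ap}.

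The only genuinely nontrivial input is the passage from nuclearity to the approximation property, together with the Morita invariance of nuclearity needed to move from $B$ to $A$. Both are quoted results, so I expect no real obstacle. The one point to check carefully is that the frame from Proposition \ref{prop: parseval-module-frame} yields the reconstruction formula with the left $A$-valued inner product. This uses the compatibility $\lin{A}{z}{w}\cdot w'=z\cdot\rin{B}{w}{w'}$ of the equivalence bimodule.
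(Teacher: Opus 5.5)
Your proof is correct. Its skeleton is the paper's: Morita invariance of nuclearity gives $A$ the approximation property, and the Parseval frame $w_1,\dots,w_n$ from Proposition \ref{prop: parseval-module-frame} transports finite-rank approximations of the identity from $A$ to $Z$. The difference is at the decisive step, and it matters.

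The paper compresses the frame coefficients into a single map $F(z)=\sum_{i}\lin{A}{z}{w_i}$ into $A$. It chooses a finite-rank $F_A$ that approximates the identity only on the compact set $F(K)$, and then asserts
\[
\Bigl\|\sum_{i=1}^n \bigl(a_i-F_A(a_i)\bigr)\cdot w_i\Bigr\|_Z\le \Bigl\|\sum_{i=1}^n a_i-F_A\Bigl(\sum_{i=1}^n a_i\Bigr)\Bigr\|_A,\qquad a_i=\lin{A}{z}{w_i}.
\]
This inequality is not justified. The right-hand side only sees the sum of the errors $c_i=a_i-F_A(a_i)$, and these may cancel (e.g.\ $c_1=-c_2$) while $\sum_i c_i\cdot w_i\neq 0$.

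Your analysis/synthesis pair $U\colon Z\to A^n$, $V\colon A^n\to Z$ with $VU=\op{Id}_Z$ keeps the $n$ coefficients separate. The error is therefore controlled coordinatewise, and the bound $\|z-VSUz\|\le\|V\|\,\|Uz-SUz\|$ is sound. Equivalently, the paper's argument is repaired by choosing $F_A$ to approximate the identity on the compact set $\bigcup_i\{\lin{A}{z}{w_i}:z\in K\}$, which gives $\|z-Tz\|<n\varepsilon$ using $\|w_i\|_Z\le 1$.

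Your formulation also gives a structural statement: $Z$ is isomorphic to a complemented subspace of $A^n$. Any property stable under finite direct sums and complemented subspaces therefore transfers from $A$ to $Z$. For instance, the completely positive approximation property gives $A$ the metric approximation property, so your argument shows that $Z$ even has the bounded approximation property, with constant at most $\|U\|\,\|V\|$.
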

\begin{proof}
Since $B$ is nuclear, and $A$ and $B$ are Morita equivalent through $Z$, $A$ has the approximation property (see Remark \ref{rem: cstar-nuclearity}). By unitality of $B$, it follows that there exist $\{w_1,...,w_n\}\subseteq Z$ such that $\displaystyle \sum_{i=1}^n \rin{B}{w_i}{w_i}=1_B$, when $\displaystyle z = \sum_{i=1}^n \lin{A}{z}{w_i}\cdot w_i$ for all $z\in Z$. 

Now, let $\varepsilon>0$ and $K$ be a compact subset of $Z.$ Define the operator $F\in \mathcal{L}_A(Z,A)\subseteq \mathcal{B}(Z,A)$ via
\begin{align*}
    F(a) = \sum_{i=1}^n \lin{A}{z}{w_i}.
\end{align*}
Since $A$ has the approximation property and $F(K)$ is a compact subset of $A$, there exists (see Proposition \ref{prop: the-ap} Item (1) a finite-rank operator $F_A: A\to A$ such that $\|a- F_A(a)\|<\varepsilon$ for all $a\in F(K)$. Equivalently, 
\begin{align*}
    \|F(z)- F_A(F(z))\|_A<\varepsilon, \qquad z\in K.
\end{align*}
Define $T\in \mathcal{B}(Z,Z)$ via $\displaystyle T(z) = \sum_{i=1}^n F_A(\lin{A}{z}{w_i})\cdot w_i$ for all $z\in Z$. We see that $T$ must be finite-rank because $F_A$ is. More explicitly, we have $\displaystyle F_A(a) = \sum_{k=1}^{m} \rin{A,A'}{a}{\varphi_k}a_k$ for some $\varphi_k\in A'$, and $a_k\in A$, $k=1,...,m$. It follows that:
\begin{align*}
    T(z) = \sum_{i=1}^n \sum_{k=1}^m \rin{Z,Z'}{z}{z'_{i,j}} z_{i,j}
\end{align*}
where $z_{i,j}'\in Z'$ is given by $\rin{Z,Z'}{z}{z_{i,j}'} = \rin{A,A'}{\lin{A}{z}{w_i} }{ \varphi_k }$ and $z_{i,j}\in Z$ is given by $z_{i,j}= a_k\cdot w_j.$ Since $\|w_i\|_{Z}\leq 1$ for all $i=1,...,n$, we have that for all $z\in K$:
\begin{align*}
    \|z - T(z)\|_Z &= \left\|\sum_{i=1}^n \lin{A}{z}{w_i}\cdot w_i - \sum_{i=1}^n F_A(\lin{A}{z}{w_i})\cdot w_i \right\|_Z \\
    &\leq \left\|\sum_{i=1}^n \lin{A}{z}{w_i} -F_A\left(\sum_{i=1}^n \lin{A}{z}{w_i}\right)\right\|_A \\
    & = \|F(z) - F_A(F(z))\|_A<\varepsilon.
\end{align*}
We have now shown that $Z$ has the approximation property.
\end{proof}

\begin{corollary}\label{cor: proj-into-external}
    Suppose that $Z_i$ is an $A_i-B_i$-equivalence bimodule. If $B_1$ is unital and nuclear, there is an embedding
    \[
    j: Z_1\otimes_{\pi}Z_2 \hookrightarrow Z_1\otimes_{\op{ex}}Z_2
    \]
    with dense range, satisfying $\|j(\mathbf{z})\|_{\op{ex}}\leq \|\mathbf{z}\|_{\pi(Z_1,Z_2)}$ for all $\mathbf{z}\in Z_1\otimes_{\pi}Z_2.$
\end{corollary}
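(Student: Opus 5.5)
The plan mirrors the proof of Proposition \ref{prop: envelope-of-proj}, with Lemma \ref{lem: for-external-embed} playing the role of the slice-map estimate. Three things need checking: boundedness, dense range, and injectivity.

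\emph{Boundedness.} Let $j$ be the identity on $Z_1\odot Z_2$. The external tensor norm is a cross norm, so for any finite representation $\mathbf{z}=\sum_{i=1}^n z_1^{(i)}\otimes z_2^{(i)}$ the triangle inequality gives $\|\mathbf{z}\|_{\op{ex}}\leq \sum_{i=1}^n \|z_1^{(i)}\|_{Z_1}\|z_2^{(i)}\|_{Z_2}$. Taking the infimum over representations yields $\|\mathbf{z}\|_{\op{ex}}\leq\|\mathbf{z}\|_{\pi(Z_1,Z_2)}$. Hence $j$ extends to a contraction $j: Z_1\otimes_{\pi}Z_2\to Z_1\otimes_{\op{ex}}Z_2$.

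\emph{Dense range.} The range of $j$ contains $Z_1\odot Z_2$, which is dense in $Z_1\otimes_{\op{ex}}Z_2$ by construction (or by Lemma \ref{lem: dense-tensor}).

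\emph{Injectivity.} This is the main step, and the plan is to use the factorization criterion of Corollary \ref{cor: many-injections} with $X=Z_1$, $Y=Z_2$.
\begin{enumerate}
\item Since $B_1$ is unital and nuclear, Lemma \ref{lem: z-has-ap} shows that $Z_1$ has the approximation property. Therefore $i_{Z_1}: Z_1\otimes_{\pi}Z_2\to\mathcal{B}(Z_1',Z_2)$ is injective.
\item Since $B_1$ is unital, Lemma \ref{lem: for-external-embed} provides a bounded map $q:=\mathfrak{R}: Z_1\otimes_{\op{ex}}Z_2\to\mathcal{B}(Z_1',Z_2)$ with $\|\mathfrak{R}(\mathbf{z})\|_{\mathcal{B}(Z_1',Z_2)}\leq\|\mathbf{z}\|_{\op{ex}}$. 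Here $\mathfrak{R}$ is the extension from $Z_1\odot Z_2$ of the map in Proposition \ref{prop: ex-tensor-as-operator}.
\item On elementary tensors, $q(j(z_1\otimes z_2))(z_1')=\rin{Z_1,Z_1'}{z_1}{z_1'}\,z_2=i_{Z_1}(z_1\otimes z_2)(z_1')$. So $q\circ j=i_{Z_1}$ on $Z_1\odot Z_2$.
\item Both sides of this identity are bounded into $\mathcal{B}(Z_1',Z_2)$, and $Z_1\odot Z_2$ is dense in $Z_1\otimes_{\pi}Z_2$, so $q\circ j=i_{Z_1}$ everywhere.
\item Since $i_{Z_1}$ is injective, so is $j$.
\end{enumerate}

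The point that needs the most care is (2)–(4): the bound in Lemma \ref{lem: for-external-embed} is stated for finite sums, so one must make sure that $\mathfrak{R}$ genuinely extends to all of $Z_1\otimes_{\op{ex}}Z_2$ as a bounded map into $\mathcal{B}(Z_1',Z_2)$. This follows from that estimate by density and uniform continuity. Once that extension is in place, the factorization $i_{Z_1}=q\circ j$ and Corollary \ref{cor: many-injections} complete the argument.
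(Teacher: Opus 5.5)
Your proposal is correct and follows essentially the paper's argument. You get contractivity from the cross-norm and triangle inequality, dense range from density of $Z_1\odot Z_2$, and injectivity by factoring $i_{Z_1}=\mathfrak{R}\circ j$ and invoking Lemma \ref{lem: z-has-ap} together with Corollary \ref{cor: many-injections}. Your explicit extension of $\mathfrak{R}$ to $Z_1\otimes_{\op{ex}}Z_2$, and the density argument giving $\mathfrak{R}\circ j=i_{Z_1}$ on all of $Z_1\otimes_{\pi}Z_2$, spell out what the paper leaves implicit; they use only the boundedness of $\mathfrak{R}$, not the precise constant in Lemma \ref{lem: for-external-embed}.
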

\begin{proof}
Consider $j: \op{id}_{Z_1}\otimes \op{id}_{Z_2}: Z_1\odot Z_2\to Z_1\odot Z_2$. For $\mathbf{z} = \sum_{i=1}^n z_1^{(i)}\otimes z_2^{(2)}\in Z_1\odot Z_2$, it follows from the Cauchy-Schwarz inequality for Hilbert C*-modules that $\|j(\mathbf{z})\|_{\op{ex}}\leq \|\mathbf{z}\|_{\pi(Z_1,Z_2)}$ and so it can be extendded to a continuous linear map $j: Z_1\otimes_{\pi}Z_2\to Z_1\otimes_{\op{ex}}Z_2$. It must have dense range due to Lemma \ref{lem: dense-tensor}.

Now, lemma \ref{lem: for-external-embed} implies that $(\mathfrak{R} \circ j)(\mathbf{z})\in \mathcal{B}(Z_1',Z_2)$ for all $\mathbf{z}\in Z_1\otimes_{\pi}Z_2$. We know from Lemma \ref{lem: z-has-ap} that $Z_1$ has the approximation property, so $j$ must be injective, due to Corollary \ref{cor: many-injections}. 
\end{proof}
We finally obtain the following result, which allows us to embed the the projective tensor product of pre-equivalence bimodules in the external tensor product of their completions, assuming that at least one of the bimodules have certain `regularity conditions'.
\begin{theorem}\label{thm: proj-tensor-are-preequiv}
    Fix $i=1,2$. Suppose that each $Z_i$ is an $A_i-B_i$ equivalence bimodule that is faithfully localizable from an $\mathcal{A}_i-\mathcal{B}_i$ pre-equivalence bimodule $\mathcal{Z}_i$. Assume that each $\mathcal{Z}_i$ is a Banach space whose topology is finer than the one induced by its $\mathcal{A}_i-\mathcal{B}_i$ pre-equivalence bimodule structure where there exist constants $C_1,C_2>0$ such that:
    \begin{align*}
        \|\mathscr{z}_i\|_{Z_i} \leq C_i \|\mathscr{z}_i\|_{\mathcal{Z}_i}, \qquad \forall \mathscr{z}_i\in \mathcal{Z}_i,\ \text{for i=1,2.}
    \end{align*}
    Furthermore, assume that $B_1$ is a nuclear unital $C^*$-algebra and that $\mathcal{A}_1$, $\mathcal{B}_1$ and $\mathcal{Z}_1$ all possess the approximation property as Banach spaces. Then $\mathcal{Z}_1\otimes_{\pi}\mathcal{Z}_2$ is an $\mathcal{A}_1\otimes_{\pi} \mathcal{A}_2 - \mathcal{B}_1\otimes_{\pi} \mathcal{B}_2$ pre-equivalence bimodule and there is the embedding (as Banach spaces) with dense range
    \begin{align*}
        \mathcal{j}: \mathcal{Z}_1\otimes_{\pi}\mathcal{Z}_2 \hookrightarrow Z_1\otimes_{\op{ex}}Z_2
    \end{align*}
    satisfying
        \begin{align}\label{form: proj-to-ex-continuity}
        \|\mathcal{j}(\mathscr{z})\|_{\op{ex}}\leq C_1C_2 \|\mathscr{z}\|_{\pi(\mathscr{Z}_1,\mathscr{Z}_2)}, \qquad \forall \mathbf{z}\in \mathcal{Z}_1\otimes_{\pi}\mathcal{Z}_2.
    \end{align}
    The same injection implements $Z_1\otimes_{\op{ex}}Z_2$ as a faithfully localizable $A_1\otimes_{\op{sp}}A_2-B_1\otimes_{\op{sp}}B_2$ equivalence bimodule from $\mathcal{Z}_1 \otimes_{\pi} \mathcal{Z}_2$. 
\end{theorem}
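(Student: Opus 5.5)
The proof will go in three steps: build $\mathcal{j}$ as a composition, check the algebraic pre-equivalence structure, then handle the traces.

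\emph{Construction of $\mathcal{j}$.} Set $\mathcal{j}=j\circ\iota$, where $\iota:=\iota_1\otimes\iota_2:\mathcal{Z}_1\otimes_{\pi}\mathcal{Z}_2\hookrightarrow Z_1\otimes_{\pi}Z_2$ and $j:Z_1\otimes_{\pi}Z_2\hookrightarrow Z_1\otimes_{\op{ex}}Z_2$. The map $\iota$ comes from Proposition \ref{prop: proj-into-proj}; this is where the approximation property of $\mathcal{Z}_1$ is used. It is injective, has dense range, and satisfies $\|\iota(\mathbf{u})\|_{\pi}\leq C_1C_2\|\mathbf{u}\|_{\pi}$. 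The map $j$ comes from Corollary \ref{cor: proj-into-external}; this needs $B_1$ unital and nuclear. It is injective with dense range and has norm at most one. The composition is therefore injective, and the estimate \eqref{form: proj-to-ex-continuity} follows by multiplying the two bounds. For density of the range, note that $\mathcal{Z}_1\odot\mathcal{Z}_2$ is dense in $Z_1\otimes_{\op{ex}}Z_2$ by Lemma \ref{lem: dense-tensor}, since the external norm is a cross norm and each $\mathcal{Z}_i$ is dense in $Z_i$.

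\emph{Pre-equivalence structure.} Define the actions and the two inner products on $\mathcal{Z}_1\odot\mathcal{Z}_2$ factorwise, as in \eqref{form: external-inner}. Elementwise these are the restrictions of the structure of $Z_1\otimes_{\op{ex}}Z_2$, so the algebraic axioms of \cite[Definition 3.9]{RaWi98} hold automatically. These axioms are compatibility $\lin{}{\mathbf{x}}{\mathbf{y}}\cdot\mathbf{z}=\mathbf{x}\cdot\rin{}{\mathbf{y}}{\mathbf{z}}$, positivity, and the adjoint relations.

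Next I would extend everything by continuity to the projective completions. The key estimate is
\[
\|\lin{\mathcal{A}_1}{x_1}{y_1}\otimes\lin{\mathcal{A}_2}{x_2}{y_2}\|_{\pi}\leq\|\lin{\mathcal{A}_1}{x_1}{y_1}\|_{\mathcal{A}_1}\|\lin{\mathcal{A}_2}{x_2}{y_2}\|_{\mathcal{A}_2},
\]
combined with boundedness of the pre-inner products $\mathcal{Z}_i\times\mathcal{Z}_i\to\mathcal{A}_i$. This shows that the inner products extend to jointly continuous maps $(\mathcal{Z}_1\otimes_{\pi}\mathcal{Z}_2)^2\to\mathcal{A}_1\otimes_{\pi}\mathcal{A}_2$, and the actions extend in the same way.

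Fullness is immediate: the span of the inner products contains $\op{span}\lin{}{\mathcal{Z}_1}{\mathcal{Z}_1}\odot\op{span}\lin{}{\mathcal{Z}_2}{\mathcal{Z}_2}$. This set is dense in $A_1\odot A_2$, and hence dense in $A_1\otimes_{\op{sp}}A_2$. Corollary \ref{cor: dense-tensor-envelope} identifies $C^*(\mathcal{A}_1\otimes_{\pi}\mathcal{A}_2)$ with $A_1\otimes_{\op{sp}}A_2$. To apply it, $A_1$ must be nuclear, which follows from $B_1$ nuclear by Morita invariance (Remark \ref{rem: cstar-nuclearity}). The same corollary gives $C^*(\mathcal{B}_1\otimes_{\pi}\mathcal{B}_2)\cong B_1\otimes_{\op{sp}}B_2$. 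Hence the completion of $\mathcal{Z}_1\otimes_{\pi}\mathcal{Z}_2$ is $Z_1\otimes_{\op{ex}}Z_2$, realized through $\mathcal{j}$.

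\emph{Main obstacle: positivity and compatibility on the completion.} Positivity of $\lin{}{\mathbf{z}}{\mathbf{z}}$ for a general $\mathbf{z}\in\mathcal{Z}_1\otimes_{\pi}\mathcal{Z}_2$ only makes sense in the envelope. My plan is to transport every identity through the injective maps $\mathcal{A}_1\otimes_{\pi}\mathcal{A}_2\hookrightarrow A_1\otimes_{\op{sp}}A_2$ from Propositions \ref{prop: proj-into-proj} and \ref{prop: envelope-of-proj}. The approximation property of $\mathcal{A}_1$ and $\mathcal{B}_1$ is exactly what makes these maps injective, so an identity verified in the C*-completion pulls back uniquely. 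Concretely, the inner products and actions on $\mathcal{Z}_1\otimes_{\pi}\mathcal{Z}_2$ agree, under $\mathcal{j}$ and these embeddings, with those of $Z_1\otimes_{\op{ex}}Z_2$. This holds on the dense algebraic tensors and extends by continuity. Positivity and the compatibility axioms are then inherited from the genuine equivalence bimodule $Z_1\otimes_{\op{ex}}Z_2$.

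\emph{Faithful localizability.} Take $\op{tr}_{\mathcal{B}_1}\otimes\op{tr}_{\mathcal{B}_2}$ on $\mathcal{B}_1\otimes_{\pi}\mathcal{B}_2$. It is bounded, because it is the restriction through the embedding of the trace $\op{tr}_{B_1}\otimes\op{tr}_{B_2}$. That trace is faithful and finite on $B_1\otimes_{\op{sp}}B_2$ by Lemma \ref{lem: extended-tensor-trace}. This is precisely Definition \ref{def: faithful-localization}.
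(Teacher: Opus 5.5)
Your proposal follows the paper's route. It builds $\mathcal{j}=j\circ\iota$ from Proposition \ref{prop: proj-into-proj} and Corollary \ref{cor: proj-into-external}, uses factorwise structure maps, inherits the pre-equivalence axioms from $Z_1\otimes_{\op{ex}}Z_2$ through the injections $\mathcal{A}_1\otimes_\pi\mathcal{A}_2\hookrightarrow A_1\otimes_{\op{sp}}A_2$ and $\mathcal{B}_1\otimes_\pi\mathcal{B}_2\hookrightarrow B_1\otimes_{\op{sp}}B_2$, and restricts the product trace of Lemma \ref{lem: extended-tensor-trace}.

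Where you deviate, your version is the sounder one. The paper extends the inner products using only a bound on $\|\cdot\|_{\op{sp}}$ obtained from Cauchy--Schwarz in $Z_i$. That shows the inner product extends continuously into $A_1\otimes_{\op{sp}}A_2$. It does not show that the inner product lands in the (generally non-closed) subspace $\mathcal{A}_1\otimes_\pi\mathcal{A}_2$; your estimate in the projective norm of $\mathcal{A}_1\otimes_\pi\mathcal{A}_2$ does. Similarly, the paper gets fullness by closing in projective norms. That would require $\op{span}\lin{\mathcal{A}_i}{\mathcal{Z}_i}{\mathcal{Z}_i}$ to be dense in $\mathcal{A}_i$ for its own Banach norm. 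Your use of Lemma \ref{lem: dense-tensor} in the spatial norm needs only density in $A_i$, which is exactly what a pre-equivalence bimodule provides.

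The one ingredient you assert without justification is boundedness of the structure maps. This covers $\mathcal{Z}_i\times\mathcal{Z}_i\to\mathcal{A}_i$, $\mathcal{Z}_i\times\mathcal{Z}_i\to\mathcal{B}_i$, and the two actions on $\mathcal{Z}_i$. The hypotheses only give $\|z\|_{Z_i}\le C_i\|z\|_{\mathcal{Z}_i}$. Your $\pi$-norm estimate rests on this boundedness, but it does follow, for both $i=1,2$:
\begin{enumerate}
\item The inclusion $\mathcal{A}_i\hookrightarrow A_i$ is a $*$-homomorphism from a Banach $*$-algebra into a C*-algebra, hence continuous.
\item Suppose $x_n\to x$ in $\mathcal{Z}_i$ and $\lin{\mathcal{A}_i}{x_n}{y}\to a$ in $\mathcal{A}_i$. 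Then $x_n\to x$ in $Z_i$, so $\lin{A_i}{x_n}{y}\to\lin{A_i}{x}{y}$ in $A_i$. Also $\lin{\mathcal{A}_i}{x_n}{y}\to a$ in $A_i$ by continuity of the inclusion.
\item Hence $a=\lin{A_i}{x}{y}$, and the closed graph theorem makes each partial map bounded.
\item Banach--Steinhaus then upgrades separate continuity to joint continuity. The actions and the $\mathcal{B}_i$-valued inner product are handled identically.
\end{enumerate}
With this supplied, your argument is complete.
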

\begin{proof}
    Due to the fact that $B_1$ is nuclear, it follows that $A_1$ is also nuclear. Therefore both possess the approximation property (see Remark \ref{rem: cstar-nuclearity}). From the hypothesis that $\mathcal{B}_1$ and $\mathcal{A}_1$ both have approximation property and Propositions \ref{prop: proj-into-proj} and \ref{prop: envelope-of-proj} it follows that both $\mathcal{A}_1\otimes_{\pi}\mathcal{A}_2$ and $\mathcal{B}_1\otimes_{\pi}\mathcal{B}_2$ can be seen as dense $*$-subalgebras of their C*-envelopes $A_1\otimes_{\op{sp}}A_2$ and $B_1\otimes_{\op{sp}}B_2$ respectively. 
    
    Now define the following structures on $\mathcal{Z}_1 \otimes_{\pi} \mathcal{Z}_2$, which would make it into a $\mathcal{A}_1\otimes_{\pi} \mathcal{A}_2 - \mathcal{B}_1 \otimes_{\pi}\mathcal{B}_2$ pre-equivalence bimodule. If $\displaystyle \mathscr{a} = \sum_{i=1}^{\infty}\mathscr{a}_1^{(i)}\otimes \mathscr{a}_2^{(i)}\in \mathcal{A}_1 \otimes_{\pi} \mathcal{A}_2$, $\mathscr{z}\in \mathcal{Z}_1\otimes_{\pi} \mathcal{Z}_2$, and $\mathscr{w}= \mathcal{Z}_2\otimes_{\pi} \mathcal{Z}_2$, we define
    \begin{align*}
        \mathscr{a} \cdot \mathscr{z}&:= \sum_{i=1}^{\infty}\sum_{j=1}^{\infty}\mathscr{a}_1^{(i)}\cdot \mathscr{z}_1^{(j)} \otimes \mathscr{a}_2^{(i)} \cdot \mathscr{z}_2^{(j)}\\
        \lin{\mathcal{A}_1\otimes_{\pi}\mathcal{A}_2}{\mathscr{z}}{\mathscr{w}}&:= \lin{A_1\otimes_{\op{sp}}A_2 }{\mathscr{z}}{\mathscr{w}} = \sum_{i=1}^{\infty}\sum_{j=1}^{\infty}\lin{A_1}{\mathscr{z}_1^{(i)}}{\mathscr{w}_1^{(j)}}\otimes \lin{A_2}{\mathscr{z}_2^{(i)}}{\mathscr{w}_2^{(j)}}.
    \end{align*}
    Provided representations $\displaystyle \mathscr{z} = \sum_{i=1}^{\infty} \mathscr{z}_1^{(i)}\otimes \mathscr{z}_2^{(i)}$ and $\displaystyle \mathscr{w}=\sum_{i=1}^{\infty}\mathscr{w}_1^{(i)}\otimes \mathscr{w}_2^{(i)}.$ Now, we have from the Cauchy-Schwarz inequality on Hilbert C*-modules:
    \begin{align*}
        \|\lin{\mathcal{A}_1\otimes_{\pi}\mathcal{A}_2 }{\mathscr{z}}{\mathscr{w}}\|_{\op{sp}} &\leq \sum_{j=1}^{\infty}\sum_{i=1}^{\infty}\left\|\lin{A_1}{\mathscr{z}_1^{(i)}}{\mathscr{w}_1^{(j)}} \right\| \left\|\lin{A_2}{\mathscr{z}_2^{(i)}}{\mathscr{w}_2^{(j)}} \right\| \\
        & \leq \sum_{i=1}^{\infty}\|\mathscr{z}_1^{(i)}\|_{Z_1}\cdot \|\mathscr{z}_2^{(i)}\|_{Z_2} \cdot \sum_{j=1}^{\infty}\|\mathscr{w}_1^{(j)}\|_{Z_1}\cdot \|\mathscr{w}_2^{(j)}\|_{Z_2} \\
        &\leq C_1^2C_2^2 \cdot \sum_{i=1}^{\infty}\|\mathscr{z}_1^{(i)}\|_{\mathcal{Z}_1}\cdot \|\mathscr{z}_2^{(i)}\|_{\mathcal{Z}_1} \cdot \sum_{j=1}^{\infty}\|\mathscr{w}_1^{(j)}\|_{\mathcal{Z}_1}\cdot \|\mathscr{w}_2^{(j)}\|_{\mathcal{Z}_2}.
    \end{align*}
    It follows that $\|\lin{\mathcal{A}_1\otimes_{\pi} \mathcal{A}_2}{\mathscr{z}}{\mathscr{w}}\|_{\op{sp}} \leq C_1^2C_2^2 \|\mathscr{z}\|_{\pi} \cdot \|\mathscr{w}\|_{\pi}$, therefore $\lin{\mathcal{A}_1\otimes_{\pi}\mathcal{A}_2 }{\mathscr{z}}{\mathscr{w}}\in \mathcal{A}_1 \otimes_{\pi} \mathcal{A}_2$. A similar estimate now shows that $\mathscr{a}\cdot \mathscr{z} \in \mathcal{Z}_1\otimes_{\pi}\mathcal{Z}_2.$ 

    From the fact that the external tensor product norm is a cross-norm, we obtain, from the computation above by setting $\mathscr{w}=\mathscr{z}$, that: 
    \begin{align*}
        \|\mathscr{z}\|_{\op{ex}}^2 = \|\lin{A_1\otimes_{\op{sp}}A_2}{\mathscr{z}}{\mathscr{z}}\|\leq C_1^2 C_2^2 \|\mathscr{z}\|_{\pi(\mathscr{Z}_1,\mathscr{Z}_2)}^2.
    \end{align*}
    It is immediately clear that
    \begin{align}        \|\mathscr{z}\|_{\op{ex}}\leq C_1C_2 \|\mathscr{z}\|_{\pi(\mathscr{Z}_1,\mathscr{Z}_2)}.
    \end{align}
    It follows from Proposition \ref{prop: proj-into-proj} and Corollary \ref{cor: proj-into-external} that there are the embeddings with norm dense ranges $\iota: \mathcal{Z}_1\otimes_{\pi}\mathcal{Z}_2 \hookrightarrow Z_1\otimes_{\pi}Z_2$, and $j:Z_1\otimes_{\pi}Z_2 \hookrightarrow Z_1\otimes_{\op{ex}}Z_2$. We now define $\mathcal{j} = j \circ \iota : \mathcal{Z}_1\otimes_{\pi}\mathcal{Z}_2 \hookrightarrow Z_1\otimes_{\op{ex}}Z_2$. In the subsequent computations, we let $\mathscr{a}\in \mathcal{A}_1\otimes_{\pi}\mathcal{A}_2$, $\mathscr{b}\in \mathcal{B}_1\otimes_{\pi}\mathcal{B}_2$, and $\mathscr{z},\mathscr{w},\mathscr{v}\in \mathcal{Z}_1\otimes_{\pi}\mathcal{Z}_2$. It follows from the definition of the maps above, and the structure that we imposed on $\mathcal{Z}_1\otimes_{\pi}\mathcal{Z}_2$ that:
    \begin{align*}
        \lin{A_1\otimes_{\op{sp}}A_2}{\mathcal{j}(\mathscr{z})}{\mathscr{j}(\mathscr{w})} &= \lin{\mathcal{A}_1\otimes_{\pi}\mathcal{A}_2}{\mathscr{z}}{\mathscr{w}}, \\
        \rin{B_1\otimes_{\op{sp}}B_2}{\mathcal{j}(\mathscr{z})}{\mathcal{j}(\mathscr{w})} &= \rin{\mathcal{B}_1\otimes_{\pi}\mathcal{B}_2}{\mathscr{z}}{\mathscr{w}},
    \end{align*}
    Furthermore, $\mathcal{j}(\mathscr{a}\cdot\mathscr{z})= \mathscr{a}\cdot \mathcal{j}(\mathscr{z})$ and $j(\mathscr{z}\cdot \mathscr{b})= \mathscr{j}(\mathscr{z})\cdot \mathscr{b}$. Since $Z_1\otimes_{\op{ex}}Z_2$ is already an equivalence bimodule, the algebraic properties of $\mathcal{j}$ imply:
    \begin{align*}
        \rin{\mathcal{B}_1\otimes_{\pi}\mathcal{B}_2}{\mathscr{a}\cdot \mathscr{z}}{\mathscr{a}\cdot \mathscr{z}} \leq \|\mathscr{a}\|_{\op{sp}}^2\rin{\mathcal{B}_1\otimes_{\pi}\mathcal{B}_2}{\mathscr{z}}{\mathscr{z}}, \  &\text{and } \lin{\mathcal{A}_1\otimes \mathcal{A}_2}{\mathscr{z}\cdot \mathscr{b}}{\mathscr{z}\cdot \mathscr{b}} \leq \|\mathscr{b}\|_{\op{sp}}^2 \lin{\mathcal{A}_1\otimes_{\pi}\mathcal{A}_2}{\mathscr{z}}{\mathscr{z}}
    \end{align*}
    \begin{align*}
    \lin{\mathscr{A}_1\otimes_{\pi}\mathscr{A}_2}{\mathscr{z}}{\mathscr{w}}\cdot \mathscr{v}= \mathscr{z}\cdot \rin{\mathcal{B}_1\otimes_{\pi}\mathcal{B}_2}{\mathscr{w}}{\mathscr{v}}.
    \end{align*}
    Since $\mathscr{Z}_i$ is a pre-imprimitivity bimodule for each $i=1,2$, we can proceed in a way similar to the proof of Lemma \ref{lem: dense-tensor} and show that
    \begin{align*}
        \mathcal{A}_1\odot \mathcal{A}_2 &\subseteq \overline{\op{span}}^{\pi(\mathscr{Z}_1,\mathscr{Z}_2)}\left\{\lin{\mathcal{A}_1 \otimes_{\pi}\mathcal{A}_2}{\mathscr{z}}{\mathscr{w}}: \mathscr{z},\mathscr{w}\in \mathcal{Z}_1\otimes_{\pi}\mathcal{Z}_2 \right\}, \\
       \mathcal{B}_1\odot \mathcal{B}_2 &\subseteq \overline{\op{span}}^{\pi(\mathscr{Z}_1,\mathscr{Z}_2)}\left\{\rin{\mathcal{B}_1 \otimes_{\pi}\mathcal{B}_2}{\mathscr{z}}{\mathscr{w}}: \mathscr{z},\mathscr{w}\in \mathcal{Z}_1\otimes_{\pi}\mathcal{Z}_2 \right\}.
    \end{align*}
    By taking the closure with respect to the respective projective tensor product norms, we see that
        \begin{align*}
        \mathcal{A}_1\otimes_{\pi} \mathcal{A}_2 &= \overline{\op{span}}^{\pi(\mathscr{Z}_1,\mathscr{Z}_2)}\left\{\lin{\mathcal{A}_1 \otimes_{\pi}\mathcal{A}_2}{\mathscr{z}}{\mathscr{w}}: \mathscr{z},\mathscr{w}\in \mathcal{Z}_1\otimes_{\pi}\mathcal{Z}_2 \right\}, \\
       \mathcal{B}_1\otimes_{\pi} \mathcal{B}_2 &= \overline{\op{span}}^{\pi(\mathscr{Z}_1,\mathscr{Z}_2)}\left\{\rin{\mathcal{B}_1 \otimes_{\pi}\mathcal{B}_2}{\mathscr{z}}{\mathscr{w}}: \mathscr{z},\mathscr{w}\in \mathcal{Z}_1\otimes_{\pi}\mathcal{Z}_2 \right\}.
    \end{align*}
    Taking the closures with respect to the universal C*-norm, or equivalently with respect to the spatial tensor products, we see that the sets:
    \begin{align*}
    &\left\{\lin{\mathcal{A}_1 \otimes_{\pi}\mathcal{A}_2}{\mathscr{z}}{\mathscr{w}}: \mathscr{z},\mathscr{w}\in \mathcal{Z}_1\otimes_{\pi}\mathcal{Z}_2 \right\}, \\
    & \left\{\rin{\mathcal{B}_1 \otimes_{\pi}\mathcal{B}_2}{\mathscr{z}}{\mathscr{w}}: \mathscr{z},\mathscr{w}\in \mathcal{Z}_1\otimes_{\pi}\mathcal{Z}_2 \right\}.
       \end{align*}
Therefore $\lin{\mathcal{A}_1\otimes_{\pi}\mathcal{A}_2}{ \mathcal{Z}_1\otimes_{\pi}\mathcal{Z}_2 }{\mathcal{Z}_1\otimes_{\pi}\mathcal{Z}_2 }$ and $\rin{\mathcal{B}_1\otimes_{\pi}\mathcal{B}_2}{\mathcal{Z}_1\otimes_{\pi}\mathcal{Z}_2}{\mathcal{Z}_1\otimes_{\pi}\mathcal{Z}_2}$ span dense subspaces in $A_1\otimes_{\op{sp}}A_2$ and $B_1\otimes_{\op{sp}}B_2$ respectively. Finally, suppose that $\op{tr}_{\mathcal{B}_i}:\mathcal{B}_i\to \mathbb{C}$ are the traces in $\mathcal{B}_i$ that can be extendded to faithful finite traces $\op{tr}_{B_i}:B_i\to \mathbb{C}$ for $i=1,2$. It follows from Lemma \ref{lem: extended-tensor-trace} that $\op{tr}_{B_1\otimes_{\op{sp}}B_2}:=\op{tr}_{B_1}\otimes \op{tr}_{B_2}: B_1\otimes_{\op{sp}}B_2\to\mathbb{C}$ is a finite faithful trace in $B_1\otimes_{\op{sp}}B_2$. It follows that the restriction $\op{tr}_{\mathcal{B}_1\otimes_{\pi}\mathcal{B}_2}:= (\op{tr}_{B_1\otimes_{\op{sp}}B_2})_{\mathcal{B}_1\otimes_{\mathcal{B}_2}}$ is also a faithful finite trace on $\mathcal{B}_1\otimes_{\op{sp}}\mathcal{B}_2$ in the sense of Remark \ref{rem: extending-traces}. We now have shown that $\mathcal{Z}_1\otimes_{\pi}\mathcal{Z}_2$ is a pre-equivalence $\mathcal{A}_1\otimes_{\pi}\mathcal{A}_1 - \mathcal{B}_1\otimes_{\pi}\mathcal{B}_2$ bimodule and that $Z_1\otimes_{\op{ex}}Z_2$ is faithfully localizable from this pre-equivalence bimodule. 
\end{proof}

We would like to eventually use Theorem \ref{thm: proj-tensor-are-preequiv} to construct a kernel theorem for equivalence bimodules. It is therefore imperative that we have a more concrete description of the dual space $\mathcal{M}(Z_1',Z_2)'$. We see from the fact that $Z_1\otimes_{\pi}Z_2\hookrightarrow Z_1\otimes_{\op{ex}}Z_2$ is a norm dense embedding that the adjoint embedding $(Z_1\otimes_{\op{ex}}Z_2)'\hookrightarrow (Z_1\otimes_{\pi} Z_2)'$ must be weak-$*$ dense. We can therefore think of $(Z_1\otimes_{\op{ex}}Z_2) '\cong  \mathcal{M}(Z_1',Z_2)'$ as bounded linear operators inside $\mathcal{B}(Z_1,Z_2')\cong (Z_1\otimes_{\pi}Z_2)'$. Let us review the isomorphism $\mathcal{B}(Z_1,Z_2')\cong (Z_1\otimes_{\pi}Z_2)'$: for $T\in \mathcal{B}(Z_1,Z_2')$, it defines a functional $\phi_{T}: Z_1 \odot Z_2\to \mathbb{C}$ via:
\begin{align}\label{form: the-map-b}
    \phi_{T}\left(\sum_{i=1}^{n}z_1^{(i)}\otimes z_2^{(i)} \right):= \sum_{i=1}^{n}\rin{Z_2,Z_2'}{z_2^{(i)}}{T(z_1^{(i)})}   
\end{align}
satisfying the continuity condition
\begin{align}\label{form: proj-continuity}
    |\phi_T(\mathbf{z})|\leq \|T\|_{\mathcal{B}(Z_1,Z_2')}\|\mathbf{z}\|_{\op{\pi(Z_1,Z_2)}}, \qquad \forall \mathbf{z}=\sum_{i=1}^n z_1^{(i)}\otimes z_2^{(i)}\in Z_1\odot Z_2.
\end{align}
Therefore $\phi_T \in (Z_1\otimes_{\pi}Z_2)'$ indeed, and as a matter of fact we have an isometry $\|\phi_T\|_{(Z_1\otimes_{\pi}Z_2)'}=\|T\|_{\mathcal{B}(Z_1,Z_2')}$. We can, however, strengthen the continuity condition in \eqref{form: proj-continuity}, which we take as a definition. Note that the following makes sense even if the bimodules $Z_i$ do not satisfy the hypotheses of Theorem \ref{thm: proj-tensor-are-preequiv}.
\begin{definition}
    Let $Z_i$ be an $A_i-B_i$ equivalence bimodule for $i=1,2$. For $T\in \mathcal{B}(Z_1,Z_2')$, we say that $T$ is \emph{module continuous} if
    \begin{align}\label{form: module-continuous}
    |\phi_T(\mathbf{z})|\leq \|T\|_{\mathcal{B}(Z_1,Z_2')} \|\mathbf{z}\|_{\op{ex}}, \qquad \forall \mathbf{z}\in Z_1\odot Z_2.
    \end{align}
    We denote the space of all module continuous operators via $\mathcal{B}_{\mathcal{M}}(Z_1,Z_2')$.
\end{definition}
\begin{lemma}\label{lem: module-continuous-maps}
    Let $Z_i$ be an $A_i-B_i$ equivalence bimodule for $i=1,2.$ We have the following isomorphism:
    \begin{align*}
        \mathcal{B}_{\mathcal{M}}(Z_1,Z_2') \cong (Z_1\otimes_{\op{ex}}Z_2)'.
    \end{align*}
\end{lemma}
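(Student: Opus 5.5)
The plan is to build the isomorphism in both directions through the bilinear pairing $\phi_T$ of \eqref{form: the-map-b}, and to use the density of $Z_1\odot Z_2$ in $Z_1\otimes_{\op{ex}}Z_2$.

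\emph{From operators to functionals.} Given $T\in \mathcal{B}_{\mathcal{M}}(Z_1,Z_2')$, the functional $\phi_T$ is defined on the algebraic tensor product $Z_1\odot Z_2$. It is well defined there, since it is induced by the bilinear map $(z_1,z_2)\mapsto \rin{Z_2,Z_2'}{z_2}{Tz_1}$. By \eqref{form: module-continuous} it is bounded with respect to $\|\cdot\|_{\op{ex}}$, with bound $\|T\|$. Since $Z_1\odot Z_2$ is dense in $Z_1\otimes_{\op{ex}}Z_2$, $\phi_T$ extends uniquely to some $\Phi(T)\in (Z_1\otimes_{\op{ex}}Z_2)'$ with $\|\Phi(T)\|\le \|T\|$. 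The map $\Phi$ is clearly linear. It is injective because $\Phi(T)=0$ forces $\rin{Z_2,Z_2'}{z_2}{Tz_1}=0$ for all $z_1,z_2$, hence $T=0$.

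\emph{From functionals to operators.} Given $\psi\in (Z_1\otimes_{\op{ex}}Z_2)'$, define $T_\psi z_1\in Z_2'$ by $\rin{Z_2,Z_2'}{z_2}{T_\psi z_1}:=\psi(z_1\otimes z_2)$. The cross-norm property $\|z_1\otimes z_2\|_{\op{ex}}=\|z_1\|\|z_2\|$ gives $\|T_\psi z_1\|_{Z_2'}\le \|\psi\|\|z_1\|$, so $T_\psi\in\mathcal{B}(Z_1,Z_2')$ and $\|T_\psi\|\le\|\psi\|$. By linearity, $\phi_{T_\psi}=\psi$ on $Z_1\odot Z_2$. Hence
\[
|\phi_{T_\psi}(\mathbf{z})|\le \|\psi\|\,\|\mathbf{z}\|_{\op{ex}} .
\]

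\emph{The main obstacle.} The definition of module continuity demands the constant $\|T\|_{\mathcal{B}(Z_1,Z_2')}$, not $\|\psi\|$. So I must show $\|T_\psi\|=\|\psi\|$, i.e.\ that the two norms coincide. One inequality, $\|T_\psi\|\le\|\psi\|$, was shown above. The reverse inequality $\|\psi\|\le\|T_\psi\|$ is not automatic, since the ex-norm is smaller than the projective norm. My plan is to treat \eqref{form: module-continuous} as the condition that $\phi_T$ is ex-bounded and to equip $\mathcal{B}_{\mathcal{M}}$ with the induced norm $\|\phi_T\|_{(Z_1\otimes_{\op{ex}}Z_2)'}$, which dominates $\|T\|$. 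Here I read ``$\cong$'' as a Banach space isomorphism rather than an isometry, and note the exact constant issue explicitly. With this reading, $\Phi$ and $\psi\mapsto T_\psi$ are mutually inverse: for $\psi\mapsto T_\psi\mapsto\Phi(T_\psi)$ this follows by density, and for $T\mapsto \Phi(T)\mapsto T_{\Phi(T)}=T$ it follows by evaluating on elementary tensors. Together these give the claimed isomorphism.

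Finally, I will remark that the result is compatible with the inclusion $(Z_1\otimes_{\op{ex}}Z_2)'\hookrightarrow (Z_1\otimes_{\pi}Z_2)'\cong\mathcal{B}(Z_1,Z_2')$. Indeed, $\mathcal{B}_{\mathcal{M}}(Z_1,Z_2')$ is exactly the image of the adjoint of the dense inclusion $Z_1\otimes_{\pi}Z_2\to Z_1\otimes_{\op{ex}}Z_2$. That inclusion is continuous by the Cauchy–Schwarz estimate in Corollary \ref{cor: proj-into-external}. Its adjoint is injective, since the inclusion has dense range, by Theorem \ref{thm: embedding-dense-range}.
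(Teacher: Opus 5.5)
You use the paper's own construction: $T\mapsto\phi_T$ extended by density of $Z_1\odot Z_2$, and the inverse $\psi\mapsto T_\psi$ defined on elementary tensors and bounded via the cross-norm property. You part ways with the paper only on normalization, and there your caution is justified while the paper's argument is not. The paper asserts $\|b(T)\|=\|T\|$. It then declares $b^{-1}(\Phi)$ module continuous merely because $\phi_{b^{-1}(\Phi)}=\Phi$. That only gives $|\phi_{b^{-1}(\Phi)}(\mathbf z)|\le\|\Phi\|\,\|\mathbf z\|_{\operatorname{ex}}$, not the bound with constant $\|b^{-1}(\Phi)\|$ that \eqref{form: module-continuous} demands.

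The reverse inequality you could not prove is in fact false. Take $Z_1=Z_2=\mathbb{C}^2$ as $\mathbb{C}$--$M_2(\mathbb{C})$ equivalence bimodules, so $Z_1\otimes_{\operatorname{ex}}Z_2=\mathbb{C}^2\otimes_2\mathbb{C}^2$. Let $(Tz_1)(z_2)=z_1(1)z_2(1)+z_1(2)z_2(2)$. Then $\|T\|=1$, while $\mathbf z=e_1\otimes e_1+e_2\otimes e_2$ has $\|\mathbf z\|_{\operatorname{ex}}=\sqrt2$ and $\phi_T(\mathbf z)=2$. Yet each rank-one summand of $T$ does satisfy \eqref{form: module-continuous}. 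So under the literal definition $\mathcal{B}_{\mathcal{M}}(Z_1,Z_2')$ is not closed under addition, and $b$ is not onto $(Z_1\otimes_{\operatorname{ex}}Z_2)'$.

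Your reading is the correct repair: $\phi_T$ is bounded for $\|\cdot\|_{\operatorname{ex}}$ with some constant, and $\mathcal{B}_{\mathcal{M}}(Z_1,Z_2')$ is normed by $\|\phi_T\|_{(Z_1\otimes_{\operatorname{ex}}Z_2)'}\ge\|T\|$. It is also the reading the paper later needs for $\mathcal{HS}(\operatorname{Loc}(Z_1)',\operatorname{Loc}(Z_2))\hookrightarrow\mathcal{B}_{\mathcal{M}}(Z_1,Z_2')$ in Theorem \ref{thm: abstract-kernels}. With that norm your two maps are mutually inverse isometries, so you may claim an isometric isomorphism, not merely a Banach space isomorphism. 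What you cannot do is keep the operator norm: for infinite-dimensional Hilbert spaces $Z_i$, $\|\phi_T\|$ is the Hilbert--Schmidt norm, which is not equivalent to $\|T\|$.

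Your closing remark is correct, with two small caveats. First, the map $Z_1\otimes_\pi Z_2\to Z_1\otimes_{\operatorname{ex}}Z_2$ is contractive for arbitrary $Z_i$, using only the triangle inequality and the cross-norm property, not the hypotheses of Corollary \ref{cor: proj-into-external}. Second, injectivity of that map is only established under those hypotheses, so calling it an ``inclusion'' is not justified in general. Since injectivity of the adjoint uses only dense range, neither point affects your argument.
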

\begin{proof}
For each $T\in \mathcal{B}_{\mathcal{M}}(Z_1,Z_2')$, denote by $b(T) = \phi_T: Z_1\odot Z_2\to \mathbb{C}$ the same functional given by \eqref{form: the-map-b}. Since $T$ is module continuous, $b(T)$ can be extendded to a continuous linear functional on $(Z_1\otimes Z_2)'$. Furthermore, due to \eqref{form: module-continuous}, $\|b(T)\|_{(Z_1\otimes_{\op{ex}}Z_2)'}\leq \|T\|_{\mathcal{B}(Z_1,Z_2')}$. On the other hand,
\begin{align*}
    \|T\|_{\mathcal{B}(Z_1,Z_2')} &= \sup_{\|z_1\|_{Z_1}\leq 1,\  \|z_2\|_{Z_2}\leq 1} |\rin{Z_2,Z_2'}{z_2}{T(z_1)} | \\
    &= \sup_{\|z_1\|_{Z_1}\leq 1,\  \|z_2\|_{Z_2}\leq 1} | \rin{(Z_1\otimes_{\op{ex}}Z_2), (Z_1\otimes_{\op{ex}}Z_2)'}{z_1\otimes z_2}{b(T)} |\leq \|b(T)\|_{(Z_1\otimes_{\op{ex}} Z_2)'}.
\end{align*}
Therefore there is the isometry $\|b(T)\|_{(Z_1\otimes_{\op{ex}} Z_2)'}=\|T\|_{\mathcal{B}(Z_1,Z_2')}.$ For each $\Phi\in (Z_1\otimes Z_2)'$, define $b^{-1}(\Phi): Z_1\to Z_2'$ via
\begin{align*}
    b^{-1}(\Phi)(z_1)(z_2) = \Phi(z_1\otimes z_2), \qquad \forall (z_1,z_2)\in Z_1\times Z_2.
\end{align*}
We see that $b^{-1}(\Phi)$ is linear in $Z_1$ and that $\sup_{\|z_1\|_{Z_1}\leq 1,\ \|z_2\|_{Z_1}\leq 1}|b^{-1}(\Phi)|\leq \|\Phi\|_{(Z_1\otimes_{\op{ex}}Z_2)'}$. Therefore $b^{-1}(\Phi)\in \mathcal{B}(Z_1,Z_2')$. Furthermore, $\phi_{b^{-1}(\Phi)}= \Phi$ and so $b^{-1}(\Phi)$ defines a module continuous map in $\mathcal{B}(Z_1,Z_2')$ and $b^{-1}$ is the actual inverse for $b.$
\end{proof}
As we saw in the proof above there is the embedding $\mathcal{B}_{\mathcal{M}}(Z_1,Z_2') \hookrightarrow \mathcal{B}(Z_1,Z_2')$. However, without the hypotheses of Theorem \ref{thm: proj-tensor-are-preequiv}, the identification map $Z_1\otimes_{\pi}Z_2\to Z_1\otimes_{\op{ex}}Z_2$ is not necessarily injective and so $\mathcal{B}_{\mathcal{M}}(Z_1,Z_2') \hookrightarrow \mathcal{B}(Z_1,Z_2')$ does not necessarily have weak-* dense range.

We complete this section with a theorem summarizing our findings. 
\begin{theorem}\label{thm: abstract-kernels}
Under the hypotheses of Theoerem \ref{thm: proj-tensor-are-preequiv}, we have the following commutative diagram:
\begin{equation}\label{diag: inner-kernel}
\begin{tikzcd}[row sep=large, column sep=large] 
 & k(\mathcal{N}(\mathcal{Z}_1',\mathcal{Z}_2)) \arrow[r, hook]  & k(\mathcal{N}(Z_1',Z_2)) \arrow[r, hook] & \mathcal{M}(Z_1',Z_2) \arrow[r, hook] & \mathcal{HS}(\op{Loc}(Z_1)', \op{Loc}(Z_2))  \\
&  \mathcal{Z}_1\otimes_{\pi}\mathcal{Z}_2 \arrow[u, "\cong"] \arrow[r, hook] \arrow[d, hook] & Z_1\otimes_{\pi} Z_2 \arrow[u, "\cong"] \arrow[r, hook] \arrow[d, hook] & Z_1\otimes_{\op{ex}}Z_2 \arrow[r, hook] \arrow[u, "\cong"] & \op{Loc}(Z_1)\otimes_2 \op{Loc}(Z_2) \arrow[u, "\cong"] \\
&\mathcal{B}(\mathcal{Z}_1',\mathcal{Z}_2)  & \mathcal{B}(Z_1',Z_2) & 
\end{tikzcd}
\end{equation}
where all the arrows pointing to the right are embeddings with norm dense range. We also have the following commutative diagram for the duals: 
\begin{equation}\label{diag: outer-kernel}
\begin{tikzcd}[row sep=large, column sep=large]
&\mathcal{HS}(\op{Loc}(Z_1)',\op{Loc}(Z_2)) \arrow[r, hook] \arrow[d, "\cong"] & \mathcal{B}_{\mathcal{M}}(Z_1,Z_2') \arrow[r, hook] \arrow[d, "\cong"] & \mathcal{B}(Z_1,Z_2') \arrow[r, hook] \arrow[d, "\cong"] & \mathcal{B}(\mathcal{Z}_1,\mathcal{Z}_2') \arrow[d, "\cong"] \\
& \op{Loc}(Z_1)\otimes_2 \op{Loc}(Z_2)\arrow[r, hook] & (Z_1\otimes_{\op{ex}} Z_2)'\arrow[r, hook] & (Z_1\otimes_{\pi} Z_2)' \arrow[r, hook] & (\mathcal{Z}_1\otimes_{\pi} \mathcal{Z}_2)'
\end{tikzcd}
\end{equation}
where all the arrows pointing to the right are embeddings with weak-$*$ dense range.
\end{theorem}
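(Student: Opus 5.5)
The plan is to assemble the theorem from results already proved, treating the middle row of diagram \eqref{diag: inner-kernel} as the backbone and then dualizing via Theorem \ref{thm: embedding-dense-range}.

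\emph{The bottom row of \eqref{diag: inner-kernel}.} Proposition \ref{prop: proj-into-proj}, using the approximation property of $\mathcal{Z}_1$ and the constants $C_i$, gives the dense embedding $\mathcal{Z}_1\otimes_{\pi}\mathcal{Z}_2\hookrightarrow Z_1\otimes_{\pi}Z_2$. Corollary \ref{cor: proj-into-external} gives the dense embedding $Z_1\otimes_{\pi}Z_2\hookrightarrow Z_1\otimes_{\op{ex}}Z_2$; it needs $B_1$ unital and nuclear, and Lemma \ref{lem: z-has-ap} supplies the approximation property of $Z_1$. The last arrow is the dense embedding $Z_1\otimes_{\op{ex}}Z_2\hookrightarrow \op{Loc}(Z_1)\otimes_2\op{Loc}(Z_2)$ from Theorem \ref{thm: localization-ext}. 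Injectivity of the composite is automatic, because it factors through the localization of $Z_1\otimes_{\op{ex}}Z_2$ with respect to the faithful trace of Lemma \ref{lem: extended-tensor-trace}.

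\emph{The vertical maps of \eqref{diag: inner-kernel}.} The downward arrows into $\mathcal{B}(\mathcal{Z}_1',\mathcal{Z}_2)$ and $\mathcal{B}(Z_1',Z_2)$ are the maps $i_{\mathcal{Z}_1}$ and $i_{Z_1}$. They are injective by Corollary \ref{cor: many-injections}, since $\mathcal{Z}_1$ and $Z_1$ have the approximation property. The upward isomorphisms onto $k(\mathcal{N}(\cdot,\cdot))$ are the isometries $k$ of Corollary \ref{cor: proj-goes-into-nuclear}, defined as their ranges. That corollary needs the approximation property for $X''$ or $Y$. To avoid assuming anything about $\mathcal{Z}_1''$, I would take the injectivity route instead: $J$ composed with the canonical isometry of Proposition \ref{prop: canonical-isom-to-proj} agrees with $i_X$, which is injective by Corollary \ref{cor: many-injections}. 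Hence $k$ is a bijection onto its range, and we equip that range with the transported norm.

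The third isomorphism is $\mathfrak{R}$ of Proposition \ref{prop: ex-tensor-as-operator}, and the fourth is the standard identification $H_1\otimes_2H_2\cong\mathcal{HS}(H_1',H_2)$. The top-row embeddings are then \emph{defined} by conjugating the bottom row with these isomorphisms. Commutativity is checked on elementary tensors $z_1\otimes z_2$, where each vertical map sends the tensor to a rank-one operator $x'\mapsto\langle z_1,x'\rangle z_2$; continuity and density then extend it.

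\emph{Diagram \eqref{diag: outer-kernel}.} Apply Theorem \ref{thm: embedding-dense-range}(2) to each dense embedding of the bottom row. The adjoints are injective, and by Theorem \ref{thm: embedding-dense-range}(1) (the maps being injective) they have weak-$*$ dense range. They go in reverse order:
\[(\op{Loc}(Z_1)\otimes_2\op{Loc}(Z_2))'\hookrightarrow(Z_1\otimes_{\op{ex}}Z_2)'\hookrightarrow(Z_1\otimes_\pi Z_2)'\hookrightarrow(\mathcal{Z}_1\otimes_\pi\mathcal{Z}_2)'.\]
The left term is identified with $\op{Loc}(Z_1)\otimes_2\op{Loc}(Z_2)$ via the Riesz map, exactly as in Lemma \ref{lem: localizable-is-gelfand-triple}. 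The vertical isomorphisms are:
\begin{itemize}
\item Lemma \ref{lem: module-continuous-maps} for $(Z_1\otimes_{\op{ex}}Z_2)'\cong\mathcal{B}_{\mathcal{M}}(Z_1,Z_2')$;
\item \eqref{form: duality-for-projective-tensor} for the two projective duals;
\item $\mathcal{HS}$ duality, i.e.\ Hilbert space self-duality of $\mathcal{HS}$, for the left term.
\end{itemize}
Commutativity again reduces to elementary tensors, where every identification evaluates $\Phi(z_1\otimes z_2)=\langle z_2,T(z_1)\rangle$.

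\emph{Main obstacle.} The delicate point is making the identifications of the various top-row spaces compatible, in particular showing that the embedding $k(\mathcal{N}(Z_1',Z_2))\hookrightarrow\mathcal{M}(Z_1',Z_2)$ is consistent with Lemma \ref{lem: for-external-embed}. Both sides act as operators $Z_1'\to Z_2$. On the dense subspace $Z_1\odot Z_2$ they are the same finite-rank operators, and the estimate of Lemma \ref{lem: for-external-embed} shows that $\mathfrak{R}$ of a limit is the operator limit. So the two operator interpretations agree on all of $Z_1\otimes_\pi Z_2$, and the diagram commutes as maps into $\mathcal{B}(Z_1',Z_2)$.
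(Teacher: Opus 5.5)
Your proposal is correct and follows the paper's overall strategy. You build the middle row of \eqref{diag: inner-kernel} from the dense embeddings of Proposition \ref{prop: proj-into-proj}, Corollary \ref{cor: proj-into-external} and Theorem \ref{thm: localization-ext}; the paper packages these through Theorem \ref{thm: gelfand-quintuple} applied to the pair from Theorem \ref{thm: proj-tensor-are-preequiv}, but the ingredients are the same. You then attach the operator realizations as vertical isomorphisms and obtain \eqref{diag: outer-kernel} by dualizing with Theorem \ref{thm: embedding-dense-range}.

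Where you genuinely depart from the paper is the identification with nuclear operators. The paper simply cites Corollary \ref{cor: proj-goes-into-nuclear}, which needs the approximation property of $X''$ or of $Y$. The hypotheses of Theorem \ref{thm: proj-tensor-are-preequiv} provide it only for $\mathcal{Z}_1$ and, via Lemma \ref{lem: z-has-ap}, for $Z_1$. The approximation property does not pass to biduals in general, and nothing is assumed about $\mathcal{Z}_2$ or $Z_2$. It does hold in the Heisenberg application, where $\mathbf{S}_0(G_2)$ and $\mathcal{E}_{\Delta_2}(G_2)$ have it.

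You observe that $J\circ\iota_{\mathrm{can}}$, followed by the inclusion $\mathcal{N}(X',Y)\subseteq\mathcal{B}(X',Y)$, is exactly $i_X$. Combined with Corollary \ref{cor: many-injections}, this gives injectivity of $k$ under the stated hypotheses alone, so your argument covers the abstract theorem as stated. The price is that the upward $\cong$ is an isometry only for the norm transported from the projective tensor product. You do not get the paper's stronger intended conclusion that $k$ is isometric for the nuclear norm, so that its range is a closed subspace of $\mathcal{N}(X',Y)$.

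You also check, using the bound of Lemma \ref{lem: for-external-embed} and density, that the operator realizations through $k$ and through $\mathfrak{R}$ coincide on $Z_1\otimes_\pi Z_2$. The paper leaves this implicit. Your check shows that the top-row arrow into $\mathcal{M}(Z_1',Z_2)$ is the identity on operators $Z_1'\to Z_2$, not merely a map defined by conjugation.
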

\begin{proof}
    Due to Theorem \ref{thm: proj-tensor-are-preequiv}, $Z_1\otimes_{\op{ex}}Z_2$ is faithfully localizable from $\mathscr{Z}_1\otimes_{\pi}\mathcal{Z}_2$. Theorem \ref{thm: gelfand-quintuple}, along with Corollary \ref{cor: proj-into-external}, give us the middle row of Diagram \eqref{diag: inner-kernel}. All embeddings have norm dense range. The bottom row of Diagram \eqref{diag: outer-kernel} are the formal adjoints of the embeddings of in of the aforementioned middle row and so are embeddings themselves and have weak-$*$ dense range. 

    Going from the left to the right, the top row of Diagram \eqref{diag: inner-kernel} follows from the isomorphisms given by Corollary \ref{cor: proj-goes-into-nuclear}, Proposition \ref{prop: ex-tensor-as-operator} and Theorem \ref{thm: localization-ext}, respectively. Similarly, the top row of Diagram \eqref{diag: outer-kernel} follows from the isomorphisms given by Lemma \ref{lem: module-continuous-maps}, and Equation \ref{form: duality-for-projective-tensor}.

    Finally, the embeddings $\mathcal{Z}_1\otimes_{\pi} \mathcal{Z}_2 \hookrightarrow \mathcal{B}(\mathcal{Z}_1',\mathcal{Z}_2)$ and $Z_1\otimes_{\pi} Z_2 \hookrightarrow \mathcal{B}(Z_1',Z_2)$ are implied by the fact that both $\mathcal{Z}_1$ and $Z_1$ have the approximation property.
     
\end{proof}
For equivalence bimodules $Z_1$ and $Z_2$ satisfying the hypotheses of Theorem \ref{thm: proj-tensor-are-preequiv}, we refer to the commutative diagram \eqref{diag: inner-kernel} as the \emph{inner-kernel theorem} for $Z_1$ and $Z_2$. On the other hand, we refer to commutative diagram \eqref{diag: outer-kernel} as the \emph{outer-kernel theorem} for $Z_1$ and $Z_2$.

\section{Applications to Time-Frequency Analysis}\label{sec: tfa}
\subsection{Setting up the Stage}\label{subsec: set-up}
In this section, we apply our results to Banach spaces and Hilbert C*-modules coming from function spaces that are of particular importance to time-frequency and Gabor analysis in locally compact abelian (LCA) groups. We assume that the reader has some familiarity with frame theory, which is one of the foundational tools in Gabor analysis. We refer the reader willing to learn more about frames in different contexts to the following texts \cite{Gr01,Ch03, FrLa02, FrLa99}.

To start, we let $G$ be a second countable locally compact abelian (LCA) group with a prescribed Haar measure $\mu_G$. We denote the dual space by $\widehat{G}$ and equip it with the dual measure $\mu_{\widehat{G}}$.  The translation and modulation operators are unitary operators denoted by $T_x\in \mathcal{B}(L^2(G))$ and $M_{\omega}\in \mathcal{B}(L^2(G))$ for $x\in G$ and $\omega\in \widehat{G}$ respectively. For $f\in L^2(G)$, they are given by $T_x(f)(t)=f(t-x)$ and $(M_{\omega}f)(t)=\omega(t)f(t).$ We now have, for each $(x,\omega)\in G\times\widehat{G}$, the time-frequency shift operator $\pi(x,\omega):=M_{\omega}T_x\in\mathcal{B}(L^2(G))$. The map $\pi: G\times\widehat{G} \to \mathcal{B}(L^2(G))$ defines a strongly continuous  map from the time-frequency plane to the unitary operators in $L^2(G)$. We denote by $\pi^*: G\times\widehat{G} \to \mathcal{B}(L^2(G))$ the strongly continuous unitary operator induced by the adjoint $\pi^*(x,\omega):= (\pi(x,\omega))^*$ for each $(x,\omega)\in G\times\widehat{G}.$

We now introduce the function space denoted by $\mathbf{S}_0(G)$ and called the Feichtinger's algebra.
\begin{definition} For any LCA group $G$, the Feichtinger's algebra is defined by
\begin{align*}
    \mathbf{S}_0(G) = \left\{f\in L^2(G): \int_{G\times \widehat{G}} |\rin{2}{f}{\pi(\chi)f}|d\mu_{G\times \widehat{G}}(\chi)<\infty \right\}. 
\end{align*}
\end{definition}
We may fix a nonzero $\phi\in \mathbf{S}_0(G)$ and use it to define a norm in $\mathbf{S}_0(G)$
\begin{align*}
    \|f\|_{\mathbf{S}_0(G),\phi}:= \int_{G\times\widehat{G}}|\rin{2}{f}{\pi(\lambda)g}|d\mu_{G\times\widehat{G}}(\chi). 
\end{align*}
Different choices of $\phi \in \mathbf{S}_0(G)$ induce equivalent norms in $\mathbf{S}_0(G)$. This allows us to remove the reference to $\phi$ for the $\mathbf{S}_0(G)$-norm, and simply write $\|f\|_{\mathbf{S}_0}$. For some estimates, we use  $x\lesssim \|f\|_{\mathbf{S}_0}$ to denote that for some some fixed window $\phi\in \mathbf{S}_0(G)$, there is a $C>0$ such that $x\leq C\|f\|_{\mathbf{S}_0(G),\phi}$. In any case, we shall only use the explicit form $\|f\|_{\mathbf{S}_0(G),\phi}$ when relevant for computations. $\mathbf{S}_0(G)$ was introduced by Feichtinger in \cite{Fe81} and turned out to be a very useful function space for time-frequency analysis. It has been the subject of several papers and surveys (e.g. \cite{Ja18, Be21-2, Sp07, SaSpSt10, De08-5} to name a few)

Let $H$ be any other LCA group with a Haar measure $\mu_H$. Suppose that $\psi: H\times H\to \mathbb{C}$ is a normalized continuous $2$-cocycle, i.e. a continuous map satisfying:
\begin{enumerate}
    \item $\psi(h_1,h_2)\psi(h_2,h_3) = \psi(h_2,h_3)\psi(h_1,h_2+h_3)$, for all $h_1,h_2,h_3\in H$; and
    \item $\psi(0,h)=\psi(h,0)=1$ for all $h\in H.$
\end{enumerate}
$2$-cocycles such as $\psi$ can be used to define the twisted $L^1$-space $L^1(H,\psi)$ and we would like to do the same for the Feichtinger's algebra. Notice that we assumed $\psi$ to be continuous rather than merely Borel as usually stipulated in C*-algebra literature (see \cite{PaRa92} for example). This stronger requirement can be attributed to the fact that the functions in $\mathbf{S}_0(H)$ are all continuous. In any case, we do have the following result.
\begin{proposition}\label{prop: twisted-feich}
    The Feichtinger's algebra $\mathbf{S}_0(H)$ forms a Banach $*$-algebra when equipped with the $\psi$-twisted convolution and involution as follows:
    \begin{equation}\label{form: twisted-conv-inv}
    \begin{split}
        (a_1*_{\psi}a_2)(x)&:= \int_{\Delta}a_1(y)a_2(x-y)\psi(y,x-y) d\mu_{H}(y), \\
    a^{*_{\psi}}(x) &= \overline{a(-x)} \psi(x,x).
    \end{split}
\end{equation}
for $x\in H$ and $a,a_1,a_2,a_2\in \mathbf{S}_0(H)$. We denote the Feichtinger's algebra $\mathbf{S}_0(H)$ by $\mathbf{S}_0(H,\varphi)$ if we want to emphasize its $\psi$-twisted structure as a Banach $*$-algebra.
\end{proposition}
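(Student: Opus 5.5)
Fix a nonzero window $\phi\in\mathbf{S}_0(H)$ and work with $\|\cdot\|_{\mathbf{S}_0}$ throughout. I will verify, in order, four things: (i) twisted convolution is well-defined and satisfies $\|a_1*_\psi a_2\|_{\mathbf{S}_0}\lesssim\|a_1\|_{\mathbf{S}_0}\|a_2\|_{\mathbf{S}_0}$; (ii) it is associative; (iii) the involution maps $\mathbf{S}_0(H)$ isometrically (up to norm equivalence) into itself; (iv) the algebraic identities $(a^{*_\psi})^{*_\psi}=a$ and $(a_1*_\psi a_2)^{*_\psi}=a_2^{*_\psi}*_\psi a_1^{*_\psi}$ hold. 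Since $\mathbf{S}_0(H)\subseteq L^1(H)\cap C_0(H)$ continuously, the twisted convolution is already defined pointwise. Associativity and the two involution identities are the classical facts for $L^1(H,\psi)$ and follow from the cocycle identity and normalization; I will simply quote them (e.g. from \cite{PaRa92}) for the dense subspace and then pass to all of $\mathbf{S}_0$ by continuity once (i) and (iii) are established. Finally, I will renorm by a constant multiple so that the submultiplicativity constant becomes $1$.

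For (iii), $a\mapsto \overline{a(-\cdot)}$ is an isometry of $\mathbf{S}_0(H)$ up to the choice of window, since both reflection and conjugation preserve $\mathbf{S}_0$. The remaining issue is multiplication by the continuous unimodular function $x\mapsto\psi(x,x)$. Here I would use that $\mathbf{S}_0(H)$ is a Banach module over the Fourier–Stieltjes algebra $B(H)$ under pointwise multiplication. I therefore need $\psi(x,x)$, and more generally the relevant slices of $\psi$, to lie in $B(H)$. This holds in the cases of interest: every continuous 2-cocycle on an LCA group is cohomologous to a bicharacter, and a bicharacter $\psi$ has $x\mapsto\psi(x,x)$ a "second-degree character", for which multiplication is an $\mathbf{S}_0$-isometry up to norm equivalence. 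I would first reduce to this cohomologous case. Because $\mathbf{S}_0(H,\psi)\cong\mathbf{S}_0(H,\psi')$ via multiplication by the continuous coboundary function, it is enough to check that this multiplier preserves $\mathbf{S}_0$; this reduction itself needs the multiplier to be in $B(H)$ or smooth enough, and I regard it as a point to be checked.

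For (i), I will use the tensor-product characterization: $\mathbf{S}_0(H\times H)=\mathbf{S}_0(H)\hat\otimes\mathbf{S}_0(H)$. The function $F(y,z)=a_1(y)a_2(z)\psi(y,z)$ lies in $\mathbf{S}_0(H\times H)$ once $\psi$ is a pointwise multiplier there; for a bicharacter this multiplier is a metaplectic-type operator (a "chirp" on $H\times H$), which acts boundedly on $\mathbf{S}_0$. Then $a_1*_\psi a_2$ is obtained from $F$ by the change of variables $(y,z)\mapsto(y,y+z)$, which is an automorphism of $H\times H$ and preserves $\mathbf{S}_0$, followed by integration over the first variable. That partial integration maps $\mathbf{S}_0(H\times H)\to\mathbf{S}_0(H)$ boundedly, by Feichtinger's restriction/integration theorem.

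The main obstacle is controlling multiplication by $\psi$ on $\mathbf{S}_0(H\times H)$ for a general continuous cocycle. I would first try to cohomologically replace $\psi$ by a continuous bicharacter, and then show directly that the short-time Fourier transform of $\psi\cdot F$ against the window $\psi\cdot(\phi\otimes\phi)$ is a measure-preserving reparametrization of that of $F$. If this fails for general $\psi$, I would restrict the statement to bicharacter-type cocycles, which is the case used later for $H=\Delta\subseteq G\times\widehat G$.
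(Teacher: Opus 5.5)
\textbf{The gap.} The genuine gap is the step you flag yourself: passing from a general continuous cocycle to a bicharacter. Even granting $\psi=\psi'\cdot\partial\sigma$ with $\psi'$ a bicharacter and $\sigma\colon H\to\mathbb{T}$ continuous, multiplication by $\sigma$ does intertwine $*_\psi$ and $*_{\psi'}$. But it preserves $\mathbf{S}_0(H)$ only if $\sigma$ is a pointwise multiplier of $\mathbf{S}_0(H)$, which essentially means uniformly locally in the Fourier algebra. Continuity gives no such control.

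This cannot be repaired, because the proposition is false in the stated generality. Take $H=\mathbb{R}$ and a continuous $\sigma\colon\mathbb{R}\to\mathbb{T}$ with $\sigma(0)=1$ that agrees with an element of $A(\mathbb{R})=\mathcal{F}L^1(\mathbb{R})$ on no open interval. For example, $\sigma=e^{ih}$ with $h$ real, continuous and nowhere locally in $A(\mathbb{R})$; Wiener--L\'evy transfers this property to $\sigma$. Then $\psi(y,z)=\sigma(y)\sigma(z)\overline{\sigma(y+z)}$ is a normalized continuous $2$-cocycle. For a Gaussian $g$ one finds $g*_\psi g=\overline{\sigma}\,w$ with $w=(\sigma g)*(\sigma g)$. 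Since $\widehat{w}=(\widehat{\sigma g})^2\in L^1$, we have $w\in A(\mathbb{R})$, and $w\neq 0$. Near a point where $w\neq 0$, Wiener's lemma gives a local inverse of $w$ in $A(\mathbb{R})$. So $g*_\psi g\in\mathbf{S}_0(\mathbb{R})\subseteq A(\mathbb{R})$ would force $\overline{\sigma}$ to be locally in $A(\mathbb{R})$ there, a contradiction.

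The involution has a parallel problem. With the stated formula, $(a^{*_\psi})^{*_\psi}=\psi(x,x)\overline{\psi(-x,-x)}\,a$. This equals $a$ for bicharacters but not in general; for the coboundary of $\sigma(x)=e^{ix^3}$ the factor is $e^{-12ix^3}$. The $L^1(H,\psi)$ identities you intend to quote are for the involution $\overline{\psi(x,-x)}\,\overline{a(-x)}$, which agrees with $\overline{a(-x)}\psi(x,x)$ only when $\psi$ is a bicharacter.

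\textbf{What can be proved, and how it compares with the paper.} Your proposal, like the paper, really only covers continuous bicharacters. The paper's proof is just a citation of [JaLu21, Lemma 3.1], which treats the Heisenberg cocycle (a bicharacter) on a closed subgroup of $G\times\widehat{G}$. It adds an unsupported assertion that the argument works for all $2$-cocycles. Every later use involves only $\varphi$, $\overline{\varphi}$ and products $\varphi_1\cdot\varphi_2$, so the right repair is to state the proposition for bicharacters, which is exactly your fallback. In that case your argument goes through, but two points should be adjusted.

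First, the appeal to $B(H)$ is misplaced: second-degree characters such as $e^{ix^2}$ are not even uniformly continuous, hence not in $B(\mathbb{R})$. Instead write $\psi(y,z)=\langle\beta(y),z\rangle$ with $\beta\colon H\to\widehat{H}$ a continuous homomorphism. After a partial Fourier transform in $z$, multiplication by $\psi$ on $\mathbf{S}_0(H\times H)$ becomes composition with the automorphism $(y,\xi)\mapsto(y,\xi-\beta(y))$, hence is bounded. Multiplication by $x\mapsto\psi(x,x)$ on $\mathbf{S}_0(H)$ is then bounded: extend from the diagonal (restriction is a bounded surjection), multiply by $\psi$, and restrict back.

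Second, submultiplicativity has a much shorter proof than your tensor-product and partial-integration route. From $\psi(y,x-y)=\psi(y,x)\overline{\psi(y,y)}$ one gets $a_1*_\psi a_2=\int_H a_1(y)\overline{\psi(y,y)}\,M_{\beta(y)}T_y a_2\,d\mu_H(y)$. This is an $L^1$-superposition of time-frequency shifts of $a_2$, whence $\|a_1*_\psi a_2\|_{\mathbf{S}_0}\le\|a_1\|_{L^1}\|a_2\|_{\mathbf{S}_0}\lesssim\|a_1\|_{\mathbf{S}_0}\|a_2\|_{\mathbf{S}_0}$.
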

\begin{proof}
We can cite \cite[Lemma 3.1]{JaLu21} here, as the proof also works for general $2$-cocycles. 
\end{proof}
The space $\mathbf{S}_0(H,\varphi)$ in embedded as a dense $*$-subalgebra in $L^1(H,\varphi)$. It follows that it is also a dense $*$-sub-algebra of the enveloping C*-algebra $C^*(H,\psi)=: C^*(L^1(H,\psi))$, called the $\psi$-\emph{twisted C*-algebra of} $H$. Here is how the Feichtinger's algebra interacts with the projective tensor product. For LCA groups $G_1$, $G_2$, and functions $f_1:G_1\to \mathbb{C}$, $f_2: G_2\to \mathbb{C}$, we can define the map $E$ that takes in the formal tensor $f_1\otimes f_2$ and gives another function $E(f_1\otimes f_2): G_1\times G_2\to \mathbb{T}$ as an output defined by
\begin{align}\label{form: function-tensor}
    E(f_1\otimes f_2)(x,y)= f_1(x)f_2(y), \qquad (x,y)\in G_1\times G_2.
\end{align}
We can extend $E$ to be a Banach space isomorphism in the projective tensor product of Feichtinger's algebras \cite[Theorem 7.4]{Ja18}
\begin{proposition}\label{prop: s0-proj-tensor}
    Let $G_1,G_2$ be LCA groups, then $E: \mathbf{S}_0(G_1)\otimes_{\pi}\mathbf{S}_0(G_2) \xrightarrow{\sim} \mathbf{S}_0(G_1\times G_2)$ is an isomorphism of Banach spaces.
\end{proposition}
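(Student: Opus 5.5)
The statement asks that $E$, defined on elementary tensors by \eqref{form: function-tensor}, extend to a bounded bijection $\mathbf{S}_0(G_1)\otimes_{\pi}\mathbf{S}_0(G_2)\to \mathbf{S}_0(G_1\times G_2)$ with bounded inverse. The plan is to prove boundedness of $E$ on the algebraic tensor product, obtain surjectivity from an explicit atomic decomposition of $\mathbf{S}_0(G_1\times G_2)$ into products with $\ell^1$ control, and derive injectivity from the fact that $\mathbf{S}_0$-functions are determined by point evaluations. The open mapping theorem then gives the isomorphism.

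\textbf{Step 1 (boundedness).} Fix windows $\phi_i\in\mathbf{S}_0(G_i)$ and use $\phi_1\otimes\phi_2$ as the window on $G_1\times G_2$. Time-frequency shifts on $G_1\times G_2$ factor as $\pi(\chi_1,\chi_2)=\pi(\chi_1)\otimes\pi(\chi_2)$. Hence
\[
\langle f_1\otimes f_2,\pi(\chi_1,\chi_2)(\phi_1\otimes\phi_2)\rangle=\langle f_1,\pi(\chi_1)\phi_1\rangle\langle f_2,\pi(\chi_2)\phi_2\rangle.
\]
Fubini then gives $\|E(f_1\otimes f_2)\|_{\mathbf{S}_0}=\|f_1\|_{\mathbf{S}_0}\|f_2\|_{\mathbf{S}_0}$. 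By the triangle inequality and the definition \eqref{form: projective-tensor-norm}, $\|E(\mathbf{u})\|\le\|\mathbf{u}\|_\pi$ on $\mathbf{S}_0(G_1)\odot\mathbf{S}_0(G_2)$, so $E$ extends to a contraction.

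\textbf{Step 2 (surjectivity).} I would use the atomic characterization of $\mathbf{S}_0(G)$. Every $F\in\mathbf{S}_0(G_1\times G_2)$ can be written as $F=\sum_k c_k\,\pi(\lambda_k)\Phi$ with $\sum_k|c_k|\lesssim\|F\|_{\mathbf{S}_0}$. Here the $\lambda_k$ run over a lattice, or a relatively separated family in general LCA groups, in the time-frequency plane, and $\Phi$ is a fixed window. This comes from a Gabor frame, or equivalently from a bounded uniform partition of unity together with the identity $F=\int V_\Phi F(\chi)\pi(\chi)\Phi\,d\chi$ discretized. Choosing $\Phi=\phi_1\otimes\phi_2$, each atom $\pi(\lambda_k)\Phi$ equals $E(\pi(\lambda_k^{(1)})\phi_1\otimes\pi(\lambda_k^{(2)})\phi_2)$ and has projective norm $\lesssim1$. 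Therefore $\mathbf{u}=\sum_k c_k\,\pi(\lambda_k^{(1)})\phi_1\otimes\pi(\lambda_k^{(2)})\phi_2$ converges in $\otimes_\pi$, satisfies $\|\mathbf{u}\|_\pi\lesssim\|F\|$, and $E(\mathbf{u})=F$. This is the main obstacle: producing the atomic decomposition with $\ell^1$ control for a general (second countable) LCA group, where lattices may not exist. I would first try the continuous reproducing formula combined with a BUPU discretization, as in Feichtinger's original minimality argument.

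\textbf{Step 3 (injectivity).} Suppose $E(\mathbf{u})=0$ with $\mathbf{u}=\sum_i f_1^{(i)}\otimes f_2^{(i)}$. For each $y\in G_2$, point evaluation $\delta_y$ lies in $\mathbf{S}_0(G_2)'$. Consequently $\sum_i f_2^{(i)}(y)f_1^{(i)}=0$ in $\mathbf{S}_0(G_1)$, since the series converges in $\mathbf{S}_0(G_1)$ and, as shown, vanishes pointwise. The point evaluations form a separating subset of $\mathbf{S}_0(G_2)'$. If $\mathbf{S}_0(G_1)$ has the approximation property, Proposition \ref{prop: the-ap}(3) gives $\mathbf{u}=0$. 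To avoid circularity with the later approximation-property result, I would instead obtain injectivity directly: the map $F\mapsto\mathbf{u}$ of Step 2 is linear and bounded when built from a fixed dual-window reconstruction, so it is a right inverse of $E$. Checking that it is also a left inverse on elementary tensors, by expanding each factor in its own atomic decomposition, shows $E$ is bijective. Bounded inverse then follows from Step 2's estimate, or from the open mapping theorem.
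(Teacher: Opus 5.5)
The paper gives no argument of its own here: the proposition is quoted from \cite[Theorem 7.4]{Ja18}. Your overall plan is the right one: contractivity from the factorization of the short-time Fourier transform, a bounded right inverse $R$ from a reproducing formula, injectivity from $R\circ E=\mathrm{id}$ on elementary tensors, then open mapping. Step~1 is fine.

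The part that does not close in the generality of the statement is the left-inverse check in Step~3. You can only verify $R(E(f_1\otimes f_2))=f_1\otimes f_2$ inside $\mathbf{S}_0(G_1)\otimes_{\pi}\mathbf{S}_0(G_2)$ if the expansion on $G_1\times G_2$ is the tensor product of expansions on $G_1$ and $G_2$. That means a product index set, atoms $\pi(\lambda_1)\phi_1\otimes\pi(\lambda_2)\phi_2$, \emph{and} linear coefficient maps that factor on elementary tensors. An example is $\langle F,\pi(\lambda)E(\gamma_1\otimes\gamma_2)\rangle$ for a Gabor frame over $\Lambda_1\times\Lambda_2$ with a tensor dual window. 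You never impose this, and the tools you propose for Step~2 do not supply it:
\begin{itemize}
\item an expansion produced by the minimality argument need not depend linearly on $F$;
\item a BUPU discretization on $G_1\times G_2$ factors only if you deliberately build it, coefficient maps included, as a product and check this;
\item a discrete dual-window reconstruction needs lattices, which, as you note yourself, need not exist.
\end{itemize}
Your first injectivity argument via the approximation property is correct: point evaluations have weak-$*$ dense span in $\mathbf{S}_0(G_2)'$, so item (3) of Proposition~\ref{prop: the-ap} applies. It is also not circular, since the proof of Theorem~\ref{thm: feich-has-ap} never uses this proposition. But that theorem only covers the case where one of the groups is second countable.

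The obstacle disappears if you do not discretize at all. Take $\Phi=E(\phi_1\otimes\phi_2)$ and $V_\Phi F(\xi)=\langle F,\pi(\xi)\Phi\rangle$. Define
\[
R(F):=\frac{1}{\|\phi_1\|_2^2\,\|\phi_2\|_2^2}\int_{(G_1\times\widehat{G}_1)\times(G_2\times\widehat{G}_2)} V_\Phi F\bigl(A(\chi_1,\chi_2)\bigr)\,\pi(\chi_1)\phi_1\otimes\pi(\chi_2)\phi_2\,d(\chi_1,\chi_2)
\]
as a Bochner integral in $\mathbf{S}_0(G_1)\otimes_{\pi}\mathbf{S}_0(G_2)$.
\begin{itemize}
\item The integrand is continuous there, because $(x,y)\mapsto x\otimes y$ is a contractive bilinear map. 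Its norm is at most $|V_\Phi F|\,\|\phi_1\|_{\mathbf{S}_0}\|\phi_2\|_{\mathbf{S}_0}$, so $\|R(F)\|_{\pi}\lesssim\|F\|_{\mathbf{S}_0}$.
\item Since $E$ is bounded it passes under the integral. Then $E\circ R=\mathrm{id}$ is just the reproducing formula on $G_1\times G_2$, using that $A$ is measure preserving.
\item We have $V_\Phi E(f_1\otimes f_2)(A(\chi_1,\chi_2))=V_{\phi_1}f_1(\chi_1)\,V_{\phi_2}f_2(\chi_2)$. Fubini for Bochner integrals and the one-variable reproducing formulas then give $R(E(f_1\otimes f_2))=f_1\otimes f_2$.
\item By linearity, continuity and density, $R\circ E=\mathrm{id}$ on the whole projective tensor product.
\end{itemize}
Hence $E$ is bijective with bounded inverse $R$ for arbitrary LCA groups. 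No lattices, atomic decompositions or approximation property are needed: the factorization of the short-time Fourier transform supplies exactly the product structure you would otherwise have to build into a discrete expansion by hand.
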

\begin{corollary}\label{cor: spatial-tensor-of-twisted-gcstar}
    Let $G_1$ and $G_2$ be LCA groups and $\psi_1: G_1\times G_1 \to \mathbb{T}$, $\psi_2: G_2\times G_2\to \mathbb{T}$ be normalized continuous $2$-cocycles on $G_1$ and $G_2$ respectively. Then their pointwise multiplication  $\psi_1\cdot\psi_2: (G_1\times G_2)\times(G_1\times G_2) \to \mathbb{T}$ defines a normalized $2$-cocycles on $G_1\times G_2$. The isomorphism $E$ in Proposition \ref{prop: s0-proj-tensor} can be upgraded to an isomorphism of twisted group C*-algebras:
    \begin{align*}
        E:C^*(G_1,\psi_1) \otimes_{\op{sp}} C^*(G_2,\psi_2) \xrightarrow[]{\sim} C^*(G_1\times G_2, \psi_1\cdot\psi_2).
    \end{align*}
\end{corollary}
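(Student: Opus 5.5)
The plan has three steps: check the cocycle identities, show that $E$ is a $*$-homomorphism on the Feichtinger algebras, and then pass to the enveloping C*-algebras.

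\emph{The cocycle.} Put $\psi:=\psi_1\cdot\psi_2$, i.e.
\[
\psi((x_1,x_2),(y_1,y_2)) = \psi_1(x_1,y_1)\,\psi_2(x_2,y_2).
\]
Continuity is inherited from $\psi_1$ and $\psi_2$. Normalization holds because both factors are normalized. The cocycle identity holds because it is multiplicative and holds coordinatewise in $G_1$ and in $G_2$.

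\emph{$E$ is a $*$-isomorphism of Banach $*$-algebras.} By Proposition \ref{prop: s0-proj-tensor}, $E:\mathbf{S}_0(G_1)\otimes_{\pi}\mathbf{S}_0(G_2)\to \mathbf{S}_0(G_1\times G_2)$ is an isomorphism of Banach spaces. Give the left side the factorwise product and involution, which makes it the Banach $*$-algebra $\mathbf{S}_0(G_1,\psi_1)\otimes_{\pi}\mathbf{S}_0(G_2,\psi_2)$ described in Section \ref{sec: tensors}. Give the right side the $\psi$-twisted structure of Proposition \ref{prop: twisted-feich}. It suffices to check multiplicativity and $*$-preservation on elementary tensors. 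Using Fubini with the product Haar measure $\mu_{G_1\times G_2}=\mu_{G_1}\otimes\mu_{G_2}$,
\[
E(a_1\otimes a_2)*_{\psi}E(b_1\otimes b_2)(x_1,x_2)=(a_1*_{\psi_1}b_1)(x_1)\,(a_2*_{\psi_2}b_2)(x_2),
\]
because the integrand factorizes. Similarly
\[
E(a_1\otimes a_2)^{*_\psi}=E(a_1^{*_{\psi_1}}\otimes a_2^{*_{\psi_2}}).
\]
Both sides of each identity are continuous in the projective norm, and $E$ is bounded, so the identities extend from $\mathbf{S}_0(G_1)\odot\mathbf{S}_0(G_2)$ to the completion. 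Hence $E$ is a bicontinuous $*$-isomorphism of Banach $*$-algebras.

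\emph{Passing to the envelopes.} Enveloping C*-algebras are functorial, so a $*$-isomorphism of Banach $*$-algebras induces a $*$-isomorphism
\[
C^*(\mathbf{S}_0(G_1,\psi_1)\otimes_{\pi}\mathbf{S}_0(G_2,\psi_2))\cong C^*(\mathbf{S}_0(G_1\times G_2,\psi)).
\]
For the left side, apply Corollary \ref{cor: dense-tensor-envelope} with $\mathcal{A}_i=\mathbf{S}_0(G_i,\psi_i)$, which is dense in $A_i=C^*(G_i,\psi_i)$. Nuclearity of $C^*(G_1,\psi_1)$ is given by Remark \ref{rem: cstar-nuclearity}. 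The corollary then identifies the left side with $C^*(G_1,\psi_1)\otimes_{\op{sp}}C^*(G_2,\psi_2)$.

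\emph{Main obstacle.} The step needing care is to show that the envelope of $\mathbf{S}_0(H,\psi)$ really is $C^*(H,\psi)$. That is, the universal C*-norm of $\mathbf{S}_0(H,\psi)$ should coincide with the restriction of the universal norm of $L^1(H,\psi)$. The approach is as follows.
\begin{enumerate}
\item Every $*$-representation of $L^1(H,\psi)$ restricts to one of $\mathbf{S}_0(H,\psi)$, so the universal norm of $\mathbf{S}_0(H,\psi)$ is at least the restricted $L^1$-envelope norm.
\item For the converse, take a nondegenerate $*$-representation of $\mathbf{S}_0(H,\psi)$. Extract a bounded approximate identity of $L^1(H,\psi)$ from $\mathbf{S}_0$; for instance, rescaled Gaussian-type or $\mathbf{S}_0$ bump functions should work.
\item Use it to realize the representation as the integrated form of a $\psi$-projective unitary representation of $H$. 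Then the representation extends to $L^1(H,\psi)$, giving the reverse inequality.
\end{enumerate}
It is this identification that relies on the continuity assumption on $\psi$, since it is needed for the twisted convolution to preserve $\mathbf{S}_0$. With the identification in hand, $E$ extends by density of $\mathbf{S}_0(G_1)\odot\mathbf{S}_0(G_2)$ (Lemma \ref{lem: dense-tensor}) to the stated isomorphism $C^*(G_1,\psi_1)\otimes_{\op{sp}}C^*(G_2,\psi_2)\cong C^*(G_1\times G_2,\psi_1\cdot\psi_2)$.
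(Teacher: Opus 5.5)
Your proposal is correct and follows the paper's own route. You upgrade $E$ to an isomorphism of Banach $*$-algebras $\mathbf{S}_0(G_1,\psi_1)\otimes_{\pi}\mathbf{S}_0(G_2,\psi_2)\to\mathbf{S}_0(G_1\times G_2,\psi_1\cdot\psi_2)$ and then pass to enveloping C*-algebras using nuclearity and Corollary~\ref{cor: dense-tensor-envelope}, which is exactly what the paper's one-line proof invokes.

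The identification $C^*(\mathbf{S}_0(H,\psi))=C^*(H,\psi)$ that you single out is only asserted in the paper, just after Proposition~\ref{prop: twisted-feich}, so your extra care is worthwhile. In your sketch, though, a $*$-representation $\rho$ of $\mathbf{S}_0(H,\psi)$ is a priori only $\mathbf{S}_0$-bounded. An $L^1$-bounded approximate identity therefore gives no control of $\rho$ on its twisted translates. A clean fix is the module estimate $\|g*_{\psi}f\|_{\mathbf{S}_0}\lesssim\|g\|_{L^1}\|f\|_{\mathbf{S}_0}$, which holds when twisted translations act boundedly on $\mathbf{S}_0(H)$, e.g.\ for the Heisenberg cocycle. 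It yields $r_{\mathbf{S}_0}(k)\le\|k\|_{L^1}$ for $k\in\mathbf{S}_0(H,\psi)$, and hence $\|\rho(h)\|^2\le r_{\mathbf{S}_0}(h^{*}*_{\psi}h)\le\|h\|_{L^1}^{2}$.
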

\begin{proof}
Using the fact that twisted (Abelian) group C*-algebras are nuclear from Remark \ref{rem: cstar-nuclearity}, this can be seen as a consequence of Proposition \ref{prop: s0-proj-tensor} and Corollary \ref{prop: s0-proj-tensor} because the isomphism $E$ can be upgraded to an isomorphism of Banach $*$-algebras $E: \mathbf{S}_0(G_1,\psi_1)\otimes_{\pi}\mathbf{S}_0(G_2,\psi_2)\xrightarrow[]{\sim} \mathbf{S}_0(G_1\times G_2,\psi_1\cdot \psi_2)$ following Proposition \ref{prop: twisted-feich}.
\end{proof}

One of the main objectives of \emph{Gabor analysis} is to provide a joint time-frequency representation of signals in $L^2(G)$ in terms of time-frequency shifts. We consider a closed subgroup $\Delta\leq G\times \widehat{G}$ along with its quotient subgroup $(G\times\widehat{G})/\Delta$. Both are LCA groups and we equip them with fixed Haar measures $\mu_{\Delta}$ and $\mu_{(G\times\widehat{G})/\Delta}$ respectively. Moreover we assume that all these measures are chosen in such a way that $\mu_{G\times\widehat{G}}$, $\mu_
{\Delta}$, and $\mu_{(G\times\widehat{G})/\Delta}$ are all \emph{canonically related}, that is, \emph{Weil's formula} is satisfied for all $F\in L^1(G\times\widehat{G})$:
\begin{align*}
    \int_{G\times\widehat{G}}F(z)d\mu_{G\times\widehat{G}}(z) = \int_{(G\times\widehat{G})/\Delta}\int _{\Delta}F(z+\lambda)d\mu_{\Delta}(\lambda) d\mu_{(G\times\widehat{G})/\Delta}(\dot{z}), \qquad \dot{z}=z+\Delta.
\end{align*}
We shall also typically assume that $\Delta$ is \emph{co-compact}, which means that $(G\times\widehat{G})/\Delta$ is a compact group. Equivalently, this means that $\mu_{(G\times\widehat{G})/\Delta}$ is a finite measure and consequently we define the size of $\Delta$ to be
\begin{align*}
    |\Delta|:= \int_{(G\times\widehat{G})/\Delta}1 d\mu_{(G\times\widehat{G})/\Delta} <\infty.
\end{align*} 
Now with access to the time-frequency shifts, we can also define the adjoint lattice of a closed subgroup.
\begin{definition}
For a closed subgroup $\Delta\subseteq G\times\widehat{G}$, the adjoint subgroup denoted by $\Delta^{\circ}$ is given by
\begin{align}
    \Delta^{\circ} = \{\chi\in G\times\widehat{G}: \pi(\chi)\pi(\lambda) = \pi(\lambda) \pi(\chi), \ \text{for all $\lambda \in \Delta$} \}.
\end{align}
\end{definition}
\noindent Since $\Delta^{\circ}$ is identifiable with the annihilator $\Delta^{\perp}$, it allows us the following characterization: $\Delta$ is co-compact if and only if $\Delta^{\circ}$ is discrete. A basic result from abstract harmonic analysis also says that  $\Delta^{\perp}\cong ((G\times\widehat{G})/\Delta)^{\widehat{}}$, from which it follows that the measure on $\Delta^{\circ}$ is the counting measure scaled by $\displaystyle\frac{1}{|\Delta|}$, since the measure in $\Delta^{\circ}$ must be pulled back from the dual measure of $((G\times\widehat{G})/\Delta)^{\widehat{}}$. 
\begin{definition}
    For $g\in L^2(G)$ and closed subgroup $\Delta\subseteq G\times\widehat{G}$, we denote by $\mathcal{G}(g,\Delta)$ the \emph{Gabor system}:
\begin{align}
    \mathcal{G}(g,\Delta):= \{\pi(\lambda)g: \lambda\in \Delta\}.
\end{align}
 Furthermore, the frame operator associated with $\mathcal{G}(g,\Delta)$, called the \emph{Gabor frame operator} and denoted by $\displaystyle S_{g,\Delta}:f\mapsto \int_{\Delta}\rin{2}{f}{\pi(\lambda)g}\pi(\lambda)g d\mu_{\Delta}(\lambda)$, is continuous in $L^2(G)$. We say that the Gabor system $\mathcal{G}(g,\Delta)$ is a \emph{Gabor frame} if it forms a frame in $L^2(G)$. It will also be important for us to consider Gabor systems with finite windows, that is, we have a multi-window system:
 \begin{align}\label{form: multi-window-system}
     \bigcup_{i=1}^n\mathcal{G}(g_i,\Delta)
 \end{align}
 for $g_i\in L^2(G)$. If the multi-window system given by \eqref{form: multi-window-system} forms a frame, then it is called a \emph{multi-window Gabor frame}.
\end{definition}
\noindent If $\mathcal{G}(g,\Delta)$ forms a Bessel system in $L^2(G)$, i.e there is an \emph{upper frame constant} $C>0$ such that:
\begin{align}
    \int_{\Delta}|\rin{L^2(G)}{f}{\pi(\lambda)g}|^2 d\mu_{\Delta}(\lambda) \leq C \|f\|_{L^2(G)}^2, \qquad \forall f\in L^2(G), 
\end{align}
then the Gabor frame operator $$S_{g,\Delta}: f\mapsto \int_{\Delta}\rin{L^2(G)}{f}{\pi(\lambda)g}\pi(\lambda)gd\mu_{\Delta}(\lambda)$$ is continuous in $L^2(G)$. If $\mathcal{G}(g,\Delta)$ is a frame in $L^2(G)$, $S_{g,\Delta}$ is invertible in $L^2(G)$. Circling back to the adjoint lattice, its relevance is that it gives a framework for various “duality” results \cite{DaLaLa94, Ja94, AuJaMaLu20} in Gabor analysis: the most striking one is the fact that $\mathcal{G}(g,\Delta)$ is a Gabor frame in $L^2(G)$ if and only if $\mathcal{G}(g,\Delta^{\circ})$ is a Riesz-basic sequence in $L^2(G)$.

We introduce the \emph{Heisenberg $2$-cocycle} $\varphi: (G\times\widehat{G})\times (G\times\widehat{G})\to \mathbb{T}$
\begin{align}\label{form: heisenberg-2-cocycle}
    \varphi(\chi_1,\chi_2) = \overline{\omega_2(x_1)}
\end{align}
for $\chi_1= (x_1,\omega_1)$ and $\chi_2=(x_2,\omega_2)$ in $G\times\widehat{G}.$ As the name suggests, it is a normalized continuous $2$-co-cycle on the time-frequency plane $G\times\widehat{G}$ and so is its conjugate $\overline{\varphi}$. The restrictions $\varphi_{|\Delta\times\Delta}: \Delta\times \Delta \to \mathbb{T}$ and $\overline{\varphi}_{|\Delta^{\circ}\times \Delta^{\circ}}: \Delta^{\circ}\times \Delta^{\circ}\to \mathbb{T}$ are also a normalized continuous $2$-co-cycle on $\Delta$ and $\Delta^{\circ}$ respectively. We shall refer to the restrictions $\varphi_{|\Delta\times \Delta}$ and $\overline{\varphi}_{|\Delta^{\circ}\times \Delta^{\circ}}$ by $\varphi$ and $\overline{\varphi}$ respectively. 

From Proposition \ref{prop: twisted-feich}, we have two non-commutative Banach $*$-algebras $\mathbf{S}_0(\Delta,\varphi)$ and $\mathbf{S}_0(\Delta^{\circ},\overline{\varphi})$, while the Feichtinger's algebra $\mathbf{S}_0(G)$ itself has a module structure over these two Banach $*$-algebras \cite{JaLu21}.
\begin{proposition}
    $\mathbf{S}_0(G)$ is a $\mathbf{S}_0(\Delta,\varphi)$-$\mathbf{S}_0(\Delta^{\circ},\overline{\varphi})$ pre-equivalence bimodule, whose structure is given by Equations \eqref{form: heisen-structure} below.
    We define, for $a\in \mathbf{S}_0(\Delta,\varphi)$, $b\in\mathbf{S}_0(\Delta^{\circ},\overline{\varphi})$, $f,f_1,f_2\in \mathbf{S}_0(G)$, $\lambda\in \Delta$, and $\lambda^{\circ}\in \Delta^{\circ}$:
\begin{equation}\label{form: heisen-structure}
\begin{alignedat}{2}
a\cdot f &:= \int_{\Delta}a(\lambda)\pi(\lambda)f d\mu_{\Delta}(\lambda), \qquad &\lin{\Delta}{f_1}{f_2}(\lambda) &:= \rin{2}{f_1}{\pi(\lambda)f_2} \\
f\cdot b &:= \int_{\Delta^{\circ}}b(\lambda^{\circ})\pi^*(\lambda^{\circ})d\mu_{\Delta^{\circ}}(\lambda^{\circ}), \qquad &\rin{\Delta^{\circ}}{f_1}{f_2}(\lambda^{\circ}) &:= \rin{2}{f_2}{\pi^*(\lambda^{\circ})f_1}
\end{alignedat}
\end{equation}
\end{proposition}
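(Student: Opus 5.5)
The plan is to verify, one by one, the axioms of a pre-equivalence bimodule from \cite[Definition 3.9]{RaWi98} for the structure \eqref{form: heisen-structure}. The work splits into two parts: an analytic part showing that every formula lands in the right Banach space, and an algebraic part establishing the identities. Both rest on the invariance of $\mathbf{S}_0$ under restriction and periodization to closed subgroups, and on the commutation relation $\pi(\lambda)\pi(\lambda^{\circ})=\pi(\lambda^{\circ})\pi(\lambda)$ built into the definition of $\Delta^{\circ}$.

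\textbf{Well-definedness.} For $f_1,f_2\in\mathbf{S}_0(G)$, the short-time Fourier transform $\chi\mapsto\rin{2}{f_1}{\pi(\chi)f_2}$ lies in $\mathbf{S}_0(G\times\widehat{G})$. By Feichtinger's restriction theorem, its restriction to the closed subgroup $\Delta$ lies in $\mathbf{S}_0(\Delta)$, and likewise its restriction to $\Delta^{\circ}$ lies in $\mathbf{S}_0(\Delta^{\circ})$. This shows $\lin{\Delta}{f_1}{f_2}\in\mathbf{S}_0(\Delta,\varphi)$ and $\rin{\Delta^{\circ}}{f_1}{f_2}\in\mathbf{S}_0(\Delta^{\circ},\overline{\varphi})$, together with norm bounds $\lesssim\|f_1\|_{\mathbf{S}_0}\|f_2\|_{\mathbf{S}_0}$. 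For the actions, $a\cdot f$ is a Bochner integral in $\mathbf{S}_0(G)$, since $\|\pi(\lambda)f\|_{\mathbf{S}_0}$ is uniformly bounded and $a\in L^1(\Delta)$. Thus $\|a\cdot f\|_{\mathbf{S}_0}\lesssim\|a\|_{L^1}\|f\|_{\mathbf{S}_0}$, and the same argument handles $f\cdot b$.

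\textbf{Algebraic identities.} These are direct computations from the relation $\pi(\lambda)\pi(\mu)=\varphi(\lambda,\mu)\pi(\lambda+\mu)$, up to the conventions fixed by \eqref{form: heisenberg-2-cocycle}. From this relation we get that the maps $a\mapsto a\cdot$ and $b\mapsto\cdot b$ are $*$-representations of the twisted convolution algebras of Proposition \ref{prop: twisted-feich}. We also get $\lin{\Delta}{f_1}{f_2}^{*_\varphi}=\lin{\Delta}{f_2}{f_1}$ and $\lin{\Delta}{a\cdot f_1}{f_2}=a*_{\varphi}\lin{\Delta}{f_1}{f_2}$, with the symmetric statements on the right. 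Positivity $\lin{\Delta}{f}{f}\ge0$ in $C^*(\Delta,\varphi)$ follows by representing $C^*(\Delta,\varphi)$ via $a\mapsto a\cdot$ on $L^2(G)$ and checking $\rin{2}{\lin{\Delta}{f}{f}\cdot h}{h}=\int_\Delta|\rin{2}{h}{\pi(\lambda)f}|^2\,d\mu_\Delta\ge0$. To make this rigorous I would use the faithfulness of this representation, which comes from amenability: twisted group C*-algebras of abelian groups have faithful regular-type representations. The bounds $\rin{}{a\cdot f}{a\cdot f}\le\|a\|^2\rin{}{f}{f}$ come for free from the standard Rieffel argument once positivity and the representation property hold.

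\textbf{Main obstacle.} The hard step is the associativity condition $\lin{\Delta}{f_1}{f_2}\cdot f_3=f_1\cdot\rin{\Delta^{\circ}}{f_2}{f_3}$. Written out, it is Janssen's fundamental identity of Gabor analysis:
\[
\int_{\Delta}\rin{2}{f_1}{\pi(\lambda)f_2}\pi(\lambda)f_3\,d\mu_\Delta(\lambda)=\int_{\Delta^{\circ}}\rin{2}{f_3}{\pi^*(\lambda^{\circ})f_2}\pi^*(\lambda^{\circ})f_1\,d\mu_{\Delta^{\circ}}(\lambda^{\circ}).
\]
This identity needs the Poisson summation formula for the pair $(\Delta,\Delta^{\perp})$, applied to the function $\chi\mapsto\rin{2}{f_1}{\pi(\chi)f_2}\rin{2}{\pi(\chi)f_3}{h}$, which lies in $\mathbf{S}_0(G\times\widehat{G})$. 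Its Fourier-type (symplectic) transform is computed via the Moyal-type identity and the fact that $\mathbf{S}_0$ is invariant under symplectic Fourier transforms. The measures are normalized as in Weil's formula, with counting measure scaled by $1/|\Delta|$ on $\Delta^{\circ}$, and Poisson summation holds pointwise for $\mathbf{S}_0$ functions. Once this is in place, pairing with arbitrary $h$ gives the identity. Fullness, meaning that the span of the inner products is dense, would then follow because the inner products already generate a dense ideal; in the end this can be cited from \cite{JaLu21}, where the remaining details are carried out.
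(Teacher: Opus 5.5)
The paper gives no argument of its own for this proposition; it cites \cite{JaLu21}. Your outline is the standard verification behind that citation. The well-definedness part, the module identities coming from $\pi(\lambda)\pi(\mu)=\varphi(\lambda,\mu)\pi(\lambda+\mu)$, and the reduction of associativity to the fundamental identity of Gabor analysis via Poisson summation for $\mathbf{S}_0$-functions are all fine.

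The positivity step, however, fails as written. Unwinding the definitions gives
\[
\rin{2}{\lin{\Delta}{f}{f}\cdot h}{h}=\int_{\Delta}\rin{2}{f}{\pi(\lambda)f}\,\rin{2}{\pi(\lambda)h}{h}\,d\mu_{\Delta}(\lambda),
\]
which is not manifestly nonnegative. It is also in general \emph{not} equal to $\int_{\Delta}|\rin{2}{h}{\pi(\lambda)f}|^{2}\,d\mu_{\Delta}(\lambda)$. For $\Delta=G\times\widehat{G}$ with Plancherel measure, Moyal's formula turns the former into $|\rin{2}{f}{h}|^{2}$ and the latter into $\|f\|_{2}^{2}\|h\|_{2}^{2}$.

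The quantity you wrote is $\rin{2}{S_{f,\Delta}h}{h}=\rin{2}{\lin{\Delta}{h}{f}\cdot f}{h}$, which by associativity equals $\rin{2}{h\cdot\rin{\Delta^{\circ}}{f}{f}}{h}$. It therefore proves positivity of the \emph{right} inner product, and only once the fundamental identity is established. For the left inner product, associativity yields instead
\[
\rin{2}{\lin{\Delta}{f}{f}\cdot h}{h}=\rin{2}{f\cdot\rin{\Delta^{\circ}}{f}{h}}{h}=\int_{\Delta^{\circ}}\bigl|\rin{2}{h}{\pi^{*}(\lambda^{\circ})f}\bigr|^{2}\,d\mu_{\Delta^{\circ}}(\lambda^{\circ})\ge 0 .
\]
So positivity has to come after your ``main obstacle'', with the roles of $\Delta$ and $\Delta^{\circ}$ interchanged.

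Your faithfulness claim is also misattributed. Amenability identifies $C^*(\Delta,\varphi)$ with its reduced version, i.e.\ it makes the twisted \emph{regular} representation on $L^{2}(\Delta)$ faithful. It says nothing by itself about the representation $a\mapsto(h\mapsto a\cdot h)$ on $L^{2}(G)$. That representation is faithful too, but this needs a further argument. For example: since $C^*(G\times\widehat{G},\varphi)\cong\mathcal{K}(L^{2}(G))$, the twisted regular representation of $G\times\widehat{G}$ is a multiple of the Schr\"odinger representation, and its restriction to the closed subgroup $\Delta$ is a multiple of the twisted regular representation of $\Delta$.

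The simplest repair avoids both issues by testing positivity directly in the twisted regular representation, where no associativity is needed. Since $\lin{\Delta}{f}{f}(\nu-\eta)\,\varphi(\nu-\eta,\eta)=\rin{2}{\pi(\nu)^{*}f}{\pi(\eta)^{*}f}$, one gets for $\xi\in C_{c}(\Delta)$
\[
\rin{L^{2}(\Delta)}{\lin{\Delta}{f}{f}*_{\varphi}\xi}{\xi}=\Bigl\|\int_{\Delta}\overline{\xi(\nu)}\,\pi(\nu)^{*}f\,d\mu_{\Delta}(\nu)\Bigr\|_{2}^{2}\ge 0 .
\]
Since $\rin{\Delta^{\circ}}{f}{f}(\lambda^{\circ})=\overline{\rin{2}{f}{\pi(\lambda^{\circ})f}}$ is a coefficient of the complex-conjugate (hence $\overline{\varphi}$-) representation, the same computation handles it in $C^*(\Delta^{\circ},\overline{\varphi})$. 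Amenability then transfers positivity to the full twisted group C*-algebras.
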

\begin{definition}
    The completion of the Feichtinger's algebra $\mathbf{S}_0(G)$ as a $\mathbf{S}_0(\Delta,\varphi)$-$\mathbf{S}_0(\Delta^{\circ},\overline{\varphi})$ pre-imprimitivity bimodule into an imprimitivity $C^*(\Delta,\varphi)$-$C^*(\Delta^{\circ},\overline{\varphi})$ bimodule is called the \emph{Heisenberg module}, denoted by $\mathcal{E}_{\Delta}(G).$ We shall also denote the module norm on $\mathcal{E}_{\Delta}(G)$ by $\|\cdot\|_{\mathcal{E}_{\Delta}(G)}.$
\end{definition}

Recall that we chose to fix $\Delta$ to be a co-compact closed subgroup, which is equivalent to $\Delta^{\circ}$ being discrete. This implies that the $C^*$-algebra $C^*(\Delta^{\circ},\overline{\varphi})$ is unital \cite{JaLe16}. Equivalently $\mathcal{E}_{\Delta}(G)$ is finitely generated and projective as a $C^*(\Delta,\varphi)$ module. The unitality of the right C*-algebra $C^*(\Delta^{\circ},\overline{\varphi})$ allows us to densely define \cite{BeOm18} a finite faithful trace on it by
\begin{align}\label{form: right-trace}
    \op{tr}_{\Delta^{\circ}}(b) = b(0)
\end{align}
for all $b\in \mathbf{S}_0(\Delta^{\circ},\overline{\varphi})$. We note that $\Delta^{\circ}$ is equipped with the counting measure scaled with $1/|\Delta|$, and therefore the unit element in $C^*(\Delta^{\circ},\varphi)$ is given by $|\Delta|\delta_0$, where $\delta_0: \Delta^{\circ}\to \mathbb{C}$ is Dirac-mass
\begin{align*}
    \delta_0(\lambda^{\circ}) = \begin{cases}
        1, \qquad\ &\text{if $\lambda^{\circ}=0$},\\
        0, \qquad &\text{otherwise}.
    \end{cases}
\end{align*}
Applying the trace to the unit $|\Delta|\delta_0$ gives us $\op{tr}_{\Delta^{\circ}}(|\Delta|\delta_0)=\|\op{tr}_{\Delta^{\circ}}\|=|\Delta|$. We can use Theorem \ref{thm: localization} and uniquely induce a lower semi-continuous faithful trace on $\op{span}\{\lin{\Delta}{f_1}{f_2}: f_1,f_2\in \mathbf{S}_0(G)\}$ via
\begin{align*}
    \op{tr}_{\Delta}(\lin{\Delta}{f_1}{f_2}) = \op{tr}_{\Delta^{\circ}}(\rin{\Delta^{\circ}}{f_2}{f_1}) = \rin{2}{f_1}{\pi^*(0)f_2} = \rin{2}{f_1}{f_2}.
\end{align*}
Hence we see that the inner product induced from the induced semi-continuous faithful trace is just the $L^2(G)$-norm on $\mathbf{S}_0(G)$ which can be extended to all of $\mathcal{E}_{\Delta}(G)$, the completion of $\mathcal{E}_{\Delta}(G)$. The following is the main result of \cite{AuEn20}, giving us a clear picture of $\mathcal{E}_{\Delta}(G)$ as a Banach space. Note the second estimate in \eqref{ineq: heis-ineqs} follows from \cite[Lemma 3.11]{GeLaLu24}, adapted for LCA groups.
\begin{theorem}\label{thm: localization-heis}
    The Heisenberg module $\mathcal{E}_{\Delta}(G)$ is faithfully localizable from $\mathbf{S}_0(G)$ and densely embedded in $L^2(G)$. More precisely, the localization of $\mathcal{E}_{\Delta}(G)$  with respect to the trace $\op{tr}_{\Delta^{\circ}}$ gives $\op{Loc}(\mathcal{E}_{\Delta}(G))\cong L^2(G)$ and it can be obtained as a Banach space by completing $\mathbf{S}_0(G)$ as a subspace of $L^2(G)$ with respect to the following norm:
    \begin{align}
        \|f_0\|_{B_{\Delta}(G)} := \inf\{D^{1/2}: D \text{ is a Bessel bound for $\mathcal{G}(f_0,\Delta)$}\}, \qquad f_0\in \mathbf{S}_0(G).
    \end{align}
    Furthermore:
\begin{enumerate}
    \item If $f\in \mathcal{E}_{\Delta}(G)$, $\mathcal{G}(f,\Delta)$ is also a Bessel family with optimal Bessel bound $\|f\|_{B_{\Delta}(G)}^{2}$, and $\|f\|_{\mathcal{E}_{\Delta}(G)}=\|f\|_{B_{\Delta}(G)}.$
    \item The structure of given by \eqref{form: heisen-structure} can be extendded to $\mathcal{E}_{\Delta}(G)$ and is compatible with the embeddings $\mathbf{S}_0(G)\hookrightarrow \mathcal{E}_{\Delta}(G)\hookrightarrow L^2(G)$. In particular, the estimates are given by:
    \begin{equation}\label{ineq: heis-ineqs}
        \begin{split}
            \|f\|_{L^2(G)}&\leq \sqrt{|\Delta|}\|f\|_{\mathcal{E}_{\Delta}(G)}, \qquad f\in \mathcal{E}_{\Delta}(G) \\
        \|f\|_{\mathcal{E}_{\Delta}(G)} &\leq \frac{1}{\|\phi\|_{L^2(G)}} \|f\|_{\mathbf{S}_0(G),\phi}, \qquad f\in \mathbf{S}_0(G),
        \end{split}
    \end{equation}
    for window function $\phi\in \mathbf{S}_0(G)$.
    \item The elements $\xi_1,...,\xi_n\in \mathcal{E}_{\Delta}(G)$ are generators of $\mathcal{E}_{\Delta}(G)$ as a $C^*(\Delta,\varphi)$-module if and only if $\bigcup_{i=1}^n\mathcal{G}(\xi_i, \Delta)$ is a multi-window Gabor frame for $L^2(G).$
\end{enumerate}
\end{theorem}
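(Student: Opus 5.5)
The plan is to reduce everything to the abstract localization scheme of Theorem \ref{thm: localization}, applied to the concrete trace $\op{tr}_{\Delta^{\circ}}$, and to identify the resulting Hilbert space and module norm through Gabor-frame language. First, faithful localizability from $\mathbf{S}_0(G)$: $\op{tr}_{\Delta^{\circ}}(b)=b(0)$ is a positive trace on $\mathbf{S}_0(\Delta^{\circ},\overline{\varphi})$. Since $\Delta^{\circ}$ is discrete, evaluation at $0$ is the canonical tracial state (up to the factor $|\Delta|$) on $\ell^1(\Delta^{\circ},\overline{\varphi})$. It is bounded by the C*-norm, because $b(0)=\langle \lambda(b)\delta_0,\delta_0\rangle$ in the left regular twisted representation, and it is faithful there. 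So it extends to a faithful finite trace on $C^*(\Delta^{\circ},\overline{\varphi})$ with norm $|\Delta|$. Next, the computation preceding the theorem shows $\rin{\op{tr}_{\Delta^{\circ}}}{f_1}{f_2}=\rin{2}{f_1}{f_2}$ on $\mathbf{S}_0(G)$. Since $\mathbf{S}_0(G)$ is dense in $\mathcal{E}_{\Delta}(G)$ and in $L^2(G)$, the completion $\op{Loc}(\mathcal{E}_{\Delta}(G))$ is identified with $L^2(G)$. The first inequality in \eqref{ineq: heis-ineqs} is then \eqref{form: loc-estimate} with the appropriate square-root normalisation (compare Lemma \ref{lem: localizable-is-gelfand-triple}), using $\|\op{tr}_{\Delta^{\circ}}\|=|\Delta|$.

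For item (1), I would use the C*-identity $\|f\|_{\mathcal{E}}^2=\|\rin{\Delta^{\circ}}{f}{f}\|=\|\lin{\Delta}{f}{f}\|_{C^*(\Delta,\varphi)}$, which holds for any equivalence bimodule. Then $\lin{\Delta}{f}{f}$ acts on $L^2(G)$ via the integrated representation as $\int_\Delta \rin{2}{f}{\pi(\lambda)f}\pi(\lambda)\,d\mu_\Delta$, which is the frame operator $S_{f,\Delta}$. Its operator norm is the optimal Bessel bound. This gives $\|f\|_{\mathcal{E}}^2=\|S_{f,\Delta}\|$ for $f\in\mathbf{S}_0(G)$, provided the integrated representation of $C^*(\Delta,\varphi)$ on $L^2(G)$ is faithful. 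That faithfulness is the main obstacle. I would deduce it from the Morita picture: $L^2(G)=\op{Loc}(\mathcal{E}_{\Delta}(G))$, and by Theorem \ref{thm: localization}(2) the left action of $A=C^*(\Delta,\varphi)\subseteq\mathcal{L}_B$ embeds isometrically into $\mathcal{B}(\op{Loc})$. This gives $\|f\|_{\mathcal{E}}=\|f\|_{B_\Delta}$ on $\mathbf{S}_0(G)$. Passing to completions, and noting that Bessel bounds are lower semicontinuous under $L^2$-limits, extends the equality to all of $\mathcal{E}_\Delta(G)$ and shows that $\mathcal{E}_\Delta(G)$ is the completion of $\mathbf{S}_0(G)$ in $\|\cdot\|_{B_\Delta}$ inside $L^2(G)$.

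For item (2), the module structures extend by continuity, since they are bounded in the module norms. Compatibility with the embeddings holds because the actions are given by the same integrals, interpreted in $L^2(G)$. The second estimate in \eqref{ineq: heis-ineqs} reduces to bounding the Bessel bound of $\mathcal{G}(f,\Delta)$ by $\|f\|_{\mathbf{S}_0,\phi}^2/\|\phi\|_2^2$. For this I would use the STFT reproducing formula, writing $f=\|\phi\|_2^{-2}\int V_\phi f(\chi)\pi(\chi)\phi\,d\chi$ and applying Minkowski, which gives the cited bound of \cite{GeLaLu24}. For item (3), $\xi_1,\dots,\xi_n$ generate iff $\sum_i\rin{}{\xi_i}{\xi_i}$ is invertible in the unital algebra, equivalently iff $\sum_i \lin{\Delta}{\cdot}{\xi_i}\xi_i=\sum_i S_{\xi_i,\Delta}$ is invertible in $\mathcal{K}_A(\mathcal{E})$ (as in Proposition \ref{prop: parseval-module-frame}). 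Using the isometric embedding into $\mathcal{B}(L^2(G))$ and inverse-closedness of C*-subalgebras, this is equivalent to invertibility of the multi-window frame operator on $L^2(G)$, i.e.\ to the frame property.
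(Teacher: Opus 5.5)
The paper gives no argument of its own for this statement; it imports it from \cite{AuEn20}, and the second estimate from \cite{GeLaLu24}. Reducing to Theorem \ref{thm: localization} is the natural way to reconstruct it, but the key identification in your item (1) is false. By \eqref{form: heisen-structure}, $S_{f,\Delta}g=\lin{\Delta}{g}{f}\cdot f=g\cdot\rin{\Delta^{\circ}}{f}{f}$. So the frame operator is the \emph{right} action of $\rin{\Delta^{\circ}}{f}{f}$ (Janssen's representation), i.e.\ the module frame map $F_f=\lin{\Delta}{\cdot}{f}\cdot f\in\mathcal{K}_A(\mathcal{E}_{\Delta}(G))\cong C^*(\Delta^{\circ},\overline{\varphi})$ of Proposition \ref{prop: parseval-module-frame}, with $A=C^*(\Delta,\varphi)$. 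It is not the left action $g\mapsto\lin{\Delta}{f}{f}\cdot g=\int_{\Delta}\rin{2}{f}{\pi(\lambda)f}\pi(\lambda)g\,d\mu_{\Delta}(\lambda)$. Indeed $S_{f,\Delta}$ commutes with $\pi(\Delta)$, while that left action in general does not. For $\Delta=G\times\widehat{G}$ with $\mu_{\Delta}=\mu_{G\times\widehat{G}}$, Moyal's formula gives $\lin{\Delta}{f}{f}\cdot g=\rin{2}{g}{f}\,f$, whereas $S_{f,\Delta}=\|f\|_{L^2(G)}^2\op{Id}$.

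Hence the faithfulness of the left integrated representation, which you extract from the $\mathcal{L}_B$-embedding, does not by itself give the Bessel bound. The repair is to use the \emph{other} embedding in \eqref{form: loc-op-embedding}. One has $\|F_f\|=\|\rin{\Delta^{\circ}}{f}{f}\|=\|f\|^2_{\mathcal{E}_{\Delta}(G)}$, and the isometric embedding $\mathcal{L}_A(\mathcal{E}_{\Delta}(G))\hookrightarrow\mathcal{B}(L^2(G))$ sends $F_f$ to $S_{f,\Delta}$, since they agree on $\mathbf{S}_0(G)$. Both operators do have norm $\|f\|^2_{\mathcal{E}_{\Delta}(G)}$, so your conclusion survives, but only through this route. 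Note also that in (3) you use the correct identification $\sum_i S_{\xi_i,\Delta}=\sum_i\lin{\Delta}{\cdot}{\xi_i}\cdot\xi_i$, which contradicts what you wrote in (1).

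Further points. Extending $\|f\|_{\mathcal{E}_{\Delta}(G)}=\|f\|_{B_{\Delta}(G)}$ from $\mathbf{S}_0(G)$ to $\mathcal{E}_{\Delta}(G)$ by lower semicontinuity gives only one inequality. For the other, use that $\|\cdot\|_{B_{\Delta}(G)}$ is the norm of the analysis operator, hence subadditive, so $\|f-f_n\|_{B_{\Delta}(G)}\le\|f-f_n\|_{\mathcal{E}_{\Delta}(G)}\to0$. In (3), the direction ``generators $\Rightarrow$ $\sum_i\rin{\Delta^{\circ}}{\xi_i}{\xi_i}$ invertible'' needs an argument. The map $T\colon A^n\to\mathcal{E}_{\Delta}(G)$, $T(a_1,\dots,a_n)=\sum_i a_i\cdot\xi_i$, is adjointable and surjective, so $TT^*=\sum_i\lin{\Delta}{\cdot}{\xi_i}\cdot\xi_i$ is invertible by the closed-range theorem for adjointable maps. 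Faithfulness of $\op{tr}_{\Delta^{\circ}}$ on the full algebra also uses $C^*(\Delta^{\circ},\overline{\varphi})=C^*_r(\Delta^{\circ},\overline{\varphi})$, i.e.\ amenability.

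Finally, your Minkowski argument does not give the stated constant. With the reproducing formula and the invariance of Bessel bounds under time-frequency shifts of the window, it yields $\|f\|_{\mathcal{E}_{\Delta}(G)}\le\|\phi\|_{B_{\Delta}(G)}\|\phi\|_{L^2(G)}^{-2}\|f\|_{\mathbf{S}_0(G),\phi}$. Here $\|\phi\|_{B_{\Delta}(G)}$ cannot be replaced by $\|\phi\|_{L^2(G)}$, and in fact no $\Delta$-independent constant can work. The first estimate forces $\|\phi\|_{\mathcal{E}_{\Delta}(G)}\ge\|\phi\|_{L^2(G)}/\sqrt{|\Delta|}$, which is unbounded for $\Delta=\alpha\mathbb{Z}\times\beta\mathbb{Z}\subseteq\mathbb{R}^2$ (counting measure, $|\Delta|=\alpha\beta$) as $\alpha\beta\to0$. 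What you actually prove is the correct $\Delta$-dependent bound, which is all that \eqref{form: pre-equiv-est} requires.
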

In the following, let $\mathcal{E}_{\Delta}'(G)$ be the topological dual space of $\mathcal{E}_{\Delta}(G)$ as a Banach space.
\begin{corollary}
    There are the following embedding:
    \begin{align}\label{form: heis-dual-embeds}
        \mathbf{S}_0(G)\hookrightarrow \mathcal{E}_{\Delta
        }(G)\hookrightarrow L^2(G)\hookrightarrow \mathcal{E}_{\Delta}'(G)\hookrightarrow \mathbf{S}_0'(G),
    \end{align}
    satisfying the following estimates: 
    \begin{equation}\label{ineq: heis-dual-ineqs}
        \begin{split}
            \|\sigma\|_{\mathbf{S}_0',\phi} &\leq  \frac{\|\sigma\|_{\mathcal{E}_{\Delta}'}(G)}{\|\phi\|_{L^2(G)}} \\
        \|h\|_{\mathcal{E}_{\Delta}'(G)} &\leq \sqrt{|\Delta|}\|h\|_{L^2(G)}
        \end{split}
    \end{equation}
    for all $h\in L^2(G)$ and $\sigma \in \mathcal{E}_{\Delta}'(G).$ 
    Furthermore, under the embeddings, $L^2(G)$ is weak-$*$ dense in $\mathcal{E}_{\Delta}'(G)$ and $\mathcal{E}_{\Delta}'(G)$ is weak-$*$ dense in $\mathbf{S}_0'(G)$.
\end{corollary}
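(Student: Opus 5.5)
This corollary should be a direct specialisation of Theorem \ref{thm: gelfand-quintuple} to the Heisenberg module. The plan is to check its hypotheses using Theorem \ref{thm: localization-heis}, and then read off the constants.

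\textbf{Step 1: the hypotheses.} Take $\mathcal{Z}=\mathbf{S}_0(G)$, $A_0=\mathbf{S}_0(\Delta,\varphi)$ and $B_0=\mathbf{S}_0(\Delta^{\circ},\overline{\varphi})$, and let $Z=\mathcal{E}_{\Delta}(G)$. Theorem \ref{thm: localization-heis} gives three facts:
\begin{itemize}
\item $\mathcal{E}_{\Delta}(G)$ is faithfully localizable from $\mathbf{S}_0(G)$ with respect to $\op{tr}_{\Delta^{\circ}}$;
\item $\op{Loc}(\mathcal{E}_{\Delta}(G))\cong L^2(G)$;
\item the second estimate in \eqref{ineq: heis-ineqs} supplies \eqref{form: pre-equiv-est} with $C=1/\|\phi\|_{L^2(G)}$, once $\mathbf{S}_0(G)$ carries the norm $\|\cdot\|_{\mathbf{S}_0(G),\phi}$.
\end{itemize}
Theorem \ref{thm: gelfand-quintuple} then yields the chain \eqref{form: heis-dual-embeds}. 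The first two arrows have norm-dense range and the last two have weak-$*$ dense range, which is exactly the density assertion.

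\textbf{Step 2: the dual estimate.} The first inequality in \eqref{ineq: heis-dual-ineqs} is \eqref{form: pre-equiv-dual-est} with this $C$. Here $\mathbf{S}_0'(G)$ is normed as the dual of $(\mathbf{S}_0(G),\|\cdot\|_{\mathbf{S}_0(G),\phi})$. Explicitly, for $\|f\|_{\mathbf{S}_0,\phi}\le 1$ one has $|\sigma(f)|\le\|\sigma\|_{\mathcal{E}_{\Delta}'}\|f\|_{\mathcal{E}_{\Delta}}\le\|\sigma\|_{\mathcal{E}_{\Delta}'}/\|\phi\|_{L^2}$.

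\textbf{Step 3: the middle estimate.} For $h\in L^2(G)$ and $f\in\mathcal{E}_{\Delta}(G)$, Cauchy--Schwarz in $L^2(G)$ and the first estimate of \eqref{ineq: heis-ineqs} give $|\langle f,h\rangle|\le\|f\|_{L^2}\|h\|_{L^2}\le\sqrt{|\Delta|}\,\|f\|_{\mathcal{E}_{\Delta}}\|h\|_{L^2}$. Taking the supremum over $\|f\|_{\mathcal{E}_{\Delta}}\le1$ yields $\|h\|_{\mathcal{E}_{\Delta}'}\le\sqrt{|\Delta|}\,\|h\|_{L^2}$.

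This agrees with Lemma \ref{lem: localizable-is-gelfand-triple}, since $\|\op{tr}_{\Delta^{\circ}}\|=|\Delta|$. One check is needed here: Lemma \ref{lem: localizable-is-gelfand-triple} rests on \eqref{form: loc-estimate}, whose constant is $\|\op{tr}_B\|$ rather than $\sqrt{\|\op{tr}_B\|}$. I would therefore use the sharper Heisenberg estimate from Theorem \ref{thm: localization-heis} directly, rather than quoting the lemma.

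\textbf{Step 4: identifications.} The remaining point is bookkeeping: the embedding $L^2(G)\hookrightarrow\mathcal{E}_{\Delta}'(G)$ is $h\mapsto\langle\cdot,h\rangle$. This makes the composite $L^2(G)\to\mathbf{S}_0'(G)$ the usual embedding of $L^2(G)$ into $\mathbf{S}_0'(G)$, so the chain is compatible with the standard Banach Gelfand triple $(\mathbf{S}_0,L^2,\mathbf{S}_0')$. That is just a matter of identifications, and I expect no genuine obstacle.
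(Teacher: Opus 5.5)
Your proposal is correct and follows the paper's route. The paper simply cites Lemma \ref{lem: localizable-is-gelfand-triple} and Theorem \ref{thm: gelfand-quintuple} applied to $\mathbf{S}_0(G)\subseteq\mathcal{E}_{\Delta}(G)$, with the constants read off from \eqref{ineq: heis-ineqs} and $\|\op{tr}_{\Delta^{\circ}}\|=|\Delta|$. Your remark about \eqref{form: loc-estimate} is also well taken: the correct bound there is $\|z\|_{\op{tr}_A}\le\sqrt{\|\op{tr}_B\|}\,\|z\|_Z$, which is what the lemma actually uses, so deriving the $\sqrt{|\Delta|}$ estimate directly from Theorem \ref{thm: localization-heis} is a harmless and cleaner way to reach the same constant.
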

\begin{proof}
    This is a direct Corollary of Lemma \ref{lem: localizable-is-gelfand-triple} and Theorem \ref{thm: gelfand-quintuple} applied to the Heisenberg modules
\end{proof}
We end this subsection with the following crucial result involving the Feichtinger's algebras possessing the (bounded) approximation property. It is certainly possible to proceed by considering $\mathbf{S}_0(G)$ as a coorbit space and use the atomic decomposition theorem \cite{FeGr89,FeGr89-1, Be22}, which could give us a shorter proof of Theorem \ref{thm: feich-has-ap} below. We also note that Feichtinger has recently shown in \cite{Fe22-1} that Banach spaces of tempered distributions on $\mathbb{R}^d$ with a double module structure, including $\mathbf{S}_0(\mathbb{R}^d)$, have the bounded approximation property. Feichtinger has also noted that these techniques should extend, mutatis mutandis, to LCA groups. Our chosen approach, however, clarifies the interpretation of the vector-valued integrals that arise in this work and fits more naturally with the Hilbert $C^*$-module-theoretic that we have already introduced.

It usually suffies to make use of the weak interpretation of Hilbert-space valued integrals which exploits Riesz-representation theorem. This ``weak formulation'' usually comes by the name \emph{Pettis integral} (see \cite[Definition 2]{DiUh77}). We may contrast this with the ``strong formulation'' called the \emph{Bochner integral} for vector-valued functions based on vector-valued simple-function approximation instead of using duality (see \cite[Definition 1]{DiUh77}). Below we give a result showing that in our setting, there is no ambiguity about integrals of continuous functions.
\begin{lemma}\label{lem: int-no-ambiguity}
    Suppose $Q$ is an LCA group with Haar measure $\mu_{Q}$, $Y$ a Banach space and $\mathcal{H}$ a separable Hilbert space such that there is the embedding $Y\hookrightarrow \mathcal{H}$ with dense range. If $F:Q\to Y$ is a continuous function satisfying $\displaystyle \int_Q \|F(x)\|_Y d\mu_Q(x)<\infty$, then $F$ is Bochner integrable with respect to $Y$ and $\mathcal{H}$ and is also Pettis integrable with respect to both $Y$ and $\mathcal{H}$. All four possible integrals coincide with $\displaystyle \int_Q F(x)d\mu_Q(x)\in Y\hookrightarrow \mathcal{H}.$
\end{lemma}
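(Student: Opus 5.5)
The proof splits into three steps: Bochner integrability in $Y$, transfer of the integral to $\mathcal{H}$, and the Pettis statements, which follow from the general fact that a Bochner integral is always a Pettis integral.

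\emph{Step 1: Bochner integrability in $Y$.} By Bochner's theorem, $F$ is Bochner integrable in $Y$ once it is strongly $\mu_Q$-measurable and $\int_Q\|F\|_Y\,d\mu_Q<\infty$. The second condition is the hypothesis. For the first, the plan is to use Pettis' measurability theorem: we need $F$ weakly measurable and essentially separably valued.
\begin{enumerate}
\item Weak measurability is immediate, since $y'\circ F$ is continuous for each $y'\in Y'$.
\item For essential separability, the cleanest route is to reduce to $\sigma$-compact sets. Integrability forces $\{x:\|F(x)\|_Y>1/n\}$ to be an open set of finite Haar measure. By inner regularity, such an open set is, up to a null set, contained in a $\sigma$-compact set.
\item Hence, off a null set, $F$ is supported on $\{F\neq 0\}$, a countable union of sets of finite measure, each essentially $\sigma$-compact. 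The continuous image of a compact set is compact and therefore separable, so $F$ is essentially separably valued.
\item In the setting of the paper $Q$ is second countable, hence $\sigma$-compact, and this step is trivial. I would state the $\sigma$-compact reduction briefly in either case.
\end{enumerate}
I expect this measurability bookkeeping on a possibly non-$\sigma$-compact $Q$ to be the main obstacle. Everything else is formal.

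\emph{Step 2: Bochner integrability in $\mathcal{H}$ and equality of the two Bochner integrals.} Let $\iota:Y\hookrightarrow\mathcal{H}$ denote the embedding.
\begin{enumerate}
\item Since $\iota$ is bounded, $\iota\circ F$ is continuous and strongly measurable in $\mathcal{H}$. (Here separability of $\mathcal{H}$ gives measurability directly.)
\item It is integrable, because $\|\iota F(x)\|_{\mathcal{H}}\le\|\iota\|\,\|F(x)\|_Y$.
\item Bounded operators commute with Bochner integrals, which is checked on simple functions and passed to the limit by dominated convergence. This gives
\[
\int_Q \iota F\,d\mu_Q=\iota\Big(\int_Q F\,d\mu_Q\Big).
\]
So the $\mathcal{H}$-Bochner integral is exactly the image of the $Y$-Bochner integral and lies in $\iota(Y)$.
\end{enumerate}

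\emph{Step 3: Pettis integrals.} A Bochner integrable function is Pettis integrable with the same integral, because functionals commute with Bochner integrals:
\[
y'\Big(\int F\Big)=\int y'\circ F \quad\text{for all } y'\in Y',
\]
and similarly in $\mathcal{H}$, where by the Riesz representation theorem this reads $\langle \int \iota F,h\rangle=\int\langle \iota F,h\rangle$. Pettis integrals are unique, since $Y'$ and $\mathcal{H}'$ separate points. The $Y$-Pettis integral therefore coincides with the $Y$-Bochner integral, and the $\mathcal{H}$-Pettis integral with the $\mathcal{H}$-Bochner integral. Combined with Step 2, all four integrals agree as the element $\int_Q F\,d\mu_Q\in Y\hookrightarrow\mathcal{H}$. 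Injectivity of $\iota$ ensures this identification is unambiguous. Density of the range of $\iota$ is not needed for the argument.
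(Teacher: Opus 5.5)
Your proof is correct, and its skeleton matches the paper's. Both arguments get weak measurability from continuity, apply Pettis' measurability theorem to get strong measurability, obtain Bochner integrability from $\int_Q\|F\|_Y\,d\mu_Q<\infty$, and then commute bounded maps and functionals with the Bochner integral to identify the remaining integrals. The genuine difference is where the separability needed for Pettis' theorem comes from.

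The paper takes it from the separability of $\mathcal{H}$ and concludes strong measurability both as a $Y$-valued and as an $\mathcal{H}$-valued function. You instead extract essential separability of the range in the $Y$-norm from continuity plus integrability: each set $\{\|F\|_Y>1/n\}$ is open of finite Haar measure, hence essentially $\sigma$-compact by inner regularity. Your route is the more robust one. Separability of $\mathcal{H}$ only gives separability of $F(Q)$ in the $\mathcal{H}$-norm, and this does not transfer to the $Y$-norm in general. For example, $\ell^\infty\to\ell^2$, $x\mapsto(x_n/n)_n$, is an injective bounded map with dense range from a nonseparable space into a separable Hilbert space. So the paper's appeal to $\mathcal{H}$ does not by itself justify strong measurability in $Y$. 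The gap is harmless in the paper's application, where $Q=\Delta$ is second countable and continuity alone gives separable range; compare the remark after the lemma, which assumes $Y$ separable.

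Your argument also supplies the $\sigma$-finite support that Bochner integration needs on a non-$\sigma$-compact $Q$, which the paper does not address. It shows that neither the separability of $\mathcal{H}$ nor the density of $\iota(Y)$ is needed. Finally, you transport the $Y$-integral into $\mathcal{H}$ directly through the bounded map $\iota$, rather than invoking the Gelfand triple $Y\hookrightarrow\mathcal{H}\hookrightarrow Y'$ as the paper does. The paper's version buys brevity; yours buys a proof that actually holds in the stated generality.
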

\begin{proof}
By hypothesis we must have a Banach Gelfand triple $X\hookrightarrow \mathcal{H} \hookrightarrow X'$ (see proof of Lemma \ref{lem: localizable-is-gelfand-triple} and the subsequent paragraph). It also follows from continuity of $F$ that the map $x\mapsto \rin{X,X'}{F(x)}{x'}$ is continuous for all $x'\in X'.$ This implies that $F$ is Pettis (weak) $\mu_Q$-measurable as an $X$ and so as an $\mathcal{H}$-valued function too. Since $\mathcal{H}$ is separable, it follows from Pettis's measurability theorem \cite[Theorem 2]{DiUh77} that $F$ is also Bochner (strong) $\mu_Q$-measurable, both as an $X$ and $\mathcal{H}$ valued function. Continuity of $F$ also implies that $x\mapsto \|F(x)\|_Y$ is $\mu_Q$-measurable (in an ordinary sense). The hypothesis $\displaystyle \int_Q \|F(x)\|_Y d\mu_Q(x)<\infty$ implies that $F$ Bochner-integrable. We denote the Bochner integral simply by $\displaystyle \int_Q F(x)d\mu_Q(x)\in X$. An important property \cite[Theorem 6]{DiUh77} of Bochner integrals is the following: for any $T\in \mathcal{B}(Y,W)$ for some Banach space $W$, we have that $T\circ F: Q\to W$ is also Bochner integrable, with 
\begin{align}\label{form: linear-transform-bochner}
    T\left(\int_Q F(x)d\mu_Q(x)\right)=\int_Q(T\circ F)(x)d\mu_Q(x) \in W.
\end{align}
If we consider linear functionals $x'\in \mathcal{B}(Y, \mathbb{C})=Y'$, we infer from Equation \eqref{form: linear-transform-bochner} that $F$ is Pettis integrable and, by uniqueness of Pettis integrals, the Pettis and Bochner integral of $F$ on $Q$ as an $X$-valued function must coincide. The embeddings $X\hookrightarrow \mathcal{H} \hookrightarrow X'$ also imply that the corresponding Pettis and Bochner integrals of $F$ as an $\mathcal{H}$-valued function must coincide. 
\end{proof}
\begin{remark}\label{rem: without-h}
    The lemma above still works, even without reference to the separable Hilbert space $\mathcal{H}$, as long as $Y$ itself is separable. In a sense, we only wanted to make sure that the weak-interpretation of integrals that appear in time-frequency analysis literature are well-behaved under the embedding $\mathbf{S}_0(G)\hookrightarrow L^2(G)$. See for example the integrals defining the module structure on the Heisenberg modules given in Equations \eqref{form: heisen-structure}.
\end{remark}
There is a reason for clarifying our notions of vector-valued integration. We now know that Bochner (strong) and Pettis (weak) integrals coincide for continuous vector-valued integrable functions. This allows us to use a very particular kind of simple function approximation for our integrals without having to worry whether we are still working with the same integral. Now, by a particular simple function approximation, what we mean is a version of Riemann-integral that works in our setting. 

We first recall a generalization of the Riemann integral on a compact metric space $K$ with finite Borel measure $\mu_K$. Since $K$ is compact the diameter of $K$ (with respect to its metric) and its subsets are finite. We define a \emph{partition} $\mathcal{K}=\{K_1,...,K_N\}$ of $K$ to be a finite collection of pairwise disjoint Borel subsets of $Q$ whose union is $K$ and a \emph{tagging} on the partition $\mathcal{K}$ is just a set of points $\{t_1,...,t_N\}\subseteq K$ such that $t_i\in K_i$ for all $i=1,...,n$. The \emph{mesh} of the partition $\mathcal{K}$ is defined to be the largest diameter in the partition $\mathcal{K}$. The lemma below now gives us a generalized Riemann integral (see \cite{Va22}).
\begin{lemma}\label{lem: riemann-integral}
    Let $K$ be a compact metric space with finite Borel measure $\mu_K$ and $Y$ a Banach space. Let $F:K\to Y$ be continuous. Then there exists an element $\displaystyle (R)\int_K F(x)d\mu_K(x)\in Y$ with the following property: if $\varepsilon>0$, there exists $\delta>0$ such that, whenever $\mathcal{K}=\{K_1,...,K_N\}$ is a partition of $K$ with tagging $\{t_1,...,t_N\}$ and with mesh less than $\delta$, we have
    \begin{align}\label{form: riemann-approx}
        \left\|(R)\int_K F(x)d\mu_K(x)-\sum_{i=1}^N F(t_i)\mu_K(K_i) \right\|_Y<\varepsilon.
    \end{align}
\end{lemma}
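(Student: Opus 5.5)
We treat this as the Banach-space-valued Riemann integral and build $(R)\int_K F\,d\mu_K$ as a limit of Riemann sums. The engine is uniform continuity: $K$ is a compact metric space and $F:K\to Y$ is continuous, so $F$ is uniformly continuous. Hence for every $\eta>0$ there is $\delta(\eta)>0$ with $\|F(s)-F(t)\|_Y<\eta$ whenever $d(s,t)<\delta(\eta)$. We also use that $F(K)$ is bounded and that $\mu_K(K)<\infty$.

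\textbf{Step 1 (comparing two tagged partitions).} For a partition $\mathcal{K}=\{K_i\}$ with tagging $\{t_i\}$, write $S(\mathcal{K},t)=\sum_i F(t_i)\mu_K(K_i)$. Let $(\mathcal{K},t)$ and $(\mathcal{L},s)$ be two tagged partitions, both of mesh less than $\delta(\eta)$. Pass to the common refinement $\{K_i\cap L_j\}$, which is again a finite Borel partition, and drop the empty pieces. Since $\mu_K(K_i)=\sum_j\mu_K(K_i\cap L_j)$, we can write
\begin{align*}
S(\mathcal{K},t)-S(\mathcal{L},s)=\sum_{i,j}\bigl(F(t_i)-F(s_j)\bigr)\mu_K(K_i\cap L_j).
\end{align*}
Whenever $K_i\cap L_j\neq\emptyset$, pick a point $u$ in it. Then $d(t_i,u)<\delta(\eta)$ and $d(s_j,u)<\delta(\eta)$, so $\|F(t_i)-F(s_j)\|_Y<2\eta$. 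Summing gives
\begin{align*}
\|S(\mathcal{K},t)-S(\mathcal{L},s)\|_Y\le 2\eta\,\mu_K(K).
\end{align*}

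\textbf{Step 2 (existence of the limit).} Partitions of arbitrarily small mesh exist. Compactness gives a finite cover by balls of radius $\delta/3$, and we disjointify it by successive set differences; the resulting pieces are Borel with diameter at most $2\delta/3<\delta$. Choose tagged partitions $(\mathcal{K}_n,t^{(n)})$ with mesh less than $\delta(1/n)$. By Step 1, for $m\ge n$ the Riemann sums satisfy $\|S_n-S_m\|_Y\le 2\mu_K(K)/n$, so they form a Cauchy sequence. Since $Y$ is complete, they converge, and we define $(R)\int_K F\,d\mu_K$ to be the limit.

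\textbf{Step 3 (the $\varepsilon$--$\delta$ property).} Given $\varepsilon>0$, pick $\eta$ with $2\eta\,\mu_K(K)<\varepsilon/2$ and set $\delta=\delta(\eta)$. Let $(\mathcal{K},t)$ be any tagged partition of mesh less than $\delta$. For $n$ large, $\mathcal{K}_n$ also has mesh less than $\delta$, so Step 1 gives $\|S(\mathcal{K},t)-S_n\|_Y<\varepsilon/2$. Letting $n\to\infty$ yields $\|S(\mathcal{K},t)-(R)\int_K F\,d\mu_K\|_Y\le\varepsilon/2<\varepsilon$, which is \eqref{form: riemann-approx}. The same argument shows the limit does not depend on the chosen sequence of partitions.

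No step is a real obstacle. The only point needing care is the common-refinement comparison in Step 1: it must be applied only to nonempty intersections so that a witness point $u$ exists, and the sizes of the pieces are controlled purely through $\mu_K$-additivity. A degenerate case is also worth a remark: if $\mu_K(K)=0$ every Riemann sum is $0$, and the estimates hold trivially.
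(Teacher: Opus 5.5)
The paper gives no proof of this lemma; it simply quotes it from \cite{Va22}. Your argument is a correct self-contained proof. Uniform continuity (Heine--Cantor), the common-refinement comparison in Step 1, and completeness of $Y$ are exactly what is needed, and building small-mesh Borel partitions from a finite ball cover settles existence.

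One piece of bookkeeping needs tightening. Your $\delta(\cdot)$ is just some modulus of uniform continuity, so nothing guarantees two things you use:
\begin{itemize}
\item $\delta(1/m)\le\delta(1/n)$ for $m\ge n$, needed when you apply Step 1 to $S_n,S_m$ in Step 2;
\item the meshes of $\mathcal{K}_n$ eventually falling below a given $\delta(\eta)$, needed in Step 3.
\end{itemize}
There are two easy fixes. You can choose $\mathcal{K}_n$ with mesh less than $\min\{\delta(1),\delta(1/2),\dots,\delta(1/n),1/n\}$. Alternatively, run Step 1 asymmetrically: if $\mathcal{K}$ has mesh $<\delta(\eta_1)$ and $\mathcal{L}$ has mesh $<\delta(\eta_2)$, the same computation gives $\|S(\mathcal{K},t)-S(\mathcal{L},s)\|_Y\le(\eta_1+\eta_2)\,\mu_K(K)$. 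That single estimate handles both steps.

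As for what the routes buy: your Cauchy-limit construction is the most elementary, needing no vector-valued measure theory. However, it produces an abstract limit. In Corollary \ref{cor: lca-riemann-approx} the paper needs $(R)\int_K F\,d\mu_K$ to be the Bochner integral, and justifies this only by saying it is ``approximated by simple functions''. In your framework the identification costs one line. $S(\mathcal{K},t)$ is the Bochner integral of the simple function $\sum_i F(t_i)\mathbf{1}_{K_i}$. When the mesh is below $\delta(\eta)$, this function lies uniformly within $\eta$ of $F$, so $\|\int_K F\,d\mu_K - S(\mathcal{K},t)\|_Y\le \eta\,\mu_K(K)$.

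Read the other way, this estimate is itself an alternative proof of the lemma. Take $(R)\int_K F\,d\mu_K$ to be the Bochner integral; it exists because $F$ is a uniform limit of such simple functions and $\mu_K$ is finite. Given Bochner theory this is shorter, and it is the form the paper actually relies on downstream.
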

\begin{corollary}\label{cor: lca-riemann-approx}
    Let $Q$ be a second countable LCA group with a Haar measure $\mu_Q$ and $Y$ a Banach space. Suppose that $F: Q\to Y$ is a continuous function such that $\displaystyle \int_Q\|F(x)\|_Yd\mu_Q(x)<\infty$. Then the Bochner integral $\displaystyle \int_QF(x)d\mu_Q(x)\in Y$ exists. Furthermore, for any $\varepsilon>0$, there exists a compact subset $K\subseteq Q$ with partition $\mathcal{K}=\{K_1,...,K_n\}$ with tagging $\{t_1,...,t_N\}\subseteq K$ such that:
    \begin{align}\label{form: bochner-is-riemann-approximable}
        \left\|\int_QF(x)d\mu_Q(x)-\sum_{i=1}^n F(t_i)\mu_{Q}(K_i) \right\|_Y<\varepsilon
    \end{align}
\end{corollary}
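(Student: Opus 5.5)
The plan is to combine Lemma \ref{lem: int-no-ambiguity} (more precisely its separable-space version, Remark \ref{rem: without-h}) with an exhaustion of $Q$ by compact sets, and then to apply the generalized Riemann integral of Lemma \ref{lem: riemann-integral} on a single compact set.

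\emph{Existence of the Bochner integral.} Since $Q$ is second countable and $F$ is continuous, the image $F(Q)$ is a separable subset of $Y$. Hence $F$ is strongly measurable by Pettis' measurability theorem, applied inside the closed separable span of $F(Q)$. The map $x\mapsto\|F(x)\|_Y$ is continuous and integrable, so $F$ is Bochner integrable; write $I=\int_Q F\,d\mu_Q$.

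\emph{Reduction to a compact set.} Second countable LCA groups are $\sigma$-compact, so we may choose compact sets $K_1\subseteq K_2\subseteq\cdots$ with $\bigcup_m K_m=Q$. By dominated (or monotone) convergence applied to $\|F\|_Y$, we have $\int_{Q\setminus K_m}\|F\|_Y\,d\mu_Q\to 0$. Fix $\varepsilon>0$ and pick a compact $K$ with $\int_{Q\setminus K}\|F\|_Y<\varepsilon/2$. Then
\[
\Bigl\|I-\int_K F\,d\mu_Q\Bigr\|_Y\leq \int_{Q\setminus K}\|F\|_Y<\varepsilon/2.
\]

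\emph{Riemann sums on $K$.} A second countable LCA group is metrizable (Birkhoff--Kakutani), so $K$ is a compact metric space and $\mu_Q|_K$ is a finite Borel measure. Lemma \ref{lem: riemann-integral} supplies $(R)\int_K F$ together with a $\delta>0$ such that every tagged partition of mesh less than $\delta$ has Riemann sum within $\varepsilon/2$ of $(R)\int_K F$. Such a partition exists: cover $K$ by finitely many open balls of radius $\delta/3$, disjointify them into Borel sets, discard the empty pieces, and choose tags.

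\emph{Main obstacle.} The remaining point is the identification $(R)\int_K F=\int_K F$ (Bochner). To prove it, I would pass to scalars. For every $y'\in Y'$, the function $y'\circ F$ is continuous on the compact set $K$, so its Riemann sums converge to its Lebesgue integral: on a partition of mesh below $\delta'$, the oscillation of $y'\circ F$ on each piece is uniformly small. By \eqref{form: linear-transform-bochner}, $y'\bigl(\int_K F\bigr)=\int_K y'\circ F$. Moreover $y'\bigl((R)\int_K F\bigr)$ is the limit of the scalar Riemann sums, by continuity of $y'$ and \eqref{form: riemann-approx}. Hence $y'\bigl((R)\int_K F-\int_K F\bigr)=0$ for all $y'$, and Hahn--Banach gives equality. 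An alternative is a direct estimate: $\|F(x)-F(t_i)\|<\eta$ on each piece by uniform continuity, so $\bigl\|\int_K F-\sum_i F(t_i)\mu(K_i)\bigr\|\leq\eta\,\mu(K)$. This route even bypasses Lemma \ref{lem: riemann-integral}. Combining the two $\varepsilon/2$ bounds yields \eqref{form: bochner-is-riemann-approximable}.
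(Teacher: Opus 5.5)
Your proposal is correct and follows essentially the same route as the paper. Both arguments exhaust $Q$ by an increasing sequence of compact sets to reduce to a single compact $K$, use metrizability to apply Lemma \ref{lem: riemann-integral} on $K$, and then identify the Riemann integral on $K$ with the Bochner integral. Your write-up is more careful than the paper's at three points it glosses over: Bochner measurability when $Y$ is not separable (you argue via separability of $F(Q)$), the existence of tagged partitions of arbitrarily small mesh, and the identification $(R)\int_K F=\int_K F$. Your uniform-continuity estimate for the last point is exactly the content of the paper's brief remark that the Riemann sums are integrals of approximating simple functions.
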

\begin{proof}
    The hypotheses on $F$ automatically implies that $F$ is Bochner integrable and so the Bochner integral $\displaystyle \int_Q F(x)d\mu_Q(x)\in Y$ exists. 
    
    Since $Q$ is second countable, we can find an increasing sequence $\{K_n\}_{n\in \mathbb{N}}$ of compact subsets $K_n \subseteq Q$ such that $\bigcup_{n\in \mathbb{N}}K_n = Q$. It follows from Lebesgue-dominated convergence theorem for Bochner integrals \cite[Theorem 3]{DiUh77} that
    \begin{align}\label{form: half-estimate}
        \int_{K_n} F(x)d\mu_{K_n}(x)\to \int_{Q}F(x)d\mu_Q(x)
    \end{align}
    in $Y$ as $n\to \infty$. Note that $\mu_{K_n}$ is just the restriction of $\mu_Q$ to the compact Borel subset $K_n$, and is always a finite Borel measure.
    
    On the other hand, $Q$ is a second countable LCA group and in particular it is $T_0$ with countable open basis at the identity, which implies that $Q$ is metrizable \cite[Theorem 8.3]{HeRo79}, say, with a metric $d$\footnote{This can be chosen to be translation invariant, but this is not really relevant for our purposes.}. We see that, for any compact subset $K\subseteq Q$, $K$ is a compact metric space with finite Borel measure $\mu_K$. The restriction $F_{|K}: K\to Y$ is still a continuous function and so Lemma \ref{lem: riemann-integral} holds for $F_{|K}.$ Since $K$ is a compact subset, for any $\varepsilon>0$, there exists a tagged partition $\mathcal{K}$ with mesh smaller than the required $\delta>0$ (with respect to $d$) such that \eqref{form: riemann-approx} holds for $F_{|K}.$ Note that the resulting element $(R) \displaystyle \int_K F_{|K}(x)d\mu_K(x) \in Y$ from Lemma \ref{lem: riemann-integral} must still be the Bochner integral $\displaystyle \int_{K}F(x)d\mu_K(x)$ as it is approximated by simple functions (given in particular by the Riemann-sum in \eqref{form: riemann-approx}). We obtain the result by combining the limit in \eqref{form: half-estimate} and the Riemann-integral estimate \eqref{form: riemann-approx} on the restrictions $ \displaystyle \int_{K_n}F(x)d\mu_{K_n}(x).$ 
\end{proof}
\begin{theorem}\label{thm: feich-has-ap}
    Let $G$ be a second countable LCA group. The Feichtinger's algebra $\mathbf{S}_0(G)$ has the approximation property.
\end{theorem}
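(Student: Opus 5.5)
We will show directly that the identity on $\mathbf{S}_0(G)$ can be approximated uniformly on compacta by finite-rank operators, using the reproducing formula of the short-time Fourier transform combined with the Riemann-sum approximation of Corollary \ref{cor: lca-riemann-approx}. Fix $\phi\in\mathbf{S}_0(G)$ with $\|\phi\|_{L^2(G)}=1$. For every $f\in\mathbf{S}_0(G)$ we have the inversion formula
\begin{align*}
    f=\int_{G\times\widehat{G}} \rin{2}{f}{\pi(\chi)\phi}\,\pi(\chi)\phi \, d\mu_{G\times\widehat{G}}(\chi).
\end{align*}
The integrand $F_f(\chi)=\rin{2}{f}{\pi(\chi)\phi}\pi(\chi)\phi$ is continuous as an $\mathbf{S}_0(G)$-valued map, because $\chi\mapsto\pi(\chi)\phi$ is norm continuous in $\mathbf{S}_0(G)$ and $\chi\mapsto\rin{2}{f}{\pi(\chi)\phi}$ is continuous. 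It is also Bochner integrable in $\mathbf{S}_0(G)$, since $\|\pi(\chi)\phi\|_{\mathbf{S}_0}$ is uniformly bounded and $\int|\rin{2}{f}{\pi(\chi)\phi}|\,d\chi=\|f\|_{\mathbf{S}_0,\phi}<\infty$. By Lemma \ref{lem: int-no-ambiguity} this Bochner integral agrees with the weak $L^2$ inversion formula, so it equals $f$ in $\mathbf{S}_0(G)$.

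Next we build the finite-rank operators. For a compact $K\subseteq G\times\widehat{G}$ and a tagged partition $\mathcal{K}=\{K_1,\dots,K_N\}$ with tags $t_i$, define
\begin{align*}
    T_{K,\mathcal{K}}f=\sum_{i=1}^N \mu(K_i)\rin{2}{f}{\pi(t_i)\phi}\,\pi(t_i)\phi .
\end{align*}
This operator has rank at most $N$, and each coefficient functional $f\mapsto\rin{2}{f}{\pi(t_i)\phi}$ is bounded on $\mathbf{S}_0(G)$ because $\mathbf{S}_0\hookrightarrow L^2$. Corollary \ref{cor: lca-riemann-approx} gives, for each fixed $f$, some $K,\mathcal{K}$ with $\|f-T_{K,\mathcal{K}}f\|_{\mathbf{S}_0}<\varepsilon$.

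The main obstacle is uniformity over a compact set $C\subseteq\mathbf{S}_0(G)$, because the corollary only controls one $f$ at a time. We will handle it in three steps.

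First, we get a uniform bound $\sup_{K,\mathcal{K}}\|T_{K,\mathcal{K}}\|_{\mathbf{S}_0\to\mathbf{S}_0}\le M$ for partitions of small mesh. For this we estimate $\|T f\|_{\mathbf{S}_0}\le \sup_\chi\|\pi(\chi)\phi\|_{\mathbf{S}_0}\sum_i\mu(K_i)|V_\phi f(t_i)|$. The Riemann sum on the right is controlled by $\int|V_\phi f|$ up to an oscillation term. Using the local maximal-function (Wiener amalgam) estimate $\|V_\phi f\|_{W(L^\infty,L^1)}\lesssim\|f\|_{\mathbf{S}_0}$, we restrict to partitions into sets that are subordinate to a fixed relatively compact neighbourhood of $0$. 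If needed, we choose the partitions by refining a fixed uniform cover of translates of a neighbourhood $U$; this is possible because the group is second countable and metrizable.

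Second, we make the approximation uniform on a finite net. Given $\varepsilon$, take a finite $\varepsilon$-net $f_1,\dots,f_m$ of $C$. We need a single pair $(K,\mathcal{K})$ that works for all the $f_j$ at once. We obtain it by taking $K$ large enough for every $j$ via the dominated convergence step, and then, on that one $K$, using the uniform continuity behind Lemma \ref{lem: riemann-integral} to pick one $\delta$ valid for all $j$. Both choices are legitimate because there are only finitely many $f_j$.

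Third, we conclude. For any $f\in C$, with $f_j$ its net point,
\[
\|f-Tf\|\le \|f-f_j\|+\|f_j-Tf_j\|+\|T\|\|f_j-f\|\le(2+M)\varepsilon .
\]
This gives the approximation property (in fact the bounded one). The delicate point throughout is the uniform operator bound from the first step; it relies on the amalgam-space control of $V_\phi f$, which the paper has not yet established.
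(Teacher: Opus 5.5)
Your argument is correct and follows the paper's route. The paper's reconstruction formula over a co-compact $\Delta$ becomes your STFT inversion for the admissible choice $\Delta=G\times\widehat{G}$; the paper then takes Riemann sums via Corollary~\ref{cor: lca-riemann-approx}, asserts a uniform bound on the resulting finite-rank operators, and concludes from uniform boundedness plus pointwise convergence, of which your finite-net step is just the proof. Your version is more careful at the delicate point: the paper derives $\sup_N\|I_N\|<\infty$ from \eqref{form: for-uniform-bound}, which bounds only the integral $I$ and not its Riemann sums, and it picks the partitions depending on $\mathscr{f}$, whereas your amalgam estimate for partitions subordinate to a fixed neighbourhood of $0$, together with a single partition chosen for a finite net, is what makes those steps rigorous (the amalgam bound itself is standard, e.g.\ from $|V_\phi f|\le|V_\phi f|*|V_\phi\phi|$ and $V_\phi\phi\in W(C_0,L^1)$).
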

\begin{proof}
    Choose any closed co-compact subgroup $\Delta \leq G\times \widehat{G}$ (which can be trivially $G\times\widehat{G}$ itself). Since $\ell^1(\Delta^{\circ},\overline{\varphi}_1)\cong \mathbf{S}_0(\Delta^{\circ}, \overline{\varphi})$ is inverse-closed in $C^*(\Delta^{\circ},\overline{\varphi})$ \cite{GrLe04}, it follows from Proposition \ref{prop: pre-parseval-module-frame} that there exist $\{\mathscr{f}_1,...,\mathscr{f}_n\}$ and $\{\mathscr{g}_1,...,\mathscr{g}_n\}$ in $\mathbf{S}_0(G)$ such that $\displaystyle \sum_{i=1}^n\rin{\Delta^{\circ}}{\mathscr{f}_i}{\mathscr{g}_i}= \frac{1}{|\Delta|}\delta_0.$ Equivalently, it follows that, for all $\mathscr{f}\in \mathbf{S}_0(G):$, $$\mathscr{f} = \sum_{i=1}^n\int_{\Delta} \rin{L^2(G)}{\mathscr{f}}{\pi(\lambda)\mathscr{f}_i}\pi(\lambda)\mathscr{g}_id\mu_{\Delta}(\lambda).$$
    For the sake of simplicity, we can assume without loss of generality that $n=1.$ We can then write the identity map $I: \mathbf{S}_0(G)\to \mathbf{S}_0(G)$ via the integral:
    \begin{align*}
        I(\mathscr{f}) = \int_{\Delta}\rin{L^2(G)}{\mathscr{f}}{\pi(\lambda)\mathscr{f}_1}\pi(\lambda)\mathscr{g}_1.
    \end{align*}
    By second-countability of $G$, $L^2(G)$ is separable and so is $\mathbf{S}_0(G)$. It follows from Lemma \ref{lem: int-no-ambiguity} that there is no ambiguity in the interpretation of integral $I(f)$ as a Bochner integral. 
    
    Due to the nice properties of the Feichtinger's algebra \cite[Lemma 4.15, Corollary 4.2 (iii), and Theorem 5.3(ii)]{Ja18} the map $$\Delta\ni \lambda \mapsto \rin{L^2(G)}{\mathscr{f}}{\pi(\lambda)\mathscr{f}_1}\pi(\lambda)\mathscr{g}_2\in \mathbf{S}_0(G)$$ is continuous for all $\mathscr{f},\mathscr{f}_1,\mathscr{g}_1\in \mathbf{S}_0(G)$. Therefore the same map is Bochner integrable with
    \begin{align}\label{form: for-uniform-bound}
        \|I(\mathscr{f})\|_{\mathbf{S}_0}\lesssim \|\mathscr{g}_1\|_{\mathbf{S}_0}  \int_\Delta |\rin{L^2(G)}{\mathscr{f}}{\pi(\lambda)\mathscr{f}_1}|d\mu_{\Delta}(\lambda)<\infty.
    \end{align}
  Since $\Delta$ is a second-countable LCA group, it follows from Corollary \ref{cor: lca-riemann-approx} that, for each $N\in \mathbb{N}_{>0}$, there exists a compact subset $\Delta_N\subseteq \Delta$ with partition $\{\Delta_N^{(1)},...,\Delta_N^{(n)}\}$ and tagging $\{t_N^{(1)},...,t_N^{(n)}\}\subseteq \Delta_N$ such that:
    \begin{align*}
        \left\|I(\mathscr{f})-\sum_{i=1}^n \rin{L^2(G)}{\mathscr{f}}{\mu_{\Delta}(\Delta_N^{(i)}) \pi(t_N^{(i)})\mathscr{f}_1}\pi(t_N^{(i)})\mathscr{g}_1 \right\|_{\mathbf{S}_0}<\frac{1}{N}.
    \end{align*}
    Note that each $I_N$ is a finite-rank operator in $\mathbf{S}_0(G)$ satisfying the uniform bound, following Estimate \eqref{form: for-uniform-bound}:
    \begin{align*}
        \sup_{N\in \mathbb{N}_{>0}}\|I_N\|\leq \sup_{N\in \mathbb{N}_{>0}}\sup_{\|\mathscr{f}\|_{\mathbf{S}_0}\leq 1} \|I(\mathscr{f})\|_{\mathbf{S}_0} \lesssim \|\mathscr{g}_1\|_{\mathbf{S}_0}\cdot \|\mathscr{f}\|_{\mathbf{S}_0} .
    \end{align*}
    We have found a sequence of uniformly bounded finite rank operators in $\mathbf{S}_0(G)$ such that for all $\mathscr{f}\in \mathbf{S}_0(G)$, $\|I(\mathscr{f})-I_N(\mathscr{f})\|_{\mathbf{S}_0}=\|\mathscr{f}-I_N(\mathscr{f})\|_{\mathbf{S}_0}\to 0$ as $N\to \infty.$ This is sufficient, according to \cite[Proposition 4.3]{Ry02}.
\end{proof}
\subsection{Kernel Theorems for the Heisenberg Modules} We now investigate the tensor products of the Heisenberg modules. Consider for each $i=1,2$, the LCA groups $G_i$, $\Delta_i\leq G_i\times\widehat{G}_i$, and $\Delta_i \leq G_i\times\widehat{G}_i$ satisfying the usual properties of Subsection \ref{subsec: set-up}. We shall denote by $\varphi_i: (G_i\times \widehat{G}_i)\times (G_i\times \widehat{G}_i)\to \mathbb{T}$ the Heisenberg $2$-co-cycle on $G_i\times \widehat{G}_i$, for $i=1,2$. We shall use the abbreviation $G_{12}:=G_1\times G_2$. 

Then, for each $i=1,2$, $C^*(\Delta_i^{\circ},\overline{\varphi}_i)$ is a nuclear unital $C^*$-algebra (see Remark \ref{rem: cstar-nuclearity}). Therefore $\mathcal{E}_{\Delta_i}(G_i)$ is a finitely generated projective $C^*(\Delta_i,\varphi_i)$-module that is faithfully localizable from $\mathbf{S}_0(G_i)$. Since $\mathbf{S}_0(G_i)$ possesses the approximation property (Theorem \ref{thm: feich-has-ap}), the hypotheses of Theorem \ref{thm: proj-tensor-are-preequiv}, with both the equivalence bimodules $\mathcal{E}_{\Delta_1}(G_1)$ and $\mathcal{E}_{\Delta_2}(G_2)$ satisfying the required regularity properties. From this we see that the external tensor product $\mathcal{E}_{\Delta_1}(G_1)\otimes_{\op{ex}}\mathcal{E}_{\Delta_2}(G_2)$ satisfies the abstract kernel theorems of Corollary \ref{thm: abstract-kernels}. However, we would like our kernel theorems to be more descriptive, similar to the classical Schwartz's kernel theorems. We start with the following result.
\begin{corollary}
    The map $E$ of \eqref{form: function-tensor} defines the embedding of $\mathcal{E}_{\Delta_1}(G_1)\otimes_{\op{ex}} \mathcal{E}_{\Delta_2}(G_2)$ in $L^2(G_{12})$ with dense range and satisfies the following estimate:
    \begin{align}\label{form: heis-tensor-embedding}
    \|E(\mathbf{f})\|_{L^2(G_{12})} \leq \sqrt{|\Delta_1|} \sqrt{|\Delta_2|} \|\mathbf{f}\|_{\op{ex}},
    \end{align}
    for $\mathbf{f}\in \mathcal{E}_{\Delta_1}(G_1) \otimes_{\op{ex}}\mathcal{E}_{\Delta_2}(G_2)$.
\end{corollary}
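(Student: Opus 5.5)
The plan is to factor the map through the localization isomorphism of Theorem \ref{thm: localization-ext} and the classical identification $L^2(G_1)\otimes_2 L^2(G_2)\cong L^2(G_{12})$. First, by Theorem \ref{thm: localization-heis}, each $\mathcal{E}_{\Delta_i}(G_i)$ is faithfully localizable via the trace $\op{tr}_{\Delta_i^{\circ}}$ with $\|\op{tr}_{\Delta_i^{\circ}}\|=|\Delta_i|$, and $\op{Loc}(\mathcal{E}_{\Delta_i}(G_i))\cong L^2(G_i)$; under this identification the localization inner product on $\mathbf{S}_0(G_i)$ is the $L^2(G_i)$ inner product. Theorem \ref{thm: localization-ext} then yields a dense embedding
\begin{align*}
\mathcal{E}_{\Delta_1}(G_1)\otimes_{\op{ex}}\mathcal{E}_{\Delta_2}(G_2)\hookrightarrow \op{Loc}(\mathcal{E}_{\Delta_1}(G_1))\otimes_2\op{Loc}(\mathcal{E}_{\Delta_2}(G_2))\cong L^2(G_1)\otimes_2 L^2(G_2),
\end{align*}
which on elementary tensors $f_1\otimes f_2$ with $f_i\in\mathcal{E}_{\Delta_i}(G_i)$ is the formal tensor of the images of $f_i$ in $L^2(G_i)$.

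Second, I compose with the standard unitary $L^2(G_1)\otimes_2 L^2(G_2)\to L^2(G_1\times G_2)$, $h_1\otimes h_2\mapsto h_1(x)h_2(y)$, which is exactly $E$ from \eqref{form: function-tensor} on elementary tensors (and is a unitary because products of $L^2$ functions on a product of $\sigma$-finite measure spaces span a dense subspace and $\langle h_1\otimes h_2,k_1\otimes k_2\rangle$ factorizes by Fubini; second countability gives $\sigma$-finiteness). Thus $E$, extended by continuity, is the composite of an injective map with dense range and a unitary, hence an embedding with dense range. Consistency with the earlier use of $E$ on $\mathbf{S}_0(G_1)\otimes_\pi\mathbf{S}_0(G_2)$ follows since both agree on $\mathbf{S}_0(G_1)\odot\mathbf{S}_0(G_2)$, which is dense everywhere by Lemma \ref{lem: dense-tensor}.

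Third, the estimate. Rather than rely on \eqref{form: loc-tensor-norm-est} (which has $\|\op{tr}\|$ rather than its square root), I compute directly on $\mathbf{f}\in\mathcal{E}_{\Delta_1}(G_1)\odot\mathcal{E}_{\Delta_2}(G_2)$: by \eqref{form: tr-to-hilb-equation}, $\|E(\mathbf{f})\|_{L^2}^2=\op{tr}_{\Delta_1^\circ}\otimes\op{tr}_{\Delta_2^\circ}(\rin{B_1\otimes_{\op{sp}}B_2}{\mathbf{f}}{\mathbf{f}})\le\|\op{tr}_{\Delta_1^\circ}\|\,\|\op{tr}_{\Delta_2^\circ}\|\,\|\rin{}{\mathbf{f}}{\mathbf{f}}\|_{\op{sp}}$, using positivity and Lemma \ref{lem: extended-tensor-trace} for the norm of the tensor trace, and the right inner product norm equals $\|\mathbf{f}\|_{\op{ex}}^2$ in an equivalence bimodule. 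Taking square roots gives $\sqrt{|\Delta_1||\Delta_2|}\,\|\mathbf{f}\|_{\op{ex}}$, and density extends it. The main obstacle is this bookkeeping: making sure the localized inner product is literally the $L^2$ one (no stray $|\Delta_i|$ factors from the normalization of the measure on $\Delta_i^\circ$) and that the ex-norm can be computed through the right $B$-valued inner product, consistent with the single-factor estimate \eqref{ineq: heis-ineqs}.
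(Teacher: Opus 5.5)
Your proposal is correct and follows essentially the same route as the paper: pass through Theorem \ref{thm: localization-ext} to get $\op{Loc}(\mathcal{E}_{\Delta_1}(G_1)\otimes_{\op{ex}}\mathcal{E}_{\Delta_2}(G_2))\cong L^2(G_1)\otimes_2 L^2(G_2)$, then compose with the unitary $E: L^2(G_1)\otimes_2 L^2(G_2)\to L^2(G_{12})$. Your direct derivation of the bound, using $\op{tr}_{\Delta_1^{\circ}}\otimes\op{tr}_{\Delta_2^{\circ}}$ with $\|\op{tr}_{\Delta_i^{\circ}}\|=|\Delta_i|$, is in fact the right way to get the square-root constant, because the estimate \eqref{form: loc-tensor-norm-est} that the paper cites carries $\|\op{tr}_{B_1}\|\,\|\op{tr}_{B_2}\|$ without a square root and would not literally give \eqref{form: heis-tensor-embedding}.
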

\begin{proof}
    From Theorem \ref{thm: localization-ext} we know that $$\op{Loc}(\mathcal{E}_{\Delta_1}(G_1)\otimes_{\op{ex}}\mathcal{E}_{\Delta_2}(G_2))\cong \op{Loc}(\mathcal{E}_{\Delta_1}(G_1))\otimes_2 \op{Loc}(\mathcal{E}_{\Delta_2}(G_2))\cong L^2(G_1)\otimes_{2} L^2(G_2).$$ 
    We also know that the map $E$ of Equation \eqref{form: function-tensor} implements the unitary map $E: L^2(G_1)\otimes_2 L^2(G_2)\xrightarrow[]{\sim} L^2(G_{12})$. Therefore an application of the estimates of Theorem \ref{thm: localization-ext} to the Heisenberg modules with the traces satisfying $\|\op{tr}_{\Delta_i^{\circ}}\|=|\Delta_i|$ implies: 
    \begin{align*}
        \|E(\mathbf{f})\|_{L^2(G_{12})}=\|\mathbf{f}\|_{L^2(G_1)\otimes_{2} L^2(G_2)}\leq \sqrt{|\Delta_1|} \sqrt{|\Delta_2|} \|\mathbf{f}\|_{\op{ex}}. 
    \end{align*}
    for any $\mathbf{f}\in \mathcal{E}_{\Delta_1}(G_1)\otimes_{\op{ex}}\mathcal{E}_{\Delta_2}(G_2)$. 
\end{proof}
While it is useful to know that tensors in $\mathcal{E}_{\Delta_1}(G_1)\otimes_{\op{ex}} \mathcal{E}_{\Delta_2}(G_2)$ can be realized as elements in $L^2(G_{12})$, it is desirable for our purposes to find a more concrete way of describing the external tensor product of Heisenberg modules as a function space.  In light of this, we identify the time-frequency plane $G_{12}\times \widehat{G}_{12}$ with $G_1\times\widehat{G}_1 \times G_2 \times \widehat{G}_2$ using the coordinate transform $A: G_1\times \widehat{G}_1 \times G_2\times \widehat{G}_2\to G_{12}\times \widehat{G}_{12}$:
\begin{align} \label{form: top-transform-1}
    A (x,\omega, y, \nu) = (x,y,\omega, \nu)
\end{align}
for $(x,\omega,y,\nu)\in G_{12}\times \widehat{G}_{12}$. Since the transform just swaps the second and the third coordinates in $G_1\times \widehat{G}_1\times G_2 \times \widehat{G}_2$, due to Fubini's theorem, we have the following.
\begin{proposition}\label{prop: preserving-transform} The coordinate transform 
    $A: G_1\times \widehat{G}_1\times G_2\times \widehat{G}_2\to G_{12}\times \widehat{G}_{12}$ is a measure preserving topological group isomorphism.
\end{proposition}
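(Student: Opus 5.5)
The plan is to check the three required properties of $A$ separately: that it is a group isomorphism, that it is a homeomorphism, and that it preserves measure. The first two are immediate, and the third reduces to Fubini's theorem once the Haar measures on both sides are identified as product measures.

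\emph{Algebra and topology.} Write the generic point of $G_1\times\widehat{G}_1\times G_2\times\widehat{G}_2$ as $(x,\omega,y,\nu)$. First recall that $\widehat{G}_{12}=\widehat{G_1\times G_2}\cong \widehat{G}_1\times\widehat{G}_2$ as topological groups, via $(\omega,\nu)\mapsto [(s,t)\mapsto \omega(s)\nu(t)]$; this is the identification implicitly used in the definition of $A$. Under it, $A$ is the coordinate permutation swapping the second and third factors of a finite product of LCA groups. A permutation of factors is a group homomorphism, since the group operations are coordinatewise. It is bijective, with inverse the same swap. It is continuous with continuous inverse for the product topology, since each coordinate of $A$ and of $A^{-1}$ is a projection. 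Hence $A$ is an isomorphism of topological groups.

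\emph{Measure.} Haar measure on $G_{12}=G_1\times G_2$ is taken to be $\mu_{G_1}\otimes\mu_{G_2}$. Its dual (Plancherel) measure on $\widehat{G}_{12}\cong\widehat{G}_1\times\widehat{G}_2$ is $\mu_{\widehat{G}_1}\otimes\mu_{\widehat{G}_2}$. This holds because the Fourier transform of a product function $f_1\otimes f_2$ factorises as $\widehat{f_1}\otimes\widehat{f_2}$, so the product measure satisfies Plancherel's normalisation on a dense set of elementary tensors, and dual measures are unique. Consequently
\[
\mu_{G_{12}\times\widehat{G}_{12}}=\mu_{G_1}\otimes\mu_{G_2}\otimes\mu_{\widehat{G}_1}\otimes\mu_{\widehat{G}_2},
\]
while the measure on the domain of $A$ is $\mu_{G_1}\otimes\mu_{\widehat{G}_1}\otimes\mu_{G_2}\otimes\mu_{\widehat{G}_2}$.

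It remains to verify that $\int F\circ A$ over the domain equals $\int F$ over $G_{12}\times\widehat{G}_{12}$ for all $F\in C_c$, or for all nonnegative Borel $F$. All groups are second countable, so the measures are $\sigma$-finite and Tonelli/Fubini applies. Both integrals are then iterated integrals over the same four factors taken in different orders, so they agree. Alternatively, one can note that $A_*\mu$ is a Haar measure on the target, since $A$ is an isomorphism, and then compare the two measures on products of Borel boxes, where they visibly agree.

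\emph{Main obstacle.} The only nontrivial point is the normalisation of the dual measure on $\widehat{G}_{12}$, that is, confirming that the product of the dual measures is the dual of the product measure. I would dispatch this by the elementary-tensor Plancherel argument above. Everything else is routine.
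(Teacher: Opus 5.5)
Your proposal is correct and takes the same route as the paper, which simply observes that $A$ swaps the middle two coordinates and cites Fubini's theorem. Your extra check, that the dual measure on $\widehat{G}_{12}$ is $\mu_{\widehat{G}_1}\otimes\mu_{\widehat{G}_2}$, makes explicit a normalisation the paper leaves implicit.
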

Using $A$, we can embed the product subgroup $\Delta_1 \times \Delta_2$ as a closed subgroup in $G_{12} \times \widehat{G}_{12}$. Simple computations involving the transform $A$ give us the following result.
\begin{lemma}\label{lem: simple-tensor-tfs}
    For any $\lambda,\lambda^{(1)},\lambda^{(2)} \in G_1\times \widehat{G}_1$, $\mu,\mu^{(1)},\mu^{(2)} \in G_2\times \widehat{G}_2$, $f\in L^2(G_1)$, and $g\in L^2(G_2):$
    \begin{align}\label{form: tfs-a1}
        \pi(A(\lambda,\mu))(E(f_1\otimes f_2)) = E(\pi(\lambda)f_1 \otimes \pi(\mu)f_2).
    \end{align}
    Furthermore, if $\varphi_{12}:(G_{12} \times \widehat{G}_{12})\times (G_{12}\times \widehat{G}_{12})\to \mathbb{T}$ is the Heisenberg $2$-co-cycle on $G_{12}\times \widehat{G}_{12}$,
    \begin{align}\label{form: g12-to-g1-cocycle}
        \varphi_{12}(A(\lambda^{(1)},\mu^{(1)}),A(\lambda^{(2)},\mu^{(2)})) = \varphi_{1}(\lambda^{(1)},\lambda^{(2)})\cdot \varphi_{2}(\mu^{(1)},\mu^{(2)}).
    \end{align} 
\end{lemma}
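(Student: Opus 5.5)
Both identities are direct computations from the definitions, so the plan is simply to unfold them pointwise. Throughout, write $\lambda=(x,\omega)\in G_1\times\widehat{G}_1$ and $\mu=(y,\nu)\in G_2\times\widehat{G}_2$. Then $A(\lambda,\mu)=((x,y),(\omega,\nu))\in G_{12}\times\widehat{G}_{12}$. Here the character $(\omega,\nu)\in\widehat{G}_{12}$ acts by $(\omega,\nu)(s,t)=\omega(s)\nu(t)$, via the standard identification $\widehat{G_1\times G_2}\cong\widehat{G}_1\times\widehat{G}_2$.

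For \eqref{form: tfs-a1}, I evaluate both sides at $(s,t)\in G_{12}$. By definition $\pi(A(\lambda,\mu))=M_{(\omega,\nu)}T_{(x,y)}$, so
\begin{align*}
\pi(A(\lambda,\mu))E(f\otimes g)(s,t)=\omega(s)\nu(t)f(s-x)g(t-y).
\end{align*}
This equals $(M_\omega T_xf)(s)\,(M_\nu T_y g)(t)=E(\pi(\lambda)f\otimes\pi(\mu)g)(s,t)$. The equality holds pointwise for continuous representatives, and almost everywhere for $f,g\in L^2$. Since $E$ is unitary from $L^2(G_1)\otimes_2L^2(G_2)$ onto $L^2(G_{12})$, the identity is an equality in $L^2(G_{12})$.

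For \eqref{form: g12-to-g1-cocycle}, put $\lambda^{(j)}=(x_j,\omega_j)$ and $\mu^{(j)}=(y_j,\nu_j)$. Then $A(\lambda^{(j)},\mu^{(j)})=((x_j,y_j),(\omega_j,\nu_j))$. By \eqref{form: heisenberg-2-cocycle},
\begin{align*}
\varphi_{12}\bigl(A(\lambda^{(1)},\mu^{(1)}),A(\lambda^{(2)},\mu^{(2)})\bigr)=\overline{(\omega_2,\nu_2)(x_1,y_1)}=\overline{\omega_2(x_1)}\,\overline{\nu_2(y_1)}.
\end{align*}
The right-hand side is exactly $\varphi_1(\lambda^{(1)},\lambda^{(2)})\varphi_2(\mu^{(1)},\mu^{(2)})$.

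There is no real obstacle in either step. The only point needing care is to state the identification of $\widehat{G}_{12}$ with $\widehat{G}_1\times\widehat{G}_2$ explicitly, since that is what makes $A$ well defined as a map into $G_{12}\times\widehat{G}_{12}$. That $A$ is a measure-preserving topological group isomorphism is already Proposition \ref{prop: preserving-transform}.
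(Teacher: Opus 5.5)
Your proposal is correct and follows the paper's proof: you evaluate $\pi(A(\lambda,\mu))E(f\otimes g)$ pointwise using $\pi(x,\omega)=M_\omega T_x$, and you then unfold the Heisenberg cocycle through the identification $\widehat{G}_{12}\cong\widehat{G}_1\times\widehat{G}_2$. Your remark about almost-everywhere equality for $L^2$ representatives adds a small precision the paper leaves implicit, though you do not need the appeal to unitarity of $E$ for it, since a.e.\ equality of the two functions already gives equality in $L^2(G_{12})$.
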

\begin{proof}
   We let $(x,y)\in G_{12}$, $\lambda = (x', \omega')\in G_1\times \widehat{G}_1$, $\mu=(y',\nu')\in G_2\times \widehat{G}_2$, we obtain: 
\begin{align*}
    \pi(A(\lambda,\mu))(E(f\otimes g))(x,y) &=E(f\otimes g)(x-x',y-y')\omega'(x)\nu'(y)\\
    &=f(x-x')\omega'(x)\cdot g(y-y')\nu'(y)\\
    &= E(\pi(\lambda)f\otimes \pi(\mu)g)(x,y).
\end{align*}
This proves Equation \eqref{form: tfs-a1}. On the other hand, explicitly writing out the Heisenberg $2$-co-cycle on $\varphi_{12}:(G_{12}\times \widehat{G}_{12})\times (G_{12}\times \widehat{G}_{12})\to \mathbb{C}$ results in:
\begin{align*}
    \varphi_{12}((x_1,y_1,\omega_1,\nu_1),(x_2,y_2,\omega_2,\nu_2)) &= \overline{(\omega_2,\nu_2)}(x_1,y_1)\\
    &=\overline{\omega_2(x_1)}\cdot \overline{\nu_2(y_1)}\\
    &= \varphi_{1}((x_1,\omega_1),(x_2,\omega_2))\cdot \varphi_{2}((y_1,\nu_1),(y_2,\nu_2)).
\end{align*}
Setting $\lambda^{(1)}=(x_1,\omega_1)$, $\lambda^{(2)}=(x_2,\omega_2)$ and $\mu^{(1)}=(y_1,\nu_1)$, and $\mu^{(2)}= (y_2,\nu_2)$ results in Equation \eqref{form: g12-to-g1-cocycle}.
\end{proof}
In general, one may consider any co-compact closed subgroup $\Delta_{12}\subseteq G_1\times \widehat{G}_1 \times G_2 \times \widehat{G}_2$. Then $A(\Delta_{12})$ is also a closed co-compact subgroup in $G_{12}\times \widehat{G}_{12}$. Since $A$ is measure-preserving,
\begin{align}\label{form: size-comparison}
    |\Delta_{12}| = |A(\Delta_{12})|.
\end{align}
We now consider the Heisenberg module $\mathcal{E}_{A(\Delta_{12})}(G_{12})$. We obtain the following as an easy consequence of the projective tensor product characterization of Feichtinger's algebras.
\begin{lemma}\label{lem: s0-tensor-approx}
For any subgroup $\Delta_{12} \subseteq G_1\times \widehat{G}_1\times G_2\times \widehat{G}_2$, we have $\mathcal{E}_{A(\Delta)}(G_{12}) = \overline{\op{span}}\{E(f_1\otimes f_2): f_1\in \mathbf{S}_0(G_1),f_2\in \mathbf{S}_0(G_2)\}$, where the closure is with respect to the $\mathcal{E}_{A(\Delta)}(G_{12})$-norm.
\end{lemma}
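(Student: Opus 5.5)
The plan is to combine Proposition \ref{prop: s0-proj-tensor} with the continuity estimate \eqref{ineq: heis-ineqs} of Theorem \ref{thm: localization-heis}, applied to the group $G_{12}$ and the closed co-compact subgroup $A(\Delta_{12})$. By definition, $\mathcal{E}_{A(\Delta_{12})}(G_{12})$ is the completion of the pre-equivalence bimodule $\mathbf{S}_0(G_{12})$. Hence $\mathbf{S}_0(G_{12})$ is dense in $\mathcal{E}_{A(\Delta_{12})}(G_{12})$ for the module norm. It therefore suffices to show that the span of the functions $E(f_1\otimes f_2)$, with $f_i\in\mathbf{S}_0(G_i)$, is dense in $\mathbf{S}_0(G_{12})$ with respect to the $\mathcal{E}_{A(\Delta_{12})}(G_{12})$-norm. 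A standard $\varepsilon/2$ argument then finishes the proof.

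The first key step is to show that $\op{span}\{E(f_1\otimes f_2)\}=E(\mathbf{S}_0(G_1)\odot\mathbf{S}_0(G_2))$ is dense in $\mathbf{S}_0(G_{12})$ in the $\mathbf{S}_0$-norm. By Proposition \ref{prop: s0-proj-tensor}, $E$ is a Banach space isomorphism from $\mathbf{S}_0(G_1)\otimes_{\pi}\mathbf{S}_0(G_2)$ onto $\mathbf{S}_0(G_{12})$. The algebraic tensor product is dense in the projective tensor product by its very construction as a completion. A bounded surjection maps dense sets onto dense sets, so the image is dense.

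The second key step transfers this density to the module norm. Fix a nonzero window $\phi\in\mathbf{S}_0(G_{12})$. By the second inequality in \eqref{ineq: heis-ineqs} for the Heisenberg module over $G_{12}$,
\[
\|F\|_{\mathcal{E}_{A(\Delta_{12})}(G_{12})}\leq \|\phi\|_{L^2(G_{12})}^{-1}\|F\|_{\mathbf{S}_0(G_{12}),\phi}, \qquad F\in\mathbf{S}_0(G_{12}).
\]
Thus $\mathbf{S}_0$-convergence implies convergence in the module norm. Given $F\in\mathcal{E}_{A(\Delta_{12})}(G_{12})$ and $\varepsilon>0$, I would first pick $F_0\in\mathbf{S}_0(G_{12})$ within $\varepsilon/2$ of $F$. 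Then I would pick a finite sum $\sum_k E(f_1^{(k)}\otimes f_2^{(k)})$ within $\varepsilon/2$ of $F_0$ in module norm, using the two steps above.

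I do not expect a serious obstacle. The one point to check is that Theorem \ref{thm: localization-heis}, and in particular the $\mathbf{S}_0$ estimate, genuinely applies to $A(\Delta_{12})$ as a closed co-compact subgroup of $G_{12}\times\widehat{G}_{12}$. This follows from Proposition \ref{prop: preserving-transform}: $A$ is a topological group isomorphism, so it preserves closedness and co-compactness. The estimate is stated for an arbitrary co-compact $\Delta$, as in Subsection \ref{subsec: set-up}.

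Note that the $E(f_1\otimes f_2)$ lie in $\mathbf{S}_0(G_{12})$, so they are genuine elements of the module. The inclusion ``$\supseteq$'' in the statement is therefore immediate, and the argument above supplies ``$\subseteq$''.
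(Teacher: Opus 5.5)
Your proposal is correct and follows the paper's own argument. The paper likewise combines the continuous dense embedding $\mathbf{S}_0(G_{12})\hookrightarrow\mathcal{E}_{A(\Delta_{12})}(G_{12})$ with the identification $\mathbf{S}_0(G_{12})\cong\mathbf{S}_0(G_1)\otimes_{\pi}\mathbf{S}_0(G_2)$ of Proposition \ref{prop: s0-proj-tensor}; you simply make explicit the $\varepsilon/2$ argument, the estimate \eqref{ineq: heis-ineqs}, and the closed co-compactness of $A(\Delta_{12})$, which the paper leaves implicit.
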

\begin{proof}
    The result follows from the fact that $\mathbf{S}_0(G_{12})$ is continuously and densely embedded in $\mathcal{E}_{A(\Delta)}(G_{12})$ and that Feichtinger's algebras can be decomposed using projective tensor product (Proposition \ref{prop: s0-proj-tensor}) $\mathbf{S}_0(G_{12}) \cong \mathbf{S}_0(G_1)\otimes_{\pi}\mathbf{S}_0(G_2)$.
\end{proof}
\begin{theorem}\label{thm: concrete-tensor}
    If $f\in \mathcal{E}_{\Delta_1}(G_1)$ and $g\in \mathcal{E}_{\Delta_2}(G_2)$, then $E(f\otimes g) \in \mathcal{E}_{A(\Delta_1\times \Delta_2)}(G_{12})$. As a corollary, we have:
    \begin{align}\label{form: span-heis-tensor}
        \mathcal{E}_{A(\Delta_1\times \Delta_2)}(G_{12})= \overline{\op{span}}\{E(f\otimes g):f\in \mathcal{E}_{\Delta_1}(G_1),g\in \mathcal{E}_{\Delta_2}(G_2)\},
    \end{align}
    where the closure is with respect to the $\mathcal{E}_{A(\Delta_1\times \Delta_2)}(G_{12})$-norm.
\end{theorem}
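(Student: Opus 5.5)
The plan is to use the Bessel-bound description of the Heisenberg module norm from Theorem \ref{thm: localization-heis}. Write $\Delta_{12}:=A(\Delta_1\times\Delta_2)$, and measure it with the product Haar measure; by Proposition \ref{prop: preserving-transform} and \eqref{form: size-comparison} this is compatible with Weil's formula. The central estimate is that, for $f_0\in\mathbf{S}_0(G_1)$ and $g_0\in\mathbf{S}_0(G_2)$,
\begin{align*}
\|E(f_0\otimes g_0)\|_{\mathcal{E}_{\Delta_{12}}(G_{12})}\leq \|f_0\|_{\mathcal{E}_{\Delta_1}(G_1)}\,\|g_0\|_{\mathcal{E}_{\Delta_2}(G_2)}.
\end{align*}
By Lemma \ref{lem: simple-tensor-tfs}, \eqref{form: tfs-a1}, we have $\pi(A(\lambda,\mu))E(f_0\otimes g_0)=E(\pi(\lambda)f_0\otimes\pi(\mu)g_0)$. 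For $F=E(h_1\otimes h_2)$ with $h_i\in L^2(G_i)$, the analysis coefficient therefore factors:
\begin{align*}
\rin{L^2(G_{12})}{F}{\pi(A(\lambda,\mu))E(f_0\otimes g_0)}=\rin{L^2(G_1)}{h_1}{\pi(\lambda)f_0}\,\rin{L^2(G_2)}{h_2}{\pi(\mu)g_0}.
\end{align*}
The Gabor system $\mathcal{G}(E(f_0\otimes g_0),\Delta_{12})$ is thus the tensor product of the two Bessel systems. Its analysis operator $L^2(G_1)\otimes_2L^2(G_2)\to L^2(\Delta_1)\otimes_2L^2(\Delta_2)\cong L^2(\Delta_1\times\Delta_2)$ is the tensor product $C_{f_0}\otimes C_{g_0}$ of the individual analysis operators. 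Hence its norm equals $\|C_{f_0}\|\,\|C_{g_0}\|$, and the optimal Bessel bound is the product of the optimal Bessel bounds. Item (1) of Theorem \ref{thm: localization-heis} then gives the displayed estimate, in fact with equality.

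Next I extend from $\mathbf{S}_0$ to general $f\in\mathcal{E}_{\Delta_1}(G_1)$ and $g\in\mathcal{E}_{\Delta_2}(G_2)$. Choose sequences $f_n\in\mathbf{S}_0(G_1)$ and $g_n\in\mathbf{S}_0(G_2)$ converging in the module norms. Since $E(f_n\otimes g_n)\in\mathbf{S}_0(G_{12})\subseteq\mathcal{E}_{\Delta_{12}}(G_{12})$ by Proposition \ref{prop: s0-proj-tensor}, bilinearity and the estimate show that $E(f_n\otimes g_n)$ is Cauchy in $\mathcal{E}_{\Delta_{12}}(G_{12})$, with limit $F$.

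The main obstacle is identifying this abstract limit $F$ with the function $E(f\otimes g)$. For this I use the continuous embeddings into $L^2$ from \eqref{ineq: heis-ineqs}. Convergence in $\mathcal{E}_{\Delta_{12}}(G_{12})$ implies convergence in $L^2(G_{12})$. Likewise $f_n\to f$ and $g_n\to g$ in $L^2(G_i)$, so $E(f_n\otimes g_n)\to E(f\otimes g)$ in $L^2(G_{12})$, since $E$ is unitary on $L^2(G_1)\otimes_2L^2(G_2)$. Because the embedding $\mathcal{E}_{\Delta_{12}}(G_{12})\hookrightarrow L^2(G_{12})$ is injective, $F=E(f\otimes g)$, and therefore $E(f\otimes g)\in\mathcal{E}_{\Delta_{12}}(G_{12})$.

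For \eqref{form: span-heis-tensor}, the inclusion "$\supseteq$" follows from the first part together with closedness. For "$\subseteq$", Lemma \ref{lem: s0-tensor-approx} shows that $\mathcal{E}_{\Delta_{12}}(G_{12})$ is already the closed span of $E(f_1\otimes f_2)$ with $f_i\in\mathbf{S}_0(G_i)\subseteq\mathcal{E}_{\Delta_i}(G_i)$. These elements lie in the right-hand set, which completes the argument.
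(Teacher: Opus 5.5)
Your proposal is correct and follows essentially the paper's route. Both arguments use the tensor-product Bessel estimate: the paper cites \cite[Theorem 3.3]{BaTe22}, while you derive it directly by identifying the analysis operator with $C_{f_0}\otimes C_{g_0}$; both then approximate by $\mathbf{S}_0$-elements and invoke Lemma \ref{lem: s0-tensor-approx} for the span identity. Your version is slightly more careful in one respect: you apply the norm identity of Theorem \ref{thm: localization-heis}(1) only to elements of $\mathbf{S}_0(G_{12})$ and identify the limit via injectivity of $\mathcal{E}_{A(\Delta_1\times\Delta_2)}(G_{12})\hookrightarrow L^2(G_{12})$, whereas the paper uses the module norm of $E(f\otimes g)$ before its membership in $\mathcal{E}_{A(\Delta_1\times\Delta_2)}(G_{12})$ has been established.
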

\begin{proof}
    We have $f\in \mathcal{E}_{\Delta_1}(G_1)$ and $g\in \mathcal{E}_{\Delta_2}(G_2)$. Therefore the families $\mathcal{G}(f,\Delta_1)$ and $\mathcal{G}(g,\Delta_2)$ are (continuous) Bessel systems in $L^2(G_1)$ and $L^2(G_2)$ with Bessel bounds $\|f\|_{\mathcal{E}_{\Delta_1}(G_1)}^2$ and $\|g\|_{\mathcal{E}_{\Delta_2}(G_2)}^2$ respectively (see Theorem \ref{thm: localization-heis} Item (1)). We know from \cite[Theorem 3.3]{BaTe22} that
    \begin{align}\label{eq: tensor-bessel}
        \{\pi (\lambda)f\otimes \pi(\mu)g :(\lambda,\mu)\in \Delta_1\times \Delta_2 \}    
    \end{align}
    is a continuous Bessel system in $L^2(G_1)\otimes_2 L^2(G_2)$ with Bessel bound  $\|f\|_{\mathcal{E}_{\Delta_1}(G_1)}^2\|g\|_{\mathcal{E}_{\Delta_2}(G_2)}^2$. Note that, through the isomorphism $E$ and Lemma \ref{lem: simple-tensor-tfs}, the system described by \eqref{eq: tensor-bessel} is exactly $\mathcal{G}(E(f\otimes g), A(\Delta_1\times \Delta_2))$ in $L^2(G_{12})$. Therefore
    \begin{align}\label{form: tensor-estimate}
        \|E(f\otimes g)\|_{\mathcal{E}_{A(\Delta_1 \times \Delta_2)}(G_{12})} \leq \|f\|_{\mathcal{E}_{\Delta_1}(G_1)}\cdot \|g\|_{\mathcal{E}_{\Delta_2}(G_2)}.
    \end{align}
    On the other hand, fix $\varepsilon>0$. Then, from the density properties of the Feichtinger's algebra, we can choose  $\displaystyle f_0\in \mathbf{S_0}(G_1)$ and $g_0\in \mathbf{S}_0(G_2)$ such that $\displaystyle \|f-f_0\|_{\mathcal{E}_{\Delta_1(G_1)}}<\frac{\varepsilon}{2\|g\|_{\mathcal{E}_{\Delta_2}(G_2)}}$ and $\|g-g_0\|_{\mathcal{E}_{\Delta_2}(G_2)}<\frac{\varepsilon}{2\|f_0\|_{\mathcal{E}_{\Delta_1}(G_1)}}.$ Using Estimate \eqref{form: tensor-estimate}:
    \begin{align*}
        \|E(f\otimes g -f_0\otimes g_0)\|_{\mathcal{E}_{A(\Delta\times \Delta_2)}(G_{12})}&\leq \|E((f-f_0)\otimes g)\|_{\mathcal{E}_{A(\Delta\times \Delta_2)}(G_{12})} + \|E(f_0\otimes (g-g_0))\|_{\mathcal{E}_{A(\Delta\times \Delta_2)}(G_{12})} \\
        &\leq \|f-f_0\|_{\mathcal{E}_{\Delta_1}(G_1)}\|g\|_{\mathcal{E}_{\Delta_2}(G_2)} + \|f_0\|_{\mathcal{E}_{\Delta_1}(G_1)}\|g-g_0\|_{\mathcal{E}_{\Delta_2}(G_2)} \\
        &< \varepsilon.
    \end{align*}
    Hence $E(f\otimes g)$ can always be approximated by a function $E(f_0\otimes g_0) \in \mathbf{S}_0(G_{12})$ with respect to the $\mathcal{E}_{A(\Delta_1\times \Delta_2)}(G_{12})$-norm. It follows that $E(f\otimes g) \in \mathcal{E}_{A(\Delta_1\times \Delta_2)}(G_{12})$, as required. The set equality \eqref{form: span-heis-tensor} follows from the dense embeddings $\mathbf{S}_0(G_i)\hookrightarrow \mathcal{E}_{\Delta_i}(G_i)$ and Lemma \ref{lem: s0-tensor-approx}.
\end{proof}

Let us now take a look at the C*-algebras associated with $\Delta_1\times \Delta_2$.
\begin{proposition}\label{prop: twisted-isomorphism}
    There exists a $*$-isomorphism $D:C^*(\Delta_1,\varphi_1)\otimes_{\op{sp}}C^*(\Delta_2,\varphi_2)\xrightarrow[]{\sim}C^*(A(\Delta_1\times \Delta_2), \varphi_{12}).$
\end{proposition}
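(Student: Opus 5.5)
The plan is to build $D$ in two stages. First I construct a $*$-isomorphism $C^*(\Delta_1,\varphi_1)\otimes_{\op{sp}}C^*(\Delta_2,\varphi_2)\cong C^*(\Delta_1\times\Delta_2,\varphi_1\cdot\varphi_2)$. Then I transport the structure along the group isomorphism $A_{|\Delta_1\times\Delta_2}:\Delta_1\times\Delta_2\to A(\Delta_1\times\Delta_2)$.

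For the first stage I invoke Corollary \ref{cor: spatial-tensor-of-twisted-gcstar} with $G_i:=\Delta_i$ and $\psi_i:=\varphi_{i|\Delta_i\times\Delta_i}$. These restrictions are normalized continuous $2$-cocycles on the second countable LCA groups $\Delta_i$, as noted after \eqref{form: heisenberg-2-cocycle}. The corollary gives
$$E: C^*(\Delta_1,\varphi_1)\otimes_{\op{sp}}C^*(\Delta_2,\varphi_2)\xrightarrow{\sim} C^*(\Delta_1\times\Delta_2,\varphi_1\cdot\varphi_2),$$
where $E$ acts on elementary tensors $a_1\otimes a_2$ of $\mathbf{S}_0(\Delta_i,\varphi_i)$ by $E(a_1\otimes a_2)(\lambda,\mu)=a_1(\lambda)a_2(\mu)$. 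The Haar measure on $\Delta_1\times\Delta_2$ is taken to be $\mu_{\Delta_1}\otimes\mu_{\Delta_2}$, which is the normalization for which $E$ intertwines the twisted convolutions.

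For the second stage I define $\alpha: L^1(A(\Delta_1\times\Delta_2),\varphi_{12})\to L^1(\Delta_1\times\Delta_2,\varphi_1\cdot\varphi_2)$ by $\alpha(c)=c\circ A$.

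1. By Proposition \ref{prop: preserving-transform}, $A$ is a measure preserving topological group isomorphism of the ambient time-frequency planes. Its restriction is therefore a topological group isomorphism onto the closed subgroup $A(\Delta_1\times\Delta_2)$. I equip that subgroup with the pushforward Haar measure, which is consistent with \eqref{form: size-comparison}. With this choice $\alpha$ is an isometry of $L^1$ spaces.
2. By \eqref{form: g12-to-g1-cocycle}, $\varphi_{12}\circ(A\times A)=\varphi_1\cdot\varphi_2$ on $(\Delta_1\times\Delta_2)^2$. A change of variables $y=A(y')$ in the twisted convolution \eqref{form: twisted-conv-inv} then gives $\alpha(c_1*_{\varphi_{12}}c_2)=\alpha(c_1)*_{\varphi_1\varphi_2}\alpha(c_2)$.
3. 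Similarly $\alpha(c^{*_{\varphi_{12}}})=\alpha(c)^{*_{\varphi_1\varphi_2}}$, using $A(-z)=-A(z)$ and the cocycle identity on the diagonal.

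So $\alpha$ is an isometric $*$-isomorphism of Banach $*$-algebras. It maps $\mathbf{S}_0$ onto $\mathbf{S}_0$, since composing with a topological isomorphism preserves Feichtinger's algebra. Because the enveloping C*-algebra is functorial for isometric $*$-isomorphisms, $\alpha$ extends to a $*$-isomorphism $C^*(A(\Delta_1\times\Delta_2),\varphi_{12})\cong C^*(\Delta_1\times\Delta_2,\varphi_1\cdot\varphi_2)$.

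I then set $D:=\alpha^{-1}\circ E$, so that on elementary tensors $D(a_1\otimes a_2)(A(\lambda,\mu))=a_1(\lambda)a_2(\mu)$. I expect the main obstacle to be the measure and cocycle bookkeeping in the second stage. The Haar measure on $A(\Delta_1\times\Delta_2)$ must be normalized so that $\alpha$ is a $*$-homomorphism for the twisted convolution, and not merely a $*$-homomorphism up to a constant. The cocycle in the involution formula must also match exactly. I would handle this by fixing the pushforward measure explicitly and checking \eqref{form: twisted-conv-inv} on $\mathbf{S}_0$, where all integrals are absolutely convergent. The result then extends by density to $L^1$ and to the C*-envelopes.
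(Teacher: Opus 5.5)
Your proposal is correct and follows the paper's proof. Both arguments obtain $C^*(\Delta_1,\varphi_1)\otimes_{\op{sp}}C^*(\Delta_2,\varphi_2)\cong C^*(\Delta_1\times\Delta_2,\varphi_1\cdot\varphi_2)$ from Corollary \ref{cor: spatial-tensor-of-twisted-gcstar} and then transport along $A$ using \eqref{form: g12-to-g1-cocycle} and the pushforward measure; your $\alpha^{-1}$ is exactly the paper's $V$, and $D=V\circ E$. The only difference is that you verify the $*$-isomorphism on $L^1$, which matches the definition $C^*(H,\psi)=C^*(L^1(H,\psi))$ directly. The paper instead works on $\mathbf{S}_0$, using $L^2$-unitarity and the intertwining of time-frequency shifts to see that $V$ preserves $\mathbf{S}_0$.
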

\begin{proof}
In light of Corollary \ref{cor: spatial-tensor-of-twisted-gcstar}, it suffices to find an isomorphism $V: C^*(\Delta_1\times \Delta_2,\varphi_1\cdot \varphi_2)\xrightarrow[]{\sim} C^*(A(\Delta_1\times \Delta_2), \varphi_{12}).$ This can be done in the Feichtinger's algebra level, let us define for $F\in \mathbf{S}_0(\Delta_1\times \Delta_2)$, $VF\in \mathbf{S}_0(A(\Delta_1\times \Delta_2))$ via
\begin{align*}
    (VF)(A(\lambda,\mu)) = F(\lambda,\mu), \qquad \forall (\lambda,\mu)\in \Delta_1\times \Delta_2.
\end{align*}
due to the fact that $A$ is measure-preserving (Proposition \ref{prop: preserving-transform}), it satisfies:
\begin{align*}
    \rin{L^2(A(\Delta_1\times \Delta_2))}{V(F)}{V(G)} = \rin{L^2(\Delta_1\times \Delta_2)}{F}{G}
\end{align*}
for all $F,G\in \mathbf{S}_0(\Delta_1\times \Delta_2)$. It follows that we can in fact extend to a unitary $L^2$ to $L^2$ map. Note that $A(\Delta_1\times \Delta_2)\cong \Delta_1\times \Delta_2$, and we also have $\widehat{\Delta_1}\times \widehat{\Delta_2}\cong \widehat{\Delta_1\times \Delta_2}\cong \widehat{A(\Delta_1\times \Delta_2)}$, using the map $\widehat{A}: \widehat{\Delta_1}\times\widehat{\Delta_2}\to \widehat{A(\Delta_1\times \Delta_2)}$ via $\widehat{A}(\hat{\lambda},\hat{\mu})(A(\lambda,\mu))= \hat{\lambda}(\lambda)\cdot \hat{\mu}(\mu)$ for all $(\lambda,\hat{\lambda})\in \Delta_1\times \widehat{\Delta_1}$ and $(\mu,\hat{\mu})\in \Delta_2\times \widehat{\Delta_2}$. We see then that for $(z,w)\in \Delta_1\times \Delta_2$
\begin{align*}
    \pi(A(\lambda,\mu), \widehat{A}(\hat{\lambda},\hat{\mu}))(VF)(A(z,w)) &= F(z-\lambda,w-\mu) \cdot \hat{\lambda}(\lambda)\cdot \hat{\mu}(\mu)\\
    &= ((V \circ \pi((\lambda,\mu),(\hat{\lambda},\hat{\mu}))F)(A(z,w)) 
\end{align*}
It follows from \cite[Theorem 5.1]{Ja18} that $V$ is restricted to an isometric Banach space isomorphism $V:\mathbf{S}_0(\Delta_1\times \Delta_2)\to \mathbf{S}_0(A(\Delta_1\times \Delta_2))$ with
\begin{align*}
    \|VF\|_{\mathbf{S}_0(A(\Delta_1,\Delta_2)), (V^*)^{-1}\Phi} = \|F\|_{\mathbf{S}_0(\Delta_1\times\Delta_2, \Phi)} 
\end{align*}
for any $\Phi\in \mathbf{S}_0(\Delta_1\times \Delta_2).$

We check that this map does extend to a $*$-homomorphism with respect to the Banach $*$-algebra structure of Proposition \ref{prop: twisted-feich}. For $F\in \mathbf{S}_0(\Delta_1\times \Delta_2)$ and $(\lambda,\mu)\in \Delta_1\times \Delta_2$:
\begin{align*}
    (VF^{*_{\varphi_1\cdot\varphi_2}})(A(\lambda,\mu)) &= F^{*_{\varphi_1\cdot\varphi_2}}(\lambda,\mu) = \overline{F(-\lambda,-\mu)} (\varphi_1\cdot\varphi_2)((\lambda,\mu),(\lambda,\mu) \\
    &\stackrel{\text{Eq. \eqref{form: g12-to-g1-cocycle}}}{=} \overline{F(-\lambda,-\mu)} (\varphi_1\cdot\varphi_2) \varphi_{12}(A(\lambda,\mu)) = (VF)^{*_{\varphi_{12}}}(A(\lambda,\mu)).
\end{align*}
On the other hand, using both Equation \eqref{form: g12-to-g1-cocycle} and Proposition \ref{prop: preserving-transform}, we also have for $G\in \mathbf{S}_0(\Delta_1\times \Delta_2)$:
\begin{align*}
    &V(F*_{\varphi_1\cdot \varphi_2}G)(A(\lambda,\mu))= \int_{\Delta_1\times \Delta_2}F(z,w)G(\lambda-z,\mu-w)(\varphi_1\cdot \varphi_2)((z,w),(\lambda-z,\mu-w))d_{\Delta_1\times \Delta_2}(z,w) \\ 
    &=  \int_{A(\Delta_1\times \Delta_2)}(VF)(A(z,w))(VG)(A(\lambda-z,\mu-w)))
    \varphi_{12}(A(z,w),A(\lambda-z,\mu-w)) d_{A(\Delta_1\times \Delta_2)}(A(z,w)) \\
    &= (V(F)*_{\varphi_{12}}V(G))(A(\lambda,\mu)).
\end{align*}
It now follows that $V: \mathbf{S}_0(\Delta_1\times \Delta_2,\varphi_1\cdot \varphi_2)\to \mathbf{S}_0(\Delta_1\times \Delta_2,\varphi_{12})$ is a Banach $*$-algebra isomorphism and can be extendded to $V: C^*(\Delta_1\times \Delta_2, \varphi_1\cdot \varphi_2)\to C^*(A(\Delta_1\times \Delta_2),\varphi_{12})$. We define $D:= V\circ E$ where $E$ is the isomorphism in Corollary \ref{cor: spatial-tensor-of-twisted-gcstar}. 
\end{proof}
Recall that $\mathcal{E}_{\Delta_1}(G_1)\otimes_{\op{ex}}\mathcal{E}_{\Delta_2}(G_2)$ is a Hilbert $C^*(\Delta_1,\varphi_1)\otimes_{\op{sp}}C^*(\Delta_2,\varphi_2)$-module, while $\mathcal{E}_{A(\Delta_1\times \Delta_2)}(G_{12})$ is a Hilbert $C^*(A(\Delta_1\times\Delta_2),\varphi_{12})$-module. The following result relates these two and shows that the external tensor product of Heisenberg modules is also a Heisenberg module.
\begin{theorem}\label{thm: concrete-external-tensor}
    The map \eqref{form: function-tensor} identifying the simple tensors in $\mathcal{E}_{\Delta_1}(G_1)\otimes_{\op{ext}}\mathcal{E}_{\Delta_2}(G_2)$ as functions in $L^2(G_{12})$ can be extendded as an isomorphism of Hilbert C*-modules $E: \mathcal{E}_{\Delta_1}(G_1)\otimes_{\op{ex}}\mathcal{E}_{\Delta}(G_2)\xrightarrow[]{\sim} \mathcal{E}_{A(\Delta_1\times \Delta_2)}(G_{12})$. More precisely, $E$ is a $D$-unitary map in the sense of Corollary \ref{cor: unitary-is-adjointable}. That is, for $f_1^{(i)}\in \mathcal{E}_{\Delta_1}(G_1)$, $f_2^{(i)}\in \mathcal{E}_{\Delta_2}(G_2)$, and $i=1,...,n$:
\begin{equation}\label{form: module-tensor-isometry}
    \begin{split}
    \lin{A(\Delta_1\times \Delta_2)}{E\left(\sum_{i=1}^nf_1^{(i)}\otimes f_2^{(i)}\right) }{E\left(\sum_{j=1}^nf_1^{(j)}\otimes f_2^{(j)}\right)  }\\
     =D\left(\sum_{i=1}^{n}\sum_{j=1}^n  \lin{\Delta_1}{f_1^{(i)}}{f_1^{(j)}} \otimes \lin{\Delta_2}{f_2^{(i)}}{f_2^{(j)}} \right)
    \end{split}
\end{equation}
where $D: C^*(\Delta_1,\varphi_1)\otimes_{\op{sp}} C^*(\Delta_2,\varphi_2)\xrightarrow{\sim} C^*(A(\Delta_1\times \Delta_2),\varphi_{12})$ is the isomorphism described in Proposition \ref{prop: twisted-isomorphism}.
\end{theorem}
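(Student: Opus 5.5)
The plan is to verify the identity \eqref{form: module-tensor-isometry} on elementary tensors of Feichtinger functions, then extend by density and continuity. Surjectivity will follow from Theorem \ref{thm: concrete-tensor}.

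\textbf{Step 1: elementary tensors in $\mathbf{S}_0$.} Take $f_1,g_1\in\mathbf{S}_0(G_1)$ and $f_2,g_2\in\mathbf{S}_0(G_2)$. For $(\lambda,\mu)\in\Delta_1\times\Delta_2$, Lemma \ref{lem: simple-tensor-tfs} and the unitarity of $E:L^2(G_1)\otimes_2L^2(G_2)\to L^2(G_{12})$ give
\[
\lin{A(\Delta_1\times\Delta_2)}{E(f_1\otimes f_2)}{E(g_1\otimes g_2)}(A(\lambda,\mu))
=\rin{L^2(G_{12})}{E(f_1\otimes f_2)}{E(\pi(\lambda)g_1\otimes\pi(\mu)g_2)},
\]
and this equals $\lin{\Delta_1}{f_1}{g_1}(\lambda)\cdot\lin{\Delta_2}{f_2}{g_2}(\mu)$. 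By the definition of $V$ in Proposition \ref{prop: twisted-isomorphism} and of $E$ on $\mathbf{S}_0(\Delta_1)\otimes_\pi\mathbf{S}_0(\Delta_2)$, the right-hand side is exactly $D\bigl(\lin{\Delta_1}{f_1}{g_1}\otimes\lin{\Delta_2}{f_2}{g_2}\bigr)$ evaluated at $A(\lambda,\mu)$. Sesquilinearity then gives \eqref{form: module-tensor-isometry} for all finite sums in $\mathbf{S}_0(G_1)\odot\mathbf{S}_0(G_2)$. In the same way one checks that $E(a_1\cdot f_1\otimes a_2\cdot f_2)=D(a_1\otimes a_2)\cdot E(f_1\otimes f_2)$ for $a_i\in\mathbf{S}_0(\Delta_i,\varphi_i)$. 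This uses \eqref{form: tfs-a1} inside the Bochner integral defining the action, which is legitimate by Lemma \ref{lem: int-no-ambiguity}, together with the cocycle identity \eqref{form: g12-to-g1-cocycle}.

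\textbf{Step 2: isometric extension.} Since $D$ is a $*$-isomorphism of C*-algebras, it is isometric. Step 1 therefore yields $\|E(\mathbf{f})\|_{\mathcal{E}_{A(\Delta_1\times\Delta_2)}(G_{12})}=\|\mathbf{f}\|_{\op{ex}}$ for $\mathbf{f}\in\mathbf{S}_0(G_1)\odot\mathbf{S}_0(G_2)$. By Lemma \ref{lem: dense-tensor}, this space is dense in $\mathcal{E}_{\Delta_1}(G_1)\otimes_{\op{ex}}\mathcal{E}_{\Delta_2}(G_2)$, so $E$ extends to an isometry into $\mathcal{E}_{A(\Delta_1\times\Delta_2)}(G_{12})$.

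Two consistency checks are needed here.
\begin{itemize}
\item On elementary tensors $f\otimes g$ with $f\in\mathcal{E}_{\Delta_1}(G_1)$ and $g\in\mathcal{E}_{\Delta_2}(G_2)$, the extension must agree with the pointwise $L^2$ function $E(f\otimes g)$ from Theorem \ref{thm: concrete-tensor}. This holds because both spaces embed continuously into $L^2(G_{12})$, by \eqref{form: heis-tensor-embedding} and \eqref{ineq: heis-ineqs}, and approximants converge in both norms.
\item The identity \eqref{form: module-tensor-isometry} persists in the limit, because both module inner products are jointly continuous and $D$ is continuous.
\end{itemize}

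\textbf{Step 3: surjectivity and $D$-unitarity.} The range of an isometry is closed. By Step 2 it contains $E(f\otimes g)$ for all $f\in\mathcal{E}_{\Delta_1}(G_1)$ and $g\in\mathcal{E}_{\Delta_2}(G_2)$, so \eqref{form: span-heis-tensor} in Theorem \ref{thm: concrete-tensor} forces the range to be all of $\mathcal{E}_{A(\Delta_1\times\Delta_2)}(G_{12})$. Setting the two arguments equal in \eqref{form: module-tensor-isometry} shows that $E$ is $D$-preserving in the sense of Definition \ref{def: tau-preserving}. Being surjective, it is $D$-unitary, and Corollary \ref{cor: unitary-is-adjointable} supplies the adjoint $E^*=E^{-1}$.

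\textbf{Main obstacle.} I expect the hardest point to be the bookkeeping in Step 1, which has three parts.
\begin{itemize}
\item Matching the measures on $\Delta_1\times\Delta_2$ and $A(\Delta_1\times\Delta_2)$, via Proposition \ref{prop: preserving-transform}.
\item Confirming that $D$ on simple tensors of $\mathbf{S}_0$ functions is literally the pointwise product composed with $V$.
\item Confirming that the twisted structures agree via \eqref{form: g12-to-g1-cocycle}.
\end{itemize}
Once the identity is established on the $\mathbf{S}_0$ level, the remainder is standard density.
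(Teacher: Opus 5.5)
Your proof is correct and takes essentially the same route as the paper. Both arguments verify \eqref{form: module-tensor-isometry} on simple tensors via Lemma \ref{lem: simple-tensor-tfs} and the factorization of the $L^2(G_{12})$ inner product, obtain surjectivity from Theorem \ref{thm: concrete-tensor}, and conclude with Corollary \ref{cor: unitary-is-adjointable}.

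Your version is somewhat more careful than the paper's. The paper computes only the diagonal inner product directly for Heisenberg-module elements, and it invokes surjectivity before the extension of $E$ has been constructed. You instead prove the polarized identity on $\mathbf{S}_0(G_1)\odot\mathbf{S}_0(G_2)$, extend isometrically, and then deduce surjectivity from the closed range.
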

\begin{proof}
    We already know due to Theorem \ref{thm: concrete-tensor} that $E$ is surjective. Proving Equation \eqref{form: module-tensor-isometry} is sufficient in light of Corollary \ref{cor: unitary-is-adjointable}. We do it for simple tensors. So we fix $f_1\in \mathcal{E}_{\Delta_1}(G_1)$ and $f_2\in \mathcal{E}_{\Delta_2}(G_2)$. Using Lemma \eqref{lem: simple-tensor-tfs}, we obtain the following:
    \begin{align*}
        \lin{A(\Delta_1\times \Delta_2)}{E(f_1\otimes f_2)}{E(f_1\otimes  f_2)}(A(\lambda,\mu)) &= \rin{2}{E(f_1\otimes f_2)}{\pi(A(\lambda,\mu))E(f_1\otimes f_2)}\\
        &=\rin{L^2(G_{12})}{E(f_1\otimes f_2)}{E(\pi(\lambda)f_1 \otimes \pi(\mu)f_2)} \\
        &= \int_{G_1\times G_2}f_1(x)\cdot f(y) \cdot \overline{\pi(\lambda)f_1(x) \pi(\mu)f_2(y)}d\mu_{G_1\times G_2}(x,y) \\
        &= \left(\int_{G_1}f_1(x)\overline{\pi(\lambda)f_1(x)}d\mu_{\Delta_1}(x)\right) \\ &\qquad \cdot \left(\int_{G_2}f_2(y)\overline{\pi(\mu)f_2(y)}d\mu_{\Delta_2}(y) \right) \\
        &= \rin{L^2(G_1)}{f_1}{\pi(\lambda)f_1} \cdot \rin{L^2(G_2)}{f_2}{\pi(\mu)f_2} \\
        &= D\left(\lin{\Delta_1}{f_1}{f_1} \otimes \lin{\Delta_2}{f_2}{f_2} \right)(A(\lambda,\mu)).
    \end{align*}
    We can then extend our computation linearly to obtain \eqref{form: module-tensor-isometry}. 
\end{proof}
\begin{remark}
    It follows from Corollary \ref{cor: unitary-is-adjointable} that $E: \mathcal{E}_{\Delta_1}(G_1)\otimes_{\op{ex}}\mathcal{E}_{\Delta_2}(G_2)\to \mathcal{E}_{A(\Delta_1\times \Delta_2)}(G_{12})$ is automatically $D$-adjointable. Hence it follows from Proposition \ref{prop: adjointable-auto-properties} that it is $D$-linear. That is:
    \begin{align}\label{form: module-tensor-action}
    E\left(\sum_{i=1}^n (a_1^{(i)}\otimes a_2^{(i)})\cdot \sum_{j=1}^n(f_1^{(j)}\otimes f_2^{(j)}) \right) = D\left(\sum_{i=1}^n a_1^{(i)}\otimes a_2^{(i)} \right) \cdot E\left(\sum_{i=1}^n f_1^{(j)}\otimes f_2^{(j)} \right).
\end{align}
\end{remark}
\begin{remark}
   We can obtain a description of Theorem \ref{thm: concrete-external-tensor} in terms of the right module structures by using a result that is analogous to Proposition \ref{prop: twisted-isomorphism}, namely, that we have an isomorphism $C^*(\Delta_1^{\circ},\overline{\varphi}_1)\otimes_{\op{sp}} C^*(\Delta_2^{\circ},\overline{\varphi}_2) \cong C^*(A(\Delta_1^{\circ}\times \Delta_2^{\circ}),\overline{\varphi}_{12}).$
\end{remark}
We recall that, due to the co-compactness of $\Delta_1$ and $\Delta_2$, both $C^*(\Delta_1^{\circ},\overline{\varphi}_1)$ and $C^*(\Delta_2^{\circ},\overline{\varphi}_2)$ are unital. Therefore $C^*(\Delta_1^{\circ},\overline{\varphi}_1)\otimes_{\op{sp}} C^*(\Delta_2^{\circ},\overline{\varphi}_2)\cong C^*(A(\Delta_1^{\circ}\times \Delta_2^{\circ}), \overline{\varphi}_{12})$ is also unital. Due to Theorem \ref{thm: concrete-external-tensor}, $\mathcal{E}_{A(\Delta_1\times \Delta_2)}(G_{12})$ is a finitely-generated projective $C^*(A(\Delta_1\times \Delta_2),\varphi_{12})$-module. 
\begin{corollary}
    The elements $\xi_1^{(1)},...,\xi_1^{(n)}\in \mathcal{E}_{\Delta_1}(G_1)$ and $\eta_2^{(1)},...,\eta_2^{(k)}\in \mathcal{E}_{\Delta_2}(G_2)$ are generators for the respective Heisenberg modules if and only if the system
    \begin{align*}
        \bigcup_{i=1}^{n}\bigcup_{j=1}^k \mathcal{G}\left(E\left(\xi_1^{i}\otimes \eta_2^{(j)}\right),A(\Delta_1\times \Delta_2) \right)
    \end{align*}
    is a multi-window Gabor frame for $L^2(G_{12}).$
\end{corollary}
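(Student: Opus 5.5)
The plan is to bypass the module-theoretic generator condition by translating both sides into statements about continuous Gabor frames in $L^2$. Theorem \ref{thm: localization-heis} Item (3) converts the generator condition on each factor into a frame condition. It says $\xi_1^{(1)},\dots,\xi_1^{(n)}$ generate $\mathcal{E}_{\Delta_1}(G_1)$ if and only if $\bigcup_i\mathcal{G}(\xi_1^{(i)},\Delta_1)$ is a multi-window Gabor frame for $L^2(G_1)$, and similarly for the $\eta_2^{(j)}$ and $\Delta_2$. By Lemma \ref{lem: simple-tensor-tfs}, under the unitary $E: L^2(G_1)\otimes_2 L^2(G_2)\to L^2(G_{12})$ the system $\bigcup_{i,j}\mathcal{G}(E(\xi_1^{(i)}\otimes\eta_2^{(j)}),A(\Delta_1\times\Delta_2))$ is exactly $\{\pi(\lambda)\xi_1^{(i)}\otimes\pi(\mu)\eta_2^{(j)}\}_{(\lambda,\mu)\in\Delta_1\times\Delta_2,\,i,j}$. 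By Proposition \ref{prop: preserving-transform}, the Haar measure on $A(\Delta_1\times\Delta_2)$ is the pushforward of $\mu_{\Delta_1}\otimes\mu_{\Delta_2}$. So the claim reduces to the following: the tensor system is a continuous frame for $L^2(G_1)\otimes_2L^2(G_2)$ over $\Delta_1\times\Delta_2$ if and only if both factor systems are frames.

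Upper bounds are automatic. Every element of a Heisenberg module generates a Bessel system (Theorem \ref{thm: localization-heis} Item (1)), and the tensor system is Bessel by \cite[Theorem 3.3]{BaTe22}. For the forward direction, I would write the frame operators $S_\xi=\sum_i S_{\xi_1^{(i)},\Delta_1}$ and $S_\eta=\sum_j S_{\eta_2^{(j)},\Delta_2}$. By Fubini on $\Delta_1\times\Delta_2$, the frame operator of the tensor system acts as $S_\xi\otimes S_\eta$ on simple tensors, and hence everywhere by boundedness. If $S_\xi\ge a_1 I$ and $S_\eta\ge a_2 I$, then
\[
S_\xi\otimes S_\eta=(S_\xi^{1/2}\otimes S_\eta^{1/2})^2\ge a_1a_2\, I,
\]
since the square root of each factor is invertible with norm bound. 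This gives the lower frame bound $a_1a_2$.

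For the converse, which I expect to be the only genuinely delicate step, I would test the lower frame bound $a$ of the tensor system on simple tensors $f\otimes g$ with $g\neq0$ fixed. Fubini factorises the frame sum, giving
\[
a\|f\|^2\|g\|^2\le\Big(\sum_i\int_{\Delta_1}|\langle f,\pi(\lambda)\xi_1^{(i)}\rangle|^2\,d\mu_{\Delta_1}\Big)\Big(\sum_j\int_{\Delta_2}|\langle g,\pi(\mu)\eta_2^{(j)}\rangle|^2\,d\mu_{\Delta_2}\Big).
\]
The second factor is at most $B_\eta\|g\|^2$, where $B_\eta$ is the (finite) Bessel bound of the $\eta$-system. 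Dividing by $\|g\|^2$ yields a lower frame bound $a/B_\eta$ for the $\xi$-system. Here I must check that $B_\eta>0$, which holds because some $\eta_2^{(j)}\neq0$; if all $\eta_2^{(j)}$ vanished, the tensor system could not be a frame. The argument is symmetric in the two factors. Finally, Theorem \ref{thm: localization-heis} Item (3) converts the two factor frame statements back into generator statements. As a consistency check, Theorem \ref{thm: concrete-external-tensor} and Proposition \ref{prop: send-gen-to-gen} identify the frame condition on the tensor system with the statement that the $\xi_1^{(i)}\otimes\eta_2^{(j)}$ generate the external tensor product.
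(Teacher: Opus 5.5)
Your proof is correct, but it takes a different route from the paper's.

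\textbf{The paper's route.} The paper argues at the level of Hilbert C*-modules:
\begin{enumerate}
\item it invokes Theorem \ref{thm: concrete-external-tensor} to know that $E$ is a $D$-unitary map;
\item it asserts that the elementary tensors $\xi_1^{(i)}\otimes\eta_2^{(j)}$ generate $\mathcal{E}_{\Delta_1}(G_1)\otimes_{\op{ex}}\mathcal{E}_{\Delta_2}(G_2)$;
\item it transports these to generators of $\mathcal{E}_{A(\Delta_1\times\Delta_2)}(G_{12})$ by Proposition \ref{prop: send-gen-to-gen};
\item only then does it apply Theorem \ref{thm: localization-heis} Item (3), once, to the big Heisenberg module.
\end{enumerate}

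\textbf{Your route.} You apply Item (3) separately to each factor. This reduces everything to a Hilbert-space fact: a tensor product of continuous frames is a frame if and only if both factors are. You prove that fact directly, via $S=S_\xi\otimes S_\eta$ and Tonelli.

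\textbf{What each approach buys.} Your argument needs neither the isomorphism $D$ nor Theorem \ref{thm: concrete-external-tensor}. It gives explicit lower bounds ($a_1a_2$ and $a/B_\eta$), and it handles both implications explicitly. The paper's argument really only spells out ``generators $\Rightarrow$ frame''. In the module picture, the converse would additionally require showing that if the $\xi_1^{(i)}\otimes\eta_2^{(j)}$ generate the external tensor product, then each family generates its own factor. For instance, one would need that invertibility of $\bigl(\sum_i\rin{B_1}{\xi_1^{(i)}}{\xi_1^{(i)}}\bigr)\otimes\bigl(\sum_j\rin{B_2}{\eta_2^{(j)}}{\eta_2^{(j)}}\bigr)$ in $B_1\otimes_{\op{sp}}B_2$ forces invertibility of each factor. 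The paper does not address this. What the paper's approach buys instead is the structural interpretation: the frame property of the tensor system is literally the generator property in the Heisenberg module over $A(\Delta_1\times\Delta_2)$. That is exactly what your closing ``consistency check'' records.

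\textbf{A small imprecision.} Proposition \ref{prop: preserving-transform} concerns the full time-frequency plane, so by itself it does not fix the Haar measure on the subgroup $A(\Delta_1\times\Delta_2)$. This is harmless: any Haar measure there pulls back to a positive multiple of $\mu_{\Delta_1}\otimes\mu_{\Delta_2}$, and rescaling the measure does not affect the frame property.
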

\begin{proof}
    From Theorem \ref{thm: concrete-external-tensor} $E: \mathcal{E}_{\Delta_1}(G_1)\otimes_{\op{ex}}\mathcal{E}_{\Delta_2}(G_2)\to \mathcal{E}_{A(\Delta_1\times \Delta_2)}(G_{12})$ is a $D$-unitary map. Since the $\{\xi_1^{(i)}\otimes \eta_2^{(j)}: i=1,...,n\ \text{and }j=1,...,k\}$ is a generating set of the external tensor product $\mathcal{E}_{\Delta_1}(G_1)\otimes_{\op{ex}}\mathcal{E}_{\Delta_2}(G_2)$, it follows from Proposition \ref{prop: send-gen-to-gen} that $\{E(\xi_1^{(i)}\otimes \eta_2^{(j)}): i=1,...,n\ \text{and }j=1,...,k\}$ is a generating set for $\mathcal{E}_{A(\Delta_1\times \Delta_2)}(G_{12})$ as a Hilbert $C^*(A(\Delta_1\times \Delta_2, \varphi_{12}))$-module. The result now follows from Theorem \ref{thm: localization-heis} Item (3).
\end{proof}

Finally, we can now state the kernel theorems for the Heisenberg modules by combining Theorem \ref{thm: abstract-kernels} with Theorem \ref{thm: concrete-external-tensor}.

\begin{theorem}\label{thm: heis-kernels}
We have the following commutative diagram:
\begin{equation}\label{diag: heis-inner-kernel}
\begin{adjustbox}{max width=\textwidth}
\begin{tikzcd}[row sep=2em,column sep=small]
 & k(\mathcal{N}(\mathbf{S}_0'(G_1),\mathbf{S}_0(G_2))) \arrow[r, hook]  & k(\mathcal{N}(\mathcal{E}_{\Delta_1}'(G_1),\mathcal{E}_{\Delta_2}(G_2))) \arrow[r, hook] & \mathcal{M}(\mathcal{E}_{\Delta_1}
'(G_1),\mathcal{E}_{\Delta_2}(G_2)) \arrow[r, hook]  & \mathcal{HS}(L^2(G_1)', L^2(G_1)) \\
& \mathbf{S}_0(G_1) \otimes_{\pi}\mathbf{S}_0(G_2) \arrow[u, "\cong"] \arrow[r, hook] \arrow[d, "\cong"] & \mathcal{E}_{\Delta_1}(G_1)\otimes_{\pi} \mathcal{E}_{\Delta_2}(G_2) \arrow[u, "\cong"] \arrow[r, hook] & \mathcal{E}_{\Delta_1}(G_1)\otimes_{\op{ex}}\mathcal{E}_{\Delta_2}(G_2) \arrow[u, "\cong"] \arrow[r, hook] \arrow[d, "\cong"] & L^2(G_1)\otimes_{2} L^2(G_2)\arrow[d, "\cong"] \arrow[u, "\cong"] \\
&\mathbf{S}_0(G_{12}) \arrow[rr, hook]  &  & \mathcal{E}_{A(\Delta_1\times\Delta_2)}(G_{12}) \arrow[r, hook] & L^2(G_{12}) 
\end{tikzcd}
\end{adjustbox}
\end{equation}
where all the arrows pointing to the right are embeddings with norm dense ranges. We also have the following commutative diagram for the duals: 
\begin{equation}\label{diag: heis-outer-kernel}
\begin{adjustbox}{max width=\textwidth}
\begin{tikzcd}[row sep=2em,column sep=small]
&\mathcal{HS}(L^2(G_1)',L^2(G_2)) \arrow[r, hook] \arrow[d, "\cong"] & \mathcal{B}_{\mathcal{M}}(\mathcal{E}_{\Delta_1}(G_1),\mathcal{E}_{\Delta_2}'(G_2)) \arrow[r, hook] \arrow[d, "\cong"] & \mathcal{B}(\mathcal{E}_{\Delta_1}(G_1),\mathcal{E}_{\Delta_2}'(G_2)) \arrow[r, hook] \arrow[d, "\cong"] & \mathcal{B}(\mathbf{S}_0(G_1),\mathbf{S}_0'(G_2)) \arrow[d, "\cong"] \\
& L^2(G_1)\otimes_2 L^2(G_2) \arrow[r, hook] \arrow[d, "\cong"] & (\mathcal{E}_{\Delta_1}(G_1)\otimes_{\op{ex}}\mathcal{E}_{\Delta_2}(G_2))' \arrow[r, hook] & (\mathcal{E}_{\Delta_1}(G_1)\otimes_{\pi}\mathcal{E}_{\Delta_2}(G_2))' \arrow[r, hook] & \mathbf{S}_0(G_1)\otimes_{\pi}\mathbf{S}_0(G_2) \arrow[d, "\cong"] \\
&L^2(G_{12}) \arrow[r, hook] &  \mathcal{E}_{A(\Delta_1\times \Delta_2)}'(G_{12})\arrow[rr, hook] \arrow[u, "\cong"] & & \mathbf{S}_0'(G_{12})
\end{tikzcd}
\end{adjustbox}
\end{equation}
where all the arrows pointing to the right are embeddings with weak-$*$ dense ranges.
\end{theorem}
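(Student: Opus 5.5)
The plan is to obtain both diagrams by specialising Theorem \ref{thm: abstract-kernels} to $Z_i=\mathcal{E}_{\Delta_i}(G_i)$ and $\mathcal{Z}_i=\mathbf{S}_0(G_i)$, and then attaching the bottom rows through the maps $E$ of Proposition \ref{prop: s0-proj-tensor} and Theorem \ref{thm: concrete-external-tensor}.

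\textbf{Step 1: the hypotheses of Theorem \ref{thm: proj-tensor-are-preequiv}.} We check these for $\mathcal{A}_i=\mathbf{S}_0(\Delta_i,\varphi_i)$, $\mathcal{B}_i=\mathbf{S}_0(\Delta_i^{\circ},\overline{\varphi}_i)$ and $\mathcal{Z}_i=\mathbf{S}_0(G_i)$.
\begin{enumerate}
\item The estimate $\|\cdot\|_{\mathcal{E}_{\Delta_i}}\le C_i\|\cdot\|_{\mathbf{S}_0}$ is the second inequality of \eqref{ineq: heis-ineqs}, with $C_i=\|\phi\|_{L^2}^{-1}$.
\item Faithful localizability from $\mathbf{S}_0(G_i)$ is Theorem \ref{thm: localization-heis}.
\item $C^*(\Delta_1^{\circ},\overline{\varphi}_1)$ is unital and nuclear by Remark \ref{rem: cstar-nuclearity}.
\item $\mathbf{S}_0(G_1)$, $\mathbf{S}_0(\Delta_1,\varphi_1)$ and $\mathbf{S}_0(\Delta_1^{\circ},\overline{\varphi}_1)$ have the approximation property by Theorem \ref{thm: feich-has-ap}. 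For this we need $\Delta_1$ and $\Delta_1^{\circ}$ to be second-countable LCA groups, which holds since they are closed subgroups of $G_1\times\widehat G_1$.
\end{enumerate}
With these in place, Theorem \ref{thm: abstract-kernels} yields the first two rows of \eqref{diag: heis-inner-kernel} and the first two rows of \eqref{diag: heis-outer-kernel}. In doing so we substitute $\operatorname{Loc}(\mathcal{E}_{\Delta_i}(G_i))\cong L^2(G_i)$ from Theorem \ref{thm: localization-heis}.

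\textbf{Step 2: the third row of \eqref{diag: heis-inner-kernel}.} The vertical isomorphism $\mathbf{S}_0(G_1)\otimes_\pi\mathbf{S}_0(G_2)\cong\mathbf{S}_0(G_{12})$ is Proposition \ref{prop: s0-proj-tensor}. The isomorphism $\mathcal{E}_{\Delta_1}\otimes_{\mathrm{ex}}\mathcal{E}_{\Delta_2}\cong\mathcal{E}_{A(\Delta_1\times\Delta_2)}(G_{12})$ is Theorem \ref{thm: concrete-external-tensor}. The unitary $L^2(G_1)\otimes_2L^2(G_2)\cong L^2(G_{12})$ is also given by $E$.

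Commutativity of the squares can be checked on simple tensors $f_1\otimes f_2$ with $f_i\in\mathbf{S}_0(G_i)$, since every map involved is $E$ on such tensors. These tensors are dense in every space by Lemma \ref{lem: dense-tensor}, and all maps are continuous, so commutativity extends to the whole diagram.

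The bottom embeddings $\mathbf{S}_0(G_{12})\hookrightarrow\mathcal{E}_{A(\Delta_1\times\Delta_2)}(G_{12})\hookrightarrow L^2(G_{12})$ are the Heisenberg-module embeddings of Theorem \ref{thm: localization-heis} for the co-compact subgroup $A(\Delta_1\times\Delta_2)$. That this subgroup is co-compact follows from \eqref{form: size-comparison}. These embeddings are injective with dense range, and under the vertical isomorphisms they agree with the compositions in the middle row.

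\textbf{Step 3: the dual diagram.} The third row of \eqref{diag: heis-outer-kernel} consists of the Banach adjoints of the bottom-row maps in Step 2. By Theorem \ref{thm: embedding-dense-range}(3) these are isomorphisms that are weak-$*$ homeomorphisms. Weak-$*$ density of the horizontal arrows is then carried over from the abstract diagram, or obtained directly from Theorem \ref{thm: embedding-dense-range}(1), because the predual maps are injective. The identification $L^2(G_1)\otimes_2L^2(G_2)\cong L^2(G_{12})$ is again the unitary $E$, composed with Riesz identifications.

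We also note that the rightmost node of the middle row of \eqref{diag: heis-outer-kernel} should read $(\mathbf{S}_0(G_1)\otimes_\pi\mathbf{S}_0(G_2))'$. Its vertical isomorphism to $\mathbf{S}_0'(G_{12})$ is the adjoint of the inverse of the isomorphism in Proposition \ref{prop: s0-proj-tensor}.

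\textbf{Main obstacle.} The delicate point is compatibility between the adjoints of the vertical isomorphisms and the horizontal dual embeddings. Concretely, we must show that the pairing $\langle E(\mathbf f),\sigma\rangle$ on $G_{12}$ agrees with the pairing of $\mathbf f$ against the pulled-back functional. The plan is to verify this on the dense span of simple tensors $E(f_1\otimes f_2)$ and extend by weak-$*$ continuity, using the fact that each map in the diagram is the adjoint of a map in \eqref{diag: heis-inner-kernel}, which already commutes.
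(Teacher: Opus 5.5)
Your proposal is correct and follows essentially the same route as the paper. The paper's proof likewise only checks the hypotheses of Theorem~\ref{thm: proj-tensor-are-preequiv} for $\mathcal{Z}_i=\mathbf{S}_0(G_i)$ (faithful localizability, the second estimate in \eqref{ineq: heis-ineqs}, the approximation property via Theorem~\ref{thm: feich-has-ap}, and unitality of $C^*(\Delta_i^{\circ},\overline{\varphi}_i)$) and then invokes Theorem~\ref{thm: abstract-kernels}; your Steps 2--3, which attach the $G_{12}$ rows via $E$ (Proposition~\ref{prop: s0-proj-tensor}, Theorem~\ref{thm: concrete-external-tensor}) and their adjoints, make explicit what the paper leaves implicit, and your correction of that node to $(\mathbf{S}_0(G_1)\otimes_{\pi}\mathbf{S}_0(G_2))'$ is right.
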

\begin{proof}
 Note that for $i=1,2$, $\mathcal{E}_{\Delta_i}(G_i)$ is faithfully localizable from the $\mathbf{S}_0(\Delta_i,\varphi_i)-\mathbf{S}_0(\Delta_i^{\circ},\varphi_i)$ pre-equivalence bimodule $\mathbf{S}_0(G_i)$. We know that $\mathbf{S}_0(G_i)$ as a Banach space has a topology that is finer than that induced by its bimodule structure due to the latter inequality in \eqref{ineq: heis-ineqs}. We also have that $\mathbf{S}_0(\Delta_i,\varphi_i)$, $\mathbf{S}_0(\Delta_i^{\circ},\overline{\varphi}_i)$ and $\mathbf{S}_0(G_i)$ possess the approximation property, c.f. Theorem \ref{thm: feich-has-ap}. Since $C^*(\Delta_i^{\circ},\overline{\varphi}_i)$ is unital, the Heisenberg modules satisfy the hypotheses of Theorem \ref{thm: proj-tensor-are-preequiv}. The result now follows as a corollary of Theorem \ref{thm: abstract-kernels}.
\end{proof}
Let us denote the duality brackets for $\mathcal{E}_{\Delta_i}(G_i)$ and $\mathcal{E}_{A(\Delta_1\times \Delta_2)}(G_{12})$, and $\mathcal{E}_{\Delta_1}(G_1)\otimes_{\op{ex}}\mathcal{E}_{\Delta_2}(G_2)$ via
\begin{align*}
    \rin{\mathcal{E}_i, \mathcal{E}_i}{\cdot}{\cdot},\ \rin{\mathcal{E}_{12}, \mathcal{E}'_{12}}{\cdot}{\cdot},\ \text{and } \rin{(\mathcal{E}_1\otimes_{\op{ex}} \mathcal{E}_2), (\mathcal{E}_1\otimes_{\op{ex}\mathcal{E}_2})'}{\cdot }{\cdot }
\end{align*}
respectively. Using the function space picture of the external tensor product of the Heisenberg modules, we obtain a concrete version of the outer kernel theorem for Heisenberg modules.
\begin{theorem}[Outer Kernel Theorem for Heisenberg Modules]\label{thm: heis-outer-kernel}
For any module continuous map $T\in \mathcal{B}_{\mathcal{M}}(\mathcal{E}_{\Delta_1}(G_1), \mathcal{E}_{\Delta_2}'(G_2))$, there exists a unique \emph{kernel} $\kappa(T)\in \mathcal{E}_{A(\Delta_1\times \Delta_2)}'(G_{12})$ satisfying:
\begin{align}\label{form: explicit-outer-kernel}
\rin{\mathcal{E}_2,\mathcal{E}_2'}{f_2}{Tf_1} = \rin{\mathcal{E}_{12},\mathcal{E}'_{12}}{E(f_1\otimes f_2)}{\kappa(T)},
\end{align}
for all $f_1\in \mathcal{E}_{\Delta_1}(G_1)$ and $f_2\in \mathcal{E}_{\Delta_2}(G_2).$
\end{theorem}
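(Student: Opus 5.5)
The plan is to compose two identifications already available in the paper: Lemma \ref{lem: module-continuous-maps}, which identifies module continuous maps with the dual of the external tensor product, and Theorem \ref{thm: concrete-external-tensor}, which realizes that external tensor product concretely as the Heisenberg module $\mathcal{E}_{A(\Delta_1\times\Delta_2)}(G_{12})$.

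\textbf{Existence.} Take $T\in\mathcal{B}_{\mathcal{M}}(\mathcal{E}_{\Delta_1}(G_1),\mathcal{E}_{\Delta_2}'(G_2))$.

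1. By Lemma \ref{lem: module-continuous-maps}, $T$ corresponds to $b(T)=\phi_T\in(\mathcal{E}_{\Delta_1}(G_1)\otimes_{\op{ex}}\mathcal{E}_{\Delta_2}(G_2))'$. On simple tensors this functional is
\[
\phi_T(f_1\otimes f_2)=\rin{\mathcal{E}_2,\mathcal{E}_2'}{f_2}{Tf_1}.
\]
2. By Theorem \ref{thm: concrete-external-tensor}, $E$ is a $D$-unitary map from the external tensor product onto $\mathcal{E}_{A(\Delta_1\times\Delta_2)}(G_{12})$. By Corollary \ref{cor: unitary-is-adjointable} it is isometric and surjective, so it is an isometric isomorphism of Banach spaces.
3. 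By Theorem \ref{thm: embedding-dense-range}(3), $E$ therefore has a Banach space adjoint $E'$ that is an isomorphism
\[
E':\mathcal{E}'_{A(\Delta_1\times\Delta_2)}(G_{12})\to(\mathcal{E}_{\Delta_1}(G_1)\otimes_{\op{ex}}\mathcal{E}_{\Delta_2}(G_2))'.
\]
4. Define $\kappa(T):=(E')^{-1}(\phi_T)=\phi_T\circ E^{-1}$.
5. Unwinding the duality relation $\rin{\mathcal{E}_{12},\mathcal{E}_{12}'}{E\mathbf{f}}{\kappa(T)}=\phi_T(\mathbf{f})$ on $\mathbf{f}=f_1\otimes f_2$ gives exactly \eqref{form: explicit-outer-kernel}.

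Note also that $\kappa$ is isometric: $\|\kappa(T)\|=\|\phi_T\|=\|T\|$.

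\textbf{Uniqueness.} Suppose $\kappa_1,\kappa_2$ both satisfy \eqref{form: explicit-outer-kernel}.

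1. Their difference annihilates every $E(f_1\otimes f_2)$ with $f_i\in\mathcal{E}_{\Delta_i}(G_i)$, and hence, by linearity, the whole span of such elements.
2. By Theorem \ref{thm: concrete-tensor}, equation \eqref{form: span-heis-tensor}, this span is norm dense in $\mathcal{E}_{A(\Delta_1\times\Delta_2)}(G_{12})$.
3. A continuous functional vanishing on a dense subspace vanishes identically, so $\kappa_1=\kappa_2$.

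\textbf{Main obstacle.} The delicate point is making sure that the two dualities used are the same bilinear pairings. The paper's brackets are bilinear, not sesquilinear. One must therefore check that the functional $\phi_T$ from \eqref{form: the-map-b}, extended by module continuity, is what gets transported by $E'$; the identification involves no Riesz (antilinear) map, so no conjugations enter.

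One should also confirm that $E$ on the dense algebraic tensor product $\mathcal{E}_{\Delta_1}(G_1)\odot\mathcal{E}_{\Delta_2}(G_2)$ agrees with the extension used in Theorem \ref{thm: concrete-external-tensor}. This holds because both are continuous in the external norm and agree on simple tensors, with the bound supplied by \eqref{form: tensor-estimate}.

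Everything else is formal. Equivalently, the result is just the middle column of diagram \eqref{diag: heis-outer-kernel} in Theorem \ref{thm: heis-kernels}, read elementwise.
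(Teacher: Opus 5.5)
Your proposal is correct and follows the paper's proof: $\kappa(T)$ is the preimage of $b(T)$ from Lemma~\ref{lem: module-continuous-maps} under the dual isomorphism $E'$ of the $D$-unitary map from Theorem~\ref{thm: concrete-external-tensor}. Your density argument via \eqref{form: span-heis-tensor} also spells out the uniqueness that the paper leaves implicit. One small correction to a side remark: continuity of $E$ in the external norm comes from the $D$-preserving identity \eqref{form: module-tensor-isometry}, not from the cross-norm bound \eqref{form: tensor-estimate}, which only controls simple tensors and so gives only a projective-norm bound. This does not affect your main argument.
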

\begin{proof}
 We know that $T\in \mathcal{B}_{\mathcal{M}}(Z_1,Z_2')$ becomes a linear functional in $(\mathcal{E}_{\Delta_1}(G_1)\otimes_{\op{ex}}\mathcal{E}_{\Delta_2}(G_2))'$ via $b(T)$ of Lemma \ref{lem: module-continuous-maps} satisfying $$\rin{(\mathcal{E}_1\otimes_{\op{ex}} \mathcal{E}_2), (\mathcal{E}_1\otimes_{\op{ex}\mathcal{E}_2})'}{f_1\otimes f_2}{b(T)}= \rin{\mathcal{E}_2,\mathcal{E}_2'}{f_2}{Tf_1}$$ for $f_1\in \mathcal{E}_{\Delta_1}(G_1)$ and $f_2\in \mathcal{E}_{\Delta_2}(G_2).$

 On the other hand, recall from Theorem \ref{thm: embedding-dense-range} that the formal dual of $E$ in Theorem \ref{thm: concrete-external-tensor} implements the isomorphism $E': \mathcal{E}_{A(\Delta_1\times\Delta_2)}'(G_{12})\xrightarrow[]{\sim} (\mathcal{E}_{\Delta_1}(G_1)\otimes_{\op{ex}}\mathcal{E}_{\Delta_2}(G_2))'$. We define $\kappa(T) \in \mathcal{E}_{A(\Delta_1\times \Delta_2)}'(G_{12})$ to be the pre-image of $b(T)$ under $E'$, it follows that:
 \begin{align*}
     \rin{\mathcal{E}_{12},\mathcal{E}_{12}'}{E(f_1\otimes f_2)}{\kappa(T)} &= \rin{(\mathcal{E}_1\otimes_{\op{ex}} \mathcal{E}_2), (\mathcal{E}_1\otimes_{\op{ex}\mathcal{E}_2})'}{f_1\otimes f_2}{E'(\sigma_T)}\\
     &= \rin{(\mathcal{E}_1\otimes_{\op{ex}} \mathcal{E}_2), (\mathcal{E}_1\otimes_{\op{ex}\mathcal{E}_2})'}{f_1\otimes f_2}{b(T)} = \rin{\mathcal{E}_2,\mathcal{E}_2'}{f_2}{Tf_1},
 \end{align*}
 as required. 
\end{proof}
\begin{theorem}[Inner Kernel Theorem for Heisenberg Modules]\label{thm: heis-inner-kernel}
    For any module continuous map $T\in \mathcal{B}_{\mathcal{M}}(\mathcal{E}_{\Delta_1}(G_1), \mathcal{E}'_{\Delta_2}(G_2))$ with kernel $\kappa(T)\in \mathcal{E}_{A(\Delta_1\times \Delta_2)}(G_{12})$, $T$ can be extendded to a map $\widetilde{T}:\mathcal{E}_{\Delta_1}'(G_1)\to \mathcal{E}_{\Delta_2}(G_2)$ such that $\widetilde{T}\in \mathcal{M}(\mathcal{E}_{\Delta_1}'(G_1),\mathcal{E}_{\Delta_2}(G_2))$.
\end{theorem}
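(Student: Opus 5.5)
The plan is to transport the kernel back to the external tensor product and then read it as an operator via $\mathfrak{R}$. Since $\kappa(T)$ is assumed to lie in $\mathcal{E}_{A(\Delta_1\times\Delta_2)}(G_{12})$ (embedded in its dual through $L^2(G_{12})$ as in \eqref{form: heis-dual-embeds}), I would set $\mathbf{k}:=E^{-1}(\kappa(T))\in \mathcal{E}_{\Delta_1}(G_1)\otimes_{\op{ex}}\mathcal{E}_{\Delta_2}(G_2)$. Here $E$ is the $D$-unitary isomorphism of Theorem \ref{thm: concrete-external-tensor}, which is in particular an isometric bijection by Corollary \ref{cor: unitary-is-adjointable}. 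I would then define the candidate $\widetilde{T}:=\mathfrak{R}(\mathbf{k})$ via Proposition \ref{prop: ex-tensor-as-operator}. Since $C^*(\Delta_1^{\circ},\overline{\varphi}_1)$ is unital, Lemma \ref{lem: for-external-embed} gives $\widetilde{T}\in\mathcal{M}(\mathcal{E}_{\Delta_1}'(G_1),\mathcal{E}_{\Delta_2}(G_2))\subseteq\mathcal{B}(\mathcal{E}_{\Delta_1}'(G_1),\mathcal{E}_{\Delta_2}(G_2))$, with $\|\widetilde{T}\|\le\|\mathbf{k}\|_{\op{ex}}$.

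It remains to show that $\widetilde{T}$ extends $T$, that is, $\widetilde{T}(\iota^*f_1)=Tf_1$ in $\mathcal{E}_{\Delta_2}'(G_2)$ for $f_1\in\mathcal{E}_{\Delta_1}(G_1)$. Here $\iota^*$ denotes the chain $\mathcal{E}_{\Delta_1}(G_1)\hookrightarrow L^2(G_1)\hookrightarrow\mathcal{E}_{\Delta_1}'(G_1)$ of Lemma \ref{lem: localizable-is-gelfand-triple}, and $\widetilde{T}(\iota^* f_1)\in\mathcal{E}_{\Delta_2}(G_2)$ is viewed inside $\mathcal{E}_{\Delta_2}'(G_2)$ through the same chain. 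I would first test on $f_2\in\mathcal{E}_{\Delta_2}(G_2)$ and take $\mathbf{k}=\sum_i k_1^{(i)}\otimes k_2^{(i)}$ a finite sum. Then
\[
\rin{\mathcal{E}_2,\mathcal{E}_2'}{f_2}{\widetilde{T}\iota^*f_1}=\sum_i \rin{L^2(G_1)}{k_1^{(i)}}{f_1}\rin{L^2(G_2)}{f_2}{k_2^{(i)}},
\]
while \eqref{form: explicit-outer-kernel} with $\kappa(T)$ viewed in $L^2(G_{12})$ gives $\rin{\mathcal{E}_2,\mathcal{E}_2'}{f_2}{Tf_1}=\rin{L^2(G_{12})}{E(f_1\otimes f_2)}{E(\mathbf{k})}$. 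Using the unitarity $L^2(G_1)\otimes_2L^2(G_2)\cong L^2(G_{12})$ through $E$ (Theorem \ref{thm: localization-ext}), the two sides agree up to the bilinear/sesquilinear convention in the first factor. I would handle that convention by building the conjugation into the identification $\mathcal{E}'\supseteq L^2$ via the Riesz map $R$, exactly as in Lemma \ref{lem: localizable-is-gelfand-triple}, or if necessary by replacing $\mathbf{k}$ with its partial conjugate in the first leg.

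Two further steps complete the argument. First, pass from finite sums to general $\mathbf{k}$: both sides are continuous in $\mathbf{k}$ for the $\op{ex}$-norm, by Lemma \ref{lem: for-external-embed} on one side and by estimate \eqref{form: heis-tensor-embedding} together with the continuity of the dual pairing on the other. Second, since $\mathcal{E}_{\Delta_2}(G_2)$ separates points of $\mathcal{E}_{\Delta_2}'(G_2)$, the two elements coincide.

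I expect the main obstacle to be the bookkeeping of the identifications. Specifically, $\kappa(T)$ sits in $\mathcal{E}'_{12}$ through $L^2$ with a Riesz (antilinear) map, whereas $\mathfrak{R}$ uses the bilinear pairing $\rin{Z_1,Z_1'}{\cdot}{\cdot}$. Making the two pairings line up, including the factor of complex conjugation, is the delicate point; the analytic estimates themselves are already available.
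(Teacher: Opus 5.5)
Your proposal is correct and is essentially the paper's proof: you pull $\kappa(T)$ back to $\mathbf{k}\in\mathcal{E}_{\Delta_1}(G_1)\otimes_{\op{ex}}\mathcal{E}_{\Delta_2}(G_2)$ via Theorem \ref{thm: concrete-external-tensor}, set $\widetilde{T}:=\mathfrak{R}(\mathbf{k})$, check $\rin{\mathcal{E}_2,\mathcal{E}_2'}{f_2}{Tf_1}=\rin{\mathcal{E}_2,\mathcal{E}_2'}{f_2}{\mathfrak{R}(\mathbf{k})(f_1)}$ through the unitary $L^2(G_1)\otimes_2 L^2(G_2)\cong L^2(G_{12})$ with a limit over finite sums, and obtain boundedness and membership in $\mathcal{M}(\mathcal{E}_{\Delta_1}'(G_1),\mathcal{E}_{\Delta_2}(G_2))$ from Lemma \ref{lem: for-external-embed}. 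The conjugation ``obstacle'' is only a slip in your display, so keep your Riesz-map fix and drop the ``partial conjugate in the first leg'' fallback: because $\mathcal{E}_{\Delta_2}(G_2)$ sits in $\mathcal{E}_{\Delta_2}'(G_2)$ through the antilinear $\iota^*$, the left side is really $\sum_i\overline{\langle k_1^{(i)},f_1\rangle_{L^2(G_1)}}\,\langle f_2,k_2^{(i)}\rangle_{L^2(G_2)}=\sum_i\langle f_1,k_1^{(i)}\rangle_{L^2(G_1)}\langle f_2,k_2^{(i)}\rangle_{L^2(G_2)}$, which agrees exactly with $\langle E(f_1\otimes f_2),E(\mathbf{k})\rangle_{L^2(G_{12})}$, and the fallback is not even well defined on tensors over $\mathbb{C}$, since it does not respect $(ck_1)\otimes k_2=k_1\otimes(ck_2)$.
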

\begin{proof}
    Suppose $T\in \mathcal{B}_{\mathcal{M}}(\mathcal{E}_{\Delta_1}(G_1), \mathcal{E}'_{\Delta_2}(G_2))$ with kernel $\kappa(T)\in \mathcal{E}_{A(\Delta_1\times \Delta_2)}(G_{12})$. It follows from Theorem \ref{thm: concrete-external-tensor} that $\displaystyle \kappa(T) = \lim_{N\to \infty} \sum_{i_N=1}^{K_N} E(h_1^{(i_N)}\otimes h_2^{(i_N)})$ where $$\displaystyle \mathbf{h}=\lim_{N\to \infty}\displaystyle \sum_{i_N=1}^{K_N} h_1^{(i_N)}\otimes h_2^{(i_N)}\in \mathcal{E}_{\Delta_1}(G_1)\otimes_{\op{ex}} \mathcal{E}_{\Delta_2}(G_2).$$ Using the inclusion $\mathcal{E}_{\Delta_1}(G_1)\otimes_{\op{ex}}\mathcal{E}_{\Delta_2}(G_2)\hookrightarrow (\mathcal{E}_{\Delta_1}(G_1)\otimes_{\op{ex}}\mathcal{E}_{\Delta_2}(G_2))'$ (see proof of Lemma \ref{lem: localizable-is-gelfand-triple}), we obtain, for any $f_2\in \mathcal{E}_{\Delta_2}(G_2)$ and $f_1\in \mathcal{E}_{\Delta_1}(G_1)$:
    \begin{align*}
        \rin{}{f_2}{Tf_1} &= \rin{}{E(f_1\otimes f_2)}{\kappa(T)} = \lim_{N\to \infty} \sum_{i_N=1}^{K_N} \rin{L^2(G_1)\otimes_2 L^2(G_2)}{E(f_1\otimes f_2)}{E(h_1^{(i_N)}\otimes h_2^{(i_N)})} \\
        &= \lim_{N\to \infty} \sum_{i_N=1}^{K_N} \rin{L^2(G_1)}{f_1}{h_1^{(i_N)}} \cdot \rin{L^2(G_2)}{f_2}{h_2^{(i_N)}} = \rin{L^2(G_2)}{f_2}{ \lim_{N\to \infty} \sum_{i_N=1}^{K_N} \rin{\mathcal{E}_1,\mathcal{E}_1'}{h_1^{(i_N)}}{f_1}\cdot h_2^{(i_N)}  } \\
        &=\rin{\mathcal{E}_2,\mathcal{E}_2'}{f_2}{ \lim_{N\to \infty} \sum_{i_N=1}^{K_N} \rin{\mathcal{E}_1,\mathcal{E}_1'}{h_1^{(i_N)}}{f_1}\cdot h_2^{(i_N)}} = \rin{\mathcal{E}_{2},\mathcal{E}_2'}{f_2}{\mathfrak{R}(\mathbf{h})(f_1)},
    \end{align*}
    where $\mathfrak{R}: \mathcal{E}_{\Delta_1}(G_1)\otimes_{\op{ex}}\mathcal{E}_{\Delta_2}(G_2)\xrightarrow[]{\sim} \mathcal{M}(\mathcal{E}_{\Delta_1}'(G_1),\mathcal{E}_{\Delta_2}'(G_2)$ is the isomorphism from Proposition \ref{prop: ex-tensor-as-operator}. Due to Lemma \ref{lem: for-external-embed} and the computation above, we know that $Tf_1 = \mathfrak{R}(\mathbf{h})(f_1)\in \mathcal{E}_{\Delta_2}(G_2)$ for all $f_1\in \mathcal{E}_{\Delta_1}(G_1).$ Since we have an embedding $\mathcal{M}(\mathcal{E}_{\Delta_1}'(G_1), \mathcal{E}_{\Delta_2}(G_2))\hookrightarrow \mathcal{B}_{\mathcal{M}}(\mathcal{E}_{\Delta_1}(G_1),\mathcal{E}_{\Delta_2}'(G_2))$ from Theorem \ref{thm: heis-kernels}, $T = \mathfrak{R}(\mathbf{h})_{|\mathcal{E}_{\Delta_1}(G_1)}$ and we can extend $T$ to $\widetilde{T}:= \mathfrak{R}(\mathbf{h})\in \mathcal{M}(\mathcal{E}_{\Delta_1}'(G_1), \mathcal{E}_{\Delta_2}(G_2))$, as required. 
    \end{proof}
\section*{Acknowledgements}
The second named author's work was supported by the Computational Research Laboratory, Institute of Mathematics, University of the Philippines Diliman, through a Research Grant in Pure and Applied Mathematics, funded under the General Appropriations Act as an Allotment Order (GAAAO).
\bibliographystyle{myalphaurl-sortbyauthor}
\bibliography{master}
\linespread{1}
\Addresses
\end{document}